\newcommand\citeayn[2][]{\textnormal{\citeauthor*{#2} (\citeyear{#2}) \cite[#1]{#2}}}
\newcommand\citeyn[2][]{\textnormal{(\citeyear{#2})~\cite{#2}}}
\newcommand\citeay[2][]{\textnormal{\citeauthor*{#2} (\citeyear{#2})}}
\newcommand*{\email}[1]{\href{mailto:#1}{\nolinkurl{#1}}} 
\Crefname{equation}{Condition}{Conditions}
\DeclareMathOperator*{\essinf}{ess\,inf}
\DeclareMathOperator*{\argmax}{arg\,max}
\definecolor{red}{rgb}{0.7,0.15,0.15}
\definecolor{green}{rgb}{0,0.5,0}
\definecolor{blue}{rgb}{0,0,0.7}
\makeatletter \@addtoreset{equation}{section}
\newtheorem{theorem}{Theorem}[section]
\newtheorem{assumption}[theorem]{Assumption}
\newtheorem{lemma}[theorem]{Lemma}
\newtheorem{proposition}[theorem]{Proposition}
\newtheorem{definition}[theorem]{Definition}
\newtheorem{remark}[theorem]{Remark}
\renewenvironment{proof}[1][\relax]{\par
  \pushQED{\qed}%
  \normalfont \topsep6\p@\@plus6\p@\relax
  \trivlist
  \item[\hskip\labelsep\itshape
    \ifx#1\relax \proofname\else\proofname{} of #1\fi\@addpunct{.}]\ignorespaces
}{%
  \popQED\endtrivlist\@endpefalse
}
\def \D{\mathbb{D}}
\def \E{\mathbb{E}}
\def \F{\mathbb{F}}
\def \G{\mathbb{G}}
\def \H{\mathbb{H}}
\def \I{\mathbb{I}}
\def \M{\mathbb{M}}
\def \N{\mathbb{N}}
\def \P{\mathbb{P}}
\def \R{\mathbb{R}}
\def \S{\mathbb{S}}
\def \U{\mathbb{U}}
\def \V{\mathbb{V}}
\def \X{\mathbb{X}}
\def\Pk{{\mathfrak P}}
\def\Xk{{\mathfrak X}}
\def\Ac{{\cal A}}
\def\Cc{{\cal C}}
\def\Ec{{\cal E}}
\def\Fc{{\cal F}}
\def\Gc{{\cal G}}
\def\Hc{{\cal H}}
\def\Kc{{\cal K}}
\def\Mc{{\cal M}}
\def\Oc{{\cal O}}
\def\Pc{{\cal P}}
\def\Sc{{\cal S}}
\def\Uc{{\cal U}}
\def\Vc{{\cal V}}
\def\Xc{{\cal X}}
\def\Yc{{\cal Y}}
\def\Zc{{\cal Z}}
\def\eps{\varepsilon}
\def\erm{\mathrm{e}}
\def\drm{\mathrm{d}}
\def\eps{\varepsilon}
\title{Continuous--time incentives in hierarchies\thanks{Research supported by the ANR project PACMAN ANR--16--CE05--0027, the FACE Foundation -- Thomas Jefferson Fund \& the Mobility Grant of Université Gustave Eiffel.}}
\author{Emma {\sc Hubert}\thanks{ \email{emma.hubert@univ-paris-est.fr}}}
\affil{LAMA, Universit\'e Gustave Eiffel, Marne--la--Vall\'ee, France.}
\date{\today}
\begin{document}

\maketitle

\begin{abstract}

This paper studies continuous--time optimal contracting in a hierarchy problem which generalises the model of \citeayn{sung2015pay}. The hierarchy is modeled by a series of interlinked principal--agent problems, leading to a sequence of Stackelberg equilibria. More precisely, the principal can contract with the managers to incentivise them to act in her best interest, despite only observing the net benefits of the total hierarchy. Managers in turn subcontract with the agents below them. Both agents and managers independently control in continuous time a stochastic process representing their outcome. First, we show through a continuous--time adaptation of \citeauthor{sung2015pay}'s model that, even if the agents only control the drift of their outcome, their manager controls the volatility of their continuation utility. This first simple example justifies the use of recent results on optimal contracting for drift and volatility control, and therefore the theory of second--order backward stochastic differential equations, developed in the theoretical part of this paper, dedicated to a more general model. The comprehensive approach we outline highlights the benefits of considering a continuous--time model and opens the way to obtain comparative statics. We also explain how the model can be extended to a large--scale principal--agent hierarchy. Since the principal's problem can be reduced to only an $m$--dimensional state space and a $2m$--dimensional control set, where $m$ is the number of managers immediately below her, and is therefore independent of the size of the hierarchy below these managers, the dimension of the problem does not explode.

\bigskip

\noindent
{\it Keywords.} principal--agent problems, moral hazard, hierarchical contracting, 2BSDEs.

\medskip

\noindent
{\it AMS 2020 subject classifications.} Primary: 91A65; Secondary: 91B41, 60H30, 93E20.

\medskip

\noindent
{\it JEL subject classifications.} C61, C73, D82, D86.
\end{abstract}


\section{Introduction}\label{sec:introduction}

\textcursive{A little bit of history.}\vspace{-0.2em} The desire to optimise the organisation of work, in a scientific manner, has its origins in the early $20$th century, through the work of Frederick Winslow Taylor, pioneer of the theory of \textit{scientific management}. The main objective of this theory is to improve economic efficiency, especially labor productivity, and was one of the earliest attempts to apply science to the engineering of management processes. The main objective of Taylor's model, developed in his monograph entitled \textit{The principles of scientific management}, could be summarised as: \textit{How to make workers perform in the employer's interest, i.e., in the most cost--efficient way and with the least possible resistance?} To answer this question, Taylor promotes the supervision of workers, in opposition to \blockquote[{\citeayn[pp. 34]{taylor1911principles}}][]{the management of initiative and incentive}. 

\medskip

Nevertheless, during the course of the 20th century, management has noticeably evolved. 
This shift of paradigm is primarily due to the fact that the very nature of work has changed. Indeed, with the introduction of new technologies at all levels of production, the predominance of indirect labour requires forms of management that break with the classical model of Taylorian organisation of work. 
Furthermore, the recognition of the importance of the psychological climate, and in particular the idea that the recognition of workers stimulates their productivity, also represents an important advance compared to the Taylorian approach.
Both the progressive elimination of simple jobs and the desire to empower the employee have made the supervision of workers difficult and even counterproductive. These two transformations imply that management nowadays relies more on employee initiative, and on the development of incentives to bring the interests of the employee and the employer together, than on the supervision of workers. However, the hierarchical organisation of work \textit{à la} Taylor is still considered as the standard structure in companies, and only a few dare to adopt a different organisation.

\medskip

\textcursive{A little bit of context.}\vspace{-0.2em} In an organisation, a hierarchy usually consists of a power entity at the top with subsequent levels of power underneath. This structure is the dominant mode in our contemporary society. Indeed, most companies, governments, and even criminal organisations have a hierarchical structure, with different levels of management or authority. This particular structure of organisations raises many questions: on its efficiency, its cost, its optimal size... To answer these questions, an abundant literature has emerged in the last century in a wide variety of fields, from philosophy to mathematics, through social and management sciences. 
The first mathematical model for the study of the optimal structure of a hierarchy seems to be the work of \citeayn{williamson1967hierarchical}, but, as he mentioned, this question, which presents a serious dilemma for business theory, was originally introduced by Knight in 1921 (see \citeayn{knight2012risk} for a recent edition). Many authors have followed this trend, including the models of \citeauthor{calvo1978supervision} \citeyn{calvo1978supervision, calvo1979hierarchy} and \citeayn{keren1979optimum}, as well as a generalisation by \citeayn{qian1994incentives} to take into account the notion of \textit{incentives}. 

\medskip

The first attempt to define a mathematical framework for incentives in management is attributed to \citeayn{barnard1938functions}. In particular, he advocates the need to create hierarchical relationships within organisations. Although he highlighted the serious issues associated with moral hazard, this very concept was introduced into the literature on management control almost thirty years later by \citeayn{arrow1963uncertainty}.
Mathematical models on incentive theories then became more widespread in the 1970s, especially through the work of \citeayn{mirrlees1971exploration}, and were applied a few years later to hierarchical organisations by \citeayn{stiglitz1975incentives} and \citeayn{mirrlees1976optimal}. Incentive theory is strongly related to contract theory and principal--agent problems, and is associated with a vast literature that cannot be mentioned here for the sake of conciseness.\footnote{We refer the interested reader to the seminal books by \citeayn{bolton2005contract}, \citeayn{salanie2005economics}, or \citeayn{laffont2009theory} for more references.}
In the case of a hierarchy, we are dealing with a succession of interlinked principal--agent problems, or in other words, a sequence of nested Stackelberg equilibria. 
The interest of this mathematical formalism lies in the modelling of information asymmetries within a hierarchy, whether they are \textit{ex--ante} (adverse selection) or \textit{ex--post} (moral hazard) the signing of contracts between the entities constituting the hierarchy. Works in this direction include, among others, \citeayn{tirole1986hierarchies}, \citeayn{demski1987hierarchical}, \citeayn{baiman1987optimal} and \citeayn{kofman1993collusion} on collusion and auditing within a hierarchy, as well as \citeayn{melumad1995hierarchical}, \citeayn{mcafee1995organizational}, \citeayn{laffont1997firm}, \citeayn{mookherjee2006decentralization} on adverse selection. 
In our framework, we will focus on moral hazard within a hierarchy, as in the work of \citeayn{laffont1990analysis}, \citeayn{yang1995degree}, \citeayn{macho--stadler1998centralized}, \citeayn{itoh2001job} and \citeayn{jost2010organization}.\footnote{The essay by \citeayn{miller2006principal} presents, however, some limitations to the use of incentives within a hierarchy, through a simple principal--agent model.} 
However, it should be noted that the above--mentioned models are discrete--time models, mostly consisting of a single period.

\medskip

\textcursive{Moving to continuous--time.}\vspace{-0.2em} In the late 1980s, the literature on contract theory expanded to include continuous--time models. The first, and seminal, paper on continuous--time principal--agent problems is by \citeayn{holmstrom1987aggregation}. This work was then extended, and main contributors in these regards are \citeayn{schattler1993first}, \citeayn{sannikov2008continuous}, \citeayn{biais2010large} as well as \citeayn{cvitanic2012contract}.\footnote{We can also mention in a non--exhaustive way the works of \citeauthor{sung1995linearity} \citeyn{sung1995linearity,sung1997corporate}, \citeauthor{muller1998first} \citeyn{muller1998first,muller2000asymptotic}, \citeayn{hellwig2002discrete} and \citeayn{hellwig2007role}, that are based on an extension of the first--order approach, popular in static cases. More recently, 
\citeauthor{williams2009dynamic} \citeyn{williams2009dynamic,williams2011persistent,williams2015solvable} and \citeauthor*{cvitanic2006optimal} \citeyn{cvitanic2006optimal,cvitanic2008principal,cvitanic2009optimal} characterise the optimal compensation for more general utility functions by using the stochastic maximum principle and forward--backward stochastic differential equations.}
More recently, \citeayn{cvitanic2018dynamic} have developed a general theory that allows to address a wide spectrum of principal--agent problems. The basic idea is to identify a sub--class of contracts offered by the principal, which are revealing in the sense that the best--reaction function of the agent, and his optimal control, can be computed straightforwardly, and then proving that restricting one's attention to this class is without loss of generality. With this approach, the problem faced by the principal becomes a standard optimal control problem. More importantly, this method allows one to address volatility control problems. It has subsequently been extended and applied in many different situations. We can mention in a non--exhaustive way the applications to finance by \citeayn{cvitanic2017moral} and 
\citeayn{cvitanic2018asset};
the works of \citeayn{aid2019optimal} and \citeayn{alasseur2019principal} for applications related to the energy sector; as well as other various extensions, \textit{e.g.}, the works of
\citeayn{hernandezsantibanez2019contract} and \citeayn{hu2019continuous}.

\medskip

Recently, principal--agent problems in continuous time have been extended to models with several principals, through the works of  \citeayn{mastrolia2018principal} and \citeayn{hu2019principal} for example. In our context, we are particularly interested in the extension to several agents, as by \citeayn{koo2008optimal}, \citeayn{elie2019contracting} and \citeayn{baldacci2019optimal}, and possibly to a continuum of agents with mean--field interactions by \citeayn{elie2018tale}, \citeayn{carmona2018finite} and \citeayn{elie2019mean}. This latter extension to a large number of agents is a significant step towards the application of continuous--time contract theory to hierarchies. Nevertheless, this type of model seems for the moment to be countable on the fingers of a single hand.
First, \citeayn{miller2015optimal} consider a hierarchy of $N+1$ players, each with a principal--agent relationship. Using the approach of \citeayn{evans2015concavity}, they identify conditions under which a dynamic programming construction of an optimal contract can be reduced to only a one--dimensional state space and one--dimensional control set, independent of the size of the hierarchy.
However, the approach in \cite{evans2015concavity} to characterise optimal contracts in continuous time is less general than the one in \cite{cvitanic2018dynamic}, on which we will rely in this present paper. In particular, it does not allow for volatility control, which seems inescapable in our framework. 
Then, \citeayn{li2018forward} develop a method using forward--backward stochastic differential equations to characterise the equilibrium of a generalised Stackelberg game with multi--level hierarchy in a linear--quadratic setting. Finally, \citeayn{keppo2020dynamic} model the relationships between an investor, a partner, and a fund manager as a hierarchical principal--agent problem. More precisely in their model, the manager is an agent for the partner, the partner is a principal for the manager and an agent for the investor, and the investor is a principal for the partner. The framework is similar to the two aforementioned models, but the approach is different and related to \citeayn{cvitanic2017moral} (and thus to \cite{cvitanic2018dynamic}), to take into account the fact that the partner controls the volatility of the output. Nevertheless, in these three hierarchical models in continuous time, it is assumed that the entities of the hierarchy control and observe the same output process, while moral hazard prevents them from directly observing the controls. 

\medskip

\textcursive{Sung's model.}\vspace{-0.2em} The present work is inspired by the model developed by \citeayn{sung2015pay}. In this model, a \textit{top} manager is hired by a principal to subcontract with $N$ \textit{middle} managers (agents). Each worker (top and middle managers) controls his own output process, and all outputs are assumed to be independent.
His model includes a \textit{bi--level} moral hazard. First, the (top) manager does not observe the effort of the agents, but only the resulting outputs. Second, the principal observes only the total benefit of the hierarchy, \textit{i.e.}, the difference between the sum of the outputs of all workers and the sum of the contracts paid to the agents. 
Instead of studying a continuous--time version of the model, \citeauthor{sung2015pay} considers that the one--period model is simpler and \textit{without loss of generality}: 
\blockquote[{\citeayn[pp. 2]{sung2015pay}}][]{[f]or ease of exposition and without loss of generality, we formulate a discrete--time model which is analogous to its continuous--time counterpart}. 
Extending the reasoning of \citeayn{holmstrom1987aggregation}, he therefore restricts the study to \textit{linear contracts}, in the sense that they are linear with respect to the outcome, and states that \blockquote[{\citeayn[pp. 3]{sung2015pay}}][]{[t]his assumption is without loss of generality, as long as our results are interpreted in the context of continuous--time models}.

\medskip

However, while the restriction to linear contracts can be justified in \citeauthor{sung2015pay}'s framework for the first Stackelberg equilibrium, this is no longer the case for the contract offered by the principal to the (top) manager. More precisely, even if the workers are only controlling the drift of their outcomes, the manager controls both the drift and the volatility of the net benefit. Therefore, according to the work of \citeayn{cvitanic2018dynamic}, it appears that the type of contracts considered by \citeauthor{sung2015pay} is sub--optimal. Indeed, in continuous time, it is not sufficient to limit oneself to linear contracts (in the sense of \citeayn{holmstrom1987aggregation}) when the volatility of the state variable is controlled. More precisely, the optimal form of contracts should contain an additional part indexed on the quadratic variation of the net benefit. However, in the one--period model of \citeauthor{sung2015pay}, this controlled quadratic variation cannot be estimated (unlike in continuous time), which leads to a fundamental gap between these two frameworks. From our point of view, this gap motivates a full study of \citeauthor{sung2015pay}'s model in continuous time.

\medskip

\textcursive{Main contributions.} In this paper, we provide a systematic method to solve any hierarchy problems of this sort, including those in which workers can also control the volatility of the output process, and not just the drift. 
The main result is that optimal contracts in continuous time for the manager are not those expected by \citeayn{sung2015pay}, since they have to be indexed on the quadratic variation of the state variable, in the spirit of \cite{cvitanic2018dynamic}. The search for the optimal contract therefore requires the application of the theory of second order backward stochastic differential equations (2BSDEs for short), subject to a slight extension to take into account the plurality of agents in the hierarchy. 
Furthermore, we show that, in a general way, the contract offered by the manager to one of his subordinate agents must be indexed not only on the performance of this particular agent, but also on the performance of other workers.
However, several hypotheses are necessary to complete our study, notably on the shape of the dynamics of the state variables. Nevertheless, we will see that these assumptions are satisfied in the most common examples.

\medskip

The theoretical model we develop allows us to determine the optimal form of incentives for a particular hierarchical structure, which can be extended in a straightforward way to a larger scale hierarchy. Although theoretical, the results we obtain give intuitions based on solid theoretical considerations to know which levers could be activated to incentivise workers within a hierarchy. In particular, the indexation of the contract on the quadratic variation of net profits for the managers argues in favour of remunerating them through stock options. These results can be applied to problems of incentives within a firm with a hierarchical structure, but also and above all as soon as work is delegated to an external entity. For example, these multi--layered incentive problems can be used to model the relationships between a firm and its subsidiaries or subcontractors, or the relationships between an investor, an investment company, and a fund manager, as in the model by \citeayn{keppo2020dynamic}.

\medskip

\textcursive{An overview of the paper.}\vspace{-0.25em} This work consists of two parts. In the first part, we study \citeauthor{sung2015pay}'s model and some extensions. More precisely, the continuous--time version of \citeauthor{sung2015pay}'s model is introduced and solved in \Cref{sec:sungmodel}. This opening example highlights the differences between the discrete--time model and its continuous--time equivalent, concerning the volatility control and the form of the contracts. In particular, this example leads to the conclusion that in order to rigorously study a continuous--time hierarchy problem, it is not possible to consider the associated discrete--time model with linear contracts. In our opinion, these conclusions justify the use of the theory of 2BSDEs to tackle problems of moral hazard within a hierarchy.
In \Cref{sec:extensions}, we consider some extensions, $(i)$ by adding an ability parameter for the manager to justify his position in the hierarchy, $(ii)$ by looking at different types of reporting from the manager to the principal (other than the reporting of the net profit), $(iii)$ by extending to a more general hierarchy structure. The details and proofs of the main results in this first part can be found in \Cref{app:sungs_model}.

\medskip

The second part of this paper is devoted to the study of the most general model possible. In particular, the workers (middle and top managers) can also control the volatility of their output, in addition to the drift. Moreover, we consider general utilities, allowing us to recover the exponential utility functions (CARA) of \citeauthor{sung2015pay}'s model \cite{sung2015pay}, but also other cases such as the one of risk--neutral workers. Finally, we consider a more general hierarchy since the principal contracts with $m$ managers, who in turns subcontract with the agents in their teams. The model is described in \Cref{sec:general_model}, and then solved in \Cref{sec:solving_general_model}, going up the hierarchy. More precisely, we first solve the problem of agents (see \Cref{ss:agent_problem}), then the problem of their supervisors, \textit{i.e.}, the managers (see \Cref{ss:manager_problem}), and finally, we end with the principal's problem (see \Cref{ss:principal_problem}). 
As previously mentioned, although some intuitions are presented in \Cref{app:intuition_extension}, the resolution of this model is based on the theory of 2BSDE, whose presentation and theoretical results are postponed to \Cref{sec:2BSDEs}.
Finally, \Cref{sec:conclusion} concludes and provides some extensions.

\subsection*{Notations}

Let $\mathbb{N}^\star:=\mathbb{N}\setminus\{0\}$ be the set of positive integers. 
For every $d$--dimensional (column) vector $b$ with $d\in \mathbb{N}^\star$, we denote by $b^{1},\ldots,b^{d}$ its coordinates and for $(\alpha,\beta) \in \R^d\times\R^d$, we denote by $\alpha \cdot \beta$ the usual inner product, with associated norm $\|\cdot\|$, which we simplify to $|\cdot|$ when $d$ is equal to $1$. Let ${\bf 0}_d$ and ${\bf 1}_d$ be the vectors of size $d$ whose coordinates are all equal to respectively $0$ and $1$.
The zero element in $\R^d$ is denoted by $\mathbf{0}_d$. For any $(\ell,c) \in \N^\star \times \N^\star$, $\M^{\ell,c}$ will denote the space of $\ell\times c$ matrices with real entries. 
The transpose of $M \in \M^{\ell,c}$ will be denoted by $M^\top$ and the zero element in $\M^{\ell,c}$ is denoted by $\mathbf{0}_{\ell,c}$. When $\ell=c$, we let $\M^{\ell}:=\M^{\ell,\ell}$. We also identify $\M^{\ell,1}$ and $\R^\ell$. The identity matrix in $\M^\ell$ will be denoted by $\mathrm{I}_\ell$. We also denote by $\S^\ell$ (resp. $\S^\ell_+$) the set of symmetric positive (resp. symmetric definite positive) matrices in $\M^\ell$. The trace of a matrix $M \in \M^\ell$ will be denoted by $\mathrm{Tr} [M]$. 
For any vector $x \in \R^d$, we denote by $\textnormal{diag}[x]$ the diagonal matrix in $\M^d$ whose entries are the $d$ elements of the vector $x$.

\medskip

Throughout this paper, $T > 0$ denotes some maturity fixed in the contract. For any positive integer $d$, let $\Cc ( [0,T], \R^d)$ denote the set of continuous functions from $[0,T]$ to $\R^d$. On $\Cc ( [0,T], \R^d)$, define the evaluation mappings $\pi_t$ by $\pi_t(x) = x_t$ and the truncated supremum norms $\| \cdot \|_t$ by $\| x \|_t = \sup_{s \in [0,t]} \|x_s\|$, for $t \in [0,T]$.
Unless otherwise stated, $\Cc ( [0,T], \R^d)$ is endowed with the norm $\| \cdot \|_T$.

\section{An opening example: Sung's model in continuous time}\label{sec:sungmodel}

In order to justify the motivation of this work, we present in this section a simple hierarchical contracting problem, similar to the one considered by \citeayn{sung2015pay}. In this model, the principal contracts one manager who in turn subcontracts with many agents. The hierarchy is illustrated in \Cref{fig:sung_model}. Despite its simplicity, this illuminating example shows the need to take volatility control into account, and therefore justifies the use of the 2BSDEs theory in the rest of this paper. The reasoning will remain informal throughout \Cref{sec:sungmodel,sec:extensions}, the reader is referred to the theoretical part of this paper, from \Cref{sec:general_model} onwards, for a rigorous model setting in continuous time.

\medskip

\begin{figure}[ht!]
\begin{center}
\begin{tikzpicture}
\tikzstyle{principal}=[rectangle,draw,rounded corners=4pt,fill=cyan!50]
\tikzstyle{agents}=[rectangle,draw,rounded corners=4pt, fill=green!50]
\tikzstyle{manager}=[rectangle,draw,rounded corners=4pt, fill=red!25]
\tikzstyle{invisible}=[rectangle]

\node[principal] (principal) at (0,0) {Principal};
\node[manager] (manager) at (0,-2) {Manager};
\node[agents] (agent1) at (-4,-4) {Agent $1$};
\node[agents] (agent2) at (-2,-4) {Agent $2$};
\node[invisible] (agenti) at (0,-4) {$\dots$};
\node[agents] (agentn1) at (2,-4) {Agent $n-1$};
\node[agents] (agentn) at (4,-4) {Agent $n$};

\draw[->] (principal) -- (manager) node[midway,fill=white]{$\xi^0$};
\draw[->] (manager) -- (agent1) node[midway,fill=white]{$\xi^{1}$};
\draw[->] (manager) -- (agent2) node[midway,fill=white]{$\xi^{2}$};
\draw[->, dashed] (manager) -- (agenti);
\draw[->] (manager) -- (agentn1) node[midway,fill=white]{$\xi^{n-1}$};
\draw[->] (manager) -- (agentn) node[midway,fill=white]{$\xi^{n}$};
\end{tikzpicture}
\caption{Hierarchy in \citeauthor{sung2015pay}'s model}
\label{fig:sung_model}
\end{center}
\end{figure}
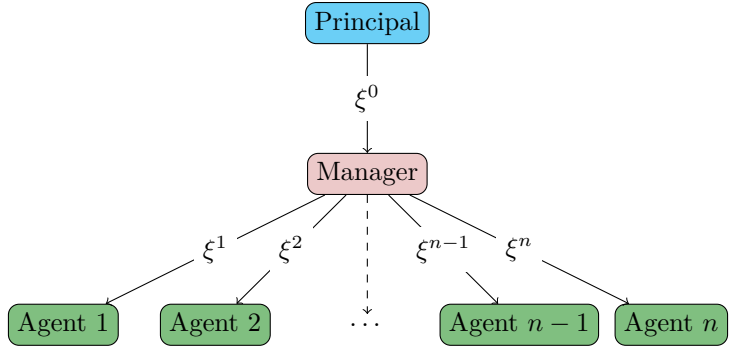

The only difference with \citeauthor{sung2015pay}'s model is that we consider here a continuous--time model: between $0$ and some time $T > 0$, denoting the maturity fixed in the contract, the firm has $n+1$ tasks which have to be carried out by $n+1$ workers. The outputs of the tasks are represented by $n+1$ stochastic processes, denoted by $X^i$, with dynamic
\begin{align}\label{eq:dynamic_Xi}
    \drm X^i_t = \alpha^i_t \drm t + \sigma^i \drm W^i_t, \; \sigma^i > 0, \; t \in [0,T],
\end{align}
for $i \in \{0, \dots, n\}$. More precisely, the $i$--th worker carries out the task with outcome $X^i$ by choosing a costly effort $\alpha^i \in \Ac^i$ with values in $\R$, where $\Ac^i$ is the set of control processes\footnote{The set of admissible control processes will be rigorously defined, in weak formulation, in the theoretical part of this paper, more precisely in \Cref{ss:theoretical_formulation}.}. For simplicity, we assume that $W^i$ for $i \in \{0, \dots, n\}$ are independent Brownian motions and that the efforts of a worker only impact his own project, which means that all projects are independent and no workers collude. Again for the sake of simplicity in this part, we will consider the following quadratic cost of effort:
\begin{align}\label{eq:cost_function}
    c^i (a) = \dfrac{1}{2} \dfrac{a^2}{k^i}, \; k^i > 0, \; \text{ for } \; a \in \R, \; i \in \{0, \dots, n\}.
\end{align}
Still following \citeauthor{sung2015pay}'s framework in \cite{sung2015pay}, we also assume that the benefit of each worker $i$ is represented by a CARA utility function with risk aversion coefficient $R^i > 0$.
The holder of the firm (the principal) is risk--neutral and seeks to maximise the expected difference between the sum of the outcomes and the sum of the compensations paid to the workers. The minimum level of utility that must be guaranteed by a contract to make it acceptable to a worker, \textit{i.e.}, his reservation utility, is defined by his utility without contract.

\medskip

We define the \textit{direct contracting case} (DC case) as the case where the principal can directly contract with the agents, without the help of a manager. In this setting, the optimal efforts of the workers are deterministic constant processes given by the result of \citeayn{holmstrom1987aggregation}, and summarised in \Cref{lem:DC_case}.
This case is also discussed by \citeauthor{sung2015pay}, but the main point of his paper \cite{sung2015pay}, and thus of ours, is to study the case where the principal contracts with a manager, who in turn subcontracts with the agents.

\medskip

In the \textit{hierarchical contracting case} (HC case) considered by \citeauthor{sung2015pay}, the principal cannot directly contract with the workers. She hires a manager (the worker indexed by $i=0$) who:
\begin{enumerate}[label=$(\roman*)$]
    \item carries out his own task by choosing an effort process $\alpha^0 \in \Ac^0$;
    \item hires $n$ agents to carry out the $n$ remaining tasks: each agent $i$ handles the outcome $X^i$, by choosing his effort level $\alpha^i \in \Ac^i$, for $i \in \{1, \dots n\}$;
    \item reports to the principal the total benefit, that is the difference between the sum of the outcomes and the sum of the compensations to be paid to the agents.
\end{enumerate}
We will show that, in the continuous--time framework, we can improve \citeauthor{sung2015pay}'s results in \cite{sung2015pay} by considering a more general form of contracts.

\subsection{A continuous--time principal--manager--agents problem}

As already mentioned, we are faced with a bi--level principal--agent problem, in the sense that a principal contracts with a manager who in turn subcontracts with many agents. In this section we define the continuous--time equivalent formulation of the value functions considered in \cite{sung2015pay}, as well as the admissible set of contracts.

\medskip

\textcursive{Agent's problem.}\vspace{-0.3em} Consider first $i \in \{1, \dots, n\}$ to focus on the $i$--th agent's problem. He controls his own output $X^i$ with dynamic \eqref{eq:dynamic_Xi} by choosing an effort $\alpha^i \in \Ac^i$. 
Given a contract $\xi^{i}$ offered by his supervisor, namely the manager, the $i$--th agent's value function is simply defined by:
\begin{align}\label{eq:agent_pb_sung}
    V_0^i (\xi^{i}) := \sup_{\alpha^i \in \Ac^i}  J_0^i \big( \xi^{i}, \alpha^i \big), \; \text{ where } \; J_0^i \big( \xi^{i}, \alpha^i \big) := \E^{\P^i} \Big[ - \erm^{ - R^i \big( \xi^{i} - \int_0^T c^i(\alpha_t^i) \drm t \big) } \Big],
\end{align}
where $\P^i$ is the probability associated to the effort $\alpha^i \in \Ac^i$.
We assume that the manager cannot directly observe the efforts of the agents, which implies a first level of moral hazard in our framework. The manager only observes the outcome processes $X^i$ for $i \in \{1, \dots, n\}$.
In order to follow \citeauthor{sung2015pay}'s model as closely as possible, we also assume that the compensation for the $i$--th agent can only be indexed on his performance, \textit{i.e.}, his outcome process $X^i$, and denote the set of admissible contracts by $\Cc^{i}$.\footnote{It is worth noticing that this restriction is not admissible in a more general model, as we will see in \Cref{rk:measurability_issues}.} Note that since the reservation utility of the $i$--th agent is defined as his utility without contract, it is given by $V^i_0(0) =-1$.

\medskip

\textcursive{Manager's problem.}\vspace{-0.3em} The manager controls his own output $X^0$ with dynamic \eqref{eq:dynamic_Xi} by choosing an effort $\alpha^0 \in \Ac^0$. He also designs the compensations for the agents, namely a collection of contracts 
\begin{align*}
    \xi^{\rm A} \in \Cc^{\rm A} := \big\{ (\xi^i)_{i=1}^n, \; \text{s.t.} \; \; \xi^{i} \in \Cc^i \; \forall \, i \in \{1, \dots, n\}\big\}.
\end{align*}
Although we consider that the manager designs the contracts for the agents, all compensations, whether for the agents or the manager, are paid by the principal. Given a contract $\xi^{\rm b}$ designed by his supervisor (the principal), the manager's value function is defined by 
\begin{align}\label{eq:manager_pb_sung}
    V_0^{\rm 0, b} (\xi^{\rm b}) := \sup_{(\alpha^{0}, \xi^{\rm A}) \in \Ac^{0} \times \Cc^{\rm A}}  J_0^{0} \big( \xi^{\rm b}, \alpha^{0}, \xi^{\rm A} \big), \; \text{ where } \; J_0^{0} \big( \xi^{\rm b}, \alpha^{0}, \xi^{\rm A} \big) := \E^{\P^{0}} \Big[ - \erm^{- R^{0} \big( \xi^{\rm b} - \int_0^T c^{0}(\alpha_t^{0}) \drm t \big) } \Big],
\end{align}
where, informally, $\P^0$ is the probability associated to both the effort $\alpha^0 \in \Ac^0$ and the choice of the contracts $\xi^{\rm A} \in \Cc^{\rm A}$, under the optimal efforts of the $n$ agents. 
As in \cite{sung2015pay}, the second level of moral hazard in our framework is linked to the fact that the manager only reports (in continuous time) to the principal the total benefit $\zeta^{\rm b}$, \textit{i.e.}, the difference between the sum of the outcomes and the sum of the compensations to be paid:
\begin{align}\label{eq:zeta_b}
    \zeta^{\rm b}_t = \sum_{i=0}^n X_t^i - \sum_{i=1}^n \xi_t^i, \; t \in [0,T].
\end{align}
More precisely, in this setting, the principal cannot independently observe the agents' outcomes $X^i$ or the certainty equivalent of their continuation utility, $\xi^i$. Nor does she observe the manager's outcome $X^0$, nor his effort $\alpha^0$. Therefore, she can only index the contract $\xi^{\rm b}$ for the manager on the total benefit, \textit{i.e.}, the variable $\zeta^{\rm b}$. The contract $\xi^{\rm b}$ is thus a measurable function of $\zeta^{\rm b}$, and the corresponding set of admissible contract is denoted by $\Cc^{\rm b}$.

\medskip

\textcursive{Principal's problem.}\vspace{-0.3em} Finally, we consider a risk--neutral principal whose problem is to maximise the sum of the outcomes minus the sum of the compensations to be paid to each worker, by choosing the optimal contract $\xi^{\rm b}$ for the manager. Mathematically speaking, we define her value function as follows:
\begin{align}\label{eq:principal_pb_sung}
    V_0^{\rm P, b} := \sup_{\xi^{\rm b} \in \Cc^{\rm b}}  J_0^{\rm P} (\xi^{\rm b}), \; \text{ where } \; 
    J_0^{\rm P} (\xi^{\rm b}) := \E^{\P^{\rm b}} \bigg[ \sum_{i=0}^n X_T^i - \sum_{i=1}^n \xi_T^i - \xi_T^{\rm b} \bigg] = \E^{\P^{\rm b}} \big[ \zeta^{\rm b}_T - \xi_T^b \big],
\end{align}
where $\P^{\rm b}$ is the probability associated to the choice of the contract $\xi^{\rm b} \in \Cc^{\rm b}$, under optimal efforts of the workers. 

\begin{remark}
It should be noted that the three value functions defined above by \textnormal{(\ref{eq:agent_pb_sung}--\ref{eq:manager_pb_sung}--\ref{eq:principal_pb_sung})} should be rigorously written in weak formulation. However, for the sake of simplicity in this section, we avoid this technical side for the moment and refer the reader to the theoretical part of this paper \textnormal{(}from \textnormal{\Cref{sec:general_model}} onwards\textnormal{)} for a more correct writing of these three value functions.
\end{remark}

\subsection{Solving the sequence of Stackelberg equilibria}

In order to solve this principal--manager--agents problem, we follow the general theory developed by \citeayn{cvitanic2018dynamic} and the application in \cite{cvitanic2017moral} to a framework with CARA utility functions. More precisely, for each Stackelberg equilibrium, starting with the manager--agent problem, we have to:
\begin{enumerate}[label=$(\roman*)$]
    \item identify a sub--class of contracts, offered to a considered worker by his supervisor, which are revealing in the sense that the best--reaction function of the worker and his optimal control can be computed straightforwardly;
    \item prove that the restriction to revealing contracts is without loss of generality;
    \item solve the supervisor's problem, which boils down to a standard optimal control problem.
\end{enumerate}

\medskip

\textcursive{Revealing contract for an agent.}\vspace{-0.3em} Consider $i \in \{1, \dots, n\}$ to focus on the contract for the $i$--th agent. Recall that his contract $\xi^i$ is assumed to be a measurable function of his output $X^i$. 
By applying classical results of contract theory for drift control only (see, \textit{e.g.}, the work by \citeayn{sannikov2008continuous}), the optimal form of contracts is the terminal value $\xi^i_T$ of the certainty equivalent of the continuation utility, which is defined for all $t \in [0,T]$ as follows:
\begin{align}\label{eq:contract_drift}
    \xi^{i}_t = \xi_0^{i} - \int_0^t \Hc^i \big( Z^{i}_s \big) \drm s + \int_0^t Z^{i}_s \drm X_s^i + \dfrac{1}{2} R^i \int_0^t \big| Z^{i}_s \big|^2 \drm \langle X^i \rangle_s, \; \xi_0^i \in \R,
\end{align}
where $Z^i$ is a payment rate chosen by the manager, and $\Hc^i (z) := \sup_{a \in \R} \big\{a z - c^i (a) \big\}$ for all $z \in \R$ is the Hamiltonian of the $i$--th agent. This form of contract is exactly the continuous--time equivalent of the linear contract considered by \citeauthor{sung2015pay} in \cite[Equation (7)]{sung2015pay}. Note that this form of compensation includes in particular a fixed part $\xi_0^i$, which is chosen so as to satisfy the $i$--th agent's participation constraint. Assuming for simplicity that $c^i$ is a standard quadratic cost function defined by \eqref{eq:cost_function}, we can establish the following result, whose proof is straightforward\footnote{As the proof of this result is very similar to that of \Cref{prop:manager_sung} below, providing a similar result for the manager, we have chosen not to detail it here. The reader is thus referred to \Cref{app:ss:proof_sung} for a sketch of the proof.} in the light of the choice of contract's form \eqref{eq:contract_drift}.

\begin{proposition}\label{prop:agent_sung}
    Fix $i \in \{1, \dots, n\}$. Let $\xi^i_0 \in \R$, $Z^i$ be an $\R$--valued process, predictable with respect to the filtration generated by $X^i$, satisfying appropriate integrability conditions\footnote{\label{fn:integrability}We have to require minimal integrability on the process $Z$ so that the stochastic integral with $X$ is well--defined. Nevertheless, since this section is informal, the conditions of integrability are ignored for the time being. The reader can refer to the general model for a rigorous definition of the admissible process $Z$.}, and consider the associated contract $\xi^i$ defined through \eqref{eq:contract_drift}. Given this contract, the optimal effort of the $i$--th agent is given for $t \in [0,T]$ by
    \begin{align*}
        \alpha^{i, {\rm HC}}_t = a^{i,{\rm HC}}(Z^i_t), \; \text{ where } \; a^{i,{\rm HC}}(z) := k^i z, \; z \in \R.
    \end{align*}
    Moreover, under the probability $\P^{i, {\rm HC}}$ associated to the optimal effort $\alpha^{i,{\rm HC}}$, the dynamics of $X^i$ and $\xi^{i}$ satisfy
    \begin{align*}
        \drm X^i_t &= k^i Z_t^{i} \drm t + \sigma^i \drm W_t^i \;
        \text{ and } \; \drm \xi^{i}_t = \frac{1}{2} \widetilde R^i \big| Z^{i}_t \big|^2 \drm t 
        + Z^{i}_t \sigma^i \drm W_t^i, \; t \in [0,T], \; \text{ where } \; \widetilde R^i = k^i + R^i |\sigma^i |^2.
    \end{align*}
\end{proposition}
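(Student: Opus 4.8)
The plan is to run the standard martingale-optimality (verification) argument for pure drift control, in the spirit of \citeay{sannikov2008continuous} and \cite{cvitanic2018dynamic}, exploiting the fact that the contract form \eqref{eq:contract_drift} is engineered precisely so that $\xi^i$ is the certainty equivalent of the agent's continuation utility. As a preliminary step I would make the Hamiltonian explicit: with the quadratic cost \eqref{eq:cost_function}, the first-order condition in $\Hc^i(z)=\sup_{a\in\R}\{az-c^i(a)\}$ yields the unique smooth maximiser $a=k^iz$ and the value $\Hc^i(z)=\tfrac{1}{2}k^iz^2$, so that $a^{i,{\rm HC}}(z)=k^iz$ is the candidate optimal response.

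Next I would fix an arbitrary admissible effort $\alpha^i\in\Ac^i$ with associated $\P^i$, under which $\drm X^i_t=\alpha^i_t\drm t+\sigma^i\drm W^i_t$ for a $\P^i$-Brownian motion $W^i$ and $\drm\langle X^i\rangle_t=|\sigma^i|^2\drm t$. Writing $Y_t:=\xi^i_t-\int_0^t c^i(\alpha^i_s)\drm s$ and substituting \eqref{eq:contract_drift}, I obtain
\begin{align*}
\drm Y_t=\Big(-\Hc^i(Z^i_t)+Z^i_t\alpha^i_t+\tfrac{1}{2}R^i|Z^i_t|^2|\sigma^i|^2-c^i(\alpha^i_t)\Big)\drm t+Z^i_t\sigma^i\drm W^i_t.
\end{align*}
Applying It\^o's formula to $U_t:=-\erm^{-R^iY_t}$, the convexity correction $-\tfrac{1}{2}(R^i)^2\erm^{-R^iY_t}|Z^i_t|^2|\sigma^i|^2$ coming from the second-order term $\tfrac{1}{2}\partial_{yy}(-\erm^{-R^iy})\,\drm\langle Y\rangle_t$ cancels exactly the contribution of the quadratic-variation term of \eqref{eq:contract_drift} to the drift of $U$ — this cancellation is the whole point of the $\tfrac{1}{2}R^i$ correction — leaving
\begin{align*}
\drm U_t=R^i\erm^{-R^iY_t}\Big(Z^i_t\alpha^i_t-c^i(\alpha^i_t)-\Hc^i(Z^i_t)\Big)\drm t+(\text{local martingale}).
\end{align*}
By the very definition of $\Hc^i$ the drift bracket is $\leq0$ pointwise, with equality iff $\alpha^i_t=a^{i,{\rm HC}}(Z^i_t)=k^iZ^i_t$; since $R^i\erm^{-R^iY_t}>0$, the process $U$ is a $\P^i$-supermartingale for every admissible $\alpha^i$ and a $\P^i$-martingale under $\alpha^i=k^iZ^i$.

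Provided the stochastic integral is a true martingale, taking expectations gives $J^i_0(\xi^i,\alpha^i)=\E^{\P^i}[U_T]\leq U_0=-\erm^{-R^i\xi^i_0}$ for all $\alpha^i$, with equality at $\alpha^i=k^iZ^i$; this establishes optimality of $\alpha^{i,{\rm HC}}_t=k^iZ^i_t$ (and yields $V^i_0(\xi^i)=-\erm^{-R^i\xi^i_0}$ as a by-product). For the dynamics in the second claim I would substitute $\alpha^i_t=k^iZ^i_t$ back: the drift of $X^i$ becomes $k^iZ^i_t$ under $\P^{i,{\rm HC}}$, while in the drift of $\xi^i$ the terms recombine as $-\tfrac{1}{2}k^i|Z^i_t|^2+k^i|Z^i_t|^2+\tfrac{1}{2}R^i|\sigma^i|^2|Z^i_t|^2=\tfrac{1}{2}(k^i+R^i|\sigma^i|^2)|Z^i_t|^2=\tfrac{1}{2}\widetilde R^i|Z^i_t|^2$, giving $\drm\xi^i_t=\tfrac{1}{2}\widetilde R^i|Z^i_t|^2\drm t+Z^i_t\sigma^i\drm W^i_t$.

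The only genuine obstacle is the integrability/true-martingale verification glossed over above: one must ensure both that the Girsanov density defining each $\P^i$ is a true martingale (so the weak formulation is well posed) and that the local martingale $\int_0^\cdot Z^i_s\sigma^i\,\drm W^i_s$ inside $U$ is of class (D), so that the supermartingale inequality survives taking expectations and the optimum is attained rather than merely approached. This is exactly the \emph{appropriate integrability conditions} on $Z^i$ flagged in the statement and in footnote~\ref{fn:integrability}, and it is what forces the careful admissibility definitions deferred to the general model; the algebra itself is otherwise routine.
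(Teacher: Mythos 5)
Your proof is correct and takes essentially the same route as the paper's: the paper defers this statement to the proof of \Cref{prop:manager_sung} in \Cref{app:ss:proof_sung}, which substitutes the contract into the CARA utility and factors the terminal utility as $-\erm^{-R^i\xi^i_0}\exp\big(-R^i\int_0^T(h-\Hc^i)\drm s\big)$ times a Doléans--Dade exponential of expectation one --- exactly the multiplicative form of your Itô/supermartingale computation, resting on the same pointwise Hamiltonian inequality, the same cancellation of the $\tfrac12 R^i|Z^i|^2\drm\langle X^i\rangle$ term against the exponential's convexity correction, and the same true-martingale integrability caveat on $Z^i$. The concluding substitution yielding the dynamics of $X^i$ and $\xi^i$ under $\P^{i,\rm HC}$ is identical in both arguments.
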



\textcursive{Simplification of the manager's problem.}\vspace{-0.3em} 
Recall that the manager, in addition to controlling his own output $X^0$ with dynamic \eqref{eq:dynamic_Xi} by choosing an effort $\alpha^0 \in \Ac^0$, also designs the collection of contracts $\xi^{\rm A} \in \Cc^{\rm A}$ for the agents. 
As mentioned above, instead of studying all possible contracts $\xi^i \in \Cc^i$ for the $i$--th agent, it has been proved by \citeayn{sannikov2008continuous} that it is sufficient to restrict the study to contracts of the form \eqref{eq:contract_drift}. Therefore, the manager's problem boils down to a standard optimal control problem: to design the compensation for the $i$--th agent, the manager only have to choose the payment rate $Z^i$. We thus define by $\Vc^0$ the collection of all processes $Z : [0,T] \times \Cc([0,T],\R) \longrightarrow \R^n$, where each $Z^i$ is predictable with respect to the filtration generated by $X^i$, and satisfies appropriate integrability conditions. We can now rewrite the manager's problem defined by \eqref{eq:manager_pb_sung} in a more standard way:
\begin{align*}
    V_0^{\rm 0, b} (\xi^{\rm b}) := \sup_{(\alpha^{0}, Z) \in \Ac^{0} \times \Vc^0}  J_0^{0} \big( \xi^{\rm b}, \alpha^{0}, \xi^{\rm A} \big).
\end{align*}
Recall that we consider that the manager designs the contracts for the agents, but he does not pay them. All compensations, whether for the agents or the manager, are paid by the principal. For this reason, we assume that the principal chooses, for all $i \in \{1, \dots, n\}$, the constant $\xi_0^i$ in the $i$--th agent's contract, noticing that this constant must be chosen in order to ensure that his participation constraint is satisfied.

\medskip

Since the manager only reports to the principal the total benefit $\zeta^{\rm b}$ in continuous time, his compensation $\xi^{\rm b}$ offered by the principal can only be a measurable function of $\zeta^{\rm b}$. Then, given the form of the manager's utility and the dynamic of the output $X^i$ for $i \in \{1, \dots, n\}$, $\zeta^{\rm b}$ is the only state variable of his control problem. Therefore, his optimal control, namely $(\alpha^0, Z) \in \Ac^0 \times \Vc^0$, will naturally be adapted to the filtration generated by $\zeta^{\rm b}$.

\begin{remark}\label{rk:measurability_issues}
    In fact, the set of admissible control processes for the manager cannot be properly defined in this framework. Indeed, recall that we choose to restrict the contract for an agent $i$ to his own $X^i$. Therefore, the payment rate $Z^i$ should not depend on anything other than $X^i$. In particular, it cannot be predictable with respect to the filtration generated by $\zeta^{\rm b}$, since it contains information generated by the outputs of other workers. However, under this assumption, the optimal control of the manager on the $i$--th agent's contract, which will be denoted by $Z^{i, \rm b}$, should be a function of $X^i$, and thus cannot be computed by the principal, since she only observes $\zeta^{\rm b}$. Moreover, we will generically have $Z_t^{i, \rm b} := z^{i, \rm b} (Z_t, \Gamma_t)$, $t \in [0,T]$, where $Z$ and $\Gamma$ are two processes chosen by the principal, assumed to be predictable with respect to what she observes, \textit{i.e.}, with respect to the filtration generated by $\zeta^{\rm b}$. Indeed, the manager's contract is restricted to a measurable function of $\zeta^{\rm b}$, therefore the payment rates $Z$ and $\Gamma$ indexing the contract on $\zeta^{\rm b}$ should also be functions of $\zeta^{\rm b}$. We thus obtain a contradiction. Nevertheless, in this particular example, every optimal efforts and controls controls turn out to be be deterministic \textnormal{(}and even constant\textnormal{)}, and we can thus index the contract for the $i$--th agent only on his own output $X^i$. This model has been chosen in this section to easily compare our results with those of \textnormal{\citeayn{sung2015pay}}. However, in a more general case, we will not be able to restrict the study to such contracts. More precisely, we will be forced to consider that each agent knows the output of other workers, and that his contract can be indexed on it. The controls of the manager will thus be predictable with respect to the filtration generated by $\zeta^{\rm b}$, and can then be computed at the optimum by the principal. 
\end{remark}

\textcursive{Towards volatility control.}\vspace{-0.3em} Let us set aside for the moment the previous remark, it will be dealt with in the general model (from \Cref{sec:general_model} onwards). The most important thing to notice at this stage is that, even if the agents are only controlling the drift of their outcomes, the manager controls both the drift and the volatility of $\zeta^{\rm b}$, as we can see from its dynamic under optimal efforts of the agents, which is as follows:
\begin{align}\label{eq:dyn_zeta_b}
    \drm \zeta^{\rm b}_t = \bigg[ \alpha_t^{0} + \sum_{i=1}^n \Big( k^i Z_t^{i} - \frac{1}{2} \widetilde R^i | Z^{i}_t |^2 \Big) \bigg] \drm t + \sigma^{0} \drm W_t^{0} + \sum_{i=1}^n \sigma^i ( 1 - Z^i_t) \drm W_t^i,
\end{align}
recalling that $\widetilde R^i = k^i + R^i |\sigma^i |^2$ for all $i \in \{1, \dots, n\}$.
Indeed, by choosing optimally the payment rate in each agent's contract ($Z^i$ for all $i \in \{1, \dots, n\}$), the manager controls in a way the volatility of the certainty equivalent of agents' continuation utilities $\xi^i$ (through the term $ Z^{i}_t \sigma^i \drm W_t^i$), and thus the volatility of $\zeta^{\rm b}$. Therefore, we must consider a more extensive class of contracts than the one used by \citeauthor{sung2015pay} in \cite{sung2015pay}. Indeed, in continuous time, it is not sufficient to limit oneself to linear contracts (in the sense of \citeayn{holmstrom1987aggregation}) when the volatility of the state variable is controlled, as demonstrated by \citeayn{cvitanic2018dynamic}. This is where our model and \citeauthor{sung2015pay}'s will diverge. Instead of studying the model in continuous time, \citeauthor{sung2015pay} considers that the one--period model is simpler and without loss of generality.
He therefore continues to restrict the study to contracts that are linear with respect to the outcome, insisting that this restriction is \blockquote[\citeayn{sung2015pay}][.]{without loss of generality, as long as our results are interpreted in the context of continuous--time models}. According to our study in continuous time, it appears that the type of contracts considered by \citeauthor{sung2015pay} is sub--optimal (see \Cref{ss:non_optimal} for the analysis of the results).

\medskip

\textcursive{Revealing contract for the manager.}\vspace{-0.3em} Let $\V^{\rm b}$ be the set of all $(z,\gamma) \in \R^2$ such that $\widetilde R^i z - |\sigma^i|^2 \gamma > 0$ for all $i \in \{1, \dots, n\}$. We define by $\Vc^{\rm b}$ the collection of all processes $(Z,\Gamma) : [0,T] \times \Cc([0,T],\R) \longrightarrow \V^{\rm b}$, predictable with respect to the filtration generated by $\zeta^{\rm b}$, satisfying appropriate\footnote{Similarly as noticed in \Cref{fn:integrability}, we have to require minimal integrability on the process $Z$ so that the stochastic integral with $\zeta^{\rm b}$ is well--defined. The reader can refer to the general model for a rigorous definition of the set of admissible control processes $\Vc^{\rm b}$.} integrability conditions. The set $\Vc^{\rm b}$ represents the admissible control processes for the principal, when she only observes the variable $\zeta^{\rm b}$. Taking into account the previous discussion, it is necessary to use recent results on optimal contracting for drift and volatility control, and therefore the theory of 2BSDEs, to state the following result. We refer to the previously mentioned works of \citeayn{cvitanic2018dynamic} for the general result, \cite{cvitanic2017moral} for an application with exponential utilities, as well as \citeayn{lin2020random} for an extension to random time horizon or \citeayn{elie2019mean} for a model with a continuum of agents with mean--field interaction.

\begin{proposition}\label{prop:contract_vol}
    Assuming that the principal only observes $\zeta^{\rm b}$, the optimal form of contracts offered by the principal to the manager is given by
    \begin{align}\label{eq:contract_vol}
        \xi^{\rm b}_T = &\ 
        \xi^{\rm b}_0
        - \int_0^T \Hc^{\rm b} (Z_s, \Gamma_s) \drm s
        + \int_0^T  Z_s \drm \zeta^{\rm b}_s
        + \dfrac{1}{2} \int_0^T \big( \Gamma_s + R^{0} | Z_s|^2 \big) \drm \langle \zeta^{\rm b} \rangle_s, \; \xi^{\rm b}_0 \in \R,
    \end{align}
    where $\Hc^{\rm b}$ is the manager's Hamiltonian, and $(Z, \Gamma) \in \Vc^{\rm b}$ is a pair of processes optimally chosen by the principal. In addition, similar to the agent's contract form \eqref{eq:contract_drift}, $\xi^{\rm b}_0$ represents a fixed part of the compensation, which is chosen so as to satisfy the manager's participation constraint.
\end{proposition}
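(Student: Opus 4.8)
The plan is to follow the three--step programme of \citeay{cvitanic2018dynamic}: exhibit the family of revealing contracts \eqref{eq:contract_vol} and compute the manager's best response explicitly; check that such a contract pins his value at the prescribed level; and finally argue that restricting to this family is without loss of generality. Because the manager's utility is of CARA type, I would work throughout with the certainty equivalent of his continuation utility, turning the multiplicative exponential structure into an additive one and making the role of the quadratic--variation term transparent. I would note at the outset why the constraint $\widetilde R^i z - |\sigma^i|^2 \gamma > 0$ defining $\V^{\rm b}$ is exactly the domain on which the supremum defining the manager's Hamiltonian $\Hc^{\rm b}$ is finite: the quantity to be maximised is, for each agent, a quadratic in the payment rate whose leading coefficient is proportional to $-(\widetilde R^i z - |\sigma^i|^2 \gamma)$, and the supremum is finite precisely when this coefficient is negative, which is what prevents the manager from exploiting his volatility control to make the objective diverge.

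First I would establish the verification (revealing) direction. Fix $(Z,\Gamma) \in \Vc^{\rm b}$ and $\xi^{\rm b}_0 \in \R$, and read the right--hand side of \eqref{eq:contract_vol} as a forward process $\xi^{\rm b}_t$. For an arbitrary admissible choice $(\alpha^0, Z') \in \Ac^0 \times \Vc^0$ of the manager's own effort and of the payment rates $Z'$ to the agents, I would apply Itô's formula to $-\erm^{-R^0(\xi^{\rm b}_t - \int_0^t c^0(\alpha^0_s)\drm s)}$ using the dynamics \eqref{eq:dyn_zeta_b} of $\zeta^{\rm b}$ under the corresponding probability. The exponential generates an Itô correction of size $\tfrac12 R^0 |Z_t|^2 \drm\langle\zeta^{\rm b}\rangle_t$, which is cancelled exactly by the matching term in \eqref{eq:contract_vol}; the surviving drift is $-\Hc^{\rm b}(Z_t,\Gamma_t)$ plus the expression whose supremum over $(\alpha^0, Z')$ defines $\Hc^{\rm b}(Z_t,\Gamma_t)$. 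By definition of the Hamiltonian this drift is non--positive, so the exponential--utility process is a supermartingale, and a true martingale precisely when $(\alpha^0, Z')$ are the pointwise maximisers. Consequently the manager's value equals $-\erm^{-R^0\xi^{\rm b}_0}$, his optimal response is the Hamiltonian maximiser, and $\xi^{\rm b}_0$ is his certainty equivalent, which the principal tunes to saturate the participation constraint.

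The substantial step is the converse: that restricting the principal to contracts of the form \eqref{eq:contract_vol} entails no loss of generality. Here I would invoke the 2BSDE theory of \Cref{sec:2BSDEs}. Given an arbitrary admissible $\xi^{\rm b} \in \Cc^{\rm b}$, the manager faces a genuine drift--and--volatility control problem: each choice of payment rates $Z'$ alters the volatility of $\zeta^{\rm b}$ and hence induces a generically mutually singular probability measure, which is exactly the situation that ordinary BSDEs cannot handle and that forces the use of 2BSDEs. I would characterise the manager's value process, in certainty--equivalent form, as the solution $Y$ of a 2BSDE with terminal condition $\xi^{\rm b}_T$ driven by $\zeta^{\rm b}$; its well--posedness --- to be proved in the theoretical part, after the slight extension accommodating the plurality of agents --- furnishes integrands $(Z,\Gamma)$ that one verifies to lie in $\Vc^{\rm b}$ and for which the canonical decomposition of $Y$ is precisely \eqref{eq:contract_vol} with $\xi^{\rm b}_0 = Y_0$. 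This simultaneously identifies $V_0^{\rm 0,b}(\xi^{\rm b})$ with $-\erm^{-R^0 Y_0}$ and recovers the revealing representation.

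The main obstacle is this last step. Representing the manager's value through a 2BSDE requires the full well--posedness and comparison theory for 2BSDEs in the present volatility--controlled, multi--agent setting, together with the verification that the integrands extracted from the 2BSDE satisfy the admissibility and integrability constraints built into $\Vc^{\rm b}$. The delicate ingredients --- the weak--formulation family of measures generated by the manager's volatility choices, and the finiteness of $\Hc^{\rm b}$ on $\V^{\rm b}$ --- are precisely those that the informal treatment of this section defers to \Cref{sec:general_model,sec:2BSDEs}.
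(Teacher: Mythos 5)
Your proposal is correct and follows essentially the same route as the paper: the paper gives no standalone proof of \Cref{prop:contract_vol}, treating it as an application of \cite{cvitanic2018dynamic} and \cite{cvitanic2017moral} whose rigorous justification is deferred to the general model, where \Cref{prop:nash_manager} plays the role of your verification (supermartingale/martingale) step and \Cref{thm:main_principal} the without--loss--of--generality step via 2BSDEs. The only imprecision is your claim that the 2BSDE representation of the manager's value has \eqref{eq:contract_vol} as its canonical decomposition: the 2BSDE delivers a triple $(Y,Z,K)$ with $K$ merely non--decreasing, and the paper recovers the $\Gamma$ term only by approximating $K$ with absolutely continuous processes and a measurable selection argument, the approximating revealing contract coinciding with the original one only at equilibrium --- but this is exactly the machinery you defer to in \Cref{sec:2BSDEs}, so the gap is one of phrasing rather than of substance.
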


Given the dynamic of the state variable $\zeta^{\rm b}$ and its quadratic variation, the manager's Hamiltonian is defined, for any $(z, \gamma) \in \V^{\rm b}$, as follows:
\begin{align}\label{eq:hamiltonian_manager_sung}
    \Hc^{\rm b} (z, \gamma) = \dfrac{1}{2} \gamma | \sigma^{0} |^2 + \sup_{a \in \R} \big\{ 
    a z - c^{0} (a) \big\}
    + \sum_{i=1}^n \sup_{z^i \in \R} \bigg\{ 
    z \Big(  k^i z^{i} - \frac{1}{2} \widetilde R^i |z^{i}|^2  \Big)
    + \dfrac{1}{2} \gamma |\sigma^i|^2  |1 - z^i|^2 \bigg\}.
\end{align}
Considering any contracts of the form \eqref{eq:contract_vol}, we can easily solve the manager's problem, mainly by maximising the previous Hamiltonian. The proof of the following proposition is therefore a direct consequence of the considered form of contracts, and is detailed in \Cref{app:ss:proof_sung}.
\begin{proposition}\label{prop:manager_sung}
    Let $(Z,\Gamma) \in \Vc^{\rm b}$. The optimal effort on the drift and the optimal control on the $i$--th agent's compensation $(i \in \{1, \dots, n\})$ chosen by the manager are respectively given by $\alpha_t^{\rm b} := k^{0} Z_t$ and $Z^{i, \rm b}_t := z^{i, \rm b} (Z_t,\Gamma_t)$ for all $t \in [0,T]$, where, for all $(z, \gamma) \in \V^{\rm b}$,
    \begin{align}\label{eq:zib_star}
        z^{i, \rm b} (z,\gamma) &:= \dfrac{k^i z - | \sigma^i |^2 \gamma }{ \widetilde R^i z - | \sigma^i |^2 \gamma}.
    \end{align}
    Under the optimal probability $\P^{\rm b}$ associated to the optimal efforts of both the agents and the manager, the dynamics of $\zeta^{\rm b}$ and $\xi^{\rm b}$ are respectively given, for all $t \in [0,T]$, by:
    \begin{align*}
        \drm \zeta^{\rm b}_t = &\ \bigg[ k^{0} Z_t 
        + \sum_{i=1}^n \Big( k^i Z_t^{i,  \rm b} 
        - \frac{1}{2} \widetilde R^i \big| z^{i, \rm b}_t \big|^2 \Big) \bigg] \drm t 
        + \sigma^{0} \drm W_t^{0} 
        + \sum_{i=1}^n \sigma^i ( 1 - z^{i, \rm b}_t) \drm W_t^i, \\
        \text{and } \; \drm \xi^{\rm b}_t = &\ \dfrac12  Z_t^2 \bigg( k^0  
        + R^{0} | \sigma^{0} |^2 
        + R^{0} \sum_{i=1}^n | \sigma^i |^2 \big| 1 - z^{i, \rm b}_t \big|^2 \bigg) \drm t
        +  Z_t \sigma^{0} \drm W_t^{0} +  Z_t \sum_{i=1}^n \sigma^i ( 1 - z^{i, \rm b}_t) \drm W_t^i.
    \end{align*}
\end{proposition}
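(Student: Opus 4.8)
The plan is to run a verification (martingale–optimality) argument, made possible precisely by the revealing form \eqref{eq:contract_vol} of the contract. Fix $(Z,\Gamma)\in\Vc^{\rm b}$, take any admissible control $(\alpha^0,Z^{\rm A})\in\Ac^0\times\Vc^0$ for the manager, and introduce the running net gain $G_t:=\xi^{\rm b}_t-\int_0^t c^0(\alpha^0_s)\,\drm s$ together with the exponential utility process $M_t:=-\erm^{-R^0 G_t}$, so that $M_0=-\erm^{-R^0\xi^{\rm b}_0}$ and $J_0^0\big(\xi^{\rm b},\alpha^0,\xi^{\rm A}\big)=\E^{\P^0}[M_T]$. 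The goal is to show that $M$ is a supermartingale for every admissible control and a martingale for the announced one; since $M_0$ is fixed, this identifies $(\alpha^{\rm b},Z^{\rm b})$ as optimal and pins down the value $-\erm^{-R^0\xi^{\rm b}_0}$.

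First I would compute $\drm M_t$ by It\^o's formula. Writing $\drm G_t=b^G_t\,\drm t+\drm N_t$ with $N$ the local martingale part and $q^G_t\,\drm t=\drm\langle G\rangle_t$, one obtains $\drm M_t=M_t\,R^0\big(-b^G_t+\tfrac12 R^0 q^G_t\big)\,\drm t-R^0 M_t\,\drm N_t$. Substituting the contract dynamics \eqref{eq:contract_vol} and the dynamics \eqref{eq:dyn_zeta_b} of $\zeta^{\rm b}$, the decisive cancellation is that the term $\tfrac12 R^0|Z_t|^2\,\drm\langle\zeta^{\rm b}\rangle_t$ built into \eqref{eq:contract_vol} exactly absorbs the $\tfrac12 R^0 q^G_t$ contribution produced by the convexity of the exponential --- this is the structural role of the $R^0|Z|^2$ term in the contract. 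Setting $h^i_t(z):=Z_t\big(k^i z-\tfrac12\widetilde R^i|z|^2\big)+\tfrac12\Gamma_t|\sigma^i|^2|1-z|^2$ and using the explicit Hamiltonian \eqref{eq:hamiltonian_manager_sung}, the drift factor collapses to
\begin{align*}
    -b^G_t+\tfrac12 R^0 q^G_t
    = \Big(\sup_{a\in\R}\big\{aZ_t-c^0(a)\big\}-\big(Z_t\alpha^0_t-c^0(\alpha^0_t)\big)\Big)
    + \sum_{i=1}^n\Big(\sup_{z\in\R} h^i_t(z)-h^i_t(Z^i_t)\Big).
\end{align*}
Each bracket is nonnegative as a supremum minus an attained value; since $M_t<0$ and $R^0>0$, the $\drm t$--drift of $M$ is nonpositive, so $M$ is a local supermartingale, with equality $\drm t\otimes\drm\P^0$--a.e. if and only if $\alpha^0_t$ maximises $a\mapsto aZ_t-c^0(a)$ and each $Z^i_t$ maximises $h^i_t$.

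It then remains to solve these pointwise maximisations. With the quadratic cost \eqref{eq:cost_function}, the drift first--order condition gives $\alpha^{\rm b}_t=k^0 Z_t$. For the $i$-th term, $\partial_z h^i_t(z)=0$ reads $k^i Z_t-|\sigma^i|^2\Gamma_t=z\big(\widetilde R^i Z_t-|\sigma^i|^2\Gamma_t\big)$, which yields $z^{i,\rm b}(Z_t,\Gamma_t)$ as in \eqref{eq:zib_star}; since $\partial_z^2 h^i_t(z)=-\big(\widetilde R^i Z_t-|\sigma^i|^2\Gamma_t\big)$, the constraint $\widetilde R^i z-|\sigma^i|^2\gamma>0$ defining $\V^{\rm b}$ is exactly what makes $h^i_t$ strictly concave and this critical point its (finite) maximiser --- this is the structural reason for restricting the controls to $\Vc^{\rm b}$. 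Substituting $\alpha^{\rm b}$ and $z^{i,\rm b}$ into \eqref{eq:dyn_zeta_b} produces the announced dynamics of $\zeta^{\rm b}$, and the dynamics of $\xi^{\rm b}$ follow from \eqref{eq:contract_vol}: at the optimum the drift of $G$ equals $\tfrac12 R^0 q^G_t$, so the drift of $\xi^{\rm b}$ is $c^0(\alpha^{\rm b}_t)+\tfrac12 R^0|Z_t|^2\big(|\sigma^0|^2+\sum_{i}|\sigma^i|^2|1-z^{i,\rm b}_t|^2\big)$, which collapses to the stated $\tfrac12 Z_t^2\big(k^0+R^0|\sigma^0|^2+R^0\sum_{i}|\sigma^i|^2|1-z^{i,\rm b}_t|^2\big)$.

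I expect the main obstacle to be the passage from local to true (super)martingale: making $\E^{\P^0}[M_T]\le M_0$ rigorous, ensuring the stochastic integral $\int_0^{\cdot}(-R^0 M_t)\,\drm N_t$ is a genuine martingale, and checking that the candidate $(\alpha^{\rm b},Z^{\rm b})$ is admissible so that the associated measure $\P^{\rm b}$ is well defined. These integrability points are exactly what this informal section postpones; they are treated rigorously via the 2BSDE theory of \Cref{sec:2BSDEs} in the general model. By contrast, the It\^o computation and the two pointwise optimisations are routine once the form \eqref{eq:contract_vol} is granted.
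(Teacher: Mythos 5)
Your proof is correct and is essentially the paper's argument in different packaging: the paper substitutes the contract \eqref{eq:contract_vol} into the exponential utility and factorises the terminal value as a Dol\'eans--Dade exponential (a martingale under the integrability conditions) times the exponential of the nonnegative Hamiltonian gap $\Hc^{\rm b} - h^{\rm b}$, which is precisely the multiplicative form of your supermartingale drift computation, and both arguments then reduce to the same pointwise maximisations (with the same role of $\V^{\rm b}$ guaranteeing strict concavity of $h^i_t$) and the same substitution to obtain the dynamics of $\zeta^{\rm b}$ and $\xi^{\rm b}$. The integrability caveat you flag is treated exactly as you anticipate: informally in this section, and rigorously through the 2BSDE machinery of \Cref{sec:2BSDEs} in the general model.
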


\textcursive{Solving the principal's problem.}\vspace{-0.3em} \Cref{prop:contract_vol} states that it is sufficient to restrict the space of contracts to those of the form \eqref{eq:contract_vol}, and thus simplifies the principal's problem. Recall that we assume that the principal chooses all the constants $\xi_0^i$ in each agent's contract, as well as the constant $\xi^{\rm b}_0$ in the manager's contract. Informally, these constants have to be chosen such that each contract satisfies the participation constraint of the corresponding worker. Given the form of the manager's and agents' utility, and in particular since the manager does not pay the compensation for his agents, he is indifferent to the value of $\xi_0^i$, for $i \in \{1, \dots, n\}$, as long as the agents accept the contracts. This is why we consider that the principal chooses it, and we denote by $\xi_0 \in \R^{n+1}$ the collection of $\xi^{\rm b}_0$ and $\xi_0^i$, for $i \in \{1, \dots, n\}$. Moreover, recall that we have assumed as in \cite{sung2015pay} that the reservation utility level of each worker is given by his utility without any contract, thus equal to $-1$, and that the initial outcomes $(X_0^i)_{i=0}^n$ are equal to zero.

\begin{proposition}\label{prop:principal_sung}
    The principal's problem defined by \eqref{eq:principal_pb_sung} is reduced to $V_0^{\rm b}  = \sup_{(\xi_0, Z, \Gamma) \in \R^{n+1} \times \R \times \Vc^{\rm b}}  J_0^{\rm P} (\xi^{\rm b})$. By solving standard control problem, we determine her optimal controls, the optimal contracts and her utility.
    \begin{enumerate}[label=$(\roman*)$]
        \item The optimal payment rates for the manager are given by the constant processes $Z^{\rm b} := z^{\rm b}$ and $\Gamma^{\rm b} := - R^0 (z^{\rm b})^3$, where $z^{\rm b}$ is solution of the following maximisation problem
    \begin{align}\label{eq:sup_z_sung_case}
        \sup_{z > 0} \bigg\{ 
        k^{0} z 
        - \dfrac12 \widetilde R^0 |z|^2
        + \sum_{i=1}^n h^{i,\rm b} \big(z,-R^0 z^3 \big) 
        \bigg\},
    \end{align}
    where $\widetilde R^0 := k^0 + R^{0} | \sigma^{0} |^2$ and, for all $i \in \{1, \dots, n\}$ and any $(z,\gamma) \in \V^{\rm b}$,
\begin{align}\label{eq:hib}
    h^{i,\rm b}(z,\gamma) := k^i z^{i, \rm b} (z,\gamma)
    - \frac{1}{2} \widetilde R^i \big| z^{i, \rm b} (z,\gamma) \big|^2
    - \dfrac12 R^{0} | \sigma^i |^2  |z|^2 \big| 1 - z^{i, \rm b} (z,\gamma) \big|^2.
\end{align}
    \item The optimal contract offered by the principal to the manager is given by:
    \begin{align*}
        \xi^{\rm b}_T =
        - \Hc^{\rm b} \big( z^{\rm b}, - R^0 (z^{\rm b})^3 \big) T
        + z^{\rm b} \zeta^{\rm b}_T 
        + \dfrac{1}{2} R^0 | z^{\rm b} |^2 (1 - z^{\rm b}) \langle \zeta^{\rm b} \rangle_T,
    \end{align*}
    where $\Hc^{\rm b}$ is the manager's Hamiltonian defined by \eqref{eq:hamiltonian_manager_sung}. In particular, the optimal choice of the fixed part of the compensation $\xi^{\rm b}_0$ is the one that saturates the manager's participation constraint, \textit{i.e.}, such that he obtains exactly his reservation utility. In this case, since his utility reservation is equal to $-1$, the optimal $\xi^{\rm b}_0$ is $0$.
    \item For all $i \in \{1, \dots, n\}$, the optimal contract offered by the manager to the $i$--th agent is given by:
    \begin{align*}
        \xi^{i}_T = - \Hc^i \big(z^{i, \rm b} \big( z^{\rm b},-R^0 (z^{\rm b})^3 \big) \big) T + z^{i, \rm b} \big( z^{\rm b},-R^0 (z^{\rm b})^3 \big) X_T^i 
        + \dfrac{1}{2} R^i \big| z^{i, \rm b} \big( z^{\rm b},-R^0 (z^{\rm b})^3 \big) \big|^2  \langle X^i \rangle_T,
    \end{align*}
    where $\Hc^i$ is the $i$--th agent's Hamiltonian, and recalling that $z^{i, \rm b}$ is defined in \textnormal{\Cref{prop:manager_sung}}. In particular, as for the manager, the optimal choice of the fixed part of the compensation $\xi_0^i$ is $0$.
    \item Finally, the value function of the principal is given by:
    \begin{align*}
        V_0^{\rm b} =  T \bigg( k^{0} z^{\rm b}
        - \dfrac12 \widetilde R^{0}  |z^{\rm b}|^2
        + \sum_{i=1}^n h^{i,\rm b} \big(z^{\rm b}, - R^0 ( z^{\rm b})^3 \big) \bigg).
    \end{align*}
    \end{enumerate}
\end{proposition}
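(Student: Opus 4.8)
The plan is to chain the two preceding propositions. By \Cref{prop:contract_vol} the principal may restrict attention to the revealing family \eqref{eq:contract_vol}, so her problem \eqref{eq:principal_pb_sung} becomes a maximisation over the fixed part $\xi^{\rm b}_0$ (together with the $\xi_0^i$) and the pair $(Z,\Gamma)\in\Vc^{\rm b}$; and \Cref{prop:manager_sung} supplies, for each such $(Z,\Gamma)$, the dynamics of both $\zeta^{\rm b}$ and $\xi^{\rm b}$ under the optimal probability $\P^{\rm b}$. First I would take expectations in $J_0^{\rm P}(\xi^{\rm b})=\E^{\P^{\rm b}}[\zeta^{\rm b}_T-\xi^{\rm b}_T]$: the integrability built into $\Vc^{\rm b}$ makes the Brownian parts of $\drm\zeta^{\rm b}$ and $\drm\xi^{\rm b}$ true martingales, which vanish in expectation, leaving only the drifts. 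Since the initial outcomes are zero, $\zeta^{\rm b}_0=0$, and collecting the drift of $\zeta^{\rm b}$ minus the drift of $\xi^{\rm b}$ from \Cref{prop:manager_sung} the running integrand collapses, term by term, into
\[
F(z,\gamma):=k^{0} z-\frac{1}{2}\widetilde R^{0} z^2+\sum_{i=1}^n h^{i,\rm b}(z,\gamma),
\]
with $h^{i,\rm b}$ exactly as in \eqref{eq:hib}; this is the routine but central algebraic identification. Hence $J_0^{\rm P}=-\xi^{\rm b}_0+\E^{\P^{\rm b}}\big[\int_0^T F(Z_t,\Gamma_t)\,\drm t\big]$.

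Next I would dispose of the fixed parts. The term $\xi^{\rm b}_0$ enters $J_0^{\rm P}$ linearly with a negative sign, so the principal drives it down to the boundary of the manager's participation constraint. Following \citeay{sung2015pay} the reservation utility is $-1$, and the manager's value under a contract of the form \eqref{eq:contract_vol} is monotone in $\xi^{\rm b}_0$, so the constraint saturates precisely at $\xi^{\rm b}_0=0$; the identical argument, applied through each agent's participation constraint, fixes $\xi_0^i=0$. With these choices $J_0^{\rm P}=\E^{\P^{\rm b}}\big[\int_0^T F(Z_t,\Gamma_t)\,\drm t\big]$, the integral of a deterministic function of the control evaluated along the path. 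Since $F$ has no explicit time or state dependence and the admissible set $\V^{\rm b}$ is the same at every instant, one has the bound $\E^{\P^{\rm b}}\big[\int_0^T F(Z_t,\Gamma_t)\,\drm t\big]\le T\sup_{(z,\gamma)\in\V^{\rm b}}F(z,\gamma)$, with equality for the constant control equal to the maximiser. The dynamic problem is thereby reduced to a static optimisation of $F$.

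The main obstacle is this optimisation of $F$ over $\V^{\rm b}$, and in particular the claim that it reduces to the one-dimensional problem \eqref{eq:sup_z_sung_case}. I would fix $z$ and optimise over $\gamma$. As $h^{i,\rm b}(z,\cdot)$ depends on $\gamma$ only through $z^{i,\rm b}(z,\gamma)$, and is a concave downward parabola in that argument, the first-order condition $\partial_\gamma h^{i,\rm b}=0$ (with $\partial_\gamma z^{i,\rm b}\neq0$) forces the inner derivative $k^i-\widetilde R^i z^{i,\rm b}+R^{0}|\sigma^i|^2 z^2(1-z^{i,\rm b})$ to vanish, i.e. $z^{i,\rm b}(z,\gamma)=(k^i+R^{0}|\sigma^i|^2 z^2)/(\widetilde R^i+R^{0}|\sigma^i|^2 z^2)$, which is a maximum. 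Solving for $\gamma$ from \eqref{eq:zib_star} and clearing denominators, the cross terms cancel and one is left with $(k^i-\widetilde R^i)(R^{0} z^3+\gamma)=0$; since $k^i-\widetilde R^i=-R^i|\sigma^i|^2\neq0$, this yields $\gamma=-R^{0} z^3$. The crux is that this value is \emph{independent of $i$}: although $\gamma$ is a single scalar shared by all agent terms, the common maximiser $\Gamma^{\rm b}=-R^{0}(Z^{\rm b})^3$ simultaneously maximises every $h^{i,\rm b}(z,\cdot)$, which is exactly what makes the collapse to a scalar problem possible. Restricting to $z>0$ keeps $(z,-R^{0}z^3)\in\V^{\rm b}$ automatic, since $\widetilde R^i z+R^{0}|\sigma^i|^2 z^3>0$; substituting $\gamma=-R^{0}z^3$ turns $F$ into the objective of \eqref{eq:sup_z_sung_case}, whose maximiser is $z^{\rm b}$ and whose optimal value, multiplied by $T$, gives the value function in $(iv)$.

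Finally, parts $(i)$--$(iii)$ are bookkeeping. Part $(i)$ is read off the optimisation above, $(Z^{\rm b},\Gamma^{\rm b})=(z^{\rm b},-R^{0}(z^{\rm b})^3)$. For $(ii)$ and $(iii)$ I would substitute these constant controls into the revealing forms \eqref{eq:contract_vol} and \eqref{eq:contract_drift}: constancy turns the stochastic integrals into $z^{\rm b}\zeta^{\rm b}_T$ and $z^{i,\rm b}X^i_T$ and the quadratic-variation integrals into $\frac{1}{2}R^{0}|z^{\rm b}|^2(1-z^{\rm b})\langle\zeta^{\rm b}\rangle_T$ and $\frac{1}{2}R^i|z^{i,\rm b}|^2\langle X^i\rangle_T$ respectively, using $\Gamma^{\rm b}+R^{0}|Z^{\rm b}|^2=R^{0}|z^{\rm b}|^2(1-z^{\rm b})$ and recalling that all fixed parts have been set to $0$.
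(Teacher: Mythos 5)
Your proposal is correct, and its skeleton is the same as the paper's: take expectations so that the stochastic integrals vanish, reduce the principal's problem to the static optimisation $\sup_{(z,\gamma)\in\V^{\rm b}}\{k^0 z-\tfrac12\widetilde R^0 z^2+\sum_{i=1}^n h^{i,\rm b}(z,\gamma)\}$, saturate the participation constraints to set the fixed parts to zero, and then read off $(ii)$--$(iv)$ by substituting the constant optimal controls into the revealing forms. The one step where you genuinely diverge is the optimisation over $\gamma$. The paper computes $\partial_\gamma z^{i,\rm b}(z,\gamma)=-R^i|\sigma^i|^4 z/\big|\widetilde R^i z-|\sigma^i|^2\gamma\big|^2$ and writes the first--order condition for the \emph{aggregate} sum, which factors as $z\big(\gamma+R^0z^3\big)\sum_{i=1}^n -|R^i|^2|\sigma^i|^8/\big(\widetilde R^i z-|\sigma^i|^2\gamma\big)^3=0$, invoking concavity of $h^{i,\rm b}$ in $\gamma$ for sufficiency. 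You instead maximise each $h^{i,\rm b}(z,\cdot)$ \emph{separately}: viewing it as a concave parabola in the variable $z^{i,\rm b}$, using that $\gamma\longmapsto z^{i,\rm b}(z,\gamma)$ is strictly monotone with range containing the parabola's vertex, and inverting \eqref{eq:zib_star} to get $\gamma=-R^0z^3$ independently of $i$. Your variant buys two things: it makes explicit that the single shared scalar $\gamma$ involves no trade--off across the $n$ agent terms (each $h^{i,\rm b}$ is simultaneously maximised, which the aggregate FOC alone does not reveal), and it replaces the concavity claim for the composed map $\gamma\longmapsto h^{i,\rm b}(z,\gamma)$ by the more elementary observation that a monotone reparametrisation of a unimodal function is unimodal. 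One cosmetic point: a priori $\zeta^{\rm b}_0=-\sum_{i=1}^n\xi^i_0$ rather than $0$, so the agents' fixed parts should be disposed of (via their participation constraints, exactly as you do) \emph{before} asserting $\zeta^{\rm b}_0=0$; this ordering issue does not affect the validity of your argument.
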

\noindent The proof of this proposition is detailed in \Cref{app:ss:proof_sung}.

\subsection{The benefits of continuous time}

\subsubsection{Non--optimality of linear contracts in continuous time}\label{ss:non_optimal}

\Cref{prop:contract_vol,prop:principal_sung} require that the optimal contracts for the manager must be indexed on the quadratic variation of the net profit $\zeta^{\rm b}$ through the parameter $\Gamma^{\rm b} := -R^0 (z^{\rm b})^3$. However, in \cite{sung2015pay}, \citeauthor{sung2015pay} restricts the analysis to linear contracts: although he remarks that decisions on middle managerial contracts are affecting the volatility of the net profit of the firm, he chooses to view them as a case similar to unobservable project choice decisions, modelled by \citeayn{sung1995linearity}. More precisely, he states the following: 
\blockquote[{\citeayn[pp. 3]{sung2015pay}}][.]{As shall be seen in our hierarchical contracting problem, the top manager turns out to choose not only the mean of the outcome of his own effort but, in effect, the volatility of the total profit of the firm as he chooses middle managerial contracts. Thus, our problem turns out to be similar to the unobservable project choice problem in \citeauthor{sung1995linearity} [21]}
In the aforementioned article \cite{sung1995linearity}, \citeauthor{sung1995linearity} studies a principal--agent problem in continuous time where the volatility can be controlled. He distinguishes two cases.
\begin{enumerate}[label=$(\roman*)$]
    \item One where the variance is observed, but since the Brownian motion is only one dimensional, there is no moral hazard on the volatility's effort anymore. Indeed, in this case, the variance is equal to the square of the volatility effort, and since the variance is observed, the effort is easily computable by the principal. Therefore, the principal directly controls the volatility's effort of the agent and the model degenerates to the first--best case (no moral hazard) for the volatility.
    \item One where the variance is not observed by the principal, and therefore she cannot index the contract on the quadratic variation of the outcomes, which obviously leads to consider only linear contracts.
\end{enumerate}
In \cite{sung2015pay}, as \citeauthor{sung2015pay} considers that the variance is not observed, the principal cannot offer a contract to the manager indexed on it, which is equivalent to forcing $\Gamma^{\rm b} + R^0 | Z^{\rm b} |^2 = 0$ in our extended class of contracts defined by \eqref{eq:contract_vol}. Therefore, he does not optimise the utility of the principal with respect to $\Gamma^{\rm b}$, since he forces $\Gamma^{\rm b} := - R^0 | Z^{\rm b} |^2$.

\medskip

Nevertheless, in continuous time, it seems natural to consider that the principal observes the quadratic variation of the total benefit, $\langle \zeta^{\rm b} \rangle$, and can therefore contract on it. Indeed, she observes $\zeta^{\rm b}$ in continuous time and can therefore estimate the quadratic variation through the sum of the squared increments. Moreover, a result of \citeayn{bichteler1981stochastic} (see \citeayn[Proposition 6.6]{neufeld2014measurability} for a modern presentation) states that this quadratic variation, even controlled, can be defined independently of the probability associated to the effort. Therefore, contrary to $(ii)$ above, the contract can be indexed on the quadratic variation. Moreover, since the process $\zeta^{\rm b}$ is naturally driven by $n+1$ independent Brownian motions, the principal does not perfectly observe the controls $Z^i$ of the manager, but only a functional of $Z^i$. This prevents the volatility control case from degenerating into the first--best case, contrary to what is mentioned in $(i)$ above.
    
\medskip

Therefore, \citeauthor{sung2015pay}'s argument in \cite{sung2015pay} to justify restricting the study to linear contracts, namely that his model has to be understood as a continuous--time model in which linear contracts are supposedly optimal, seems not to be valid. One way to fix this problem in the one--period model would be to propose contracts indexed on the variance of $\zeta^{\rm b}$. However, as this variance is controlled, it depends on the probability chosen by the manager, which is unknown to the principal when the efforts are not optimal. Indeed, unlike in continuous time, where the quadratic variation, even controlled, can be defined independently of the effort probability, this is not the case for the variance in the one--period model. Therefore, it is not easy to find an equivalent to the contract indexed on the quadratic variation for the one--period model.\footnote{It is worth noticing that in a discrete--time framework, but with multiple periods, one could also approximate the variance.} However, in any case, it is a well--known result that it is not possible to find an optimal contract in the one--period model, even without volatility control, as soon as the monotone likelihood ratio\footnote{The monotone likelihood ratio is defined in this case by the ratio between the derivative of the considered process density with respect to the effort and the density itself.} is not bounded from below, which is the case in \cite{sung2015pay} since the output processes are Gaussian. Indeed, \citeayn{mirrlees1999theory} shows that, in this case, there is a sequence of contracts, called forcing contracts, that allows to obtain the results of the first--best case (when there is no moral hazard) at the limit, but that there is no optimal contract. Restricting oneself to linear contracts in the case of drift control only in the one--period model is justified because these are the optimal contracts in continuous time, but, unfortunately, this reasoning is no longer valid in the case of volatility control. 

\medskip

In conclusion, unlike the case of drift control only, in the case of volatility control it is not possible to consider the one--period model by limiting the study to linear contracts, and expect to obtain the same results as in continuous time. This result therefore justifies the full study of continuous--time models and the use of the recent theory of 2BSDEs, from a theoretical point of view. In the following, we will see through numerical results that it is obviously beneficial in a practical way for the principal to consider the problem in continuous time.


\subsubsection{Numerical results}\label{sss:numerical_results}

In \cite[Theorem 2]{sung2015pay}, \citeauthor{sung2015pay} presents some interesting facts such as the decrease in the efforts of the manager and agents when the total number of workers increases, as well as their limits for an infinitely large company. These facts seem also be true in our framework, but we do not believe it is necessary to dwell on proving the same results. We find it more interesting to focus on the differences between the two models, and of course on the benefits of the approach being considered in this paper. In our opinion, the simplest way to achieve this goal is to present in this subsection some numerical simulations.

\medskip

Therefore, to illustrate the benefits induced by considering contracts with the quadratic variation term, we decide to perform some numerical simulations based on the parameters chosen in \cite[Section 5]{sung2015pay}, in particular in the case of identical workers. More precisely, we let for all $i \in \{0, \dots, n\}$, $k^i = k$, $R^i = R$ and $\sigma^i = \sigma$, where $(k, R, \sigma) := (1000, 50, 1)$. We thus represent in the left graphs of \Cref{fig:effort_agents,fig:effort_manager,,fig:principal_value}, respectively the optimal \textit{pay--for--performance sensitivities} for the agents, for the manager, as well as the value per workers for the principal, in three cases:
\begin{enumerate}[label=$(\roman*)$]
    \item in the DC case (blue line), \textit{i.e.}, without any manager (see \Cref{lem:DC_case} for theoretical results);
    \item in the case of \citeauthor{sung2015pay}'s (orange curve), \textit{i.e.}, when the manager's contract is linear;
    \item in our framework (green curve), \textit{i.e.}, when his contract is more sophisticated with an indexation on the quadratic variation.
\end{enumerate}
All curves are represented with respect to the number of workers, starting from $2$ (\textit{i.e.} $n=1$), to consider at least two agents in the DC case or one agent and one manager in the HC case.
Pay--for--performance sensitivity (PPS for short) is a common proxy for the strength of incentives (see \citeayn{gryglewicz2020growth}). In our framework, this sensitivity is directly related to the efforts of the workers. Indeed, for all $i \in \{1,\dots,n\}$, the $i$--th agent's optimal effort is given by $\alpha^{i, {\rm HC}} := k^i Z^i$, where $Z^i$ is precisely the PPS for the $i$--th agent's contract. A similar relation stands for the manager. We could have similarly represented workers' efforts, but we decide to use this indicator to simplify the comparison with the results of \citeauthor{sung2015pay} given in \cite[Table 1]{sung2015pay}, although they have been recalculated in our case for different numbers of agents. On the right graphs of the three figures below, we represent the relative gain induced by considering sophisticated contracts versus linear ones.

\begin{figure}[!ht]
\includegraphics[scale=0.55]{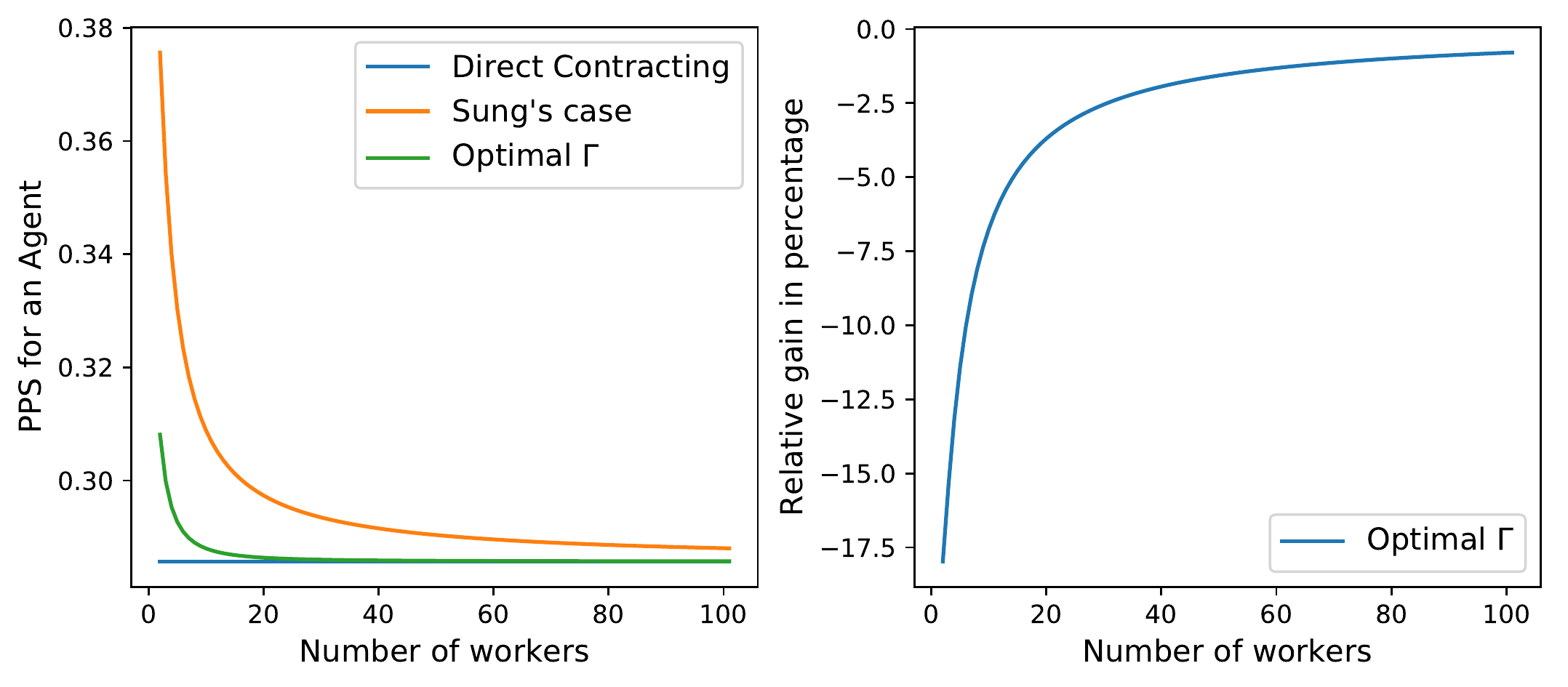}
\centering
\caption{PPS for an agent and relative gain.}
\label{fig:effort_agents}
\end{figure}

Our results obviously present the same features as those outlined in \cite{sung2015pay}. Since the manager can subcontract with the agents, he can benefit, to some extent, from the results of agents' efforts and transfers his own compensation risk to them. As a consequence, agents are induced to work harder than implied in the direct contracting case (see \Cref{fig:effort_agents}, left). To counterbalance this undesirable risk--shifting motivation of the risk--averse manager, the principal set his contract sensitivity to a level lower than that of the contract in the DC case. Consequently, the manager makes less effort than what would be required in the direct contracting situation (see \Cref{fig:effort_manager}, left). In addition, the larger the size of the company, the more motivated the manager is to shift the risk onto the agents. Consequently, the larger the number of workers, the lower--powered the managerial incentive contract. \citeauthor{sung2015pay} concludes that the results obtained with this model on the low managerial effort can serve as an explanation of the empirical finding of \citeayn{jensen1990performance} that the average CEO contract sensitivity of large firms is lower than that of small firms. 

\begin{figure}[!ht]
\includegraphics[scale=0.55]{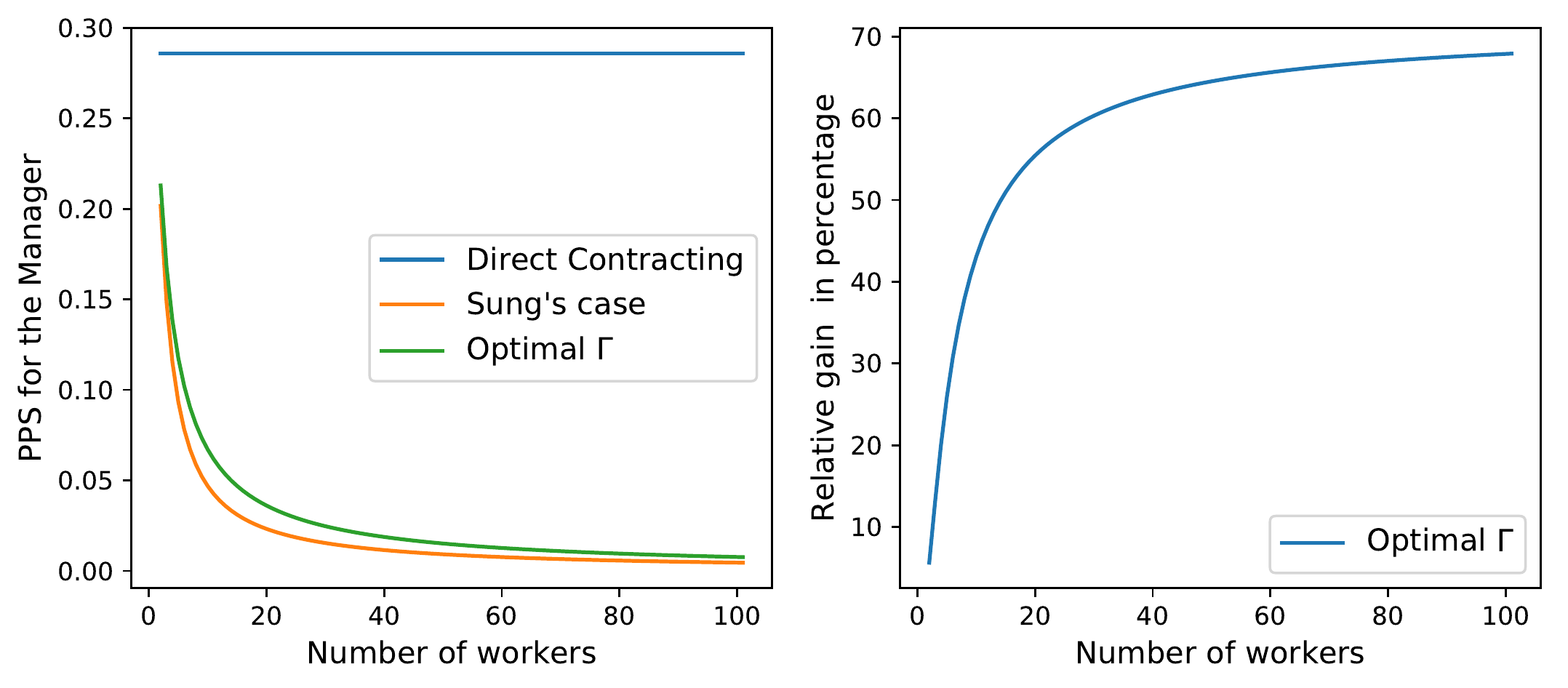}
\centering
\caption{PPS for the manager and relative gain.}
\label{fig:effort_manager}
\end{figure}

\medskip

Nevertheless, our sophisticated contracts allow an improvement of the results. More specifically, the PPSs we obtain, both for the manager and for the agents, are closer to the PPS in the DC case, compared to those obtained by \citeauthor{sung2015pay}. More precisely, using a contract with the quadratic variation for the manager allows the principal to better monitor his own performance. This results in a higher PPS for the manager (see \Cref{fig:effort_manager}, left), and therefore forces him to make more effort. The relative gain (\Cref{fig:effort_manager}, right) is increasing with respect to the number of workers and reaches for example $60$\% for $30$ workers in the company ($29$ agents in addition to the manager). The new contracts we consider therefore mitigates the undesirable risk--shifting motivation of the risk--averse manager. The manager still benefits from the results of agents' efforts and transfers a part of his own compensation risk to them, but less than with linear contracts. Consequently, agents are still induced to work harder than implied in the DC case (see \Cref{fig:effort_agents}, left), but less than in \citeauthor{sung2015pay}'s framework. 

\begin{figure}[!ht]
\includegraphics[scale=0.55]{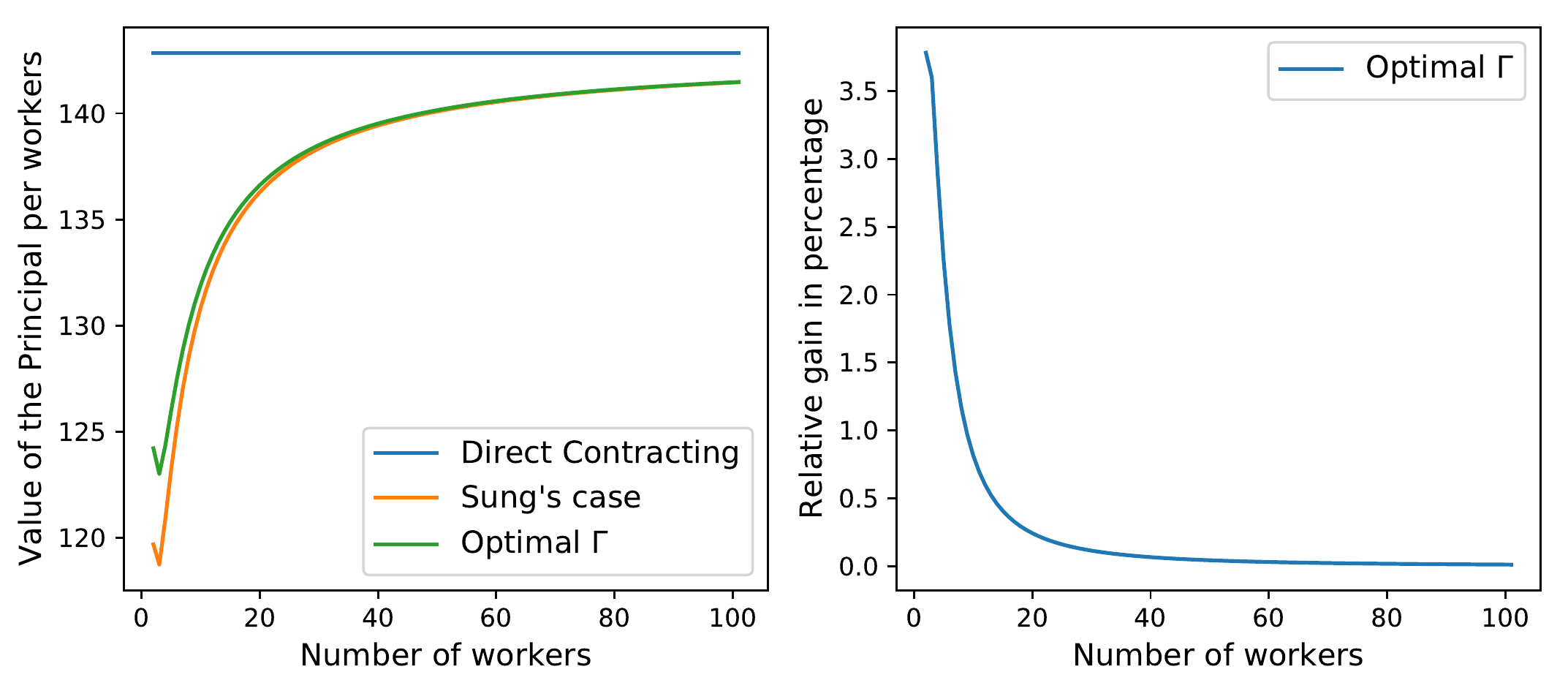}
\centering
\caption{Principal's value per workers.}
\label{fig:principal_value}
\end{figure}

\Cref{fig:principal_value} (left) represents the value function of the principal per workers. With the sophisticated contracts, this value is obviously higher than with linear contracts, which confirms the interest of our study. Even if the relative gain seems small (see \Cref{fig:principal_value}, right), this result motivates a full study with even more sophisticated contracts, in the theoretical part of this paper (from \Cref{sec:general_model} onwards). Indeed, even if this only leads to a small increase in the principal's value per worker when the number of workers is large, the gain has to be multiplied by the number of workers. Moreover, when the number of workers is small, the gain is significant nonetheless, but above all it allows to reduce the effort gaps between the agents and the manager. It is interesting to consider the benefit of these contracts not only from the principal's point of view, but also from a global managerial perspective. Indeed, by developing this type of contracts, the principal better monitors the manager's efforts, and therefore regulates his risk--shifting motivation, which results in improved conditions for the agents and a better division of work and risk between the agents and the manager. 

\begin{remark}
One can notice that the principal's profit per worker is not monotonous when the number of workers is small. In particular, her profit is higher when there are two workers instead of three, while it is then increasing with the number of workers. This is actually explained by the fact that when there is only one agent supervised by the manager, there is less loss of information when going up the hierarchy. Indeed, since the principal can estimate the quadratic variation of $\zeta^{\rm b}$, given in this case by:
\begin{align*}
    \drm \langle \zeta^{\rm b} \rangle_t = \big( | \sigma^0|^2 + |\sigma^1|^2 |1 - Z^{1}_t |^2 \big) \drm t, \; t \in [0,T], 
\end{align*}
she has access \textnormal{(}up to a sign\textnormal{)} to the volatility control of the manager, \textit{i.e.}, the indexation parameter $Z^{1}$. Therefore, in this particular case, there is 'less' moral hazard on volatility control, and we could expect that the model degenerates towards the first--best case regarding volatility control. This fact should $($at least partially$)$ explain the higher profit of the principal.
\end{remark}

\section{Explicit extensions of Sung's model}\label{sec:extensions}

In this section, we propose some basic extensions of \citeauthor{sung2015pay}'s model developed in \Cref{sec:sungmodel}. The first possible extension is to add a coefficient of ability for the manager, allowing to highlight the interest for the principal to delegate the management of the agents. Indeed, a good manager should have a positive impact on the work of the agents below him, and improve the efficiency of the hierarchical organisation to the point that above a certain number of workers, it becomes more profitable to group them in a working team led by a manager. The second extension we propose is to consider a different reporting: the manager will report the sum of the cost and the sum of the outcomes, instead of only reporting the difference between the two sums (the benefits). We will see that more precise reporting will lead to a degeneracy of the model towards the direct contracting case. The last extension considers a more complicated hierarchy, with a top manager in--between the principal and the $m$ managers. In this case, we show that, although it is more complicated to obtain analytical results, the resolution tools remain the same and it is thus theoretically very simple to add a level in the hierarchy using our approach.

\subsection{On the positive impact of hierarchical organisations}

As we have seen in the previous numerical results, the hierarchical organisation considered in \citeauthor{sung2015pay}'s model is not recommended compared to the direct contracting case. Indeed, the utility obtained by the principal, when she hires a manager, is smaller compared to the case when he contracts directly with all the workers. This decrease in utility is linked to the fact that the 'severity' of moral hazard increases with the number of levels in the hierarchy, but, above all, because we do not model any specific ability of the manager. In reality, hierarchical structures appear for logistical reasons, since it would be complicated for a principal to supervise a large number of workers, but also because a manager should have more ability to manage a small group of workers than the owner (or the investors) of the firm. We therefore provide a simple extension to take into account the manager's ability to improve the productivity of his workers.

\medskip

We can say that the manager's skills are defined by a pair $(m,\widetilde m)$ where $m$ measures the help he provides to the agents under his supervision, and $\widetilde m$ is a penalty suffered because of his management activities. Indeed, it seems natural to consider that, by helping the agents, the manager has less time for his own work. We consider that the manager's skill $m > 0$ and $\widetilde m \in [0,1)$ respectively affect the cost functions of the agents and the manager as follows
\begin{align*}
    c^i (a) = \dfrac{1}{2} \dfrac{|a|^2}{k^i \big(1 + \frac{m}{n} \big)}, \; \text{ for } \; i \in \{1, \dots, n\} \; \text{ and } \;
    c^0 (a) = \dfrac{1}{2} \dfrac{|a|^2}{k^0 (1 - \widetilde m)}.
\end{align*}
In the one hand, this means that the manager's ability $m$ decreases the cost of effort of each agent under his supervision. However, the more agents he is responsible for, the weaker the effect is. On the other hand, the parameter $\widetilde m$ increases its own cost, representing the fact that helping agents leaves him less time for his own work. In other words, devoting time to helping the agents penalises his own result. We obtain the same form of solutions as in the previous section, more precisely by replacing $k^i$ and $k^0$ respectively by 
\begin{align*}
    \widetilde k^i := k^i \Big(1 + \frac{m}{n} \Big) \; \text{ and } \; \widetilde k^0 := k^0 (1 - \widetilde m).
\end{align*}
In order to evaluate the effects of the manager's ability parameters, we present some numerical results, with identical workers and the same set of parameters as in the previous simulations (see parameters in \Cref{sss:numerical_results}).

\medskip

In \Cref{fig:with_aptitude}, we represent, in addition to the previous results, the PPS for an agent (on the left), for the manager (in the middle), as well as the value of the principal (on the right) when we take into account the skills of the manager (red curves). Specifically, we set $(m, \widetilde m) = (0.6, 0.1)$. We can see that the agents are incentivised to work more than without the help of their manager, but still less than in \citeauthor{sung2015pay}'s framework. The results are the opposite for the manager. The key point to observe here is that the value of the principal is higher than in the DC case, which means that a hierarchical structure can be beneficial for the principal when the manager has an ability to supervise the agents.

\begin{figure}[!ht]
\includegraphics[width=0.99\linewidth]{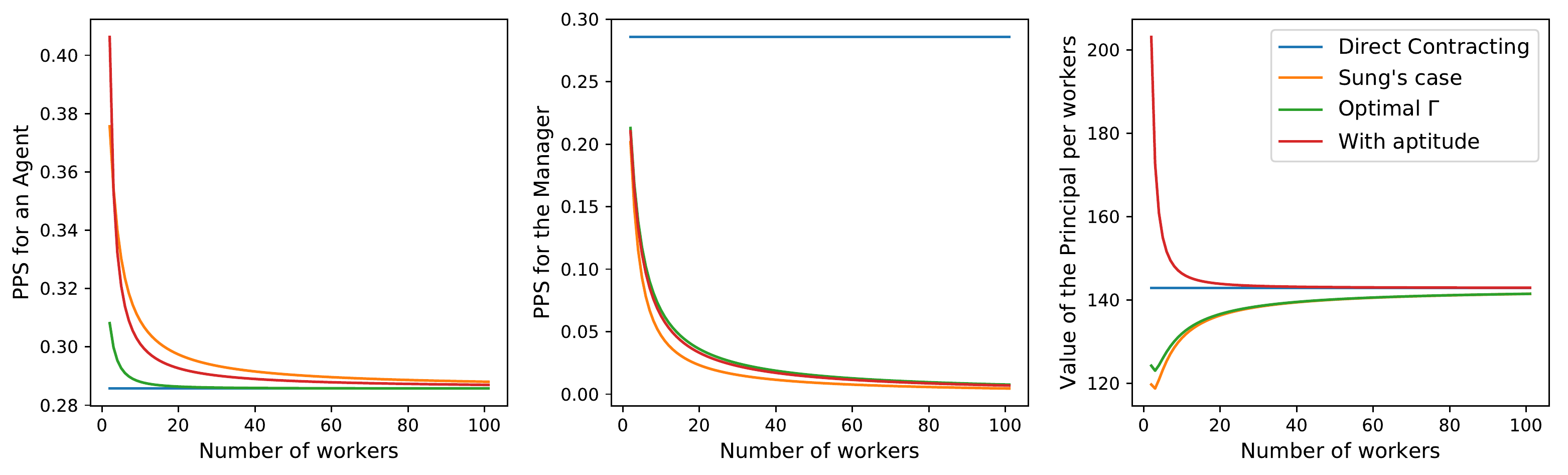}
\centering
\caption{PPS and principal's value per workers.}
\label{fig:with_aptitude}
\end{figure}

\medskip

Therefore, this modeling highlights the interest for the principal to delegate the management of the agents. As can be seen in \Cref{fig:with_aptitude_compare}, if the manager has good management skills, \textit{i.e.}, $m$ large enough, it becomes more cost--effective for the principal to form a working team headed by a manager. 
The right graph of \Cref{fig:with_aptitude_compare} shows that the influence of the parameter $\widetilde m$ is negligible, in the sense that if $m$ is large enough, then the value of the principal is higher than in the DC case regardless of the value of $\widetilde m$. This is in line with the previous results. Indeed, since under a hierarchical contracting, the manager works less than in the DC case, the fact that its cost is higher matters less, as long as his ability is sufficiently beneficent to the agents.

\begin{figure}[!h]
\includegraphics[scale=0.5]{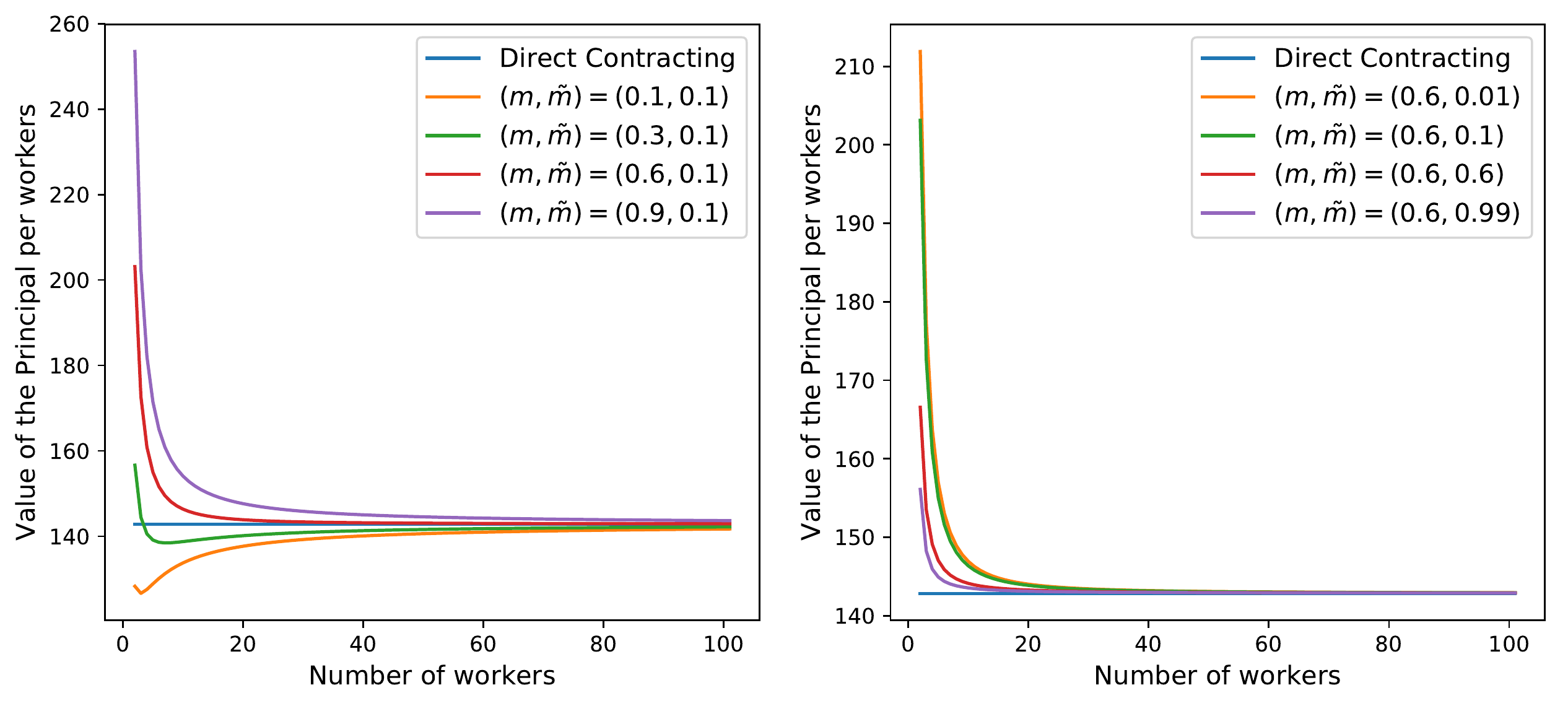}
\centering
\caption{Principal's value per workers for different values of $(m, \widetilde m)$.}
\label{fig:with_aptitude_compare}
\end{figure}



\begin{remark}
    We could consider another case where the manager's skills $m$ and $\widetilde m$ respectively affect the outcomes' drift of the agents and the manager in the following way:
\begin{align*}
    \drm X^i_t =  \bigg( \alpha^i_t + \dfrac{m}{n} \bigg) \drm t + \sigma^i \drm W^i_t, \; \text{ for } \; i \in \{1, \dots, n\} \; \text{ and } \;
    \drm X^0_t = \big( \alpha^0_t - \widetilde m \big) \drm t + \sigma^0 \drm W^0_t, \; t \in [0,T].
\end{align*}
This basic extension finally leads to the same problem and solution as \citeauthor{sung2015pay}'s model, only the utility of the principal is increased by $m - \widetilde m$ \textnormal{(}is decreased if $m - \widetilde m < 0)$. However, this model is not necessarily very realistic, because if the agents do not make any effort, the manager's ability is sufficient to increase the outcomes. The previous model therefore seems more interesting, both in terms of interpretations and results.
\end{remark}

\subsection{On other types of reporting}

Throughout \Cref{sec:sungmodel}, we have assumed that the manager reports only the net benefit $\zeta^{\rm b}$ to his supervisor, the principal. The goal of this subsection is to provide some interesting results on other types of reporting.

\subsubsection{Reporting of profits and costs}\label{sss:2statevariables}

Within the same hierarchical structure as before, we consider here that the manager reports to the principal the sum of the profits and the sum of the costs (and not only the net profit $\zeta^{\rm b}$, \textit{i.e.}, the difference between the two values). Therefore his contract will be indexed on the following $2$--dimensional state variable
\begin{align*}
    \zeta^{\rm pc} = \bigg( \sum_{i=0}^n X^i, \sum_{i=1}^n \xi^i \bigg)^\top.
\end{align*}
Since the agents' problem remain unchanged, the dynamic of $\zeta^{\rm pc}$ under their optimal efforts is given by
\renewcommand*{\arraystretch}{1.5}
\begin{align*}
    \drm \zeta^{\rm pc}_t = 
    \begin{pmatrix}
        \displaystyle \alpha^0_t + \sum_{i=1}^n k^i Z_t^{i} \\
        \displaystyle \dfrac{1}{2} \sum_{i=1}^n | Z^{i}_t |^2 \widetilde R^i
    \end{pmatrix}
    \drm t +
    \begin{pmatrix}
        \sigma^0 & \sigma^1 & \cdots & \sigma^n \\
        0 & \sigma^1  Z^1_t & \cdots & \sigma^n  Z^n_t
    \end{pmatrix}
    \begin{pmatrix}
        \drm W^0_t \\
        \vdots \\
        \drm W^n_t
    \end{pmatrix}, t \in [0,T].
\end{align*}
We consider the same criterion for the manager as before. Following the reasoning behind \Cref{prop:contract_vol}, we are led to consider contracts similar to \eqref{eq:contract_vol}, but now indexed on the $2$--dimensional variable $\zeta^{\rm pc}$:
\begin{align}\label{eq:contract_vol_2D}
    \xi^{\rm pc}_T =  
    \xi^{\rm pc}_0
    - \int_0^T \Hc^{\rm pc} (Z_s, \Gamma_s) \drm s
    + \int_0^T  Z_s \cdot \drm \zeta^{\rm pc}_s
    + \dfrac{1}{2} \int_0^T \mathrm{Tr} \Big[ \big( \Gamma_s + R^{0}  Z_s ( Z_s)^\top \big) \drm \langle \zeta^{\rm pc} \rangle_s \Big],
\end{align}
where $\Hc^{\rm pc}$ is the manager's Hamiltonian, and $(Z, \Gamma)$
is a tuple of parameters optimally chosen by the principal. We thus denote by $\Vc^{\rm pc}$ the set of admissible control processes for the principal, defined as the collection of all processes $(Z,\Gamma) : [0,T] \times \Cc([0,T],\R^2) \longrightarrow \V^{\rm pc}$, predictable with respect to the filtration generated by $\zeta^{\rm pc}$ and satisfying appropriate integrability conditions, where 
\begin{align*}
    \V^{\rm pc} := \Big\{ (z,\gamma) \in \R^2 \times \R^{2 \times 2} \; \text{ s.t. } \forall i \in \{1, \dots, n\}, \; \widetilde R^i z^{2} - | \sigma^i|^2 \gamma^{22} > 0\Big\}.
\end{align*}

Given the dynamic of the state variable $\zeta^{\rm pc}$ and its quadratic variation, we can compute the manager's Hamiltonian and thus establish a result similar to \Cref{prop:manager_sung}.
\begin{proposition}
    Let $(Z,\Gamma) \in \Vc^{\rm pc}$. The optimal drift effort and the optimal control on the $i$--th agent's compensation, chosen by the manager, are respectively given by $\alpha_t^{\rm pc} := k^0 Z^{1}_t$ and $Z^{i,\rm pc}_t := z^{i,\rm pc} (Z_t,\Gamma_t)$, where
    \begin{align*}
        z^{i,\rm pc} (z, \gamma ) := - \dfrac{k^i z^{1} + |\sigma^i|^2 \gamma^{12} }{ \widetilde R^i z^{2} + | \sigma^i |^2 \gamma^{22}}, \; \text{for} \; (z, \gamma ) \in \V^{\rm pc} \text{ and all }\; i \in \{1, \dots, n\}.
    \end{align*}
    Under the probability $\P^{\rm pc}$ associated to the optimal effort and controls of the manager \textnormal{(}and the agents\textnormal{)}, the dynamics of $\zeta^{\rm pc}$ and $\xi^{\rm pc}$ are respectively given by:
    \renewcommand*{\arraystretch}{1.5}
    \begin{align*}
        \drm \zeta^{\rm pc}_t = &
        \begin{pmatrix}
            \displaystyle k^0 Z^{1}_t + \sum_{i=1}^n k^i z^{i, \rm pc} (Z_t, \Gamma_t) \\
            \displaystyle \dfrac{1}{2} \sum_{i=1}^n \widetilde R^i\big| z^{i, \rm pc} (Z_t, \Gamma_t) \big|^2
        \end{pmatrix}
        \drm t +
        \begin{pmatrix}
            \sigma^0 & \sigma^1 & \cdots & \sigma^n \\
            0 & \sigma^1 z^{1, \rm pc} (Z_t, \Gamma_t) & \cdots & \sigma^n z^{n, \rm pc} (Z_t, \Gamma_t)
        \end{pmatrix}
        \begin{pmatrix}
            \drm W^0_t \\
            \vdots \\
            \drm W^n_t
        \end{pmatrix}, \\
        \displaystyle \drm \xi_t^{\rm pc} = &\ 
        \dfrac{\widetilde R^0}{2} | Z_t^{1} |^2 \drm t
        + \dfrac{R^{0}}{2}  \sum_{i=1}^n |\sigma^i|^2 \big| 
        Z_t^{1} + Z_t^{2} z^{i, \rm pc} (Z_t, \Gamma_t) \big|^2 \drm t
        + Z_t^{1} \sigma^0 \drm W^0_t
        + \sum_{i=1}^n \sigma^i \big( Z_t^{1} + Z_t^{2} z^{i, \rm pc} (Z_t, \Gamma_t) \big)  \drm W^i_t,
    \end{align*}
    where $\widetilde R^0 := k^0 + R^{0} | \sigma^0|^2$.
\end{proposition}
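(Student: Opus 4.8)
The plan is to follow exactly the template of the proof of \Cref{prop:manager_sung}: the statement takes a contract of the form \eqref{eq:contract_vol_2D} as given, so it suffices to solve the manager's (now standard) stochastic control problem by pointwise maximisation of his Hamiltonian, and then to substitute the maximisers back into the dynamics. First I would write $\drm\zeta^{\rm pc}_t = b_t\,\drm t + \Sigma_t\,\drm W_t$, where $W=(W^0,\dots,W^n)^\top$, $b$ is the two--dimensional drift displayed before the statement, and $\Sigma \in \M^{2,n+1}$ is the displayed volatility matrix, so that $\drm\langle\zeta^{\rm pc}\rangle_t = \Sigma_t\Sigma_t^\top\,\drm t$. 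Applying It\^o's formula to the manager's exponential criterion along the contract \eqref{eq:contract_vol_2D}, the key observation is that the term $\tfrac12 R^0\,\mathrm{Tr}[Z Z^\top\,\drm\langle\zeta^{\rm pc}\rangle]$ built into the contract exactly offsets the convexity correction $\tfrac12 R^0|\Sigma^\top Z|^2\,\drm t$ produced by the CARA utility. This is the standard certainty--equivalent reduction, and it shows that the drift of the manager's utility process is nonpositive for every admissible control and vanishes at the optimum precisely when $(\alpha^0,(Z^i)_i)$ maximises
$$z\cdot b + \tfrac12\,\mathrm{Tr}\big[\gamma\,\Sigma\Sigma^\top\big] - c^0(\alpha^0),$$
which I would take as the definition of $\Hc^{\rm pc}(z,\gamma)$.

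Next I would exploit the independence of the projects to decouple this maximisation. Since $\alpha^0$ enters only the first coordinate of $b$, the supremum over $\alpha^0$ is the manager's drift Hamiltonian $\sup_a\{z^1 a - c^0(a)\}$, whose maximiser for the quadratic cost \eqref{eq:cost_function} is $a=k^0 z^1$, recovering $\alpha^{\rm pc}_t=k^0 Z^1_t$. The remaining terms split into $n$ independent scalar problems, the $i$--th being the maximisation over $z^i\in\R$ of a quadratic whose linear coefficient is $k^i z^1 + |\sigma^i|^2\gamma^{12}$ (collecting the drift contribution $z^1 k^i z^i$ with the off--diagonal covariance term $\gamma^{12}|\sigma^i|^2 z^i$) and whose coefficient of $|z^i|^2$ is $\tfrac12(\widetilde R^i z^2 + |\sigma^i|^2\gamma^{22})$.

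The role of the admissibility set $\V^{\rm pc}$ is then to guarantee that this coefficient has the sign making each scalar quadratic strictly concave, so that the first--order condition delivers the unique maximiser $z^{i,\rm pc}(z,\gamma)$ announced in the statement; I would record the second--order check explicitly to confirm it matches the defining inequality of $\V^{\rm pc}$. Substituting $\alpha^{\rm pc}=k^0 Z^1$ and $Z^{i,\rm pc}=z^{i,\rm pc}(Z,\Gamma)$ into $b$ and $\Sigma$ yields the stated dynamics of $\zeta^{\rm pc}$ under $\P^{\rm pc}$ directly. For $\xi^{\rm pc}$, the diffusion coefficient is $\Sigma^\top Z$, i.e. $\sigma^0 Z^1$ against $W^0$ and $\sigma^i(Z^1 + Z^2 z^{i,\rm pc})$ against each $W^i$, matching the claim; its drift, read off from the vanishing--drift condition at the optimum, equals $c^0(\alpha^{\rm pc}) + \tfrac12 R^0|\Sigma^\top Z|^2$, which collapses to $\tfrac12\widetilde R^0|Z^1|^2 + \tfrac12 R^0\sum_{i=1}^n|\sigma^i|^2|Z^1 + Z^2 z^{i,\rm pc}|^2$ using $\widetilde R^0 = k^0 + R^0|\sigma^0|^2$.

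The main obstacle is purely the two--dimensional bookkeeping: correctly expanding $\mathrm{Tr}[\gamma\,\Sigma\Sigma^\top]$ with the off--diagonal entries of both $\gamma$ and $\Sigma\Sigma^\top$, and checking that the $R^0 ZZ^\top$ cancellation together with the concavity furnished by $\V^{\rm pc}$ make the candidate controls genuinely optimal rather than merely critical. As throughout \Cref{sec:sungmodel}, the proof that restricting to contracts of the form \eqref{eq:contract_vol_2D} is without loss of generality is deferred to the 2BSDE theory of the general model.
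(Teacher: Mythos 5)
Your proposal is correct and takes essentially the same route as the paper: the paper offers no separate argument for this proposition, saying only that it is established exactly like \Cref{prop:manager_sung}, and your certainty--equivalent/supermartingale reduction is the same computation as the Dol\'eans--Dade factorisation used in \Cref{app:ss:proof_sung}, followed by the same pointwise maximisation of the Hamiltonian and substitution of the maximisers into the dynamics; your expansion of $\mathrm{Tr}[\gamma\,\Sigma\Sigma^\top]$, the resulting first--order condition, and the drift identity $c^0(\alpha^{\rm pc})+\tfrac12 R^0\|\Sigma^\top Z\|^2=\tfrac12\widetilde R^0|Z^1|^2+\tfrac12 R^0\sum_{i=1}^n|\sigma^i|^2|Z^1+Z^2 z^{i,\rm pc}|^2$ all check out.

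One caveat on the second--order verification you announce: strict concavity of the $i$--th scalar quadratic requires $\widetilde R^i z^{2} + |\sigma^i|^2\gamma^{22} < 0$, i.e.\ the denominator of $z^{i,\rm pc}$ must be \emph{negative} (which is the natural regime, e.g.\ $z^1>0$, $z^2<0$, $\gamma=0$, since the second coordinate of $\zeta^{\rm pc}$ is a cost). This does not coincide with the displayed definition of $\V^{\rm pc}$ in the paper, which reads $\widetilde R^i z^{2} - |\sigma^i|^2\gamma^{22} > 0$; the discrepancy appears to be a sign slip in the paper's definition of $\V^{\rm pc}$ rather than an error in your argument, but the ``match'' you promise to record will not occur literally, so you should state the concavity condition you actually need rather than cite the set as defined.
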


The principal's problem remains essentially unchanged compared to \eqref{eq:principal_pb_sung}, she still maximises the difference between the sum of the outcomes and sum of the compensations owed to the manager and the agents. The only difference is the contract space on which she optimises her criterion. In particular, given the optimal form \eqref{eq:contract_vol_2D} of the contracts, her problem becomes:
\begin{align*}
    V_0^{\rm pc} &:= \sup_{(\xi_0, Z, \Gamma) \in \R^{n+1} \times \Vc^{\rm pc}} \E^{\P^{\rm pc}} \bigg[ \sum_{i=0}^n X_T^i - \sum_{i=1}^n \xi^i - \xi^{\rm pc} \bigg].
\end{align*}
To solve the previous optimisation problem, it is in fact equivalent to consider the following maximisation problem:
\begin{align}\label{eq:sup_2statevariable}
    \sup_{(z, \gamma) \in \V^{\rm pc}} \bigg\{ 
     k^0 z^{1} 
    - \dfrac12 \widetilde R^0 |z^{1}|^2 
    + \sum_{i=1}^n h^{i, \rm pc} (z, \gamma)
    \bigg\},
\end{align}
where $h^{i, \rm pc}$ is defined for all $i \in \{1, \dots, n\}$ and $(z,\gamma)\in \V^{\rm pc}$ by 
\begin{align}\label{eq:hipc}
    h^{i, \rm pc} (z, \gamma) :=  k^i z^{i, \rm pc} (z, \gamma)
    - \frac{1}{2} \widetilde R^i \big| z^{i, \rm pc} (z, \gamma) \big|^2
    - \dfrac{1}{2} R^{0} | \sigma^i|^2 
    \big| z^{1} + z^{2} z^{i, \rm pc} (z, \gamma) \big|^2.
\end{align}

\begin{remark}
    In this setting, we could also consider the case where the manager directly pays the agents he manages \textnormal{(}not only designs their contracts\textnormal{)}. His value function is thus defined by:
    \begin{align*}
        \widetilde V_0^{0, \rm pc} (\widetilde \xi^{\rm pc}) &:= \sup_{(\alpha^0, Z) \in \Ac^0 \times \Vc^{0}} \E^{\P^{\rm pc}} \bigg[ - \exp \bigg(- R^{0} \bigg(\widetilde \xi^{\rm pc} - \sum_{i=1}^n \xi^i - \int_0^T c^{0}(\alpha_t^{0}) \drm t \bigg) \bigg) \bigg],
    \end{align*}
    and the optimal form of contract for the manager is as follows:
    \begin{align*}
        \widetilde \xi^{\rm pc} =  
        \widetilde \xi^{\rm pc}_0
        + \sum_{i=1}^n \xi^i
        - \int_0^T \Hc^{\rm pc} (Z_s, \Gamma_s) \drm s
        + \int_0^T  Z_s \cdot \drm \zeta^{\rm pc}_s
        + \dfrac{1}{2} \int_0^T \mathrm{Tr} \Big[ \big( \Gamma_s + R^{0}  Z_s ( Z_s)^\top \big) \drm \langle \zeta^{\rm pc} \rangle_s \Big].
    \end{align*}
    Since the manager is directly paying the agents, the problem of the principal is only to maximise the difference between the sum of the outcomes and the compensation due to the manager:
    \begin{align*}
        \widetilde V_0^{\rm pc} &:= \sup_{(Z, \Gamma) \in \Vc^{\rm pc}} \widetilde J_0^{\rm P} \big( \widetilde \xi^{\rm pc} \big), \; \text{ where } \; \widetilde J_0^{\rm P} \big(  \widetilde \xi^{\rm pc} \big) := \E^{\P^{\rm pc}} \bigg[ \sum_{i=0}^n X_T^i - \widetilde \xi^{\rm pc} \bigg],
    \end{align*}
    which leads to the exact same maximisation as before. Indeed, in this case, the compensation for the manager $\widetilde \xi^{\rm pc}$ is equal to the sum of the compensations for the agents and the contract $\xi^{\rm pc}$ of the previous case: 
    \begin{align*}
        \widetilde \xi^{\rm pc} = \xi^{\rm pc} + \sum_{i=1}^n \xi^i,
    \end{align*}
    and thus the two frameworks are strictly equivalent, since 
    \begin{align*}
        \widetilde J_0^{\rm P} \big(  \widetilde \xi^{\rm pc} \big) = \E^{\P^{\rm pc}} \bigg[ \sum_{i=0}^n X_T^i - \sum_{i=1}^n \xi^i - \xi^{\rm pc} \bigg].
    \end{align*}
\end{remark}

The supremum given by \eqref{eq:sup_2statevariable} is not easily computable in the general case, but if all agents have the same characteristics, we obtain the following result, whose proof is postponed to \Cref{app:ss:proof_extension}.
\begin{proposition}\label{prop:cas_pc0_identical}
    If all agents are identical, the principal can achieve her utility in the DC case, denoted by $V^{\rm DC}$ and given by \eqref{eq:value_DC} in \textnormal{\Cref{lem:DC_case}}.
\end{proposition}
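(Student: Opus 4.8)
The plan is to prove the equality $V_0^{\rm pc} = V^{\rm DC}$ by a two--sided estimate of the scalar maximisation problem \eqref{eq:sup_2statevariable}, which determines $V_0^{\rm pc}$ up to the multiplicative horizon $T$. Writing $k$, $R$, $\sigma$ and $\widetilde R := k + R|\sigma|^2$ for the common characteristics of the identical agents, the direct contracting value recalled in \Cref{lem:DC_case} is obtained by applying the classical drift--control optimum to each of the $n+1$ workers separately, giving
\[
V^{\rm DC} = T\bigg( \frac{(k^0)^2}{2\widetilde R^0} + n\,\frac{k^2}{2\widetilde R}\bigg),
\]
where the worker indexed by $0$ contributes exactly the manager's own production term of \eqref{eq:sup_2statevariable}. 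Hence it suffices to show that the supremum in \eqref{eq:sup_2statevariable} equals $(k^0)^2/(2\widetilde R^0) + n\,k^2/(2\widetilde R)$.

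For the upper bound I would bound the manager's term and each of the $n$ agent terms separately. The manager's term satisfies $k^0 z^{1} - \tfrac12 \widetilde R^0 |z^{1}|^2 \le (k^0)^2/(2\widetilde R^0)$ for every $z^{1}\in\R$. For each $i$, the penalty $\tfrac12 R^{0}|\sigma^i|^2\,|z^{1} + z^{2} z^{i,\rm pc}(z,\gamma)|^2$ in \eqref{eq:hipc} is nonnegative, so dropping it and using $k w - \tfrac12\widetilde R w^2 \le k^2/(2\widetilde R)$ with $w = z^{i,\rm pc}(z,\gamma)$ yields $h^{i,\rm pc}(z,\gamma)\le k^2/(2\widetilde R)$. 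Summing these $n+1$ inequalities over an arbitrary admissible $(z,\gamma)\in\V^{\rm pc}$ gives the upper bound.

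For the lower bound I would exhibit an explicit admissible pair attaining it. Set $z^{1} := k^0/\widetilde R^0$, which maximises the manager's term. Because the agents are identical, $z^{i,\rm pc}(z,\gamma)$ takes a single common value across $i$; I require this value to equal the first--best sensitivity $k/\widetilde R$ and, simultaneously, that the residual--risk term vanish, that is $z^{1} + z^{2}\,(k/\widetilde R) = 0$. The latter fixes $z^{2} = -z^{1}\widetilde R/k = -k^0\widetilde R/(\widetilde R^0 k)$, and the identity $z^{i,\rm pc}(z,\gamma) = k/\widetilde R$ then becomes a single affine relation between $\gamma^{12}$ and $\gamma^{22}$; I would choose $\gamma^{22}$ large enough that $\widetilde R z^{2} + |\sigma|^2\gamma^{22} > 0$ (which is exactly the admissibility condition $(z,\gamma)\in\V^{\rm pc}$ for identical agents, and is met since $z^{2}<0$) and then solve for $\gamma^{12}$. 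With this choice each $h^{i,\rm pc}$ equals $k^2/(2\widetilde R)$ and the manager's term equals $(k^0)^2/(2\widetilde R^0)$, so the objective reaches the upper bound and equality follows.

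The hard part will be precisely this construction, and it is where the identical--agents hypothesis is essential: making the residual--risk term vanish forces $z^{i,\rm pc} = -z^{1}/z^{2}$ to be the same constant for every agent, which can coincide with the respective first--best target $k^i/\widetilde R^i$ for all $i$ at once only when the ratios $k^i/\widetilde R^i$ agree. Under the identical--agents assumption these two requirements are consistent, leaving the two free parameters $(\gamma^{12},\gamma^{22})$ to satisfy the single sensitivity equation together with the admissibility constraint; the remaining verifications of admissibility and of the attained values are then routine.
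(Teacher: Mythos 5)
Your strategy is, in substance, the paper's own: the attaining construction with $z^{1} = k^0/\widetilde R^0$, $z^{2} = -k^0\widetilde R/(k\widetilde R^0)$, the requirement $z^{i,\rm pc}(z,\gamma) = k/\widetilde R$ turned into an affine relation between $\gamma^{12}$ and $\gamma^{22}$, and the observation that identical agents are exactly what lets a single common sensitivity hit every first--best target at once. Your explicit upper bound (dropping the nonnegative residual--risk penalty in $h^{i,\rm pc}$ and maximising each concave quadratic separately) is correct and is a welcome addition that the paper leaves implicit, since it upgrades "achievable" to "optimal". However, the step you dismiss as routine --- admissibility of the chosen $(\gamma^{12},\gamma^{22})$ --- contains a genuine sign error.

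You require $\widetilde R z^{2} + |\sigma|^2\gamma^{22} > 0$, i.e.\ $\gamma^{22}$ large and \emph{positive}, and call this "exactly the admissibility condition". It is not, under either reading of $\V^{\rm pc}$. As stated in the paper, $\V^{\rm pc}$ requires $\widetilde R^i z^{2} - |\sigma^i|^2\gamma^{22} > 0$, which with $z^{2}<0$ forces $\gamma^{22}$ to be sufficiently \emph{negative}; the paper's proof accordingly imposes $\gamma^{22} < -k^0|\widetilde R|^2/(k|\sigma|^2\widetilde R^0)$ and then sets $\gamma^{12} = k^0 R/\widetilde R^0 - (k/\widetilde R)\gamma^{22}$. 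More importantly, the sign is not a bookkeeping convention: the manager's inner optimisation over $z^i$ in the pc--reporting Hamiltonian is a quadratic whose second--order coefficient is $\widetilde R z^{2} + |\sigma|^2\gamma^{22}$ (the plus sign arises because the second component of $\zeta^{\rm pc}$ is the sum of payments, which enters with the opposite sign to the net--benefit case $\zeta^{\rm b}$). For the stationary point $z^{i,\rm pc}(z,\gamma) = -\big(k z^{1}+|\sigma|^2\gamma^{12}\big)/\big(\widetilde R z^{2}+|\sigma|^2\gamma^{22}\big)$ to be the manager's best response, this coefficient must be strictly negative. With your choice it is strictly positive, so $z^{i,\rm pc}$ is the global \emph{minimiser} of a convex problem, the manager's Hamiltonian is $+\infty$, and the contract is not incentive compatible: the manager would never select the sensitivities your lower bound needs. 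The fix is simply to reverse the inequality --- take any $\gamma^{22} < -k^0|\widetilde R|^2/(k|\sigma|^2\widetilde R^0)$ and solve the same affine relation for $\gamma^{12}$ --- after which the rest of your argument goes through and coincides with the paper's proof.
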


\subsubsection{Separate reporting of the manager's performance}\label{sss:reporting_denegerate}

For now, we focused our study on two frameworks, one where the manager reports only net benefits (\Cref{sec:sungmodel}), the other where he reports the sum of the total profit (sum of outcomes) and the total cost (sum of payments) separately (\Cref{sss:2statevariables}). We could consider other scenarii where the manager reports more information to the principal, for example if he reports his personal outcome $X^0$ separately. However, with this reporting, the HC case degenerates towards the DC case, in the sense that optimal efforts of the agents and the manager, as well as the value of the principal, can be equal to those in the DC case, given by \Cref{lem:DC_case}. The proof of the following proposition is postponed to \Cref{app:ss:proof_extension}.

\begin{proposition}\label{prop:other_reporting}
    If the manager reports to the principal his own outcome $X^0$ separately, the problem degenerates towards the DC case. More precisely:
    \begin{enumerate}[label=$(\roman*)$]
        \item if the manager reports $\zeta^{\rm b,0} = \big( \sum_{i=1}^n X^i - \sum_{i=1}^n \xi^i, X^0 \big)$, it is possible to find a sequence of contracts such that, at the limit, all workers apply the optimal efforts of the DC case, and the principal receives the maximum utility possible, \textit{i.e.}, $V^{\rm DC}$;
        \item if the manager reports $\zeta^{\rm pc,0} = \big( \sum_{i=1}^n X^i, \sum_{i=1}^n \xi^i, X^0 \big)$, the result of $(i)$ holds and moreover, if agents are identical, we can find a contract which allows to attain the DC case.
    \end{enumerate}
\end{proposition}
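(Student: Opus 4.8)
The plan is to benchmark the hierarchical value obtained under the enriched reporting against the direct--contracting value $V^{\rm DC}$ of \Cref{lem:DC_case}, and to exhibit (sequences of) admissible contracts whose value converges to, or attains, this benchmark. The cornerstone is the observation that $V^{\rm DC}$ is an upper bound for the principal's value under \emph{any} reporting scheme: interposing the managerial layer and letting the principal observe only a functional of the outputs can only shrink her set of feasible incentive schemes relative to contracting individually and directly with each of the $n+1$ workers on their own output. This is consistent with the framing of \Cref{prop:cas_pc0_identical}, where $V^{\rm DC}$ appears as a ceiling the principal can hope to \emph{achieve}. It therefore suffices to approach $V^{\rm DC}$ from below.

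For part $(i)$, I would exploit that, once $X^0$ is reported separately, it is disentangled from the aggregate agent signal $Y:=\sum_{i=1}^n(X^i-\xi^i)$; since $X^0$ and $Y$ are driven by independent Brownian motions, $\langle Y,X^0\rangle\equiv0$, and because the manager has CARA utility his certainty equivalent splits additively across the two independent blocks, so that his choice of $\alpha^0$ and his choice of the agents' payment rates decouple. I would then index the $X^0$--part of the manager's contract exactly as in the direct contract of \Cref{lem:DC_case}, reproducing the DC incentive and risk on $\alpha^0$ verbatim, and treat the agent block separately. For the latter I would invoke the Gaussian structure: since the outputs are Gaussian the monotone likelihood ratio is unbounded from below, so the forcing--contract construction of \citeay{mirrlees1999theory} recalled in \Cref{ss:non_optimal} yields a sequence of contracts --- indexed on $Y$ and its quadratic variation $\langle Y\rangle$ --- penalising ever more steeply any departure of the drift and volatility of $Y$ from the profile induced by the DC--optimal rates $(Z^{i,\rm DC})_{i=1}^n$. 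Along this sequence the manager is driven to set $Z^i\to Z^{i,\rm DC}$, his residual risk on the agent block and hence the risk premium the principal concedes both vanish, and the value converges to $V^{\rm DC}$; matching the upper bound identifies the limit, and the absence of an optimiser is precisely the Mirrlees non--attainment phenomenon.

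For part $(ii)$, the reporting $\zeta^{\rm pc,0}=(\sum_{i=1}^n X^i,\sum_{i=1}^n\xi^i,X^0)$ is strictly finer than $\zeta^{\rm b,0}$ --- one recovers the latter by differencing the first two components --- so the principal's feasible set is larger and the limiting conclusion of $(i)$ holds \emph{a fortiori}. For the exact attainment under identical agents, I would reduce to \Cref{prop:cas_pc0_identical}: the same symmetry that makes all agents share a common payment rate, together with the separate observation of total profit and total cost, lets the principal pin that common rate down exactly and render the manager a pure pass--through bearing no residual agent risk, so that $V^{\rm DC}$ is attained by an actual contract rather than a limit. Separating $X^0$ only enlarges the available information, hence preserves this attainability.

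The main obstacle I anticipate is making the forcing--contract limit rigorous within the continuous--time / 2BSDE framework: one must verify that the penalising contracts remain admissible (of the quadratic--variation--indexed form with controls in the admissible set and satisfying the integrability conditions), that the participation constraints of the manager and of all agents are respected all along the sequence, and that the induced efforts and the conceded risk premium converge so that the value genuinely tends to $V^{\rm DC}$. The independence $\langle Y,X^0\rangle\equiv0$ and the explicit Gaussian densities should keep these estimates tractable, but interchanging the limit with the principal's optimisation is the delicate point.
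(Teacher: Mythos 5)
Your overall strategy (benchmark against $V^{\rm DC}$, decouple the $X^0$ block from the agent block, and reduce part $(ii)$ to \Cref{prop:cas_pc0_identical}) is consistent with the paper, and your treatment of $(ii)$ is essentially the paper's: the paper redoes the computation with the three--dimensional variable and exhibits the exact contract $z^{3}=k^0/\widetilde R^0$, $z^{1}=-(k/\widetilde R)z^{2}$, $\gamma^{12}=-(k/\widetilde R)(Rz^{2}+\gamma^{22})$, which is what your symmetry reduction amounts to. The genuine gap is in your construction of the approximating sequence for part $(i)$. You propose Mirrlees--type forcing contracts that ``penalise ever more steeply any departure of the drift and volatility of $Y$ from the DC profile''. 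But in continuous time the drift of $Y$ is not contractible: under the weak formulation, measures corresponding to different drifts are equivalent, so no contract can detect, let alone punish, a drift deviation pathwise. Only the quadratic variation is observable, and forcing on $\langle Y\rangle$ alone leaves the manager \emph{indifferent} among all choices of $(Z^i)_{i=1}^n$ with the same aggregate volatility $\sum_{i=1}^n|\sigma^i|^2|1-Z^i|^2$; indifference does not deliver $Z^i\to Z^{i,\rm DC}$, so the claimed convergence of efforts and of the value does not follow as stated. Note also that the paper invokes Mirrlees forcing contracts only to explain the pathology of the one--period model (\Cref{ss:non_optimal}); they are precisely what the continuous--time theory renders unnecessary.

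The paper's proof of $(i)$ stays entirely inside the revealing class \eqref{eq:contract_vol_2D} and needs no forcing and no limit interchange. Index the manager's own effort through the $X^0$--component with the constant rate $Z^{2}:=Z^{0,\rm DC}$, set $\Gamma^{11}:=0$, and take $Z^{1,n}:=1/n$ on the agent block. The key algebraic fact, from \eqref{eq:zib_star}, is that
\begin{align*}
    z^{i,\rm b}(z,0)=\frac{k^i z}{\widetilde R^i z}=\frac{k^i}{\widetilde R^i}=Z^{i,\rm DC}, \quad \text{for every } z>0,
\end{align*}
\textit{i.e.}, with zero quadratic--variation indexation the manager's optimal pass--through to each agent equals the DC rate \emph{exactly, for every $n$} (the term $\tfrac12 R^0|Z|^2\,\drm\langle\zeta\rangle$ in the contract already neutralises his risk aversion in this choice). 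The only distortion left is the risk premium the principal concedes, $\tfrac12 R^0|\sigma^i|^2|z^{1}|^2\big|1-k^i/\widetilde R^i\big|^2$ in \eqref{eq:hib}, which vanishes as $n\to\infty$; the value therefore converges to $V^{\rm DC}$ by a closed--form computation, with admissibility and participation constraints checked trivially for each fixed $n$. In other words, the delicate points you defer (admissibility along the sequence, interchanging the limit with the optimisation) are exactly what the paper's choice of sequence makes vacuous, while your proposed mechanism cannot be repaired without replacing drift penalisation by some incentive term — at which point you are back to the paper's construction.
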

Since these reporting leads to a degeneration towards the DC case, they are less interesting mathematically speaking. However, from a managerial point of view, it is relevant to observe that reporting the manager's output separately makes it possible to reduce the moral hazard within the hierarchy.

\subsection{On a more complex hierarchy}\label{ss:complexe_hierarchy}

We consider in this section a more complex hierarchy illustrated by \Cref{fig:complex_hierarchy}: the principal hires a top manager, who hires $m$ managers, and each manager $j$ hires $n_j$ agents. This new hierarchy requires small adjustments in notations, which will be reused in the general model. First, the top manager controls his own outcome $X^{0}$ and receives the compensation $\xi^{0}$ designed by the principal. Then, the $m$ managers, indexed by $j \in \{1, \dots, m\}$, each carry out their own outcome $X^{j,0}$ and receive the compensation $\xi^{j,0}$ designed by the top manager. For $j \in \{1, \dots, m\}$ and $i \in \{1, \dots, n_j \}$, the $(j,i)$--th agent is the $i$--th agent of the $j$--th manager. He controls the output $X^{j,i}$ and will receive the compensation $\xi^{j,i}$ designed by his manager. The dynamics of the output processes are given by
\begin{align*}
    \drm X^{0}_t = \alpha^0 \drm t + \sigma^{0} \drm W_t^{0} \; 
    \text{ and } \;
    \drm X^{j,i}_t = \alpha_t^{j,i} \drm t + \sigma^{j,i} \drm W_t^{j,i}, \;  \; t \in [0,T],
\end{align*}
for all $j \in \{1, \dots, m\}$ and $i \in \{0, \dots, n_j \}$, where $W^{0}$ and all $W^{j,i}$ are independent Brownian motions.

\begin{figure}[h!]
\begin{center}
\begin{tikzpicture}
\tikzstyle{principal}=[rectangle,draw,rounded corners=4pt,fill=cyan!50]
\tikzstyle{agents}=[rectangle,draw,rounded corners=4pt, fill=green!50]
\tikzstyle{manager}=[rectangle,draw,rounded corners=4pt, fill=red!25]
\tikzstyle{topmanager}=[rectangle,draw,rounded corners=4pt, fill=magenta!25]
\tikzstyle{invisible}=[rectangle]

\node[principal] (principal) at (0,2) {Principal};
\node[topmanager] (topmanager) at (0,0) {Top manager};
\node[manager] (manager1) at (-4,-2) {Manager $1$};
\node[invisible] (manageri) at (0,-2) {$\dots$};
\node[invisible] (agentii) at (0,-4) {$\dots$};
\node[manager] (managerm) at (4,-2) {Manager $m$};
\node[agents] (agent11) at (-5.5,-4) {Agent $1,1$};
\node[invisible] (agent1i) at (-4,-4) {$\dots$};
\node[agents] (agent1n) at (-2.5,-4) {Agent $1,n_1$};
\node[agents] (agentm1) at (2.5,-4) {Agent $m,1$};
\node[invisible] (agentmi) at (4,-4) {$\dots$};
\node[agents] (agentmn) at (5.5,-4) {Agent $m,n_m$};

\draw[->] (principal) -- (topmanager) node[midway,fill=white]{$\xi^0$};
\draw[->] (topmanager) -- (manager1) node[midway,fill=white]{$\xi^{1,0}$};
\draw[->, dashed] (topmanager) -- (manageri);
\draw[->] (topmanager) -- (managerm) node[midway,fill=white]{$\xi^{m,0}$};

\draw[->, dashed] (manageri) -- (agentii);

\draw[->] (manager1) -- (agent11) node[midway,fill=white]{$\xi^{1,1}$};
\draw[->, dashed] (manager1) -- (agent1i);
\draw[->] (manager1) -- (agent1n) node[midway,fill=white]{$\xi^{1,n_1}$};

\draw[->] (managerm) -- (agentm1) node[midway,fill=white]{$\xi^{m,1}$};
\draw[->, dashed] (managerm) -- (agentmi);
\draw[->] (managerm) -- (agentmn) node[midway,fill=white]{$\xi^{m,n_m}$};

\end{tikzpicture}
\caption{A more complex hierarchy}\label{fig:complex_hierarchy}
\end{center}
\end{figure}
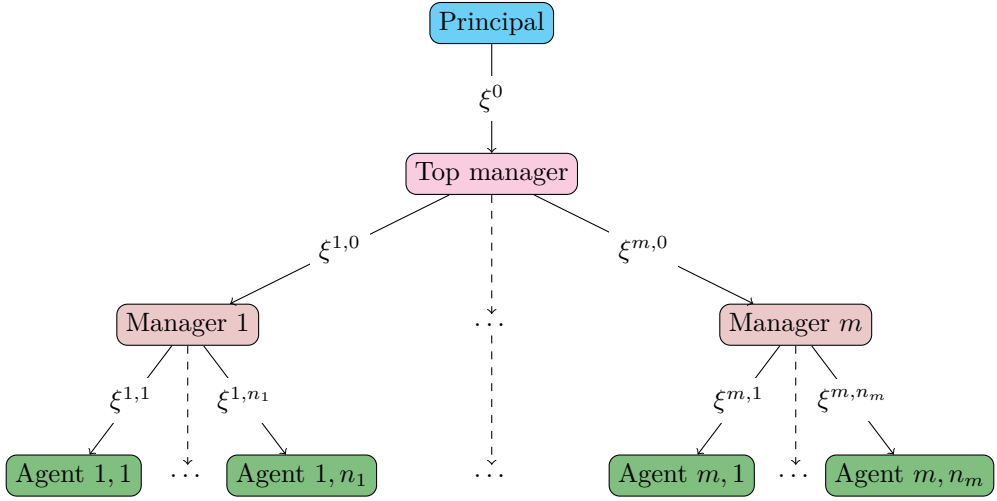

\textcursive{Agent's problem.}\vspace{-0.2em} Apart from these notation changes, the problem for the agents remains the same. Therefore, for $j \in \{1, \dots, m\}$ and $i \in \{1, \dots, n_j \}$, the optimal contract form for the $(j,i)$--th agent is similar to \eqref{eq:contract_drift}:
\begin{align*}
    \xi^{j,i}_t = \xi_0^{j,i} - \int_0^t \Hc^{j,i} (Z^{j,i}_s) \drm s + \int_0^t Z^{j,i}_s \drm X_s^{j,i} + \dfrac{1}{2} R^{j,i} \int_0^t \big| Z^{j,i}_s \big|^2 \drm \langle X^{j,i} \rangle_s, \; t \in [0,T],
\end{align*}
where $Z^{j,i}$ is an $\R$--valued process, predictable with respect to the filtration generated by $X^{j,i}$, satisfying appropriate integrability conditions, chosen by the $j$--th manager. This contract leads to the optimal effort $\alpha^{j,i, \star} = k^{j,i} Z^{j,i}$.

\medskip

\textcursive{Manager's problem.}\vspace{-0.4em} Let $j \in \{1, \dots, m\}$. The problem of the $j$--th manager is also equivalent to the manager's problem in \citeauthor{sung2015pay}'s model. In addition to choosing his effort $\alpha^{j,0}$, he designs the compensation $\xi^{j,i}$ for the $i$--th agent under his supervision by choosing the payment rate $Z^{j,i}$, for $i \in \{1, \dots, n_j \}$, and receives the payment $\xi^{j,0}$ offered by his supervisor (the top manager). We assume that the $j$--th manager only reports in continuous time to his supervisor the total benefit of his working team, \textit{i.e.}, the following variable
\begin{align*}
    \zeta^j_t = \sum_{i=0}^{n_j} X_t^{j,i} - \sum_{i=1}^{n_j} \xi_t^{j,i}, \; t \in [0,T].
\end{align*}
Under optimal efforts of the agents, and using the notation $\widetilde R^{j,i} := k^{j,i} + R^{j,i} | \sigma^{j,i} |^2$, the dynamic of $\zeta^j$ is given by
\begin{align*}
    \drm \zeta^j_t = \bigg( \alpha_t^{j,0} + \sum_{i=1}^{n_j} \Big( k^{j,i} Z_t^{j,i} - \frac{1}{2} \widetilde R^{j,i} | Z^{j,i}_t|^2 \Big) \bigg) \drm t + \sigma^{j,0} \drm W_t^{j,0} + \sum_{i=1}^{n_j} \sigma^{j,i} ( 1 - Z^{j,i}_t) \drm W_t^{j,i}, \; t \in [0,T].
\end{align*}

We assume that the contract for each manager is indexed on the total benefit of his working team, \textit{i.e.}, each contract $\xi^{j,0}$ is a measurable function of $\zeta^j$ only. Given the form of his value function, $\zeta^j$ is the only state variable of the $j$--th manager's control problem. Since he controls both the drift and the volatility of $\zeta^j$, the optimal form of contract is given by:
\begin{align}\label{eq:contractmanager}
    \xi^{j,0}_t =  
    \xi^{j,0}_0
    - \int_0^t \Hc^{j} (Z_s^{j}, \Gamma_s^{j}) \drm s
    + \int_0^t  Z_s^{j} \drm \zeta^j_s
    + \dfrac{1}{2} \int_0^t \big( \Gamma_s^{j} + R^{j} | Z_s^{j}|^2 \big) \drm \langle \zeta^j \rangle_s, \; t \in [0,T],
\end{align}
where $\Hc^{j}$ is his Hamiltonian and $(Z^{j}, \Gamma^{j}) \in \Vc^j$ is a pair of parameters optimally chosen by the top manager. More precisely, we define by $\Vc^{j}$ the collection of all processes $(Z,\Gamma) : [0,T] \times \Cc([0,T],\R) \longrightarrow \V^{j}$, predictable with respect to the filtration generated by $\zeta^{j}$ and satisfying appropriate integrability conditions, where 
\begin{align*}
    \V^{j} := \Big\{ (z,\gamma) \in \R \times \R \; \text{ s.t. } \forall i \in \{1, \dots, n_j\}, \; \widetilde R^{j,i} z - |\sigma^{j,i}|^2 \gamma > 0\Big\}.
\end{align*}
The set $\Vc^0 := \prod_{j = 1}^m \Vc^{j}$ thus represents the admissible control processes for the top manager. By computing and maximising the $j$--th manager's Hamiltonian, we obtain the following proposition.
\begin{proposition}
    Let consider a contract of the form \eqref{eq:contractmanager} indexed by a pair $(Z^j, \Gamma^j) \in \Vc^j$. Then, for all $t \in [0,T]$, the optimal effort on the drift of the $j$--th manager's outcome $X^j$ is $\alpha_t^{j,0,\star} := k^{j,0} Z^j_t$, and for all $i \in \{1, \dots, n_j \}$, his optimal control on the $i$--th agent's compensation is $Z^{j,i,\star}_t := z^{j,i,\star} (Z^{j}_t,\Gamma^{j}_t)$ where
    \begin{align*}
        z^{j,i,\star} (z,\gamma) := \dfrac{k^{j,i} z - | \sigma^{j,i} |^2 \gamma }{ \widetilde R^{j,i} z - |\sigma^{j,i}|^2 \gamma}, \text{ for all } (z, \gamma) \in \V^j.
    \end{align*}
\end{proposition}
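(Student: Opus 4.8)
The problem faced by the $j$--th manager is, after relabelling, identical to the manager's problem of \Cref{sec:sungmodel}: the correspondence $k^0 \mapsto k^{j,0}$, $R^0 \mapsto R^j$, $\sigma^i \mapsto \sigma^{j,i}$, $\widetilde R^i \mapsto \widetilde R^{j,i}$ and $n \mapsto n_j$ maps the dynamics of $\zeta^{\rm b}$ and the contract form \eqref{eq:contract_vol} exactly onto those of $\zeta^j$ and \eqref{eq:contractmanager}. The plan is therefore to reproduce the argument behind \Cref{prop:manager_sung} verbatim. First I would introduce the $j$--th manager's Hamiltonian $\Hc^j$, defined as in \eqref{eq:hamiltonian_manager_sung} but with the relabelled coefficients, namely for $(z,\gamma)\in\V^j$,
\[
\Hc^j(z,\gamma) = \tfrac12\gamma|\sigma^{j,0}|^2 + \sup_{a\in\R}\big\{az - c^{j,0}(a)\big\} + \sum_{i=1}^{n_j}\sup_{z^i\in\R}\Big\{ z\big(k^{j,i}z^i - \tfrac12\widetilde R^{j,i}|z^i|^2\big) + \tfrac12\gamma|\sigma^{j,i}|^2|1-z^i|^2\Big\}.
\]

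The heart of the proof is a martingale--optimality argument. Writing $Y_t := \xi^{j,0}_t - \int_0^t c^{j,0}(\alpha^{j,0}_s)\,\drm s$ for the manager's certainty--equivalent process and $M_t := -\erm^{-R^j Y_t}$ for his utility process, I would apply Itô's formula to $M$. Because $R^j\erm^{-R^j Y_t}>0$, the drift of $M$ has the same sign as the drift of $Y$ minus $\tfrac12 R^j$ times the quadratic--variation rate of $Y$. Substituting the dynamics of $\zeta^j$, and hence of $\xi^{j,0}$ through \eqref{eq:contractmanager}, into this expression, the crucial cancellation is that the term $\tfrac12 R^j|Z^j_t|^2\big(|\sigma^{j,0}|^2 + \sum_i|\sigma^{j,i}|^2|1-Z^{j,i}_t|^2\big)$ coming from the quadratic--variation part of the contract exactly cancels the $R^j$--term produced by the quadratic variation of $M$. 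What remains is precisely the realised integrand of the Hamiltonian minus $\Hc^j(Z^j_t,\Gamma^j_t)$, which is $\le 0$ by definition of $\Hc^j$ as a supremum, with equality if and only if $\alpha^{j,0}_t$ and each $Z^{j,i}_t$ attain the respective suprema. This shows that $M$ is a supermartingale for every admissible control and a martingale for the maximisers, so the latter are optimal.

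It then remains to compute the maximisers explicitly. Maximising $a\mapsto aZ^j_t - c^{j,0}(a)$ for the quadratic cost \eqref{eq:cost_function} yields the first--order condition $Z^j_t = a/k^{j,0}$, hence $\alpha^{j,0,\star}_t = k^{j,0}Z^j_t$. For each $i$, the per--agent objective is strictly concave in $z^i$ precisely because $(Z^j_t,\Gamma^j_t)\in\V^j$ forces $\widetilde R^{j,i}Z^j_t - |\sigma^{j,i}|^2\Gamma^j_t>0$, this quantity being the second derivative up to sign, so the unique interior critical point is the global maximiser; solving the first--order condition gives $z^{j,i,\star}(z,\gamma) = (k^{j,i}z - |\sigma^{j,i}|^2\gamma)/(\widetilde R^{j,i}z - |\sigma^{j,i}|^2\gamma)$, as claimed. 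The separability of $\Hc^j$ across agents is what allows each $Z^{j,i}$ to be optimised independently.

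The one genuinely technical point, which I would defer to the rigorous treatment of the general model, is the admissibility and integrability needed to upgrade the local martingale $M$ to a true martingale, so that the supermartingale inequality transfers to expectations and the verification is complete. The constraint set $\V^j$ also guarantees that the suprema defining $\Hc^j$ are finite and attained in the interior, which is what makes the revealing contract \eqref{eq:contractmanager} well posed in the first place.
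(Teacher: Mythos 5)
Your proposal is correct and takes essentially the same route as the paper: the paper proves this proposition only implicitly, by observing that after relabelling it is exactly the manager's problem of \Cref{sec:sungmodel} and invoking the proof of \Cref{prop:manager_sung} in \Cref{app:ss:proof_sung}, where the same verification is carried out — there the supermartingale step is packaged as a multiplicative Dol\'eans--Dade factorisation of the CARA utility rather than an It\=o drift computation, but the key cancellation of the $\frac{1}{2}R^{j}|Z^{j}|^{2}\,\drm\langle\zeta^{j}\rangle$ term, the Hamiltonian inequality $h^{j}\le\Hc^{j}$ with equality at the maximisers, and the deferral of the integrability/martingale issue are identical to yours. Your explicit first--order computations (strict concavity in $z^{i}$ forced by $(z,\gamma)\in\V^{j}$, yielding $\alpha^{j,0,\star}_t=k^{j,0}Z^{j}_t$ and $z^{j,i,\star}$) likewise match the paper's.
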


\begin{remark}\label{rk:measurability_issues_extension}
The problem highlighted in \textnormal{\Cref{rk:measurability_issues}} obviously also arises here. Indeed, to restrict the contract $\xi^{j,i}$ for the $(j,i)$--th agent to a measurable function of $X^{j,i}$, the payment process $Z^{j,i}$ must be predictable with respect to the filtration generated by $X^{j,i}$. Similarly, to restrict the contract $\xi^{j,0}$ for the $j$--th manager to a measurable function of $\zeta^{j}$, the payment rate processes $Z^{j}$ and $\Gamma^j$ must be predictable with respect to the filtration generated by $\zeta^{j}$. Since the optimal payment rate $Z^{j,i,\star}$ is a function of $Z^j$ and $\Gamma^j$, the model is consistent if and only if $Z^j$ and $\Gamma^j$ are deterministic functions of time only, which is actually the case in this example \textnormal{(}they are even constant\textnormal{)}.
\end{remark}

\textcursive{Top manager's problem.}\vspace{-0.1em} The top manager carries out his own output $X^{0}$, by choosing his effort level $\alpha^{0}$, and designs the contracts for the $m$ managers. Like other managers, he has a CARA utility function with a risk--aversion parameter $R^{0}$, and maximises the utility of the difference between the payment he receives from the principal, $\xi^{0}$, and his cost of effort:
\begin{align*}
    V_0^{0} (\xi^{0}) &:= \sup_{(\alpha^{0}, \Zc^0) \in \Ac^{0} \times \Vc^0} \E^{\P^{0}} \Big[ - \erm^{- R^{0} \big( \xi^{0} - \int_0^T c^{0}(\alpha_t^{0}) \drm t \big) }\Big],
\end{align*}
where $\P^0$ is the probability associated to the effort $\alpha^0 \in \Ac^0$ and the choice of the process $\Zc^0 := (Z^j, \Gamma^j)_{j=1}^m \in \Vc^0$, under the optimal efforts of the $m$ managers and their agents. 
In this setting, the top manager observes in continuous time the net benefit of each working team led by a manager, that is the tuple $(\zeta^j)_{j=1}^m$. 
Moreover, like every managers, the top manager reports in continuous time to the principal the benefits of his team of workers composed of all managers and agents below him, namely the following variable:
\begin{align*}
    \zeta^0_t =  X_t^0 + \sum_{j=1}^m \zeta_t^j - \sum_{j=1}^m \xi_t^{j}, \; t \in [0,T].
\end{align*}

Therefore the principal can only offer to the top manager a contract indexed on $\zeta^0$, and, since he controls the volatility of $\zeta^0$ through his choice of contracts for the managers, the optimal form of his compensation is the same as \eqref{eq:contractmanager} but indexed on the variable $\zeta^0$:
\begin{align*}
    \xi^{0} :=  
    \xi^{0}_0
    - \int_0^T \Hc^{0} (Z_s, \Gamma_s) \drm s
    + \int_0^T  Z_s \drm \zeta^0_s
    + \dfrac{1}{2} \int_0^T  \big( \Gamma_s + R^{0} |Z_s|^2 \big) \drm \langle \zeta^0 \rangle_s,
\end{align*}
where $(Z, \Gamma) \in \Vc$ is a pair of processes optimally chosen by the principal. More precisely, we define by $\Vc$ the collection of all processes $(Z,\Gamma) : [0,T] \times \Cc([0,T],\R) \longrightarrow \V$, predictable with respect to the filtration generated by $\zeta^{0}$ and satisfying appropriate integrability conditions, where $\V$ is the set of all $(z,\gamma) \in \R^2$ such that the top manager's Hamiltonian $\Hc^0$ defined below by \eqref{eq:top_manager_hamiltonian} is well defined.

\medskip

Under optimal efforts and controls of the managers and the agents (and associated probability $\P^0$), the dynamic of $\zeta^0$ is given for all $t \in [0,T]$ by:
\begin{align*}
    \drm \zeta^0_t =  \bigg( \alpha^0_t
    + \sum_{j=1}^m  h^{0,j} \big(Z^{j}_t,\Gamma^{j}_t \big)  \bigg) \drm t
    + \sigma^0 \drm W^0_t
    + \sum_{j=1}^m
    ( 1 - Z_t^{j}) \bigg( \sigma^{j} \drm W_t^{j} 
    + \sum_{i=1}^{n_j} \sigma^{j,i} \big( 1 - z^{j,i,\star} \big(Z^{j}_t,\Gamma^{j}_t \big) \big) \drm W_t^{j,i} 
    \bigg),
\end{align*}
where, for all $j \in \{1, \dots, m\}$, $\widetilde R^{j} := k^j + R^{j} | \sigma^{j} |^2$ and in addition for all $(z,\gamma) \in \V^j$,
\begin{align}\label{eq:h0j}
    h^{0,j} (z, \gamma) :=  k^j z
    - \dfrac{\widetilde R^{j}}{2} | z |^2 
    + \sum_{i=1}^{n_j} \bigg( k^{j,i} z^{j,i,\star} (z,\gamma) 
    - \frac{\widetilde R^{j,i}}{2} \big| z^{j,i,\star}  (z,\gamma)  \big|^2
    - \dfrac{ R^{j}}{2} |z |^2 |\sigma^{j,i}|^2 \big| 1 - z^{j,i,\star}  (z,\gamma) \big|^2 \bigg).
\end{align}
Therefore, the top manager's Hamiltonian is defined as follows:
\begin{align}\label{eq:top_manager_hamiltonian}
    \Hc^{0} (z, \gamma) :=  
    & \sup_{a \in A^0} \big\{ a z - c^0 (a) \big\}
    + \sum_{j=1}^m \sup_{(z^j, \gamma^j) \in \V^j} \bigg\{ 
    z h^{0,j} (z^j, \gamma^j)
    + \dfrac12 \gamma 
    | 1 - z^{j} |^2 \bigg( | \sigma^{j} |^2 
    + \sum_{i=1}^{n_j} |\sigma^{j,i}|^2 \big| 1 - z^{j,i,\star} (z^{j},\gamma^{j}) \big|^2 \bigg) \bigg\} \nonumber \\
    &+ \dfrac12 \gamma | \sigma^0 |^2.
\end{align}
The first supremum is attained for the optimal effort $\alpha_t^{0,\star} = k^0 Z_t$ for $t \in [0,T]$. In addition, the optimal control of the top manager for the managers' contracts are given for all $j \in \{1, \dots m\}$ and all $t \in [0,T]$ by $Z_t^{j,\star} := z^{j,\star} (Z_t, \Gamma_t)$ and $\Gamma_t^{j,\star} := \gamma^{j,\star} (Z_t, \Gamma_t)$, for
\begin{align*}
    \gamma^{j, \star} (z,\gamma)  = - R^{j} \big( z^{j, \star} (z,\gamma) \big)^3 + \dfrac{\gamma}{z}  z^{j, \star} (z,\gamma) \big| 1 - z^{j, \star} (z,\gamma) \big|^2,
\end{align*}
where $z^{j, \star} (z,\gamma)$ can be numerically computed as the maximiser of the previous Hamiltonian $\Hc^0$, for all $(z, \gamma) \in \V$. Abusing notations slightly for simplicity, we will denote in the following, for all $(z, \gamma) \in \V$,
\begin{align}\label{eq:zjistar}
    h^{0,j,\star} (z, \gamma) := h^{0,j} \big(z^{j, \star} (z,\gamma),\gamma^{j, \star} (z,\gamma) \big) \; \text{ and } \; z^{j,i,\star} (z,\gamma) := z^{j,i,\star} \big(z^{j, \star} (z,\gamma),\gamma^{j, \star} (z,\gamma) \big).
\end{align} 

\begin{remark}\label{rk:measurability_issues_extension_top}
    In addition to the measurability issues of the manager's control, mentioned in \textnormal{\Cref{rk:measurability_issues_extension}},
    we also have, in this more complex model, a measurability problem for the top manager's control. Indeed, since we restrict the space of the contracts for the $j$--th manager to measurable contracts with respect to his own $\zeta^j$, the processes $Z^j$ and $\Gamma^j$ defining the contract must be adapted to the filtration generated by $\zeta^j$. In fact, these processes, chosen by the top manager, are functions of $Z$ and $\Gamma$, the principal's controls. Since the principal only observes $\zeta^0$, the processes $Z$ and $\Gamma$ should be adapted to the filtration generated by $\zeta^0$, contradicting the fact that $Z^j$ and $\Gamma^j$ are adapted to the filtration generated by $\zeta^j$.
    Again, this question of measurability is actually not a problem in this particular case since all optimal controls are deterministic or even constant, but will be in a more general framework.
\end{remark}

\textcursive{Principal's problem.}\vspace{-0.1em} We consider the same problem for the principal as before, namely that she maximises the difference between the sum of the outcomes and the sum of the costs at terminal time $T$, which can be summarised by $\zeta_T^0 - \xi^0$. Her problem is reduced to the optimal choice of the indexation parameters in the contract $\xi^0$, the pair $(Z, \Gamma)$, and thus to the following maximisation problem:
\begin{align*}
    \sup_{(z, \gamma) \in \V} \Bigg\{ 
    k^0 z
    - \dfrac{\widetilde R^{0}}{2} |z|^2
    + \sum_{j=1}^m \bigg( h^{0,j,\star} (z, \gamma) 
    - \dfrac{R^{0}}{2} |z|^2 \big|1-z^{j, \star}(z, \gamma)\big|^2
    \bigg( |\sigma^{j}|^2
    + \sum_{i=1}^{n_j} |\sigma^{j,i}|^2 \big| 1 - z^{j,i,\star}(z,\gamma) \big|^2 \bigg)
    \bigg)
    \Bigg\},
\end{align*}
where $\widetilde R^{0} := k^{0} + R^{0} | \sigma^{0}|^2$ and using the notations defined by \eqref{eq:zjistar}.
Optimal parameters $(z,\gamma) \in \V$ are constant over time, and their values can be obtained thanks to a simple numerical optimisation.

\medskip

By comparing to the first example where there is no top manager and only one manager, we can see that the structure of the problem is the same. Adding a level in the hierarchy is no more complicated, all it takes is writing an additional control problem. With this in mind, and in order to avoid overloading the notations, we will consider only three levels in the hierarchy for the general model, \textit{i.e.}, one principal, $m$ managers, with a fixed number of agents each.

\section{The general model: preliminaries}\label{sec:general_model}

For the general model, we focus on the following hierarchy, represented in \Cref{fig:hierarchy_general}: the principal contracts with $m$ managers, and each manager $j$ for $j \in \{1, \dots, m\}$ in turn subcontracts with $n_j$ agents, indexed by $(j,i)$ for $i \in \{1, \dots, n_j \}$. The $(j,i)$--th agent is therefore the $i$--th agent of the $j$--th manager. The term \textit{workers} will refer to the actors in the hierarchy who are in charge of managing a project, \textit{i.e.}, both agents and managers. The total number of workers is given by $w := m + \sum_{j = 1}^m n_j$. 
Fix throughout the general model a positive integer $d$, which represents the dimension of the noise which affects each project.\footnote{We assume here that $d$ does not depend on a specific agent. This is without loss of generality, as we can always add unused coordinates to a given project.} The $(j,i)$--th agent will manage the project with output $X^{j,i}$, while the $j$--th manager is in charge of the project with output $X^{j,0}$. 
We assume here for simplicity that the outputs are one--dimensional\footnote{We could consider that each output $X^{j,i}$ is $k$--dimensional for $k > 0$. In this case, each coordinate of $X^{j,i}$ can be interpreted as the profit generated by a task managed by the $(j,i)$--th worker. Nevertheless, at some point we would be led to consider the total profit generated by a worker, which will naturally corresponds to the sum of the coordinates of his output. Therefore, to simplify, we choose to directly consider each output $X^{j,i}$ as the total profit generated by the $(j,i)$--th worker, and thus avoid increasing the notations by considering $k$--dimensional vectors and finally taking their inner product with $\mathbf{1}_k$.} and uncorrelated, meaning that tasks to be performed have been clearly separated. Moreover, each worker in the hierarchy can only impact directly his own project. This set up can be justified by the fact that each worker have a specific set of skills, implying that they are the only ones able to perform their tasks. In fact, we could let them interact, but this would make the Nash equilibrium hard to solve. In addition, interactions between workers will naturally occur at the level of managers, and therefore a way to handle this issue will be explained at that time.
The most important aspect that we wish to address in this paper is the loss of information by moving up the hierarchy. To model this, we assume that each manager $j$ only reports the results of his team to the principal through a (possibly multidimensional) variable $\zeta^j$, as in \citeauthor{sung2015pay}'s model detailed in the previous sections. Thus, the principal does not have a separate access to the results of each worker, which could lead to a degeneracy of the problem towards the DC case, as we have seen in particular in \Cref{sss:reporting_denegerate}. 

\begin{figure}[h!]
\begin{center}
\begin{tikzpicture}
\tikzstyle{principal}=[rectangle,draw,rounded corners=4pt,fill=cyan!50]
\tikzstyle{agents}=[rectangle,draw,rounded corners=4pt, fill=green!50]
\tikzstyle{manager}=[rectangle,draw,rounded corners=4pt, fill=red!25]
\tikzstyle{invisible}=[rectangle]

\node[principal] (principal) at (0,0) {Principal};
\node[manager] (manager1) at (-4,-2) {Manager $1$};
\node[invisible] (manageri) at (0,-2) {$\dots$};
\node[manager] (managerm) at (4,-2) {Manager $m$};
\node[invisible] (agentii) at (0,-4) {$\dots$};
\node[agents] (agent11) at (-5.5,-4) {Agent $(1,1)$};
\node[invisible] (agent1i) at (-4,-4) {$\dots$};
\node[agents] (agent1n) at (-2.5,-4) {Agent $(1,n_1)$};
\node[agents] (agentm1) at (2.5,-4) {Agent $(m,1)$};
\node[invisible] (agentmi) at (4,-4) {$\dots$};
\node[agents] (agentmn) at (5.5,-4) {Agent $(m,n_m)$};

\draw[->] (principal) -- (manager1) node[midway,fill=white]{$\xi^{1}$};
\draw[->, dashed] (principal) -- (manageri);
\draw[->] (principal) -- (managerm) node[midway,fill=white]{$\xi^{m}$};

\draw[->, dashed] (manageri) -- (agentii);

\draw[->] (manager1) -- (agent11) node[midway,fill=white]{$\xi^{1,1}$};
\draw[->, dashed] (manager1) -- (agent1i);
\draw[->] (manager1) -- (agent1n) node[midway,fill=white]{$\xi^{1,n_1}$};

\draw[->] (managerm) -- (agentm1) node[midway,fill=white]{$\xi^{m,1}$};
\draw[->, dashed] (managerm) -- (agentmi);
\draw[->] (managerm) -- (agentmn) node[midway,fill=white]{$\xi^{m,n_m}$};

\end{tikzpicture}
\caption{Hierarchy for the general model}\label{fig:hierarchy_general}
\end{center}
\end{figure}
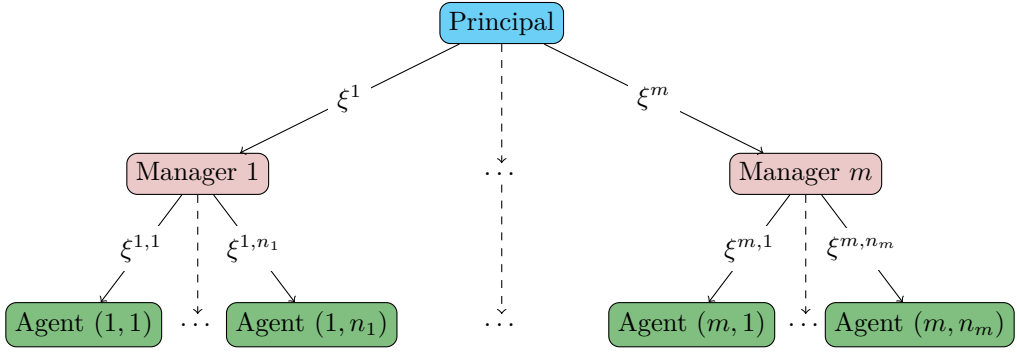

\subsection*{Additional notations}

Recall that $T > 0$ denotes some maturity fixed in the contract, and that for any positive integer $k$, $\Cc ( [0,T], \R^k)$ denotes the set of continuous functions from $[0,T]$ to $\R^k$. We will denote by $\Cc^2_b(\R^k,\R)$ the set of bounded twice continuously differentiable functions from $\R^k$ to $\R$, whose first and second derivatives are also bounded. For a probability space of the form $\Omega := \Cc ( [0,T], \R^k) \times \widetilde \Omega$ and an associated filtration $\F$, we will have to consider processes $\psi : [0,T] \times \Cc ( [0,T], \R^k) \longrightarrow E$, taking values in some Polish space $E$, which are $\F$--optional, \textit{i.e.}, $\Oc(\F)$--measurable where $\Oc(\F)$ is the so-called optional $\sigma$--field generated by $\F$--adapted right--continuous processes. In particular, such a process $\psi$ is non--anticipative in the sense that $\psi (t,x) = \psi(t, x_{\cdot \wedge t})$, for all $t\in[0,T]$ and $x \in \Cc ( [0,T], \R^k)$.

\medskip

For $j \in \{1, \dots, m\}$, $i \in \{0, \dots, n_j \}$ and $x^{j,i} \in S^{j,i}$ for some set $S^{j,i}$, we will make use of the following notations for vectors:
\begin{align}\label{eq:notations_x}
    x^{j} := (x^{j,i})_{i \in \{0, \dots, n_j \}}, \;
    x := (x^{j})_{j \in \{1, \dots, m \}}, \; 
    x^{\rm M} := (x^{j,0})_{j \in \{1, \dots, m \}} \; \text{and} \;
    x^{-j} := (x^{\ell})_{\ell \in \{1, \dots, m \} \setminus \{j\}},
\end{align}
and their corresponding sets:
\begin{align}\label{eq:notations_set}
    x^j \in S^j := \prod_{i=0}^{n_j} S^{j,i}, \; 
    x \in S := \prod_{j=1}^m \prod_{i=0}^{n_j} S^{j,i}, \; 
    x^{\rm M} \in  S^{\rm M} := \prod_{j=1}^m S^{j,0}, \; \text{ and } \; 
    x^{-j} \in S^{-j} := \prod_{\ell = 1, \, \ell \neq j}^{m} S^{j}.
\end{align}
In a similar way, we also define
\begin{itemize}[font=\scriptsize]
    \item $x^{\rm A} \in S^{\rm A}$, the vector obtained by suppressing the elements $x^{\rm M}$ of $x$;
    \item $x^{-(j,i)} \in  S^{-(j,i)}$, the vector obtained by suppressing the element $x^{j,i}$ of $x^{\rm A}$;
    \item $x^{j \setminus 0} := (x^{j,i})_{i \in \{1, \dots, n_j \}} \in  S^{j \setminus 0}$ and $x^{-j \setminus 0} := (x^{\ell \setminus 0})_{\ell \in \{1, \dots, m \} \setminus \{j\} } \in  S^{-j \setminus 0}$.
\end{itemize}
We will also use the following notations for sums: 
\begin{align}\label{eq:notation_xbar}
    \widebar{x}^{j} := \sum_{i=0}^{n_j} x^{j,i} \in \widebar S^j, \; 
    \text{and} \;
    \widebar{x}^{-j} := \big(\widebar x^{\ell} \big)_{\ell \in \{1, \dots, m \} \setminus \{j\}} \in \widebar S^{-j}.
\end{align}

\subsection{Theoretical formulation for the workers} \label{ss:theoretical_formulation}

Fix throughout this section $j \in \{1, \dots, m\}$ and $i \in \{0, \dots, n_j\}$, to consider all the workers, \textit{i.e.}, both the agents and the managers. Each worker $(j,i)$ takes care of his own task by choosing a pair $\nu^{j,i} := (\alpha^{j,i}, \beta^{j,i})$, where $\alpha^{j,i}$ and $\beta^{j,i}$ are respectively $A^{j,i}$-- and $B^{j,i}$--valued, for some subsets $A^{j,i}$ and $B^{j,i}$ of Polish spaces. 
More specifically, $\alpha^{j,i}$ and $\beta^{j,i}$ represent the effort of the worker $(j,i)$ to impact respectively the expected value and the variability of his outcome.\footnote{If the outcome $X^{j,i}$ represents the value added by the $(j,i)$--th worker, then naturally $\alpha$ represents an effort to increase the average and $\beta$ an effort to decrease the volatility. However, in more general terms, the outcome may represent different measures of a worker's performance, and it is therefore possible that the worker may need to decrease the average of the outcome or increase its volatility.} 
In addition, we will consider the following functions, assumed to be bounded:
\begin{align*}
    \lambda^{j,i} : [0,T] \times A^{j,i} \longrightarrow \R^d \; \text{ and } \; 
    \sigma^{j,i} : [0,T] \times B^{j,i} \longrightarrow \R^{d},
\end{align*}
More precisely, the scalar product between the two functions will represent the drift of the outcome of the $(j,i)$--th worker, while $\sigma^{j,i}$ will represent its volatility. We will denote for simplicity $U^{j,i}:= A^{j,i} \times B^{j,i}$, as well as $U$ the Cartesian product of the sets $U^{j,i}$, following the notations defined by \eqref{eq:notations_set}.
To easily write the dynamic of the column vector $X$ composed by the collection of all the $X^{j,i}$, we define by $\Lambda : [0,T] \times U \longrightarrow \R^{dw}$ and $\Sigma : [0,T] \times U \longrightarrow \M^{dw, w}$ the functions that will correspond respectively to the drift and the volatility of the process $X$. These functions $\Lambda$ and $\Sigma$ will be defined for all $t \in [0,T]$ and $u = (a,b) \in U$ respectively by:
\begin{align}\label{eq:def_drift_X}
    \Lambda (t,u) := 
    \begin{pmatrix}
        \big( \sigma^{j,i} \big(t, b^{j,i} \big) \cdot \lambda^{j,i} \big(t, a^{j,i} \big) \big)_{j,i}
    \end{pmatrix},
\end{align}
in the sense that $\Lambda$ is a column vector composed by the collection of all the scalar product $\sigma^{j,i} \big(t, b^{j,i} \big) \cdot \lambda^{j,i} \big(t, a^{j,i} \big)$, meaning that $\Lambda^{j,i} (t,u) := \sigma^{j,i} \big(t, b^{j,i} \big) \cdot \lambda^{j,i} \big(t, a^{j,i} \big)$, and
\begin{align}\label{eq:def_vol_X}
    \Sigma(t,b) := \bigoplus_{j =1}^m \bigoplus_{i = 0}^{n_j} \sigma^{j,i} (t, b^{j,i}),
\end{align}
where $\oplus$ symbolises direct sum\footnote{The symbol $\oplus$ denote for direct sum of matrices, which is defined for two matrix $A$ and $B$ by:
\renewcommand*{\arraystretch}{1}
\begin{align*}
    A \oplus B := 
    \begin{pmatrix}
        A & \textbf{0} \\
        \textbf{0} & B
    \end{pmatrix}.
\end{align*}} of matrices (vectors in this case).
To be consistent with the weak formulation of control problems, we need to define the canonical space $\Omega$ for the workers. Nevertheless, before that, we should discuss about what should be observed by the agents and their managers.

\subsubsection{Intuition}

According to the intuitions developed in \Cref{rk:measurability_issues,rk:measurability_issues_extension_top}, we cannot assume that the $(j,i)$--th agent only observes his own output $X^{j,i}$, since his contract cannot be restricted to a measurable function of this output. Indeed, in the general case, the parameters $Z^{j,i}$ and $\Gamma^{j,i}$ chosen by his manager and indexing the contract on his output would not be \textit{a priori} deterministic functions of time. Moreover, if we restrict the payment rates $Z^{j,i}$ and $\Gamma^{j,i}$ for the $(j,i)$--th agent to be adapted only to the filtration generated by $X^{j,i}$, the principal, who does not observe $X^{j,i}$, will not be able to compute the $j$--th manager's Hamiltonian. In fact, to better understand this measurability problem, we have to approach it from the top of the hierarchy. 

\medskip

We want to study a case of loss of information by proceeding up the hierarchy, modelled by the fact that each manager $j$ reports only the variable $\zeta^j$ to the principal, representing the performance of his work team. 
Indeed, if we consider for example that the principal represents the company's shareholders as in \cite{sung2015pay}, it is logical to assume that she is not aware of the precise results of each team led by a manager, and that she is probably only interested in the profits and costs, or even the net profits/benefit of each team, represented by the vector $\zeta$. This assumption is particularly relevant if we consider, for example, that each team is a department of the company (or a subsidiary of the parent company), and that shareholders can only compare the benefits of the different departments to optimise their investments and the importance given to each department.

\medskip

Therefore, the principal only observes the collection of the $\zeta^j$, for $j \in \{1, \dots, m \}$. Under some restrictive conditions\footnote{These conditions could be that the principal's problem is separable in each $\zeta^j$, with $\zeta^j$ independent and independently controlled by each manager. To illustrate a separable problem for the principal, one can consider \citeauthor{sung2015pay}'s model developed in \Cref{sec:sungmodel} and adding other working teams, led by managers, also reporting the net benefit of their working team to the principal. In this case, if each manager controls only his own $\zeta^j$ and if all $\zeta^j$ are independent, since the net benefit is just a difference of sums, and the principal is risk--neutral, her problem is completely separable in each $\zeta^j$.}, she may offer a contract for the $j$--th manager which only depends on the result $\zeta^j$ of his working team. However, more generally, her controls would be adapted to any information available to her, \textit{i.e.}, the filtration generated by $\zeta := (\zeta^j)_{j=1}^m $, and therefore it makes little sense to restrict the space of contracts for the $j$--th manager to measurable functions of $\zeta^j$. We are thus led to study a more general space of contracts for the managers, measurable with respect to the filtration generated by $\zeta$. 

\medskip

Therefore, each manager receives a contract indexed on $\zeta$, and we should make some assumptions ensuring that it is the only state variable of his control problem, in order to avoid the more challenging case where the manager's problem depend on another process, unobservable by the principal. Under these assumptions (see in particular \Cref{ass:zeta_dynamic} in the following), even if a manager observes independently the outcomes of his agents, his optimal controls will be adapted to the filtration generated by $\zeta$, and thus computable at the optimum by the principal. Nevertheless, the managers can use the detailed information they have to index the contract for their agents on it. 
Indeed, if a manager indexes his agents' contracts only on $\zeta$, he does not benefit from the information he knows over that known by the principal, and there is then no loss of information between the manager and the principal. Therefore, it must be in the interest of the $j$--th manager to index the compensation for his workers on all the information he has. In particular, we will assume that the $j$--th manager observes in continuous time:
\begin{itemize}[font=\scriptsize]
    \item the output produced by the workers of his team, including his own, \textit{i.e.}, the $(n_j+1)$--dimensional process $X^j$;
    \item the sum of the results of each of the other teams, \textit{i.e.}, the $(m-1)$--dimensional process $\widebar X^{-j}$;
\end{itemize}
using the notations defined by \eqref{eq:notations_x} and \eqref{eq:notation_xbar}.
Indeed, in the context of a hierarchy in a company, it seems quite logical to assume that a manager is well informed about the results of his agents, and that during meetings between managers, everyone communicates only the overall result of his team. 
The canonical space of each agent should contain every processes observable by his manager. Indeed, we want to focus on the loss of information by going up the hierarchy, between the agents and the managers, and then between the managers and the principal. Considering in addition a loss of information by going down the hierarchy would seriously complicate the problem, and would require further study.

\medskip

We could also assume that the $j$--th manager observes in particular the output produced by the workers of the other teams, \textit{i.e.}, the process $X^{-j}$, instead of the sum process $\widebar X^{-j}$, and so that the agents of his team have access to it as well. In practice, it seems difficult to imagine that a manager would have access to the individual results of the agents from another team. Nevertheless, for the sake of clarity and simplicity, we will consider that an agent observes all the outputs of all workers. Indeed, since each agent can only impact its own project, and its objective function depends only on its project and remuneration, the state variables of his problem will only be the one on which the contract is indexed. This consideration is therefore without loss of generality and makes it possible to define a single canonical space for all agents, regardless of the team they are in.

\medskip

In summary, the following framework is considered: the agents, at the bottom of the hierarchy, observe all the workers' output, \textit{i.e.}, the global output process $X$ taking values in $\R^{w}$. As explained above, they will not use all the information contained in $X$, the important thing is that they have access to the information held by their manager. Each manager perfectly observes the outputs of his agents, as well as his own. However, he does not have access to the detailed results of the other teams, but only to the sum of the outputs produced per team. Finally, the principal only observes $m$ variables, namely $\zeta := (\zeta^j)_{j=1}^m$, each representing the aggregation of a team's results.

\subsubsection{Canonical space}\label{sss:can_space_init}

Following the previous reasoning, we are thus led to consider the following canonical space
\[
    \Omega := \Cc ([0,T],\R^w) \times \Cc ([0,T], \R^{dw}) \times \U,
\]
where $\mathbb U$ is the collection of all finite and positive Borel measures on $[0,T] \times U$, whose projection on $[0,T]$ is the Lebesgue measure. In other words, every $q \in \mathbb \U$ can be disintegrated as $q(\mathrm{d}t,\mathrm{d} u)=q_t(\mathrm{d} u)\mathrm{d}t$, for $t \in [0,T]$ and an appropriate Borel measurable kernel $q_t$. The weak formulation requires to consider a subset of $\U$, namely the set $\U_0$ of all $q \in \U$ such that the kernel $q_t$ is of the form $\delta_{\phi_t}(\mathrm{d} v)$ for some Borel function $\phi$, where as usual, $\delta_{\phi_t}$ is the Dirac mass at $\phi_t$.
This space is supporting a canonical process $(X, W, \Pi)$, where for any $(t, x, w, q) \in [0,T] \times \Omega$,
\begin{align*}
    X_t(x,w,q) &:=x(t), \; W_t(x,w,q):=w(t), \; \Pi(x,w,q):=q.
\end{align*}
Less formally, $X$ represents the collection of the $w$ one--dimensional outcomes $X^{j,i}$ controlled by the workers. Each $X^{j,i}$ is affected by a $d$--dimensional idiosyncratic noise $W^{j,i}$, and $W$ is the collection of the $w$ noises $W^{j,i}$, using the notations defined in \eqref{eq:notations_x}.
Then, the canonical filtration $\F:=(\Fc_t)_{t\in[0,T]}$ is defined by
\begin{align*}
    \mathcal F_t:=\sigma\Big( \big( X_s, W_s, \Delta_s(\varphi) \big) \text{ s.t. }  (s,\varphi)\in[0,t]\times \Cc_b \big( [0,T]\times U, \R \big) \Big),\; t\in [0,T],
\end{align*}
where $\Cc_b([0,T]\times U,\R)$ is the set of all bounded continuous functions from $[0,T]\times U$ to $\R$, and for any $(s,\varphi)\in[0,T]\times \Cc_b([0,T]\times U,\R)$,
\begin{align*}
    \Delta_s(\varphi):=\int_0^s\int_{U} \varphi(r,u) \Pi (\mathrm{d}r,\mathrm{d} u).
\end{align*}
For any $(t,\psi)\in[0,T]\times \Cc^2_b(\R^{w} \times \R^{dw}, \R)$, we set
\begin{align}\label{eq:Mphi_ji}
    M_t^{\rm A}(\psi):= \psi (X_t, W_t) - \int_0^t \int_{U} \bigg( \widetilde \Lambda (s, u) \cdot \nabla \psi (X_s, W_s)
    + \frac12  {\rm Tr} \Big[ \nabla^2 \psi (X_s, W_s) \widetilde \Sigma (s, u)  \widetilde \Sigma (s, u)^\top \Big] \bigg) \Pi (\mathrm{d}s, \mathrm{d} u),
\end{align}
where $\nabla^2 \psi$ denotes the Hessian matrix of $\psi$, $\widetilde \Lambda$ and $\widetilde \Sigma$ are respectively the drift vector and the diffusion matrix of the $(w + d w)$--dimensional vector process $(X, W)^\top$, defined for all $s \in [0,T]$ and $u:=(a,b) \in U$ by:
\renewcommand*{\arraystretch}{1}
\begin{align*}
    \widetilde \Lambda(s, u) := 
    \begin{pmatrix}
        \Lambda (s,u) \\
        \mathbf{0}_{w d}
    \end{pmatrix},
    \; 
    \widetilde \Sigma(s,u) :=
    \begin{pmatrix}
        \mathbf{0}_{w, w} & \Sigma (s,b)^\top  \\
        \mathbf{0}_{w d, w} & \mathrm{I}_{w d} \\
    \end{pmatrix},
\end{align*}
where $\Lambda$ and $\Sigma$ are respectively defined by \eqref{eq:def_drift_X} and \eqref{eq:def_vol_X}, so that $\widetilde \Lambda (s,u) \in \R^{w + wd}$ and $\widetilde \Sigma (s,u) \in \M^{w + wd}$.

\medskip

We fix some initial conditions, namely $x_0 \in \R^{w}$ representing the initial value of $X$, and let $\Mc$ be the set of all probability measures on $(\Omega,\Fc_T)$.
\begin{definition}\label{def:weak_formulation}
The subset $\Pc \subset \Mc$ is composed of all $\P$ such that
\begin{enumerate}[label=$(\roman*)$]
    \item $M^{\rm A}(\psi)$ is a $(\F, \P)$--local martingale on $[0,T]$ for all $\psi \in \Cc^2_b(\R^{w} \times \R^{dw},\R)$;
    \item there exists some $w_0 \in \R^{wd}$ such that $\P [(X_0,W_0) = (x_0,w_0)]=1$;
    \item $\P\big[\Pi \in \U_0]=1$.
\end{enumerate}
\end{definition}
The previous definition does not give us access directly to the dynamic of $X$. It is however a classical result that, enlarging the canonical space if necessary, one can construct Brownian motions allowing to write rigorously the dynamic of $X$, see for instance \citeayn[Theorem 4.5.2]{stroock1997multidimensional}. It turns out here that, since we enlarged the canonical space right from the start to account for the idiosyncratic noises, any further enlargement is not required. Indeed, arguing as in the proof of \citeayn[Lemma 2.2]{lin2018second}, we can prove the following.

\begin{lemma}\label{lem:no_enlarge}
For all $\P \in \Pc$, we have $\Pi(\mathrm{d}s,\mathrm{d} u) = \delta_{\nu^\P_s}(\mathrm{d} u)\mathrm{d}s $ $\P$--{\rm a.s.} for some $\F$--predictable control process $\nu^\P := ( \alpha^{j,i,\P}, \beta^{j,i,\P} )_{j,i}$, and we obtain the following representation for $X$:
\begin{align}\label{eq:def_X}
    X_t = x_0 + \int_0^t \Lambda \big( s, \nu_s^{\P} \big) \drm s + \int_0^t \Sigma \big(s, \beta_s^{\P} \big)^\top \drm W_s, \; t\in[0,T],\; \P \textnormal{--a.s.}
\end{align}
More precisely, for any $j \in \{1, \dots, m\}$, $i \in \{0, \dots, n_j\}$,
\begin{align}\label{eq:XW}
    X_t^{j,i}=x_0^{j,i}+\int_0^t \sigma^{j,i} \big(s, \beta_s^{j,i,\P} \big) \cdot \Big[ \lambda^{j,i} \big(s, \alpha_s^{j,i,\P} \big) \mathrm{d}s +  \mathrm{d}W^{j,i}_s \Big],\; t\in[0,T],\; \P \textnormal{--a.s.}
\end{align}
\end{lemma}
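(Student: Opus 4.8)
The plan is to follow the standard route for weak formulations of controlled martingale problems, taking advantage of the fact that the idiosyncratic noise $W$ is already part of the canonical space, so that the martingale representation of $X$ can be carried out directly against $W$ with no further enlargement. Fix $\P \in \Pc$ throughout.

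First I would disintegrate the control measure and extract a predictable version. By $(iii)$ of \Cref{def:weak_formulation}, $\P[\Pi \in \U_0]=1$, so by the very definition of $\U_0$ there is a Borel function $\phi$ with $\Pi(\drm s,\drm u)=\delta_{\phi_s}(\drm u)\drm s$, $\P$--a.s. The point is to replace $\phi$ by an $\F$--predictable process. Since $\F$ is generated by $(X,W,\Delta(\varphi))$ and each $\Delta_s(\varphi)=\int_0^s\varphi(r,\phi_r)\drm r$ is continuous and $\F$--adapted, hence $\F$--predictable, a measurable selection argument exactly as in \cite[Lemma 2.2]{lin2018second} yields an $\F$--predictable process $\nu^\P=(\alpha^{j,i,\P},\beta^{j,i,\P})_{j,i}$ such that $\Delta_s(\varphi)=\int_0^s\varphi(r,\nu_r^\P)\drm r$ for every test function $\varphi$, and therefore $\Pi=\delta_{\nu_s^\P}(\drm u)\drm s$ $\P$--a.s. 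I expect this to be the main technical obstacle, since passing from the adapted continuous processes $\Delta(\varphi)$ to a jointly measurable predictable selection of the support of $\Pi$ requires a careful predictable-projection and Lebesgue-differentiation argument rather than a mere disintegration.

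Next I would read off the dynamics from the martingale property of $M^{\rm A}(\psi)$ in \eqref{eq:Mphi_ji}. Feeding in the coordinate functions $\psi(x,w)=x^{j,i}$ shows that $X^{j,i}-\int_0^\cdot\Lambda^{j,i}(s,\nu_s^\P)\drm s$ is a continuous $(\F,\P)$--local martingale, while $\psi(x,w)=w^k$ shows, the $W$--block of $\widetilde\Lambda$ being null, that each coordinate of $W$ is a continuous local martingale. Applying $M^{\rm A}(\psi)$ to the quadratic test functions $\psi=w^kw^\ell$, $\psi=x^{j,i}w^k$ and $\psi=x^{j,i}x^{j',i'}$, together with the explicit block form of $\widetilde\Sigma$ given just after \eqref{eq:Mphi_ji}, a direct computation gives $\widetilde\Sigma\widetilde\Sigma^\top=\left(\begin{smallmatrix}\Sigma^\top\Sigma & \Sigma^\top\\ \Sigma & \mathrm{I}_{wd}\end{smallmatrix}\right)$, so that $\langle W\rangle_t=\mathrm{I}_{wd}\,t$, $\drm\langle X,W\rangle_s=\Sigma(s,\beta_s^\P)^\top\drm s$ and $\drm\langle X\rangle_s=\Sigma(s,\beta_s^\P)^\top\Sigma(s,\beta_s^\P)\drm s$.

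Finally I would invoke Lévy's characterization and conclude without enlargement. From $\langle W\rangle_t=\mathrm{I}_{wd}\,t$ it follows that $W$ is a standard $\R^{wd}$--valued $\P$--Brownian motion. Writing $N:=X-x_0-\int_0^\cdot\Lambda(s,\nu_s^\P)\drm s$ for the martingale part of $X$, the three covariation identities above yield, with $\Sigma$ evaluated at $(s,\beta_s^\P)$,
\[ \big\langle N-\textstyle\int_0^\cdot\Sigma(s,\beta_s^\P)^\top\drm W_s\big\rangle_t=\int_0^t\big(\Sigma^\top\Sigma-2\,\Sigma^\top\Sigma+\Sigma^\top\Sigma\big)\drm s=\mathbf{0}_{w,w}, \]
so that $N-\int_0^\cdot\Sigma(s,\beta_s^\P)^\top\drm W_s$ is a continuous local martingale with vanishing quadratic variation starting from $0$, hence identically $0$ $\P$--a.s. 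This is exactly \eqref{eq:def_X}; extracting the $(j,i)$--th coordinate and using the definitions \eqref{eq:def_drift_X}--\eqref{eq:def_vol_X} of $\Lambda$ and $\Sigma$ gives \eqref{eq:XW}. Because $W$ was part of the canonical space from the outset, this representation is obtained on $\Omega$ itself, which is precisely the content of the lemma.
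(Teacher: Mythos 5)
Your proposal is correct and follows essentially the same route the paper relies on: the predictable selection of the control from $\Pi\in\U_0$ as in \cite[Lemma 2.2]{lin2018second}, identification of the characteristics of $(X,W)$ from the martingale problem, Lévy's characterisation of $W$, and the vanishing-quadratic-variation argument showing $X-x_0-\int_0^\cdot\Lambda(s,\nu^\P_s)\drm s-\int_0^\cdot\Sigma(s,\beta^\P_s)^\top\drm W_s\equiv 0$, which is exactly why no enlargement is needed. The only cosmetic point is that the coordinate and quadratic test functions are not in $\Cc^2_b$, so strictly speaking you should localise with smooth truncations before reading off the drift and covariations, a routine step.
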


\begin{remark}\label{rk:SDE_simplify}
    By construction, the collection of outputs $X^{j,i}$ are not defined as solutions of stochastic differential equations \textnormal{(SDE} for short\textnormal{)}, as in more general framework of principal--agent models \textnormal{(}see \textnormal{\citeayn{cvitanic2018dynamic}} for example\textnormal{)}, they are just standard It\=o processes. The reader is referred to \textnormal{\Cref{rk:SDE_simplify_why}} for the motivations of this assumption.
\end{remark}

\subsection{The principal--managers--agents hierarchy}\label{ss:hierarchy_theoretical}

The hierarchy is modeled by a series of interlinked principal--agent problems: the principal contracts with $m$ managers, and each manager $j$, for $j \in \{1, \dots, m\}$, hires $n_j$ agents, indexed by $(j,i)$ for $i \in \{1, \dots, n_j \}$. More precisely, for $j \in \{1,\dots,m\}$, the $j$--th manager receives a compensation $\xi^j$ from the principal and must design what will be the remuneration $\xi^{j,i}$ of each agent $(j,i)$, for $i \in \{1, \dots, n_j\}$. Finally, as in the model developed in the previous sections, we assume that the principal chooses the continuation utilities of the agents at time $t=0$  (in addition to those of the managers), \textit{i.e.}, the expected value finally obtained by the agents at time $T$, denoted by $Y^{j,i}_0 \in \R$ for the $(j,i)$--th agent.

\medskip

Once again, it seems more appropriate to think about this hierarchy from top to bottom. First, and as mentioned above, the principal chooses the initial values of the agents' continuation utility. This will lead us to fix, until the principal's problem, $Y^{j,i}_0 \in \R$ for all $j \in \{1, \dots, m\}$ and $i \in \{1, \dots, n_j\}$, denoted by $Y_0^{\rm A} \in \R^{w-m}$. The principal also offers to each manager a contract indexed on the variable $\zeta$ she observes, which leads to the first Stackelberg game. Given his contract and the choices of other managers, the $j$--th manager will choose:
\begin{enumerate}[label=$(\roman*)$]
    \item an optimal effort $\alpha^{j,0}$ to improve the expected value of his own output, and the associated probability;
    \item a compensation scheme for the agents he manages, \textit{i.e.}, $(\xi^{j,i})_{i=1}^{n_j}$.
\end{enumerate}
Then, given the choices of the managers, and in particular the agents' contracts, the agents will determine their optimal efforts. To write their problems, it is therefore necessary to fix a contract and an effort probability chosen by the managers. In addition, since an agent receives a contract which depends on his colleagues' output, his optimal response must therefore also be defined in relation to the efforts of the other agents. The intuition is thus to define the optimal response of an agent given:
\begin{enumerate}[label=$(\roman*)$]
    \item a contract;
    \item the efforts of the managers;
    \item the efforts of the other agents.
\end{enumerate}

In a classical way, the two Stackelberg games are solved from the bottom to the top. First we look for the optimal response of an agent to arbitrary choices of others. Next, the Nash equilibrium between the agents can be solved, under fixed contracts and efforts of the managers. Then, knowing the optimal response of the agents, each manager will then be able to choose his efforts and the remuneration for his agents, in order to optimise his criterion, given the choices of other managers, and the contracts designed by the principal. Since the manager's choices depend on those of other managers, it is also necessary to find a Nash equilibrium between managers. Finally, given the optimal response of each manager to a contract, the principal will be able to design the optimal contract for each manager in order to optimise her own criterion.

\subsubsection{A Nash equilibrium between the agents}\label{sss:nash_theoretical}

To consider a particular agent, we fix throughout this section $j \in \{1, \dots, m \}$ and $i \in \{1, \dots, n_j\}$.
Following the previous reasoning, we have to define two subsets of the canonical space $\Omega$, in order to look for the optimal response of the $(j,i)$--th agent, given the choices of other workers (managers and agents). Using notations \eqref{eq:notations_set}, we define the sets $U^{\rm M}$ and $U^{-(j,i)}$ from the collection of sets $U^{j,i}$. In the same manner we have defined $\U$ and the corresponding set $\U_0$ for the canonical space in \Cref{sss:can_space_init}, we define:
\begin{enumerate}[label=$(\roman*)$]
    \item $\mathbb U^{-(j,i)}$, the collection of all finite and positive Borel measures on $[0,T] \times U^{-(j,i)}$, whose projection on $[0,T]$ is the Lebesgue measure, and the associated subset $\U^{-(j,i)}_0$;
    \item $\mathbb U^{\rm M}$, the collection of all finite and positive Borel measures on $[0,T] \times U^{\rm M}$, whose projection on $[0,T]$ is the Lebesgue measure, and the associated subset $\U^{\rm M}_0$.
\end{enumerate}
Informally, the set $\U_0$ will allow us to define the set of admissible efforts of all the workers, while the set $\U^{-(j,i)}_0$ will be used to define the efforts of other agents, apart from the $(j,i)$--th agent, and the set $\U^{\rm M}_0$ will be used to define the efforts of the managers. In the same way we have defined $\Omega$, with its canonical process, its canonical filtration and the appropriate subset of probability $\Pc$ by \Cref{def:weak_formulation}, we can define the two following canonical spaces:
\begin{enumerate}[label=$(\roman*)$]
    \item $(\Omega^{-(j,i)},\Fc^{-(j,i)}_T)$ the canonical space of other agents (apart from the agent $(j,i)$), where
\begin{align*}
    \Omega^{-(j,i)} := \Cc ([0,T],\R^{w-m-1}) \times \Cc ([0,T], \R^{d(w-m-1)}) \times \U^{-(j,i)},
\end{align*}
with canonical process $( X^{-(j,i)}, W^{-(j,i)}, \Pi^{-(j,i)})$, and canonical filtration $\F^{-(j,i)}:=(\Fc^{-(j,i)}_t)_{t\in[0,T]}$, on which we define the subset $\Pc^{-(j,i)}$ of probability measures satisfying the appropriate properties;
    \item $(\Omega^{\rm M},\Fc^{\rm M}_T)$ the canonical space of managers, where
\begin{align*}
    \Omega^{\rm M} := \Cc ([0,T],\R^m) \times \Cc ([0,T], \R^{dm}) \times \U^{\rm M},
\end{align*}
with canonical process $(X^{\rm M}, W^{\rm M}, \Pi^{\rm M})$, and canonical filtration $\F^{\rm M}:=(\Fc^{\rm M}_t)_{t\in[0,T]}$, on which we define the subset $\Pc^{\rm M}$ of probability measures satisfying the appropriate properties.
\end{enumerate}
Informally, the canonical space $\Omega^{-(j,i)}$ contains the same information as $\Omega$, except that the components concerning the $(j,i)$--th agent and the managers are removed, and thus, $\Omega^{-(j,i)}$ is a subset of $\Omega$. Similarly, the space $\Omega^{\rm M}$ contains the information of $\Omega$ concerning the managers. 
We can now define, for the $(j,i)$--th agent, the set of his admissible response to others.

\begin{definition}[Admissible Response]\label{def:admissible_agent}
    Consider two probability measures, $(\P^{\rm M}, \P^{-(j,i)}) \in \Pc^{M} \times \Pc^{-(j,i)}$, respectively chosen by the managers and the other agents. The set of admissible response of the $(j,i)$--th agent, denoted by $\Pc^{j,i} (\P^{-(j,i)}, \P^{\rm M})$, is given by all probability measures $\P \in \Pc$ on $(\Omega,\Fc_T)$ satisfying:
    \begin{enumerate}[label=$(\roman*)$]
        \item the restriction of $\P$ to $(\Omega^{-(j,i)},\Fc^{-(j,i)}_T)$ is $\P^{-(j,i)}$;
        \item the restriction of $\P$ to $(\Omega^{\rm M},\Fc^{\rm M}_T)$ is $\P^{\rm M}$.
    \end{enumerate}
\end{definition}

Following the discussion in the beginning of \Cref{ss:theoretical_formulation}, we have assumed that the $j$--th manager observes the output of the agents under his supervision, but also the sum of the outputs of each of the other teams. Formally, this means that the $j$--th manager proposes to the $(j,i)$--th agent a contract $\xi^{j,i}$, which is a random variable measurable with respect to the natural filtration generated by $X^{j}$ and $\widebar X^{-j}$, denoted $\G^j$. In other words, $\xi^{j,i}$ must be a measurable functional of the paths of $X^{j}$ and $\widebar X^{-j}$:
\begin{align}\label{eq:contract_agent_dependency}
 \nonumber  \xi^{j,i} : \Cc \big([0,T],\R^{n_j+1} \big) \times \Cc \big([0,T],\R^{m-1} \big) & \longrightarrow \mathbb{R}, \\ \big( X^{j}, \widebar X^{-j} \big) \qquad \qquad \qquad  & \longmapsto \xi^{j,i} \big( X^{j}, \widebar X^{-j} \big).
\end{align}
One can notice that the contracts for the agents of the team $j$ are measurable with respect to $\G^j$, \textit{i.e.}, all agents in a team will receive a contract indexed on the same results. The set of admissible contracts for the $(j,i)$--th agent will be denoted by $\Cc^{j,i}$, and we refer to \Cref{def:contract_agent_admissible} below for a rigorous description.

\medskip

Given this contract and two probability measures $(\P^{-(j,i)}, \P^{\rm M}) \in \Pc^{-(j,i)} \times \Pc^{\rm M}$ chosen by others, we introduce the $(j,i)$--th agent's objective function
\begin{align}\label{eq:objectiv_agent}
    J^{j,i} \big(\P, \xi^{j,i} \big):= \E^\P \bigg[ \Kc^{j,i, \P}_{0,T} \; g^{j,i} \big(X^{j,i}_{\cdot \wedge T}, \xi^{j,i} \big) - \int_0^T \Kc^{j,i,\P}_{0,s} c^{j,i} \big(s, X^{j,i}, \nu_s^{j,i,\P} \big) \mathrm{d}s \bigg], \; \text{for} \; \P \in \Pc^{j,i} \big( \P^{-(j,i)}, \P^{\rm M} \big),
\end{align}
where
\begin{enumerate}[label=$(\roman*)$]
    \item $g^{j,i} : \Cc([0,T], \R) \times \R \longrightarrow \R$ is a utility function assumed to be Borel measurable in each arguments, such that for any $x \in \Cc([0,T], \R)$, the map $\xi \longmapsto g^{j,i} (x, \xi)$ is invertible and we will denote by $\widebar g^{j,i}$ its inverse.
    \item $c^{j,i}:[0,T] \times \Cc([0,T], \R) \times U^{j,i} \longrightarrow \R$ is a cost function assumed to be Borel measurable in each arguments, such that for any $u\in U^{j,i}$, the map $(t,x) \longmapsto c^{j,i}(t,x,u)$ is $\F$--optional, and there exists some $p > 1$ such that
    \begin{align}\label{eq:integ_cond_cost}
        \sup_{\P\in \Pc} \E^\P \bigg[ \int_0^T \big| c^{j,i} \big(t, X^{j,i}, \nu^{j,i,\P}_t \big) \big|^p \drm t \bigg] < + \infty;
    \end{align}
    \item the discount factor
    \[
    \Kc^{j,i, \P}_{t,s} :=\exp\bigg(-\int_t^s \int_{U} k^{j,i} \big(v, X^{j,i}, u^{j,i} \big) \Pi (\mathrm{d}v,\mathrm{d} u) \bigg), \; \text{ for } \; 0\leq t\leq s\leq T \; \text{ and } \; \P \in \Pc^{j,i} \big( \P^{-(j,i)}, \P^{\rm M} \big),
    \]
    is defined by means of the function $k^{j,i}: [0,T] \times \Cc([0,T], \R) \times  U^{j,i} \longrightarrow \R$ assumed to be bounded and Borel measurable in each arguments, and such that for any $u\in U^{j,i}$, the map $(t,x)\longmapsto k^{j,i}(t,x,u)$ is $\F$--optional.
\end{enumerate}
The previous setting and assumptions are relatively standard in control theory. The map $g^{j,i}$ is assumed to be invertible so that one can recover the contract $\xi^{j,i}$ from the continuation utility of the agent. More precisely, our first goal is to obtain the dynamic of the continuation utility of the $(j,i)$--th agent, which will be denoted $(Y_t^{j,i})_{t \in [0,T]}$, satisfying at the end of the contracting period the following equality: $Y_T^{j,i} = g^{j,i} (X^{j,i}, \xi^{j,i})$. The contract will thus be given by $\xi^{j,i} = \widebar g^{j,i} (X^{j,i}, Y^{j,i}_T)$, recalling that $\widebar g^{j,i} : \Cc([0,T], \R) \times \R \longrightarrow \R$ is the inverse of $g^{j,i}$. However, we are forced to make an additional hypothesis, and the reader is referred to \Cref{rk:separability_why} for the motivation.
\begin{assumption}\label{ass:separability}
    There exists $c_{\rm x}^{j,i}, k_{\rm x}^{j,i} : [0,T] \times \Cc([0,T], \R) \longrightarrow \R$ and $c_{\rm u}^{j,i}, k_{\rm u}^{j,i} : [0,T] \times U^{j,i} \longrightarrow \R$ such that for any $(t, x, u) \in [0,T] \times \Cc([0,T], \R) \times U^{j,i}$, we can write:
    \begin{align*}
        c^{j,i}(t,x,u) = c_{\rm x}^{j,i}(t,x) + c_{\rm u}^{j,i}(t,u) \; \text{ and } \; k^{j,i}(t,x,u) = k_{\rm x}^{j,i}(t,x) + k_{\rm u}^{j,i}(t,u).
    \end{align*}
\end{assumption}

Despite this unusual assumption, the framework under consideration can at least accommodate the two most standard cases for the agent's utility function, \textit{i.e.}, both the risk--averse case from \cite{sung2015pay} and the risk--neutral case. Indeed, to recover for example \citeauthor{sung2015pay}'s model, it suffices to set, for all $(t, x,y) \in [0,T] \times \R^2$, $u := (a,b) \in \U^{j,i}$, $c^{j,i} (t,x,u) = k_{\rm x}^{j,i} (t,x) \equiv 0$, $g^{j,i} (x, y) := - \erm^{- R^{j,i} y}$ and $ k_{\rm u}^{j,i} (t,u) := - R^{j,i} a^2/ 2k^{j,i}$, for some $(R^{j,i}, k^{j,i}) \in (\R_+^\star)^2$.

\medskip

Given a compensation $\xi^{j,i}$ promised by his manager, as well as a pair of probabilities $(\P^{-(j,i)}, \P^{\rm M}) \in \Pc^{-(j,i)} \times \Pc^{\rm M}$ chosen by other workers, the optimisation problem faced by the $(j,i)$--th agent is defined by
\begin{align}\label{eq:pb_agent}
    V^{j,i}_0 \big(\P^{-(j,i)}, \P^{\rm M}, \xi^{j,i} \big) := \sup_{\P \in \Pc^{j,i} (\P^{-(j,i)}, \P^{\rm M}) } J^{j,i} \big(\P, \xi^{j,i} \big).
\end{align}
For $V_0^{j,i}(\P^{-(j,i)}, \P^{\rm M}, \xi^{j,i})$ to make sense, we require minimal integrability on the contracts, and thus impose that there is some $p>1$ such that
\begin{align}\label{eq:integrability_contract_agent}
    \sup_{\P\in \Pc} \E^\P \bigg[ \big| g^{j,i} \big( X^{j,i}, \xi^{j,i} \big) \big|^p \bigg] < + \infty.
    \tag{${\rm I}^p_{\rm A}$}
\end{align}

\begin{definition}[Optimal Response]\label{def:optimal_response_agent}
    A probability measure $\P \in \Pc$ is an optimal response to the probabilities $\P^{-(j,i)}$ and $\P^{\rm M}$ chosen by others and to a contract $\xi^{j,i} \in \Cc^{j,i}$ if $(i)$ $\P$ is admissible, \textit{i.e.}, $\P \in \Pc^{j,i} (\P^{-(j,i)}, \P^{\rm M})$; and $(ii)$ $V^{j,i}_0 \big(\P^{-(j,i)}, \P^{\rm M}, \xi \big) = J^{j,i} \big(\P, \P^{-(j,i)}, \P^{\rm M}, \xi \big)$. We denote by $\Pc^{j,i,\star} (\P^{-(j,i)}, \P^{\rm M}, \xi)$ the collection of all such optimal probability measures.
\end{definition}

Using the notation \eqref{eq:notations_x}, we define by $\xi^{\rm A}$ the collection of contracts for all agents, \textit{i.e.}, $\xi^{j,i} \in \Cc^{j,i}$ for $j \in \{1, \dots, m\}$ and $i \in \{1, \dots, n_j \}$, and the corresponding set is denoted $\Cc^{\rm A}$. A Nash equilibrium between the agents can thus be defined as an optimal response to the manager's choices for any agents:

\begin{definition}[Nash Equilibrium]\label{def:nash_agent}
    Fix a probability measure $\P^{\rm M} \in \Pc^{\rm M}$ chosen by the managers and a collection $\xi^{\rm A} \in \Cc^{\rm A}$ of contracts for the agents. We denote by $\Pc^{\rm A,\star} (\P^{\rm M}, \xi^{\rm A})$ the set of Nash equilibria between agents, \textit{i.e.} the set of probability measure $\P \in \Pc$ such that for any $j \in \{1, \dots, m\}$ and any $i \in \{1, \dots, n_j\}$, $\P \in \Pc^{j,i,\star} (\P^{-(j,i)}, \P^{\rm M}, \xi^{j,i})$, where $\P^{-(j,i)}$ is the restriction of $\P$ on $\Omega^{-(j,i)}$.
\end{definition}

To simplify the scope of our study and avoid unnecessary complexities, we will subsequently require that all eligible contracts for agents are those that induce a unique Nash equilibrium between agents, and we thus define the set of admissible contract as follows:
\begin{definition}[Admissible contracts]\label{def:contract_agent_admissible}
Fix $Y^{\rm A}_0 \in \R^{w-m}$ the collection of $Y_0^{j,i} \in \R$ for all $j \in \{1, \dots, m\}$, $i \in \{1, \dots, n_j\}$ and $\P^{\rm M} \in \Pc^{\rm M}$.
A contract of the form \eqref{eq:contract_agent_dependency} satisfying \textnormal{\Cref{eq:integrability_contract_agent}} is called admissible. The corresponding class is denoted by $\Cc^{j,i}$. Moreover, a collection $\xi^{\rm A}$ of contracts for the agents is admissible if $(i)$ the induced Nash equilibrium $\P^\star$ between the agents is unique; and $(ii)$ for all $j \in \{1, \dots, m\}$, $i \in \{1, \dots, n_j\}$, $\xi^{j,i} \in \Cc^{j,i}$ and the $(j,i)$--th agent's value function at equilibrium is equal to $Y^{j,i}_0$. In this case, we will write $\xi^{\rm A} \in \Cc^{\rm A}$.\footnote{Note that the space of admissible contracts $\Cc^{\rm A}$ depends on the collection of $Y_0^{\rm A} \in \R^{w-m}$. Nevertheless, in order to lighten the notations, we omit this dependency.}
\end{definition}

In addition, similarly to classical principal--agent problems, we assume that the $(j,i)$--th agent has a reservation utility level $\rho^{j,i} \in \R$ below which he will refuse to work, and thus decline the contract offered by his manager. Mathematically speaking, a contract should thus satisfy the following inequality:
\begin{align}\label{eq:participation_agent}
    V^{j,i}_0 \big(\P^{-(j,i)}, \P^{\rm M}, \xi^{j,i} \big) \geq \rho^{j,i}.
    \tag{${\rm PC_A}$}
\end{align}
Note that, if the collection $\xi^{\rm A}$ of contracts for the agents is admissible in the sense of the previous definition, then there exists a unique Nash equilibrium $\P^\star$ between agents, which satisfies in addition $V^{j,i}_0 (\P^{-(j,i),\star}, \P^{\rm M}, \xi^{j,i}) = Y_0^{j,i}$. Therefore, in order to ensure that the participation constraint \eqref{eq:participation_agent} of the $(j,i)$--th agent is satisfied, the principal only has to choose $Y_0^{j,i} \geq \rho^{j,i}$ in the end.

\subsubsection{A Nash equilibrium between the managers}\label{sss:def_manager_pb}

Throughout the following, we fix $j \in \{1, \dots, m\}$ to informally define the $j$--th manager's optimisation problem. The weak formulation of this problem will be rigorously defined in \Cref{ss:manager_problem}, since it will be necessary to consider a new canonical space taking into account the optimal response of the agents.
Let us just recall for the moment that the $j$--th manager is in charge of a task that generates an output $X^{j,0}$. His effort to improve his output $X^{j,0}$ is defined in an informal way by a pair $\nu^{j,0} := (\alpha^{j,0}, \beta^{j,0}) \in \Uc^{j,0}$ taking values in $U^{j,0} := A^{j,0} \times B^{j,0}$.

\medskip

We suppose that the $j$--th manager reports in continuous time to the principal the variable $\zeta^j$, assumed to be $h$--dimensional for some $h > 0$, and measuring the global result of his entire working team (including himself). Therefore, $\zeta^j$ will depend on the outcomes of his team, namely $X^{j}$ (the outcomes of the agents he manages and his own) and the collection $\xi^{j \setminus 0} := (\xi^{j,i})_{i= 1}^{n_j}$ of compensations to be paid to the agents. Therefore, the principal only knows the result of each team, \textit{i.e.}, the process $\zeta := (\zeta^j)_{j=}^m$, and thus the contract designed by the principal for the $j$--th manager depends exclusively on the path of $\zeta$:
\begin{align}\label{eq:contract_manager_dependency}
    \xi^{j,0} : \zeta \in \Cc \big([0,T],\R^{hm} \big) \longmapsto  \xi^{j,0} ( \zeta) \in \mathbb{R}.
\end{align}
In other words, we can only consider contracts for managers which are $\Gc_T$--measurable, where $\G := (\Gc_t)_{t\in [0,T]}$ is the natural filtration of $\zeta$. The set of admissible contracts for the $j$--th manager will be denoted by $\Cc^{j,0}$, and we refer to \Cref{def:contract_manager_admissible} below for a rigorous description.

\medskip

We define the characteristics of the $j$--th manager as follows:
\begin{enumerate}[label=$(\roman*)$]
    \item a utility function $g^{j,0} : \Cc([0,T], \R^h) \times \R \longrightarrow \R$, assumed to be Borel measurable in each arguments, such that for any $x \in \Cc([0,T], \R^h)$, the map $y \longmapsto g^{j,0} (x, y)$ is invertible and we will denote by $\widebar g^{j,0}$ its inverse;
    \item a cost function $c^{j,0} : [0,T] \times \Cc([0,T], \R^h) \times U^{j,0} \longrightarrow \R$, assumed to be Borel measurable in each arguments, such that for any $u\in U^{j,0}$, the map $(t,x)\longmapsto c^{j,0} (t,x,u)$ is $\G$--optional, and satisfying for some $p > 1$
    \begin{align}\label{eq:integ_cond_cost_manager}
        \sup_{\P\in \Pc} \E^\P \bigg[ \int_0^T \big| c^{j,0} \big(t, \zeta^j, \nu^{j,0}_t \big) \big|^p \drm t \bigg] < + \infty;
    \end{align}
    \item a discount factor $k^{j,0}: [0,T] \times \Cc([0,T], \R^h) \times U^{j,0} \longrightarrow \R$, assumed to be bounded and Borel measurable in each arguments, and such that for any $u \in U^{j,0}$, the map $(t,x) \longmapsto k^{j,0} (t,x,u)$ is $\G$--optional,
    and its associated quantity
    \[
    \Kc^{j,0, \P}_{t,s} := \exp \bigg(-\int_t^s \int_{U} k^{j,0} \big(v, \zeta^j, u^{j,0} \big) \Pi (\mathrm{d}v,\mathrm{d} u) \bigg), \; \text{ for } \; 0\leq t\leq s\leq T \; \text{ and } \; \P \in \Pc;
    \]
    \item a reservation utility level $\rho^{j,0} \in \R$ below which he will refuse to work.
\end{enumerate}

Given a probability $\P \in \Pc$ on the canonical space, as well as a contract $\xi^{j,0}$ designed for him by the principal, the criterion of the $j$--th manager is defined as follows:
\begin{align}\label{eq:criterion_manager}
    J^{j,0} \big(\P, \xi^{j,0} \big) &:= \E^{\P} \bigg[ \Kc^{j, 0, \P}_{0,T}  g^{j,0} \big(\zeta^j, \xi^{j,0} \big) - \int_0^T \Kc^{j, 0, \P}_{0,t}  c^{j,0} \big(t, \zeta^{j}, \nu^{j,0, \P}_t \big) \mathrm{d} t \bigg].
\end{align}

\begin{remark}\label{rk:measurability_manager}
    The attentive reader will have noticed that the characteristics of the manager, \textit{i.e.}, the functions $g^{j,0}$, $c^{j,0}$ and $k^{j,0}$, are assumed to be functions of $\zeta^j$, and not of the collection $X$ of outputs. Indeed, these functions can only depend on the variable observed by the principal. Otherwise, if these functions depend on some variable which is not observed by the principal, she cannot compute the managers' Hamiltonians, even for their optimal efforts. 
    This would raise major issues, not yet addressed in the literature on continuous time $($to our knowledge$)$, which would require a full study before it can be considered in our case. Nevertheless, it is worth noticing that some works attempt to address similar problems, such as the paper by \citeayn{huang2017optimal}.
    Nevertheless, for the sake of generality, we could let them depend on the collection of the $\zeta^j$, since the manager's contract will be a measurable function of $\zeta := (\zeta^j)_{j=1}^m$, and $\zeta$ will thus be the natural state variable of each manager's problem.
\end{remark}

The $j$--th manager must optimise the specific criterion defined by \eqref{eq:criterion_manager}, given the contract he receives, but also given the choices of the other managers. 
More precisely, let us fix a contract $\xi^{j,0}$ of the form \eqref{eq:contract_manager_dependency}, as well as the decision of other managers, namely, for all $\ell \in \{1, \dots, m \} \setminus \{j\}$,
\begin{enumerate}[label=$(\roman*)$]
    \item the effort $\nu^{\ell, 0} \in \Uc^{\ell, 0}$ of the $\ell$--th manager;
    \item the collection of contracts $\xi^{\ell \setminus 0} \in \Cc^{\ell \setminus 0}$ offered by the $\ell$--th manager to his $n_\ell$ agents.
\end{enumerate}
Given this, the $j$--th manager must thus choose an optimal control $\nu^{j,0} \in \Uc^{j,0}$, as well as a contract $\xi^{j,i} \in \Cc^{j,i}$ for each agent $(j,i)$ under his supervision. We summarise the controls of the managers by a tuple $\widetilde \chi \in \widetilde{\mathscr X}$, such that
\begin{align*}
    \widetilde \chi^j := \big( \nu^{j,0}, (\xi^{j,i})_{i=1}^{n_j} \big)  \in \widetilde{\mathscr X}^j := \Uc^{j,0} \times \prod_{i=1}^{n_j} \Cc^{j,i},
\end{align*}
is the control of the $j$--th manager, for all $j \in \{1, \dots, m\}$. Informally, the probability $\P^{\rm M} \in \Pc^{\rm M}$ results from the effort choice of all managers, namely $\widetilde \chi \in \widetilde{\mathscr X}$. Therefore, the collection of all contracts chosen by the managers for their agents, denoted $\xi^{\rm A}$, depends on $\widetilde \chi$. As mentioned in \Cref{def:contract_agent_admissible}, we require that all eligible contracts for agents are those that induce a unique Nash equilibrium between agents, \textit{i.e.}, $\Pc^{\rm A,\star} (\P^{\rm M}, \xi^{\rm A}) = \{ \P^\star (\widetilde \chi)\}$, where the notation is made to highlight the dependence of the probability $\P^\star$ on the control $\widetilde \chi$.

\begin{remark}
    It is well--known that the uniqueness of a Nash equilibrium is more the exception than the rule. However, if we do not want to assume uniqueness, this leads to considerations that can become complex. Indeed, assume that there is no uniqueness of the Nash equilibrium. In usual principal--agent problems\footnote{We refer here to \textit{usual} principal--agent problems in the sense that there is no limited liability, contrary to \citeayn{sannikov2008continuous} or \citeayn{demarzo2017learning}, and the agent does not control the discount factor.}, the agents actually receive exactly their reservation utility. If this is true in our case, each equilibrium should therefore give the same utility to the agents. 
    In this case, it would therefore be reasonable to assume that each manager chooses the most advantageous equilibrium between his agents from his point of view, and that the principal then chooses the best equilibrium between the managers. 
    Unfortunately, to do this, it would first be necessary to define a Nash equilibrium between the agents per team $($\textit{i.e.} for all $j \in \{1, \dots, m\})$, and then look at the Nash equilibrium between the teams, before we address the Nash equilibrium between the managers. 
    Moreover, at this stage of the study, it is also possible that the various equilibria give distinct utilities to the agents. In this case, if one equilibrium Pareto--dominates the others from the agents' point of view, then it would make sense for the agents to choose it, and the manager would not be able to optimise over all possible equilibria. 
    In our opinion, introducing the possibility of multiple Nash equilibria would therefore only overload the framework, while it is not the major focus of this study.
\end{remark}

The $j$--th manager's optimisation problem can then be informally written as follows:
\begin{align}\label{eq:pb_manager}
    V^{j,0}_0 \big(\xi^{j,0}, \widetilde \chi^{-j} \big) &:= \sup_{\widetilde \chi^j \in \widetilde{\mathscr X}^j } 
    J^{j,0} \big(\P^{\star} \big(\widetilde \chi \big), \xi^{j,0} \big).
\end{align}
Similar to the agents problem, we require minimal integrability on the contracts:
\begin{align}\label{eq:integrability_contract_manager}
    \sup_{\P \in \Pc} 
    \E^{\P} \bigg[ \Big| g^{j,0} \big(\zeta^j, \xi^{j,0} \big) \Big|^p \bigg] < + \infty, \; \text{for some} \; p >1.
    \tag{${\rm I}^p_{\rm M}$}
\end{align}

\begin{definition}[Nash equilibrium between managers]\label{def:nash_manager}
Given a collection $\xi^{\rm M} := (\xi^{j, 0})_{ j=1}^m \in \Cc^{\rm M}$ of contracts, a Nash equilibrium between the managers is a tuple of control $\widetilde \chi \in \widetilde{\mathscr X}$, such that for all $j \in \{1, \dots, m\}$,
\begin{align*}
    V^{j,0}_0 \big(\xi^{j,0}, \widetilde \chi^{-j} \big) =
    J^{j,0} \big(\P^{\star} \big(\widetilde \chi \big), \xi^{j,0} \big).
\end{align*}
We denote by $\Pc^{\rm M,\star} (\xi^{\rm M})$ the set of Nash equilibria given $\xi^{\rm M} \in \Cc^{\rm M}$.
\end{definition}

Similar to the previous Stackelberg game, to simplify the scope of our study, we will require that all eligible contracts for managers are those that induce a unique Nash equilibrium between them, \textit{i.e.}, $\Pc^{\rm M,\star} (\xi^{\rm M}) = \{ \P^\star (\xi^{\rm M})\}$. 

\begin{definition}\label{def:contract_manager_admissible}
A contract of the form \eqref{eq:contract_manager_dependency} satisfying \textnormal{\Cref{eq:integrability_contract_manager}} is called admissible. The corresponding class is denoted by $\Cc^{j,0}$. The product of the sets $\Cc^{j,0}$ for $j \in \{1, \dots, m\}$, such that the resulting collection $\xi^{\rm M}$ of contracts for the managers induces a unique Nash equilibrium between them, will be denoted by $\Cc^{\rm M}$.
\end{definition}

In addition, similarly to the agent's problems, we assume that the $j$--th manager has a reservation utility level $\rho^{j,0} \in \R$ below which he will refuse to work, and thus decline the contract offered by the principal. Mathematically speaking, a contract should also satisfy the following inequality:
\begin{align}\label{eq:participation_manager}
    V^{j,0}_0 \big( \xi^{j,0} \big) \geq \rho^{j,0}. \tag{${\rm PC_M}$}
\end{align}
As with the agents' admissible contracts, we could have assumed that the collection of admissible contracts for the managers is such that for any $j \in \{ 1,\dots, m\}$, the $j$--th manager's value function equals a particular $Y^{j,0}_0 \in \R$ at equilibrium. Then, the principal would optimise these $Y^{j,0}_0 \in \R$ in order to satisfy the manager's participation constraint. Nevertheless, contrary to the agents' problem, here the principal chooses both the value of the manager's continuation utility and his contract. Therefore, there is no need to first consider a subset of admissible contracts for a given level of continuation utility and then optimise this level.

\subsubsection{A principal at the top}\label{sss:principal_pb}

It remains to define, still informally at this point, the principal's problem. Contrary to the managers, her problem is more classical, as she does not directly control any process, she just designs the collection of contracts $\xi^{\rm M}$ for the $m$ managers. Her criterion is defined by:
\begin{align}\label{eq:principal_criterion}
    J^{\rm P} (\xi^{\rm M}) :=
    \E^{\P (\xi^{\rm M})} \Big[ \Kc^{\rm P}_{0,T} \; g^{\rm P} \big(\zeta, \xi^{\rm M} \big) \Big],
\end{align}
where $\P (\xi^{\rm M})$ can be seen informally as the probability resulting from the optimal controls of all managers and agents under her supervision given the contracts $\xi^{\rm M}$, and
\begin{enumerate}[label=$(\roman*)$]
    \item $g^{\rm P} : \R^{hm} \times \R^m \longrightarrow \R$ is a given utility function, non--increasing and concave in the second argument;
    \item the discount factor 
    \begin{align*}
        \Kc^{\rm P}_{s,t} := \exp \bigg( \int_s^t k^{\rm P}(r, \zeta) \drm r \bigg), \; 0 \leq s \leq t \leq T, 
    \end{align*}
    is defined by means of a bounded discount rate function $k^{\rm P} : [0,T] \times \Cc([0,T], \R^h) \longrightarrow \R$, $\G$--optional, recalling that $\G := (\Gc_t)_{t\in [0,T]}$ is the natural filtration of $\zeta$.
\end{enumerate}

The principal must therefore optimise the specific criterion defined by \eqref{eq:principal_criterion} by choosing the collection of contracts $\xi^{\rm M} \in \Cc^{\rm M}$, as well as the initial value of the agents' continuation utility $Y_0^{\rm A} \in \R^{w - m}$. Since we have assumed uniqueness of the Nash equilibrium between managers given a collection of admissible contracts, the principal's problem is simply given by:
\begin{align}\label{eq:pb_principal}
    V^{\rm P}_0 := \sup_{Y_0^{\rm A} \in \R^{w-m}} \sup_{\xi^{\rm M} \in \Cc^{\rm M}} J^{\rm P} (\xi^{\rm M}).
\end{align}
Note that the choice $Y_0^{\rm A}$ does not directly appear in the principal's optimisation problem. Nevertheless, this choice does have an impact on the optimal controls of both the managers and agents, as well as on the value of $\zeta$. However, to lighten the notations, we choose not to emphasise this dependency.

\section{Reduction to a standard stochastic control problem}\label{sec:solving_general_model}

Applying the recent results of \citeayn{cvitanic2018dynamic}, it is relatively simple to solve the $(j,i)$--th agent's problem. Indeed, by limiting our study to contracts indexed on the dynamics of the outputs $X^j$ and $\widebar X^{-j}$ and their quadratic variations, the optimal efforts are given by the maximisers of his Hamiltonian. It is then sufficient to show that the restriction to contracts of this form for the manager is without loss of generality. 

\medskip

The main result of this paper is that this reasoning can be extended to the Stackelberg game between the managers and the principal. Therefore, following the same intuition, we will limit our study to contracts for the managers indexed on a well--chosen state variable, namely $\zeta$, and its quadratic variation, through a tuple of parameters $\Zc \in \Vc$ chosen by the principal. We will prove that the problem of the managers is relatively simple to solve for this particular class of contract, and that this restriction is in fact without loss of generality for the principal. More precisely, we establish in \Cref{thm:main_principal} that, at the end of the day, the principal's problem defined by \eqref{eq:pb_principal} boils down to the following standard control problem:
\begin{align*}
V^{\rm P}_0
= \sup_{ Y_0 \geq \rho} 
\sup_{\Zc \in \Vc} 
\E^{\P^\star (\Zc)} \Big[ \Kc^{\rm P}_{0,T} g^{\rm P} (\zeta, \xi^{\rm M}) \Big],
\end{align*}
where 
\begin{enumerate}[label=$(\roman*)$]
    \item the inequality $Y_0 \geq \rho$ has to be understood componentwise, \textit{i.e.}, for all $j \in \{1, \dots, m\}$ and $i \in \{0, \dots, m\}$, $Y_0^{j,i} \geq \rho^{j,i}$, and ensure that the participation constraint of all workers is satisfied;
    \item $\P^\star (\Zc)$ is the unique Nash equilibrium between the workers, given the control $\Zc \in \Vc$ chosen by the principal;
    \item $\xi^{\rm M}$ is the collection of \textit{revealing contracts} for the managers, thoroughly characterised by the choice of $Y_0^{\rm M} \in \R^m$ and $\Zc \in \Vc$.
\end{enumerate}

\subsection{Contracting with the agents}\label{ss:agent_problem}

This subsection is devoted to solving the problem of a particular agent. With this in mind, we fix throughout the following $j \in \{1, \dots, m \}$ and $i \in \{1, \dots, n_j \}$, $Y^{j,i}_0 \in \R$, as well as the probabilities $\P^{-(j,i)}$ and $\P^{\rm M}$ chosen by other workers, and thus the associated efforts $\widehat \nu := (\nu^{-(j,i)}, \nu^{\rm M}) \in \Uc^{-(j,i)} \times \Uc^{\rm M}$.
We have assumed previously that, given his manager's observation, an admissible contract $\xi^{j,i} \in \Cc^{j,i}$ for the $(j,i)$--th agent is restricted to functions of the form \eqref{eq:contract_agent_dependency}, and more precisely satisfies \Cref{def:contract_agent_admissible}. Therefore, in view of his objective function \eqref{eq:objectiv_agent}, we can already point out that the state variables of his optimisation problem \eqref{eq:pb_agent} are $X^j$ and $\widebar X^{-j}$. By considering the dynamic version $Y_t^{j,i}$ of his value function, we should have:
\begin{align}\label{eq:dynamic_value_function}
    Y_0^{j,i} = V_0^{j,i} \big( \P^{-(j,i)}, \P^{\rm M}, \xi^{j,i} \big), \text{ and } Y_T^{j,i} = g^{j,i} \big( X_{\cdot \wedge T}^{j,i}, \xi^{j,i} \big).
\end{align}
From this definition, we notice that we have an explicit relationship between the compensation $\xi^{j,i}$ and the terminal value function $Y_T^{j,i}$.
Given the probabilities chosen by other workers, and the associated efforts, we first write the Hamiltonian of the considered agent $(j,i)$ in \Cref{sss:hamiltonian}. Intuitively, this Hamiltonian appears by simply applying It\=o's formula to the dynamic function of the consumer and by considering the associated Hamilton--Jacobi--Bellman (HJB for short) equation. The next step is to derive a class of so--called revealing contracts, thus extending to a many--agents framework the results of \citeayn{cvitanic2018dynamic}, which considered general moral hazard problems with one agent, or similarly extending to volatility control the results of \citeayn{elie2019contracting}, where the agents controlled only the drift of the output process $X$. Similarly, the class of revealing contracts can be obtained by applying It\=o's formula, still in an informal way. For more details about this intuition, the reader may refer to \Cref{app:intuition_markovian}, where the reasoning is rigorously written in the Markovian framework. 
Finally, we will see in \Cref{thm:main_manager} that the restriction to revealing contracts is in fact without loss of generality. 

\subsubsection{Agent's Hamiltonian}\label{sss:hamiltonian}

Recalling that the agent's problem is defined by \eqref{eq:pb_agent}, and since his contract is restricted to functions of the form \eqref{eq:contract_agent_dependency}, the state variables of his problem are $X^j$ and $\widebar X^{-j}$. Nevertheless, the agent only controls the process $X^{j,i}$, while the dynamic of the other state variables are fixed through the probabilities $\P^{-(j,i)}$ and $\P^{\rm M}$. 
Following the reasoning in \cite{cvitanic2018dynamic}, as well as the intuition in the Markovian case developed in \Cref{app:intuition_markovian}, the agent's Hamiltonian is a sum of two components: 
\begin{enumerate}[label=$(\roman*)$]
    \item one is the classical Hamiltonian part as in \cite{cvitanic2018dynamic}, given by the supremum on the agent's effort $u := (a,b) \in U^{j,i}$ of the following function:
    \begin{align}\label{eq:agent_hamiltonian_tomax}
        h^{j,i}(t, x, y, z, \gamma, u) &:= - c^{j,i}(t,x,u) - k^{j,i}(t,x,u)y + \Lambda^{j,i} (t,u) z + \dfrac{1}{2} \big\| \sigma^{j,i}(t,b) \big\|^2 \gamma,
    \end{align}
    for $(t, x) \in [0,T] \times \Cc([0,T], \R)$, $(y, z, \gamma) \in \R^3$ and recalling that $\Lambda^{j,i} (t,u) := \sigma^{j,i}(t,b) \cdot \lambda^{j,i}(t,a)$;
    \item the second part is related to the indexation of the contract on the outputs of the other workers, and thus indexed on their effort $\widehat \nu := (\nu^{-(j,i)}, \nu^{\rm M}) \in \Uc^{-(j,i)} \times \Uc^{\rm M}$, fixed by $\P^{-(j,i)}$ and $\P^{\rm M}$:
    \begin{align}\label{eq:agent_hamiltonian_1}
        H^{j,i} \big( t, z, \widetilde z, \widehat \nu \big) := 
        z \cdot \Big( \Lambda^{j,\ell} \big(t, \widehat \nu_t^{j,\ell} \big) \Big)^{n_j}_{\ell = 0, \, \ell \neq i}
        + \widetilde z \cdot \bigg( \sum_{\ell = 0}^{n_k} \Lambda^{k,\ell} \big(t, \widehat \nu_t^{k,\ell} \big) \bigg)_{k = 1, \, k \neq j}^{m},
    \end{align}
    for $t \in [0,T]$ and $(z, \widetilde z) \in \R^{n_j} \times \R^{m-1}$.
\end{enumerate}

\begin{remark}\label{rk:SDE_simplify_why}
Without the assumption on the independence of the drift and the volatility with respect to the outputs $X$, the second part of $H^{j,i}$, defined by \eqref{eq:agent_hamiltonian_1}, would depend on the outputs of the other teams, which are not supposed to be observable by the manager of the $j$--th team. Unfortunately, the manager would not be able to compute the Hamiltonian of his agents in this case. This would lead to the more challenging case, already mentioned in \textnormal{\Cref{rk:measurability_manager}}, where the agent’s problem depends on another process, unobservable by the manager. Again, to our knowledge, this problem is not yet addressed in the literature in continuous time $($see the aforementioned paper \textnormal{\cite{huang2017optimal}} for a particular example$)$, and would require a full study before it could be considered in our case. 
\end{remark}

The Hamiltonian $\Hc^{j,i}$ of the agent $(j,i)$ is the supremum on his effort of the sum of the two previous terms:
\begin{align}\label{eq:agent_hamiltonian}
    \Hc^{j,i} \big(t, x, y, z, \widetilde z, \gamma, \widehat \nu \big) := \sup_{u \in U^{j,i}} h^{j,i} \big(t, x, y, z^i, \gamma, u \big) + H^{j,i} \big( t, z^{-i}, \widetilde z, \widehat \nu \big),
\end{align}
defined for any $(t, x, y, z, \widetilde z, \gamma) \in [0,T] \times \Cc([0,T], \R) \times \R \times \R^{n_j+1} \times \R^{m-1} \times \R$ and along $\widehat \nu \in \Uc^{-(j,i)} \times \Uc^{\rm M}$. By definition of $h^{j,i}$ in \eqref{eq:agent_hamiltonian_tomax}, and using \Cref{ass:separability}, we can already notice that a maximiser of the Hamiltonian, if it exists, can be written as a function $u^{j,i,\star} : [0,T] \times \R^3 \longrightarrow A^{j,i} \times B^{j,i}$:
\begin{align}\label{eq:hamiltonian_maximiser}
    u^{j,i, \star} \big(t, y, z^i, \gamma \big) := \big( a^{j,i, \star}, b^{j,i, \star} \big) \big(t, y, z^i, \gamma \big).
\end{align}
This maximiser, which will be proved later to be the optimal effort of the $(j,i)$--th agent, only depends on the time, the variable $y$ (which will be the agent's continuation utility), and the parameters $z^{i}$ and $\gamma$ (which will represent the indexation of the contract on respectively his output $X^{j,i}$ and its quadratic variation).
In particular, this maximiser does not depend on the effort of the other workers $\widehat \nu$, nor even on the indexation of the agent's contract on the outputs of the others, represented by the parameter $\widetilde z^{-j}$. Therefore, an agent will in fact optimise his efforts independently of others.

\begin{remark}\label{rk:separability_why}
Without \textnormal{\Cref{ass:separability}}, the maximiser of the Hamiltonian would also be a function of $X^{j,i}$. In this case, even if the agent's supervisor could still compute his optimal effort, the manager of another team could not, which would be an issue, similar to the one mentioned in \textnormal{\Cref{rk:SDE_simplify_why}}.
\end{remark}

Similarly, we can define the Hamiltonian of any agents in the same way $\Hc^{j,i}$ is defined for the $(j,i)$--th agent. Therefore, the function \eqref{eq:hamiltonian_maximiser} is defined for any $j \in \{1, \dots, m\}$ and $i \in \{1, \dots, n_j\}$. Apart from the $(j,i)$--th agent, we will denote by $\nu^{-(j,i), \star}$ the collection of the effort of other agents such that their Hamiltonians are maximised.
To simplify the reasoning from now on, we will make the following assumption.
\begin{assumption}\label{ass:unicity_maximiser_agents}
    For all $j \in \{1, \dots, m\}$ and $i \in \{1, \dots, n_j\}$, there exists a unique Borel--measurable map $u^{j,i,\star} : [0,T] \times \R^3 \longrightarrow U^{j,i}$, defined by \eqref{eq:hamiltonian_maximiser}, maximising the Hamiltonian $\Hc^{j,i}$ given by \eqref{eq:agent_hamiltonian}.
\end{assumption}

\subsubsection{A relevant form of contracts leading to a Nash equilibrium}\label{sss:nash_equilibrium_agent}

We define the relevant subset of contracts, similarly as \citeayn[Definition 3.2]{cvitanic2018dynamic}, but extended to a multi--agents framework, in the spirit of \citeayn{elie2019contracting}, although with volatility control in addition. As for the definition of the Hamiltonian, this relevant form of contract is intuited from the Markovian framework, developed in \Cref{app:intuition_markovian}.

\medskip

Let $\V^j := \R^{n_j+1} \times \R^{m-1} \times \R$. For any $\V^j$--valued $\G^{j}$--predictable processes $\Zc := (Z,\widetilde Z, \Gamma)$, and any $Y_0^{j,i} \in \R$, let us define $\P$--a.s. for all $\P \in \Pc^{j,i}(\P^{-(j,i)}, \P^{\rm M})$ the process $Y^{j,i}$ by:
\begin{align}\label{eq:continuation_utility_agent}
    Y_{t}^{j,i}
    :=  Y_{0}^{j,i} - \int_0^t \Hc^{j,i} \big( r, X^{j,i}, Y_r^{j,i}, \Zc_r, \widehat \nu_r^{\star} \big) \drm r
    + \int_0^t Z_r \cdot \drm X_r^{j} 
    + \int_0^t \widetilde Z_r \cdot \drm \widebar X_r^{-j}
    + \dfrac{1}{2} \int_0^t \Gamma_r \drm \langle X^{j,i} \rangle_r,
\end{align}
for all $t \in [0,T]$, where $\Hc^{j,i}$ is defined by \eqref{eq:agent_hamiltonian} and $\widehat \nu^\star := (\nu^{-(j,i),\star}, \nu^{\rm M})$ is the collection of optimal efforts of the other agents, given by their Hamiltonian maximisers through \eqref{eq:hamiltonian_maximiser}, and fixed effort of the managers. This process $Y^{j,i}$ will represent the continuation utility of the $(j,i)$--th agent, given the action of others.

\begin{remark}\label{rk:solution_ODE}
Note that the process $Y^{j,i}$ is defined by \eqref{eq:continuation_utility_agent} as a solution of an \textnormal{ODE} with random coefficients. It is therefore necessary to mention that this \textnormal{ODE} is well defined so that the solution exists and is unique. This is indeed the case because the Hamiltonian $\Hc^{j,i}$ defined by \eqref{eq:agent_hamiltonian} is Lipschitz continuous in the variable $y$ \textnormal{(}due to the discount factor $k^{j,i}$ being bounded\textnormal{)}, thus guaranteeing that $Y^{j,i}$ is well--defined as the unique solution of \eqref{eq:continuation_utility_agent}.
\end{remark}

\begin{definition}\label{def:contractji}
Let $Y_0^{j,i} \in \R$. We denote by $\Vc^{j,i}$ the set of $\V^j$--valued $\G^{j}$--predictable process $\Zc$, such that
the process $Y^{j,i}$ defined by \eqref{eq:continuation_utility_agent} satisfies the following integrability condition:
\begin{align}\label{eq:integrability_continuation_utility_agent}
    \sup_{\P \in \Pc} \E^\P \bigg[ \sup_{t \in [0,T]} \big| Y^{j,i}_t \big|^p \bigg] < + \infty,
    \tag{${\rm J}^p_{\rm A}$}
\end{align}
for some $p > 1$. 
For any $\Zc \in \Vc^{j,i}$, we call random variables of the form $\xi^{j,i} = \widebar g^{j,i} (X^{j,i}, Y_T^{j,i})$ \textbf{revealing contracts} for the $(j,i)$--th agent, and denote the corresponding set by $\Xi^{j,i}$.
\end{definition}


By considering revealing contracts, we are able to compute the optimal efforts of each agent, which were given informally by \eqref{eq:hamiltonian_maximiser}: intuitively, maximising each agent's Hamiltonian is sufficient to obtain his optimal efforts. 
Since the agent's optimal efforts do not depend on the efforts of the others, it simplifies the task of characterising a Nash equilibrium, in the sense of \Cref{def:nash_agent}. In other words, each agent controls his output independently of others. 
Still informally, \Cref{ass:unicity_maximiser_agents} is in force to ensure existence and uniqueness of the Nash equilibrium, thus avoiding technical considerations at the level of the managers' problem, which, in our opinion, are not relevant for this analysis. These results are rigorously presented in the following proposition.

\begin{proposition}\label{prop:nash_agent}
    Fix $\P^{\rm M}$ the probability chosen by the managers. For all $j \in \{1, \dots, m\}$ and $i \in \{1, \dots, n_j\}$, let $Y^{j,i}_0 \in \R$ and $\Zc^{j,i} := (Z^{j,i}, \widetilde Z^{j,i}, \Gamma^{j,i}) \in \Vc^{j,i}$. Define $Y^{j,i}$ and the associated contract $\xi^{j,i} \in \Xi^{j,i}$ as in \textnormal{\Cref{def:contractji}}. Denote by $\xi^{\rm A} \in \Xi^{\rm A}$ the subsequent collection of agents' contracts. Then, $\xi^{\rm A} \in \Cc^{\rm A}$ in the sense of \textnormal{\Cref{def:contract_agent_admissible}}, and the unique Nash equilibrium $\P^\star \in \Pc^{\rm A,\star} (\P^{\rm M}, \xi^{\rm A})$ is characterised by, for all $j \in \{1, \dots, m\}$ and $i \in \{1, \dots, n_j\}$,
    \begin{enumerate}[label=$(\roman*)$]
    \item the optimal effort of the $(j,i)$--th agent is given by the unique maximiser of his Hamiltonian, defined by \eqref{eq:hamiltonian_maximiser}:
    \begin{align*}
        \nu^{j,i,\star}_t := u^{j,i, \star} \big(t, Y^{j,i}_t, \big( Z_t^{j,i} \big)^i, \Gamma^{j,i}_t \big), \; \drm t \otimes \P^\star\text{--a.s. for all } t \in [0,T];
    \end{align*}
    \item $Y^{j,i}_0 = V_0^{j,i} \big(\P^{-(j,i),\star}, \P^{\rm M}, \xi^{j,i} \big)$.
\end{enumerate}
\end{proposition}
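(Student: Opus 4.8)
The plan is to run a verification argument in the spirit of \citeay{cvitanic2018dynamic}, but carried out agent by agent with the efforts of all other workers frozen at their Hamiltonian maximisers. Fix $(j,i)$, the data $Y_0^{j,i}$ and $\Zc^{j,i} = (Z^{j,i}, \widetilde Z^{j,i}, \Gamma^{j,i}) \in \Vc^{j,i}$, and take the reference probabilities $\P^{-(j,i),\star}$ (the other agents playing $\nu^{-(j,i),\star}$) and $\P^{\rm M}$. Recall that $Y^{j,i}$ defined by \eqref{eq:continuation_utility_agent} is a pathwise functional of $(X^j, \widebar X^{-j})$, so the revealing contract $\xi^{j,i} = \widebar g^{j,i}(X^{j,i}, Y_T^{j,i})$ is a fixed, $\Gc^j_T$--measurable random variable satisfying \eqref{eq:integrability_contract_agent} by virtue of \eqref{eq:integrability_continuation_utility_agent}; in particular $\xi^{j,i} \in \Cc^{j,i}$. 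The goal is to show that for every admissible response $\P \in \Pc^{j,i}(\P^{-(j,i),\star}, \P^{\rm M})$ one has $J^{j,i}(\P, \xi^{j,i}) \leq Y_0^{j,i}$, with equality precisely when the agent plays the maximiser $u^{j,i,\star}$.

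First I would apply It\^o's formula to the discounted process $\Kc^{j,i,\P}_{0,t} Y_t^{j,i}$. Using $\drm \Kc^{j,i,\P}_{0,t} = - \Kc^{j,i,\P}_{0,t} k^{j,i}(t, X^{j,i}, \nu_t^{j,i,\P}) \drm t$ together with \eqref{eq:continuation_utility_agent}, the finite--variation part involves $-\Hc^{j,i}$, the drift of $Z_t \cdot \drm X_t^j + \widetilde Z_t \cdot \drm \widebar X_t^{-j}$ (read off from \eqref{eq:def_X}), the curvature term $\tfrac12 \Gamma_t \|\sigma^{j,i}(t, \beta_t^{j,i,\P})\|^2$, and $-k^{j,i}Y_t^{j,i}$. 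The crucial point is that, because the other workers are frozen at $\widehat\nu^\star$, the contribution of the drifts of $X^{j,\ell}$ ($\ell \neq i$) and of $\widebar X^{-j}$ coincides exactly with the second component $H^{j,i}$ of the Hamiltonian, which therefore cancels against the matching term inside $\Hc^{j,i}$. What remains is
\begin{align*}
    \drm \big( \Kc^{j,i,\P}_{0,t}Y_t^{j,i} \big) = \Kc^{j,i,\P}_{0,t} \Big( h^{j,i}\big(t,X^{j,i},Y_t^{j,i},(Z_t)^i,\Gamma_t,\nu_t^{j,i,\P}\big) - \sup_{u} h^{j,i}\big(t,X^{j,i},Y_t^{j,i},(Z_t)^i,\Gamma_t,u\big) + c^{j,i}\big(t,X^{j,i},\nu_t^{j,i,\P}\big) \Big) \drm t + \drm M_t,
\end{align*}
with $M$ a local martingale. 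Integrating and taking $\P$--expectation (after the martingale justification below), recognising that the left side yields $\E^\P[\Kc^{j,i,\P}_{0,T}Y_T^{j,i}] - Y_0^{j,i}$ while the cost term recombines into $J^{j,i}$, gives the exact representation $J^{j,i}(\P, \xi^{j,i}) = Y_0^{j,i} + \E^\P\big[\int_0^T \Kc^{j,i,\P}_{0,s}\big(h^{j,i}(\cdots,\nu_s^{j,i,\P}) - \sup_u h^{j,i}(\cdots,u)\big)\drm s\big]$.

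Since the integrand is nonpositive, $J^{j,i}(\P, \xi^{j,i}) \leq Y_0^{j,i}$, with equality if and only if $\nu_s^{j,i,\P}$ maximises $h^{j,i}$ for $\drm s \otimes \P$--a.e.\ $(s,\omega)$; by \Cref{ass:unicity_maximiser_agents} this forces $\nu_s^{j,i,\P} = u^{j,i,\star}(s, Y_s^{j,i}, (Z_s^{j,i})^i, \Gamma_s^{j,i})$. This proves simultaneously that $V_0^{j,i}(\P^{-(j,i),\star}, \P^{\rm M}, \xi^{j,i}) = Y_0^{j,i}$ (item $(ii)$) and that the agent's unique optimal effort is the Hamiltonian maximiser (item $(i)$). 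The decisive structural feature, noted after \eqref{eq:hamiltonian_maximiser}, is that this maximiser depends neither on $\widehat\nu$ nor on the indexation parameters of the other outputs; hence each agent's best response is insensitive to the efforts of the others. Consequently the profile $(\nu^{j,i,\star})_{j,i}$ is a Nash equilibrium in the sense of \Cref{def:nash_agent}, and \emph{any} Nash equilibrium must have every agent play this same feedback, so the equilibrium law $\P^\star$ is unique; together with the integrability just checked this yields $\xi^{\rm A} \in \Cc^{\rm A}$ as required by \Cref{def:contract_agent_admissible}.

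The main obstacle is twofold and lies in the measure--theoretic underpinnings rather than in the algebra. First, one must upgrade the local martingale $M$ to a true martingale so that the expectation step is licit: this is where \eqref{eq:integrability_continuation_utility_agent}, the cost integrability \eqref{eq:integ_cond_cost}, and the boundedness of $k^{j,i}$, $\lambda^{j,i}$, $\sigma^{j,i}$ enter, via localisation followed by a uniform--integrability argument drawn from the $L^p$ bounds. Second, one must guarantee that the feedback controls $u^{j,i,\star}(\cdot, Y^{j,i}, \cdots)$, coupled through the pathwise continuation utilities $Y^{j,i}$, actually generate a well--posed joint law $\P^\star$ on $\Omega$; here I would invoke well--posedness of the associated martingale problem (the coefficients being bounded and the maximisers Borel by \Cref{ass:unicity_maximiser_agents}), exactly as in the construction of $\Pc$ following \Cref{def:weak_formulation}, to secure existence and to pin down $\P^\star$ uniquely.
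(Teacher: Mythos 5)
Your verification computation for the existence half is correct, and it does take a more elementary route than the paper. Where you apply It\^o's formula directly to $\Kc^{j,i,\P}_{0,t}Y^{j,i}_t$ and conclude from the pointwise inequality $h^{j,i}(\cdot,\nu^{j,i,\P})\le\sup_u h^{j,i}(\cdot,u)$, the paper instead rewrites \eqref{eq:continuation_utility_agent} as the 2BSDE \ref{eq:2bsde} by introducing the non--decreasing process $K^{j,i}_t:=\int_0^t\big(\Hc^{j,i}-\tfrac12\Gamma^{j,i}_rS^{j,i}_r-F^{j,i}\big)\drm r$, checks that $(Y^{j,i},(Z^{j,i},\widetilde Z^{j,i}),K^{j,i})$ is a genuine solution (the integrability of $(Z^{j,i},\widetilde Z^{j,i},K^{j,i})$ being supplied by the a priori 2BSDE estimates rather than by localisation), and then invokes the general best--response characterisation of \Cref{prop:genbestreac}. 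Against the frozen profile $\widehat\nu^\star$, the two arguments deliver the same conclusions, namely items $(i)$ and $(ii)$.

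The genuine gap is in the uniqueness of the Nash equilibrium, which is also what makes $\xi^{\rm A}$ admissible in the sense of \Cref{def:contract_agent_admissible}. Uniqueness requires characterising the $(j,i)$--th agent's best response to an \emph{arbitrary} deviation $\P^{-(j,i)}\neq\P^{-(j,i),\star}$ of the other agents, and your It\^o identity breaks down precisely there: the process $Y^{j,i}$ of \Cref{def:contractji} has $\widehat\nu^\star$ hard--coded inside $\Hc^{j,i}$, so if the others play some $\widehat\nu\neq\widehat\nu^\star$ the cancellation of $H^{j,i}$ against the drifts of $X^{j,\ell}$, $\ell\neq i$, and $\widebar X^{-j}$ fails, and your representation acquires the residual term
\begin{align*}
    \E^{\P}\bigg[\int_0^T\Kc^{j,i,\P}_{0,s}\Big(H^{j,i}\big(s,(Z^{j,i}_s)^{-i},\widetilde Z^{j,i}_s,\widehat\nu_s\big)-H^{j,i}\big(s,(Z^{j,i}_s)^{-i},\widetilde Z^{j,i}_s,\widehat\nu^\star_s\big)\Big)\drm s\bigg].
\end{align*}
Although this term does not involve the agent's own action pointwise, it is not a constant of his optimisation: it is weighted by the discount factor $\Kc^{j,i,\P}$, which depends on $\nu^{j,i,\P}$ through $k^{j,i}_{\rm u}$, and it is integrated under the law $\P$ he chooses. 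One can therefore no longer conclude that his optimal control is the pointwise maximiser of $h^{j,i}$, so the assertion that ``each agent's best response is insensitive to the efforts of the others'' is not a consequence of your computation. This is exactly where the paper falls back on the 2BSDE machinery: since $\Xi^{j,i}\subset\Cc^{j,i}$, \Cref{prop:genbestreac} applies to the fixed contract and \emph{any} efforts of the others (the shift $H^{j,i}(\cdot,\widehat\nu)$ sits inside the generator $F^{j,i}$ and does not affect the argmax over the agent's own control), and \Cref{ass:unicity_maximiser_agents} then rules out other equilibria. Your self--contained verification cannot deliver this step without either re--deriving that 2BSDE characterisation or solving the agent's control problem with the shifted generator by other means. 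A secondary, more minor point: bounded Borel coefficients do not by themselves yield the wellposedness of the martingale problem you invoke to pin down $\P^\star$; the paper is admittedly equally terse on this, so I would not count it against you.
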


While the result given by the previous proposition is relatively intuitive, its formal proof is based on the 2BSDE theory, and is thus reported to \Cref{ss:tech_proof_agents}. It follows the same reasoning as the one developed for example by \citeayn[Proof of Theorem 3.4]{elie2019mean}: the crux of the argument is to show that we can construct directly a solution to a 2BSDE (more precisely, to \ref{eq:2bsde} defined in \Cref{sss:agent_best_reac}) whenever $\xi^{j,i} \in \Xi^{j,i}$, and this for all $j \in \{1, \dots, m\}$ and $i \in \{1, \dots, n_j\}$. 
Finally, the main point is to prove that the restriction of our study to revealing contracts is not restrictive from the managers' point of view. This is precisely the purpose of the following section.

\subsubsection{Optimality of the revealing contracts}

Let us fix throughout this section $j \in \{1, \dots, m\}$, in order to focus on the $j$--th managers' problem. Given an admissible contract $\xi^{j,0} \in \Cc^{j,0}$, in the sense of \Cref{def:contract_manager_admissible}, as well as the decision of other managers summarised by
the control $\widetilde \chi^{-j}$, we recall that the $j$--th manager's optimisation problem is defined by \eqref{eq:pb_manager}. 
Following the general approach by \citeayn{cvitanic2018dynamic}, we can prove that there is no loss of generality for the manager to restrict to contracts in $\Xi^{j \setminus 0}$, in the sense of \Cref{def:contractji}, instead of contracts in $\Cc^{j \setminus 0}$. More precisely, to incentivise the $(j,i)$--th agent under his supervision, it is sufficient to offer him a revealing contract $\xi^{j,i} \in \Xi^{j,i}$, parametrised by a process $\Zc^{j,i} \in \Vc^{j,i}$, instead of considering all admissible contracts in $\Cc^{j,i}$. Therefore, the $j$--th manager has to choose in an optimal way:
\begin{enumerate}[label=$(\roman*)$]
    \item his own effort $\nu^{j,0} \in \Uc^{j,0}$;
    \item the triple of payment rates for each agent under his supervision, \textit{i.e.}, $\Zc^{j,i} \in \Vc^{j,i}$ for all $i \in \{1, \dots, n_j\}$.
\end{enumerate}
The control of the $j$--th manager can be summarised by a process $\chi^j := (\nu^{j,0}, (\Zc^{j,i})_{i=1}^{n_j} ) \in \mathscr X^j$ and, extending this reasoning to the other managers, the controls of all managers will be denoted by $\chi$, defined as follows:
\begin{align*}
    \chi := \big( \chi^j \big)_{j = 1}^{m} \in  \mathscr X, \; \textnormal{ where } \; 
    \mathscr X := \prod_{j=1}^m  \mathscr X^j, \text{ and } \; \mathscr X^j := \Uc^{j,0} \times \prod_{i=1}^{n_j} \Vc^{j,i}.
\end{align*}
Note that for all $j \in \{1, \dots, m\}$, the process $\chi^j$ takes values in $\mathfrak X^j := U^{j,0} \times (\V^j)^{n_j}$. The process $\chi$ thus takes values in $\Xk$, naturally defined as the Cartesian product of all $\Xk^j$. 
With this in hand, we can now turn to the main theorem of this first Stackelberg game, whose proof is postponed to \Cref{ss:tech_proof_agents}. 
\begin{theorem}\label{thm:main_manager}
Consider a collection of admissible contracts $\xi^{\rm M} := (\xi^{j,0})_{j=1}^{m} \in \Cc^{\rm M}$ for the managers, in the sense of \textnormal{\Cref{def:contract_manager_admissible}}, then the following equality holds
\begin{align}\label{eq:main_manager}
V_0^{j,0} \big(\xi^{j,0}, \chi^{-j} \big) 
= \sup_{\chi^j \in  \mathscr X^j } 
J^{j,0} \big(\P^{\star} (\chi), \xi^{j,0} \big), \text{ for all } \, j \in \{1,\dots,m\},
\end{align}
where 
$\P^\star (\chi)$ is the unique Nash equilibrium between the agents, given the control $\chi \in \mathscr X$ of the managers.
\end{theorem}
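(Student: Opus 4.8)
The plan is to prove \eqref{eq:main_manager} by a double inequality, fixing $j$ throughout and observing first that, by definition \eqref{eq:pb_manager}, the left--hand side is $\sup_{\widetilde \chi^j \in \widetilde{\mathscr X}^j} J^{j,0}(\P^\star(\widetilde\chi), \xi^{j,0})$ with the decisions $\chi^{-j}$ of the other managers held fixed. Since $\widetilde{\mathscr X}^j = \Uc^{j,0} \times \prod_{i=1}^{n_j} \Cc^{j,i}$ and $\mathscr X^j = \Uc^{j,0} \times \prod_{i=1}^{n_j} \Vc^{j,i}$ share the same effort component $\Uc^{j,0}$, the manager's own effort $\nu^{j,0}$ plays no role in the reduction: only the class of contracts offered to the agents changes. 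The claim thus amounts to the identity of two suprema of the same functional $\chi^j \longmapsto J^{j,0}(\P^\star(\chi), \xi^{j,0})$ taken over $\widetilde{\mathscr X}^j$ and over its subset described by the revealing contracts.

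The easy inequality $\sup_{\chi^j \in \mathscr X^j} J^{j,0}(\P^\star(\chi), \xi^{j,0}) \le V_0^{j,0}(\xi^{j,0}, \chi^{-j})$ is essentially bookkeeping: by \Cref{prop:nash_agent}, any revealing contract $\xi^{j,i} = \widebar g^{j,i}(X^{j,i}, Y^{j,i}_T)$ generated by a parameter $\Zc^{j,i} \in \Vc^{j,i}$ is admissible, i.e. lies in $\Cc^{j,i}$, and the resulting collection $\xi^{\rm A}$ induces a unique Nash equilibrium $\P^\star(\chi)$ among the agents. Hence each $\chi^j \in \mathscr X^j$ corresponds to an admissible $\widetilde\chi^j \in \widetilde{\mathscr X}^j$ producing exactly the same criterion $J^{j,0}$, so the supremum over the smaller set cannot exceed the one over the larger set.

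The substance lies in the reverse inequality. I would fix an arbitrary admissible $\widetilde \chi^j = (\nu^{j,0}, (\xi^{j,i})_{i=1}^{n_j})$ with $\xi^{j,i} \in \Cc^{j,i}$, and build, agent by agent, a revealing control $\chi^j \in \mathscr X^j$ sharing the same effort $\nu^{j,0}$ and attaining the same value. For each $i$, admissibility provides the integrability \eqref{eq:integrability_contract_agent} on $g^{j,i}(X^{j,i}, \xi^{j,i})$, which places this terminal condition within the scope of the 2BSDE theory of \Cref{sec:2BSDEs}, whose generator is built from the Hamiltonian \eqref{eq:agent_hamiltonian}. Invoking well--posedness of that 2BSDE (its Lipschitz--in--$y$ character follows from boundedness of $k^{j,i}$ as in \Cref{rk:solution_ODE}, and its separable structure from \Cref{ass:separability}), I obtain a solution $(Y^{j,i}, Z^{j,i}, \widetilde Z^{j,i}, \Gamma^{j,i})$ enjoying three features: the terminal identity $Y^{j,i}_T = g^{j,i}(X^{j,i}, \xi^{j,i})$, a representation of $Y^{j,i}$ in the exact form \eqref{eq:continuation_utility_agent} with parameters $\Zc^{j,i} := (Z^{j,i}, \widetilde Z^{j,i}, \Gamma^{j,i})$, and the initial--value identification $Y^{j,i}_0 = V_0^{j,i}(\P^{-(j,i)}, \P^{\rm M}, \xi^{j,i})$. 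The first feature gives $\xi^{j,i} = \widebar g^{j,i}(X^{j,i}, Y^{j,i}_T)$, so $\xi^{j,i}$ is itself revealing once $\Zc^{j,i} \in \Vc^{j,i}$; the integrability \eqref{eq:integrability_continuation_utility_agent} defining $\Vc^{j,i}$ transfers from \eqref{eq:integrability_contract_agent} through the a priori estimates for the 2BSDE. Setting $\chi^j := (\nu^{j,0}, (\Zc^{j,i})_{i=1}^{n_j})$, the induced agents' contracts coincide with the original ones, so by \Cref{prop:nash_agent} together with \Cref{ass:unicity_maximiser_agents} the unique Nash equilibrium is unchanged, $\P^\star(\chi) = \P^\star(\widetilde\chi)$, whence $J^{j,0}(\P^\star(\chi), \xi^{j,0}) = J^{j,0}(\P^\star(\widetilde\chi), \xi^{j,0})$. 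Taking the supremum over $\widetilde\chi^j$ yields $V_0^{j,0}(\xi^{j,0}, \chi^{-j}) \le \sup_{\chi^j \in \mathscr X^j} J^{j,0}(\P^\star(\chi), \xi^{j,0})$, and combining the two inequalities proves \eqref{eq:main_manager} for every $j$.

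The main obstacle is the identification $Y^{j,i}_0 = V_0^{j,i}(\P^{-(j,i)}, \P^{\rm M}, \xi^{j,i})$, i.e. that the initial value of the 2BSDE genuinely equals the agent's optimum under the arbitrary contract $\xi^{j,i}$. This is where the dynamic--programming content sits: one direction follows from a supermartingale property showing $J^{j,i}(\P, \xi^{j,i}) \le Y^{j,i}_0$ for every admissible response $\P \in \Pc^{j,i}(\P^{-(j,i)}, \P^{\rm M})$, and the reverse from the fact that the Hamiltonian--maximising effort $u^{j,i,\star}$ of \eqref{eq:hamiltonian_maximiser} restores the martingale property and thereby attains the bound. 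Throughout, the measurability constraints --- that each $\xi^{j,i}$ is $\G^j$--measurable and the extracted $\Zc^{j,i}$ is $\G^j$--predictable --- must be tracked so that the reduced control truly lands in $\Vc^{j,i}$ rather than in a larger, non--observable class. This technical core is what is deferred to \Cref{ss:tech_proof_agents}, following the template of \citeayn[Proof of Theorem 3.4]{elie2019mean}.
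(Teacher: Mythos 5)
Your reduction has a genuine gap at its central step. You claim that, for an arbitrary admissible contract $\xi^{j,i}\in\Cc^{j,i}$, the solution of the 2BSDE with terminal condition $g^{j,i}(X^{j,i},\xi^{j,i})$ yields ``a representation of $Y^{j,i}$ in the exact form \eqref{eq:continuation_utility_agent}'', and hence that $\xi^{j,i}$ is \emph{itself} revealing. This is false in general. A solution of \textnormal{\ref{eq:2bsde}} is a triple $(Y^{j,i},(Z^{j,i},\widetilde Z^{j,i}),K^{j,i})$ whose third component is a non--decreasing process subject only to a minimality condition; it need not be absolutely continuous with respect to Lebesgue measure. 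The revealing form \eqref{eq:continuation_utility_agent} contains no such singular term: the only room available to absorb $K^{j,i}$ is the gap between the full Hamiltonian $\Hc^{j,i}$ (with its $\tfrac12\Gamma\,\drm\langle X^{j,i}\rangle$ term) and the generator $F^{j,i}$, and this absorption is possible precisely when $K^{j,i}$ admits a density, via the surjectivity of $\gamma\longmapsto \Hc^{j,i}-\tfrac12\gamma S-F^{j,i}$ and a measurable selection producing $\Gamma$. In other words, $\Xi^{j,i}$ is a \emph{strict} subclass of $\Cc^{j,i}$, and your sentence ``the induced agents' contracts coincide with the original ones'' cannot be justified as stated; if every admissible contract were revealing, the theorem would indeed be a tautology, which it is not.

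This is exactly where the paper's proof in \Cref{ss:tech_proof_agents} does the work your proposal omits. It mollifies the singular part, setting $K^{\eps}_t:=\frac{1}{\eps}\int_{(t-\eps)^+}^{t}K^{j,i}_s\,\drm s$, defines $Y^\eps$ by the 2BSDE dynamics with $K^\eps$ in place of $K^{j,i}$, checks via the a priori estimates that $(Y^\eps,(Z^{j,i},\widetilde Z^{j,i}),K^\eps)$ is again a solution, and only then applies the surjectivity/selection argument---now legitimate because $K^\eps$ is absolutely continuous---to extract $\Gamma^\eps$ and conclude that $\xi^\eps:=\widebar g^{j,i}(X^{j,i},Y^\eps_T)$ lies in $\Xi^{j,i}$. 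The punchline is that $K^\eps=0$ $\P$--a.s.\ if and only if $K^{j,i}=0$ $\P$--a.s., so $\xi^\eps$ induces the \emph{same} unique Nash equilibrium $\P^\star$ and coincides with $\xi^{j,i}$ only $\P^\star$--almost surely, not identically; since under $\P^\star$ the paths of $Y^\eps$ and $Y^{j,i}$ agree, the payments, hence $\zeta^j$, hence the manager's criterion $J^{j,0}$, are unchanged, and the supremum over $\mathscr X^j$ attains $V_0^{j,0}(\xi^{j,0},\chi^{-j})$ exactly. Your easy inclusion inequality and the identification $Y_0^{j,i}=V_0^{j,i}$ (which is \Cref{prop:genbestreac}, already established, and not the bottleneck you make it out to be) are fine; the missing idea is the $\eps$--regularisation of $K^{j,i}$ together with the observation that agreement \emph{at equilibrium} suffices for the reduction.
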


To summarise the agents' problem, first, \Cref{prop:nash_agent} solves the Nash equilibrium for a probability $\P^{\rm M}$ and a collection of revealing contracts $\xi^{\rm A} \in \Xi^{\rm A}$ chosen by the managers. Then, \Cref{thm:main_manager} states that the restriction to revealing contracts is without loss of generality. Using the previous results and notations, we can write the value function of each agent at equilibrium as follows:
\begin{align}\label{eq:V_ji_star}
    V_0^{j,i,\star} (\chi) 
    := V_0^{j,i} \big(\P^{-(j,i),\star}, \P^{\rm M}, \xi^{j,i} \big), \; \text{ for all } \; j \in \{1, \dots, m \}, \; i \in \{1, \dots, n_j\}.
\end{align}

\subsection{Contracting with the managers}\label{ss:manager_problem}

Throughout this subsection, we fix $j \in \{1, \dots, m\}$ and focus our attention on the $j$--th manager's problem. Recall that the $j$--th manager controls his own project with outcome $X^{j,0}$ as well as the compensations for his $n_j$ agents
\Cref{thm:main_manager} ensures that it is sufficient to restrict the admissible contract space for the $(j,i)$--th agent to $\Xi^{j,i}$, and thus limit the $j$--th manager's optimisation problem to choosing an optimal effort $\nu^{j,0} \in \Uc^{j,0}$, as well as $n_j$ triples $\Zc^{j,i} := (Z^{j,i}, \widetilde Z^{j,i}, \Gamma^{j,i}) \in \Vc^{j,i}$, for $i \in \{1, \dots, n_j \}$, to set up the contracts of the agents under his supervision.
Therefore, the manager's goal is to choose his control process $\chi^j \in \mathscr X^j$ optimally, given the results of his working team $\zeta^j$ and his compensation $\xi^{j,0}$ chosen by the principal. 

\medskip

Through the equality \eqref{eq:main_manager}, the manager's value function is similar to that of an agent in a classical principal--agent problem, since given a contract $\xi^{j,0}$, the manager chooses his optimal controls. 
However, the state variable $\zeta^j$, as a function of $\xi^{j \setminus 0}$, $X^j$ and $\widebar X^{-j}$, seems to be considered partially in strong and weak formulation. 
Indeed, $\xi^{j \setminus 0}$ is considered in strong formulation (indexed by the control $\chi^{j} \in \mathscr X^{j}$), while the vector of outputs $X$ is considered in weak formulation (the control $\chi^{j} \in \mathscr X^{j}$ only impacts the distribution of $X$ through $\P^{\star}$). It makes little sense to consider a control problem of this form directly, and we should adopt the weak formulation to state the problem of each manager, since this is the one which makes sense for the agents' problem.

\medskip

However, before turning to the weak formulation, it is necessary to determine which are the state variables of the manager problem. Recall that the contract $\xi^{j,0}$ for the manager can only be indexed on $\zeta := (\zeta^j)_{j=1}^m$, as defined in \eqref{eq:contract_manager_dependency}, since it is the only variable observable by the principal. Nevertheless, each $\zeta^j$ measures the global result of the entire $j$--th working team (including the manager) and therefore depends on the outcomes of the team, namely $X^{j}$, and the collection of payments $\xi^{j,i}$, for $i \in \{1, \dots, n_j\}$, to be made to the agents, denoted by $\xi^{j \setminus 0}$. More precisely, we can state that, for all $t \in [0,T]$, $\zeta^j_t = f^j \big(t, X^{j}, \xi^{j \setminus 0} \big)$, for some function $f^{j} : [0,T] \times \R^{n_j +1} \times \R^{n_j} \longmapsto \R^h$. 

\medskip

In all generality and without any particular assumptions on the function $f^j$, there is no reason the dynamic of $\zeta^j$ should not depend on the collections of all outputs and continuation utilities, which would therefore constitute the state variables of $j$--th manager. Unfortunately, this would lead us, once again, to the case where the principal does not observe all the state variables of the managers' problem. 
As already explained in \Cref{rk:measurability_manager,rk:SDE_simplify_why}, this would raise major issues requiring a comprehensive study before being addressed in our framework. We are therefore forced to make the following major assumption on the shape of the induced dynamic for $\zeta$. 
\begin{assumption}\label{ass:zeta_dynamic}
    The dynamic of $\zeta$ depends only on time, on $\zeta$ itself, and on the managers' controls summarised by the process $\chi \in \mathscr X$. More precisely, there exist two bounded functions:
    \begin{align*}
        \Lambda_{\rm M} &: [0,T] \times \Cc([0,T], \R^{m h}) \times \Xk \longrightarrow \R^{dw} \; \textnormal{and} \; \Sigma_{\rm M} : [0,T] \times \Cc([0,T], \R^{m h}) \times \Xk \longrightarrow \M^{hm, dw},
    \end{align*}
    satisfying $\Lambda_{\rm M} (\cdot, z)$ and $\Sigma_{\rm M} (\cdot, z)$ $\G$--optional for any $z \in \Xk$, such that $\zeta$ is solution to the following \textnormal{SDE}:
    \begin{align}\label{eq:dyn_zeta_simple}
        \drm \zeta_t := \Sigma_{\rm M} (t, \zeta, \chi_t) \big( \Lambda_{\rm M} (t, \zeta, \chi_t) \drm t + \drm W_t \big), \; \text{ for all } \; t \in [0,T].
    \end{align}
\end{assumption}
\noindent
Although restrictive, this assumption nevertheless allows the study of interesting frameworks that we may have in mind, including the context described by \citeayn{sung2015pay}. The reader is referred to \Cref{app:assumption_zeta}, where interesting cases in which this hypothesis is satisfied are presented (see \Cref{lem:linear_case,lem:exponential_case} for the linear and exponential utility functions respectively). 
Throughout the following, we will assume that \Cref{ass:zeta_dynamic} holds. 

\medskip

Even under \Cref{ass:zeta_dynamic}, the dynamic of $\zeta$ is controlled by all managers in a non--trivial way. Indeed, while the agents control uniquely their own outcome, in the sense that the $(j,i)$--th agent only impacts the dynamic of $X^{j,i}$, the $j$--th manager does not only control the variable $\zeta^j$, but also the other components of the vector $\zeta$. This is due to the fact that the component $\zeta^k$ (for $k \neq j$) depends on the collection of contracts $\xi^{k \setminus 0}$, which are indexed in particular on $\widebar X^{-k}$, and thus depend in particular on the optimal effort of the $(j,i)$--th agent given by
\begin{align*}
    \nu^{j,i,\star}_t := u^{j,i, \star} \big(t, Y^{j,i}_t, \big( Z_t^{j,i} \big)^i, \Gamma^{j,i}_t \big), \; \drm t \otimes \P^\star\text{--a.s. for all } t \in [0,T].
\end{align*}
Since the pair $(Z^{j,i}, \Gamma^{j,i})$ is chosen by the $j$--th manager, he somehow controls the volatility of $\zeta^k$, for all $k \neq j$. Note that he also controls the volatility through his own effort $\nu^{j,0}$. This leads us to a control problem with interacting agents, as for example, in the work of \citeayn{elie2019contracting} (see also \citeayn{elie2018tale} for the case of an infinite number of interacting agents), but with volatility control in addition. The following weak formulation is inspired by the work of \citeayn[Section 6.1]{possamai2018zero}, where a zero--sum game is considered between two players, controlling both the drift and the volatility of the same output process. Therefore, it only requires to extend their formulation to a nonzero--sum game with $m$ interacting agents.

\subsubsection{Canonical space for the managers}\label{sss:can_space_init_managers}

Following the previous reasoning, in particular under \Cref{ass:zeta_dynamic}, and given the form \eqref{eq:contract_manager_dependency} for the managers' contract, $\zeta$ is clearly the only state variable of the managers' problems. We are thus led to consider the following canonical space,
\begin{align*}
    \Omega^{\rm M} := \Cc \big( [0,T], \R^{hm} \big) \times \Cc \big( [0,T], \R^{wd} \big) \times \X,
\end{align*}
where, similarly as for the initial canonical space defined in \Cref{sss:can_space_init}, $\X$ is the collection of all finite and positive Borel measures on $[0,T] \times \Xk$, whose projection on $[0,T]$ is the Lebesgue measure. 
The weak formulation requires to consider a subset of $\X$, namely the set $\X_0$ of all $q \in \X$ such that $q(\drm s,\drm u) = \delta_{\phi_s}(\drm u) \drm s$, for some Borel function $\phi$. The canonical process is denoted by $(\zeta, W, \Pi^{\rm M})$ where, for any $(t, \omega, \varpi, q) \in [0,T] \times \Omega$,
\begin{align*}
    \zeta_t (\omega, \varpi, q) := \omega_t, \; W_t (\omega, \varpi, q) := \varpi_t, \text{ and } \Pi^{\rm M} (\omega, \varpi, q) := q.
\end{align*}
The associated canonical filtration is defined by $\F^{\rm M} := (\Fc^{\rm M}_t)_{t \in [0,T]}$ with
\begin{align*}
    \mathcal F^{\rm M}_t := \sigma \bigg( \bigg( \zeta_s, \int_0^s\int_{\Xk} \varphi(r,u) \Pi^{\rm M}(\mathrm{d}r,\mathrm{d} u) \bigg) \text{ s.t. } (s,\varphi) \in [0,t] \times \Cc_b \big( [0,T] \times \Xk, \R \big) \bigg),\; t\in [0,T].
\end{align*}
Then, for any $(t,\psi) \in [0,T] \times \Cc^2_b(\R^{hm} \times \R^{dw}, \R)$, we set
\begin{align*}
    M^{\rm M}_t (\psi) := \psi (\zeta_t, W_t) - \int_0^t \int_{\Xk} \Big( \widetilde \Lambda_{\rm M} \big(s, \zeta, u \big) \cdot \nabla \psi (\zeta_s, W_s) 
    + \frac12  {\rm Tr} \big[ \nabla^2 \psi (\zeta_s, W_s) \big( \widetilde \Sigma_{\rm M} \widetilde \Sigma_{\rm M}^\top \big) (s, \zeta, u) \big] \Big) \Pi^{\rm M} (\mathrm{d}s, \mathrm{d} u).
\end{align*}
where $\widetilde \Lambda_{\rm M}$ and $\widetilde \Sigma_{\rm M}$ are respectively the drift vector and the diffusion matrix of the $(hm + d w)$--dimensional vector process $(\zeta, W)^\top$, defined for all $s \in [0,T]$, $x \in \Cc([0,T],\R^{hm}) $ and $u \in \Xk$ by:
\renewcommand*{\arraystretch}{1}
\begin{align*}
    \widetilde \Lambda_{\rm M} (s,x,u) := 
    \begin{pmatrix}
        \big(\Sigma_{\rm M} \Lambda_{\rm M}\big) (s,x,u) \\
        \mathbf{0}_{w d}
    \end{pmatrix},
    \; 
    \widetilde \Sigma_{\rm M} (s,x,u) :=
    \begin{pmatrix}
        \mathbf{0}_{hm, hm} & \Sigma_{\rm M} (s,x,u) \\
        \mathbf{0}_{wd, hm} & \mathrm{I}_{w d} \\
    \end{pmatrix},
\end{align*}
where $\Lambda_{\rm M}$ and $\Sigma_{\rm M}$ are defined in \Cref{ass:zeta_dynamic}.

\medskip

We fix an initial condition for the process $\zeta$, namely $\varrho_0 \in \R^{hm}$. Similarly as in \Cref{def:weak_formulation} for the initial control problem, we define the subset $\Pc^{\rm M}$ of probability measures $\P$ on $(\Omega^{\rm M},\Fc^{\rm M}_T)$ satisfying the following conditions:
\begin{enumerate}[label=$(\roman*)$]
    \item $M^{\rm M}(\psi)$ is a $(\F^{\rm M}, \P)$--local martingale on $[0,T]$ for all $\psi \in \Cc^2_b(\R^{hm} \times \R^{dw},\R)$;
    \item $\P [(\zeta_0,W_0) = (\varrho_0,w_0)]=1$;
    \item $\P\big[\Pi^{\rm M} \in \X_0]=1$.
\end{enumerate}
Similarly to \Cref{lem:no_enlarge}, we know that for all $\P \in \Pc^{\rm M}$, we have $\Pi^{\rm M} (\mathrm{d}s,\mathrm{d} u) = \delta_{\chi^{\P}_s}(\mathrm{d} u)\mathrm{d}s$ $\P$--a.s. for some $\F^{\rm M}$--predictable control process $\chi^{\P}$, and we thus obtain the representation \eqref{eq:dyn_zeta_simple} for the dynamic of $\zeta$, but controlled by $\chi := \chi^{\P}$.
However, this representation only gives access to an admissible set of controls in terms of probability measures for all managers, namely $\Pc^{\rm M}$. 

\subsubsection{Weak formulation of a manager's problem}\label{sss:weak_manager}

To properly define the choice of a particular manager, in response to the choices of others, we need to define its own canonical space. With this in mind, we fix throughout the following $j \in \{1, \dots, m\}$, as well as the controls $\chi^{-j} \in \mathscr X^{-j}$ chosen by other managers. The canonical space for the $j$--th manager is defined by
\begin{align*}
    \Omega^j := \Cc \big( [0,T], \R^{hm} \big) \times \Cc \big( [0,T], \R^{wd} \big) \times \X^j,
\end{align*}
where $\X^j$ is the collection of all finite and positive Borel measures on $[0,T] \times \Xk^j$, whose projection on $[0,T]$ is the Lebesgue measure. Similarly as before, we define the corresponding set $\X^j_0$ of all $q \in \X^j$ such that $q(\drm s,\drm u) = \delta_{\phi_s} (\drm u) \drm s$ for some Borel function $\phi$. The canonical process is denoted by $(\zeta, W, \Pi^j)$ where,
\begin{align*}
    \zeta_t (\omega, \varpi, q) := \omega_t, \; W_t (\omega, \varpi, q) := \varpi_t, \text{ and } \Pi^j (\omega, \varpi, q) := q, \; \text{ for any } \; (t, \omega, \varpi, q) \in [0,T] \times \Omega^j.
\end{align*}
The associated canonical filtration is defined by $\F^j := (\Fc_t^j)_{t \in [0,T]}$ with
\begin{align*}
    \mathcal F^{j}_t := \sigma \bigg( \bigg( \zeta_s, \int_0^s\int_{\Xk^{j}} \varphi(r,u) \Pi^{j}(\mathrm{d}r,\mathrm{d} u) \bigg) \text{ s.t. } (s,\varphi) \in [0,t] \times \Cc_b \big( [0,T] \times \Xk^{j}, \R \big) \bigg),\; t\in [0,T].
\end{align*}
Then, for any $(t,\psi) \in [0,T] \times \Cc^2_b(\R^{hm} \times \R^{dw}, \R)$, we set
\begin{align*}
    M^{j}_t (\psi) := \psi (\zeta_t, W_t) - \int_0^t \int_{\Xk^j} \Big( &\ \widetilde \Lambda_{\rm M} \big(s, \zeta, u \otimes_j \chi^{-j}_s \big) \cdot \nabla \psi (\zeta_s, W_s) \\
    &+ \frac12  {\rm Tr} \big[ \nabla^2 \psi (\zeta_s, W_s) \big( \widetilde \Sigma_{\rm M} \widetilde \Sigma_{\rm M}^\top \big) (s, \zeta, u \otimes_j \chi^{-j}_s) \big] \Big) \Pi^j (\mathrm{d}s, \mathrm{d} u).
\end{align*}
where $(u \otimes_j \chi^{-j}_t )^j = u$ and $(u \otimes_j \chi^{-j}_t)^k = \chi_t^k$ for $k \neq j$, recalling that $\chi^{-j}$ is fixed throughout this section.

\medskip

We can then define the subset $\Pc^j (\chi^{-j})$ of probability measures $\P$ on $(\Omega^j,\Fc^j_T)$ satisfying the following conditions:
\begin{enumerate}[label=$(\roman*)$]
    \item $M^j(\psi)$ is a $(\F^j, \P)$--local martingale on $[0,T]$ for all $\psi \in \Cc^2_b(\R^{hm} \times \R^{dw},\R)$;
    \item there exists some $\iota \in \R^{wd}$ such that $\P \circ (\zeta_0, W_0)^{-1} = \delta_{(\varrho,\iota)}$;
    \item $\P\big[\Pi^j \in \X_0^j]=1$.
\end{enumerate}
We know that for all $\P \in \Pc^j (\chi^{-j})$, we have $\Pi^j (\mathrm{d}s,\mathrm{d} u) = \delta_{\chi^{j,\P}_s}(\mathrm{d} u)\mathrm{d}s$ $\P$--a.s. for some $\F^j$--predictable control process $\chi^{j,\P}$, and we obtain the representation \eqref{eq:dyn_zeta_simple} for the dynamic of $\zeta$, but controlled by $\chi := \chi^{j,\P} \otimes_j \chi^{-j}$.

\medskip

Therefore, given $\chi^{-j} \in \mathscr X^{-j}$ chosen by other managers, the $j$--th manager must choose an optimal probability measure $\P \in \Pc^j (\chi^{-j})$, which leads to consider the following weak formulation for his optimisation problem \eqref{eq:pb_manager}:
\begin{align}\label{eq:manager_weak_formulation}
    V^{j,0}_0 \big( \xi^{j,0}, \chi^{-j} \big) &:= 
    \sup_{\P \in \Pc^j (\chi^{-j}) } 
    J^{j,0} \big(\P, \xi^{j,0} \big),
\end{align}
recalling that $J^{j,0}$ is defined by \eqref{eq:criterion_manager}.
We can then adapt \Cref{def:nash_manager} to define a Nash equilibrium between the managers in weak formulation.

\begin{definition}[Nash equilibrium between managers, in weak formulation]\label{def:nash_manager_weak}
    Fix a collection of contracts designed by the principal for the managers, namely $\xi^{\rm M} := (\xi^{j, 0})_{j=1}^m \in \Cc^{\rm M}$. A Nash equilibrium between the managers is a control $\chi \in \mathscr X$, such that there exists a probability measure $\P^\star \in \Pc^{\rm M}$ satisfying, for all $j \in \{1, \dots, m\}$, $\P^\star \in \Pc^j (\chi^{-j})$ and such that the supremum in \eqref{eq:manager_weak_formulation} is attained for this $\P^\star$.
    We denote by $\Pc^{\rm M,\star} (\xi^{\rm M})$ the set of Nash equilibria.
\end{definition}

\subsubsection{Relevant form of contracts for the managers}\label{sss:hamiltonian_manager}

Recall that \Cref{ass:zeta_dynamic} is enforced to ensure that $\zeta$ is the only state variable of the $j$--th manager's optimisation problem.
Following the line developed in \Cref{sss:hamiltonian} for the agents (based on \cite{cvitanic2018dynamic}), the Hamiltonian of the $j$--th manager is defined by:
\begin{align}\label{eq:hamiltonian_manager}
    \Hc^{j} ( t, x, y, z, \gamma, \chi^{-j}) 
    :=  \sup_{u \in \Xk^j} h^{j} ( t, x, y, z, \gamma, \chi^{-j}, u),
\end{align}
for $(t, x, y) \in [0,T] \times \Cc([0,T], \R^{hm}) \times \R$, $(z, \gamma) \in \R^{hm} \times \M^{hm}$ and $\chi^{-j} \in \mathscr X^{-j}$ chosen by other managers, where in addition, for $u \in \Xk^j$,
\begin{align*}
    h^{j} ( t, x, y, z, \gamma, \chi^{-j}, u) := &- \big( c^{j,0} + y k^{j,0} \big) (t,x^j,u) + \big( \Sigma_{\rm M} \Lambda_{\rm M} \big) \big( t,x, u \otimes_j \chi^{-j} \big) \cdot z 
    + \dfrac{1}{2} {\rm Tr} \big[ \big( \Sigma_{\rm M} \Sigma_{\rm M}^\top \big) \big( t,x,u \otimes_j \chi^{-j} \big) \gamma \big].
\end{align*}
Similarly, we can define the Hamiltonian of any managers in the same way $\Hc^{j}$ is defined for the $j$--th manager. We can already notice that a maximiser of the $j$--th manager's Hamiltonian, if it exists, depends on
\begin{enumerate}[label=$(\roman*)$]
    \item the time;
    \item the paths of the state variable $\zeta$;
    \item the parameter $y$, which will be the manager's continuation utility;
    \item the parameters $z$ and $\gamma$, which will represent the indexation of the contract on respectively the reporting $\zeta$ and its quadratic variation;
    \item the efforts of other managers, namely $\chi^{-j}$.
\end{enumerate}

\begin{assumption}\label{ass:existence_maximiser_manager}
For all $j \in \{1, \dots, m\}$, there exists at least a Borel--measurable map $u^j : [0,T] \times \Cc( [0,T], \R^{hm}) \times \R \times \R^{hm} \times \M^{hm} \times \mathscr X^{-j} \longrightarrow \Xk^j$, satisfying:
\begin{align*}
    \Hc^{j} \big( t, x, y, z, \gamma, \chi^{-j} \big) 
    = h^{j} \big( t, x, y, z, \gamma, \chi^{-j}, u^{j} (t, x, y, z, \gamma, \chi^{-j} ) \big),
\end{align*}
for all $(t, x, y, z, \gamma) \in [0,T] \times \Cc( [0,T], \R^{hm}) \times \R \times \R^{hm} \times \M^{hm}$ and given the actions of other managers $\chi^{-j} \in \mathscr X^{-j}$.
\end{assumption}

Contrary to the optimal efforts of agents, who were independent of the efforts of other agents, the maximiser of the Hamiltonian therefore depends on the efforts of other managers, \textit{i.e.}, $\chi^{-j}$. Similarly, these efforts $\chi^{-j}$ will be defined through the maximiser of other managers' Hamiltonian, and will thus depends on $(t,x,y^{-j}, z^{-j}, \gamma^{-j})$, but also on $\chi^j$. This leads us to consider in some way a fixed point of a multidimensional Hamiltonian, where each component represents the Hamiltonian of a manager.

\begin{assumption}\label{ass:unicity_fixedpoint_hamiltonien}
There exists a unique Borel--measurable map $u^{\star} : [0,T] \times \Cc( [0,T], \R^{hm}) \times \R^m \times \M^{hm,m} \times (\M^{hm})^m \longrightarrow \prod_{j=1}^m \Xk^j$, such that for all $j \in \{1, \dots, m \}$, the $j$--th component $u^{j,\star}$ takes values in $\Xk^j$ and satisfies:
\begin{align*}
    \Hc^{j} \big( t, x, y^j, z^j, \gamma^j, u^{-j,\star} (t, x, y, z, \gamma ) \big) 
    = h^{j} \big( t, x, y^j, z^j, \gamma^j, u^{-j,\star} (t, x, y, z, \gamma ), u^{j,\star} (t, x, y, z, \gamma ) \big),
\end{align*}
for all $(t, x) \in [0,T] \times \Cc( [0,T], \R^{hm})$, $y := (y^j)_{j=1}^m \in \R^m$, and $(z,\gamma) \in \M^{hm,m} \times (\M^{hm})^m$ where for all $j \in \{1,\dots, m\}$, $z^j \in \R^{hm}$ is the $j$--th column of $z$ and $\gamma^j \in \M^{hm}$.
\end{assumption}

\begin{remark}
The previous assumption, in particular on the existence of such a function, is classical in multi--agents problems to ensure existence of an equilibrium for the managers \textnormal{(}see for example \citeayn[Assumption 4.1]{elie2019contracting} for a multi--agents problem with drift control only\textnormal{)}. Uniqueness is assumed to simplify the study, as in the similar hypothesis for the agents \textnormal{(}see \textnormal{\Cref{ass:unicity_maximiser_agents})}. Indeed, in the absence of uniqueness, it would be possible to have several Nash equilibria, and it would then be necessary to represent the preferences of the managers and the principal between these different Nash equilibria \textnormal{(}see also \textnormal{\cite[Section 4.1.1]{elie2019contracting}} for an example\textnormal{)}.
\end{remark}

Thanks to \Cref{ass:unicity_fixedpoint_hamiltonien}, we can define the following function, which corresponds to the Hamiltonian of the $j$--th manager under optimal efforts of all managers:
\begin{align}\label{eq:hamiltonian_manager_star}
    \Hc^{j,\star} (t, x, y, z, \gamma) 
    := \Hc^{j} \big( t, x, y^j, z^j, \gamma^j, u^{-j,\star} (t, x, y, z, \gamma ) \big),
\end{align}
for $(t, x, y, z, \gamma) \in [0,T] \times \Cc( [0,T], \R^{hm}) \times \R^m \times \M^{hm,m} \times (\M^{hm})^m$, and taking values in $\R$.


\medskip

We can now define the relevant subset of contracts, similarly as for the agents, except that the contract has to be indexed on $\zeta$, and that the managers' Hamiltonians are coupled. Let $\V := \M^{hm,m} \times (\M^{hm})^m$. For any $\V$--valued $\G$--predictable processes $(Z,\Gamma)$, and any $Y_0^{\rm M} := (Y_0^j)_{j=1}^m \in \R^m$, let us define the multidimensional process $\Yc^{\rm M}$ such that each component $\Yc^{j}$, for $j \in \{1, \dots, m\}$, satisfies:
\begin{align}\label{eq:continuation_utility_manager}
    \Yc_{t}^{j}
    :=  Y_0^j - \int_0^t \Hc^{j,\star} \big( r, \zeta, \Yc_r^{\rm M}, Z_r, \Gamma_r \big) \drm r
    + \int_0^t Z^j_r \cdot \drm \zeta_r
    + \dfrac{1}{2} \int_0^t {\rm Tr} \big[ \Gamma^j_r \drm \langle \zeta \rangle_r \big], \; t \in [0,T],
\end{align}
where $\Hc^{j,\star}$ is defined by \eqref{eq:hamiltonian_manager_star}. Each component $\Yc^{j}$ will thus represent the continuation utility of the $j$--th manager, given the action of others.

\medskip

Similarly as for the agents' problem (see \Cref{rk:solution_ODE}), each component $\Yc^{j}$ of the process $\Yc^{\rm M}$ is defined by \eqref{eq:continuation_utility_manager} as a solution to an \textnormal{ODE} with random coefficients. The following assumption is made to ensure that this \textnormal{ODE} is well defined, so that the solution exists and is unique.
\begin{assumption}\label{ass:hamiltonian_lipschitz}
The multidimensional Hamiltonian $\Hc^{\star}$, whose components are defined by \eqref{eq:hamiltonian_manager_star},
is uniformly Lipschitz continuous with respect to the variable $y \in \R^m$.
\end{assumption}
One can notice that if, for all $j \in \{1, \dots,m\}$, the discount factor $k^{j,0}$ is not controlled, meaning that $k^{j,0} : [0,T] \times \Cc([0,T], \R^h) \longrightarrow \R$, then the previous assumption is not necessary. Indeed, in this case, the optimal effort of the managers, defined through the function $u^\star$ will be independent of the variable $y \in \R^m$, thus also implying the independence of the Hamiltonian $\Hc^{\star}$. Similarly, in the case of exponential (CARA) utility functions for the managers, a change of variable allows to suppress the dependence of the control on the variable $y$. Therefore, in these two cases, \Cref{ass:hamiltonian_lipschitz} is trivially satisfied, ensuring the well--definition of the process $\Yc^{\rm M}$.

\begin{definition}\label{def:contract_manager}
Let $Y_0^{\rm M} \in \R^m$. We denote by $\Vc$ the set of $\V$--valued $\G$--predictable process $\Zc$, such that
for all $j \in \{1, \dots, m\}$, each component $\Yc^j$ of the $m$--dimensional process $\Yc^{\rm M}$ defined by \eqref{eq:continuation_utility_manager} satisfies the following integrability condition, for some $p > 1$:
\begin{align}\label{eq:integrability_continuation_utility_manager}
    \sup_{\P \in \Pc^{\rm M}} \E^\P \bigg[ \sup_{t \in [0,T]} \big| \Yc^{j}_t \big|^p \bigg] < + \infty,
    \tag{${\rm J}^p_{\rm M}$}
\end{align}
Consider the function $\widebar g^{\rm M} : \Cc([0,T],\R^{hm}) \times \R^m \longrightarrow \R^m$ such that for all $j \in \{1, \dots,m\}$, the $j$--th component is given by the map $\widebar g^{j,0} : \Cc([0,T],\R^{h}) \times \R \longrightarrow \R$ defined in the managers' problem. 
For $\Zc \in \Vc$ and $Y_0^{\rm M} := (Y_0^j)_{j=1}^m \in \R^m$, we consider the $m$--dimensional random variable $\xi^{\rm M} := \widebar g^{\rm M} (\zeta, \Yc_T^{\rm M})$, and denote the corresponding set by $\Xi^{\rm M}$. We will say that $\xi^{\rm M} \in \Xi^{\rm M}$ is a collection of \textbf{revealing contracts} for the managers. In particular, for all $j \in \{1,\dots,m\}$, its $j$--th component satisfies $\xi^{j} = \widebar g^{j,0} (\zeta^{j}, \Yc_T^{j})$, and the corresponding set is denoted by $\Xi^j$.
\end{definition}

\subsubsection{Nash and optimality of revealing contracts}

By considering revealing contracts, we are able to compute the optimal efforts of each manager, which were given informally by the maximiser of their Hamiltonian.
Contrary to the agents' problem, the manager's optimal efforts depend on the efforts of other managers.
Informally, \Cref{ass:unicity_fixedpoint_hamiltonien} is in force to ensure existence and uniqueness of the Nash equilibrium, thus avoiding technical considerations on the preferences between the different Nash equilibria, which, in our opinion, are not relevant for this analysis. These results are rigorously presented in the following proposition.

\begin{proposition}\label{prop:nash_manager}
    Let $Y_0^{\rm M} := (Y_0^j)_{j=1}^m \in \R^m$ and $\Zc := (Z, \Gamma) \in \Vc$. By \textnormal{\Cref{def:contract_manager}}, consider the $m$--dimensional process $\Yc^{\rm M}$ and the associated collection of contracts $\xi^{\rm M} := (\xi^{j})_{j=1}^m \in \Xi^{\rm M}$. Then, $\xi^{\rm M} \in \Cc^{\rm M}$ in the sense of \textnormal{\Cref{def:contract_manager_admissible}} and there exists a unique Nash equilibrium in the sense of \textnormal{\Cref{def:nash_manager_weak}}, \textit{i.e.}, a control $\chi^\star \in \mathscr X$ associated to a probability measure $\P^\star$. This Nash equilibrium is characterised by:
    \begin{enumerate}[label=$(\roman*)$]
    \item for all $j \in \{1, \dots, m\}$, the optimal effort of the $j$--th manager is given by the $j$--th component of the unique fixed point of the multidimensional Hamiltonian, defined through \eqref{ass:unicity_fixedpoint_hamiltonien}, \textit{i.e.}:
    \begin{align}\label{eq:chi_star}
        \chi^{j,\star}_t := u^{j, \star} \big(t, \zeta, \Yc^{\rm M}_t, Z_t, \Gamma_t \big), \; \drm t \otimes \P^\star\text{--a.s. for all } t \in [0,T];
    \end{align}
    \item $Y_0^j = V^{j,0}_0 \big( \xi^{j}, \chi^{-j,\star} \big)$.
\end{enumerate}
\end{proposition}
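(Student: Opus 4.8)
The plan is to mirror, at the level of the managers, the verification argument behind \Cref{prop:nash_agent}, while accounting for the two structural novelties of this Stackelberg game: the managers control the \emph{volatility} of $\zeta$, and their optimal responses are \emph{coupled}. The backbone is a second--order BSDE representation of each manager's value function. Fixing the contracts $\xi^{\rm M} \in \Xi^{\rm M}$ and the data $(Y_0^{\rm M}, \Zc) \in \R^m \times \Vc$, I would show that the explicitly defined process $\Yc^{\rm M}$ from \eqref{eq:continuation_utility_manager} is, component by component, the solution of the 2BSDE characterising the managers' problem, so that its initial value $Y_0^j$ is precisely the $j$--th manager's optimal value and its generator encodes the optimal effort. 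As in the proof of \Cref{prop:nash_agent} and following \citeayn[Proof of Theorem 3.4]{elie2019mean}, the crux is to construct such a solution directly whenever $\xi^{\rm M} \in \Xi^{\rm M}$; the weak formulation with controlled volatility is treated as in \citeayn[Section 6.1]{possamai2018zero}, extended from a two--player zero--sum game to a nonzero--sum game between the $m$ interacting managers.

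I would first carry out the single--manager verification: fix $j$ and suppose every other manager plays the feedback $\chi^{-j,\star}_\cdot = u^{-j,\star}(\cdot, \zeta, \Yc^{\rm M}_\cdot, Z_\cdot, \Gamma_\cdot)$ produced by the unique fixed point of \Cref{ass:unicity_fixedpoint_hamiltonien}, so that $\Hc^{j,\star}$ coincides with $\Hc^{j}(\,\cdot\,, \chi^{-j,\star})$ along the equilibrium. For an arbitrary admissible response $\P \in \Pc^j(\chi^{-j,\star})$, associated with a predictable control $\chi^{j,\P}$, I would compute via It\=o's formula the $\P$--drift of $\Kc^{j,0,\P}_{0,t}\Yc^j_t - \int_0^t \Kc^{j,0,\P}_{0,s} c^{j,0}(s,\zeta^j,\chi^{j,\P}_s)\,\drm s$. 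Using \eqref{eq:continuation_utility_manager} and the definition of $h^j$, this drift equals $\Kc^{j,0,\P}_{0,t}\big(h^j(t,\zeta,\Yc^j_t,Z^j_t,\Gamma^j_t,\chi^{-j,\star},\chi^{j,\P}_t) - \Hc^{j,\star}(t,\zeta,\Yc^{\rm M}_t,Z_t,\Gamma_t)\big) \le 0$, with equality $\drm t\otimes\P$--a.s.\ if and only if $\chi^{j,\P}$ is the maximiser $\chi^{j,\star}$. The $\Gamma^j$--term indexing the contract on $\langle\zeta\rangle$ is exactly what makes this inequality hold uniformly over all volatilities available to the manager, which is where the 2BSDE (rather than a plain BSDE) is unavoidable. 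The integrability $\Zc\in\Vc$, i.e.\ \eqref{eq:integrability_continuation_utility_manager}, upgrades the resulting local supermartingale to a true supermartingale with uniform integrability; taking expectations yields $J^{j,0}(\P,\xi^{j,0}) \le Y_0^j$ for all admissible $\P$, with equality at the measure $\P^\star$ carrying $\chi^{j,\star}$. This gives both characterisations $(i)$ and $(ii)$, in particular $Y_0^j = V_0^{j,0}(\xi^{j},\chi^{-j,\star})$.

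Next I would assemble these best responses into a genuine Nash equilibrium. Setting $\chi^{\star}_\cdot := u^{\star}(\cdot,\zeta,\Yc^{\rm M}_\cdot,Z_\cdot,\Gamma_\cdot)$ through the unique fixed point of the coupled Hamiltonian system (\Cref{ass:unicity_fixedpoint_hamiltonien}), with $\Yc^{\rm M}$ well defined by the Lipschitz property of \Cref{ass:hamiltonian_lipschitz}, I would let $\P^\star$ be the law of the weak solution of \eqref{eq:dyn_zeta_simple} driven by $\chi^\star$, whose existence in $\Pc^{\rm M}$ follows from the boundedness and measurability of $(\Lambda_{\rm M},\Sigma_{\rm M})$ in \Cref{ass:zeta_dynamic} via a standard martingale--problem argument. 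By construction $\P^\star\in\Pc^j(\chi^{-j,\star})$ for every $j$, and the single--manager verification shows the supremum in \eqref{eq:manager_weak_formulation} is attained there, so $(\chi^\star,\P^\star)$ is a Nash equilibrium in the sense of \Cref{def:nash_manager_weak}. Admissibility $\xi^{\rm M}\in\Cc^{\rm M}$ (\Cref{def:contract_manager_admissible}) then follows: the required integrability \eqref{eq:integrability_contract_manager} on $g^{j,0}(\zeta^j,\xi^{j,0})=\Yc^j_T$ is exactly \eqref{eq:integrability_continuation_utility_manager}, and uniqueness of the equilibrium comes from the strictness of the drift inequality off the maximiser, forcing any equilibrium control to coincide $\drm t$--a.e.\ with a fixed point of the coupled Hamiltonian, unique by \Cref{ass:unicity_fixedpoint_hamiltonien}; uniqueness of the associated $\P^\star$ then follows from well--posedness of the martingale problem for $\zeta$ under the fixed feedback $\chi^\star$.

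I expect the main obstacle to be the rigorous 2BSDE construction underlying the first step: one must verify that the pathwise--defined triple $(\Yc^j, Z^j, \Gamma^j)$, together with the non--decreasing process supplied by the theory of \Cref{sec:2BSDEs}, genuinely solves the relevant 2BSDE quasi--surely over the whole family $\Pc^j(\chi^{-j,\star})$, and that this solution lies in the correct space. The coupling between managers compounds the difficulty, since the generator $\Hc^{j,\star}$ already incorporates the others' optimal feedback; the decoupling is possible only because \Cref{ass:unicity_fixedpoint_hamiltonien,ass:hamiltonian_lipschitz} render the coupled ODE \eqref{eq:continuation_utility_manager} for $\Yc^{\rm M}$ well--posed, after which each manager's value can be read off a single 2BSDE exactly as in the one--agent theory of \cite{cvitanic2018dynamic}.
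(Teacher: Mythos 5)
Your overall strategy coincides with the paper's: verification built on the structure of revealing contracts, assembly of best responses through the fixed--point map $u^{\star}$ of \Cref{ass:unicity_fixedpoint_hamiltonien}, admissibility of $\xi^{\rm M}$ read off \Cref{eq:integrability_continuation_utility_manager}, and uniqueness deduced from uniqueness of that fixed point. The only difference is in execution of the single--manager step: where you run the supermartingale computation by hand, the paper rewrites $\Yc^{j}$ from \eqref{eq:continuation_utility_manager} as the first component of an explicit solution $(\Yc^{j}, Z^{j}, K^{j})$ to \textnormal{\ref{eq:2bsde_manager}}, with $K^{j}$ defined as the integrated gap between the full Hamiltonian $\Hc^{j}(\cdot,\chi^{-j,\star})$ and the partially maximised generator $F^{j}$, checks the integrability of $(Z^j,K^j)$ via \cite{bouchard2018unified}, and then invokes the best--reaction characterisation of \Cref{prop:genbestreac_manager} (itself resting on \cite{possamai2018stochastic}). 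Your in--lined supermartingale argument is essentially a proof of the sufficiency half of that proposition, so for existence and for items $(i)$--$(ii)$ the two routes are equivalent; the paper's route additionally produces the 2BSDE solution object that is reused in the proof of \Cref{thm:main_principal}.

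There is, however, a genuine gap in your uniqueness step. The drift inequality you establish, namely that the drift of the discounted value process equals $h^{j}(\cdot,\chi^{-j,\star},\chi^{j,\P}) - \Hc^{j,\star} \le 0$, is valid \emph{only} when the opponents play $\chi^{-j,\star}$, because $\Hc^{j,\star}$ is the supremum of $h^{j}$ over manager $j$'s actions with the opponents' argument frozen at $u^{-j,\star}$. For a candidate alternative equilibrium $(\widetilde\chi,\widetilde\P)$ with $\widetilde\chi^{-j}\ne\chi^{-j,\star}$, the process $\Yc^{j}$ of \eqref{eq:continuation_utility_manager} is no longer manager $j$'s continuation value, and the relevant drift $h^{j}(\cdot,\widetilde\chi^{-j},\widetilde\chi^{j})-\Hc^{j,\star}$ has no definite sign; so ``strictness off the maximiser'' cannot force $\widetilde\chi$ to coincide with a fixed point of the coupled Hamiltonians. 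What is needed is a best--response \emph{necessity} result at arbitrary opponent controls: since $\Xi^{j}\subset\Cc^{j,0}$, the manager's problem against $\widetilde\chi^{-j}$ still admits a 2BSDE representation (with generator $F^{j}(\cdot,\widetilde\chi^{-j},\cdot)$ and a solution distinct from $(\Yc^j,Z^j)$), and any optimal control must be a pointwise maximiser of that generator. This is exactly how the paper argues, via \Cref{prop:genbestreac_manager}, before concluding that the system of simultaneous maximisations pins down the unique fixed point $u^{\star}$, and hence the unique equilibrium $\chi^{\star}$.
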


The formal proof of the previous result is based on the 2BSDE theory, and is thus reported to \Cref{ss:tech_proof_managers}. It follows the same reasoning as the one developed in the proof of \Cref{prop:nash_agent} (see \Cref{ss:tech_proof_agents}). \Cref{prop:nash_manager} solves the Nash equilibrium for a collection of revealing contracts $\xi^{\rm M} \in \Xi^{\rm M}$ chosen by the principal. At equilibrium, we can write the value function of the $j$--th manager as follows:
\begin{align}\label{eq:V_j0_star}
    V^{j,0,\star} \big( \xi^{\rm M} \big) := V_0^{j,0} \big(\xi^{j}, \chi^{-j,\star} \big), \text{ for all } \; j \in \{1, \dots, m \}.
\end{align}
Finally, it remains to prove that the specialisation of our study to revealing contracts is not restrictive from the principal's point of view. 
This result is given by the following theorem, which echoes \Cref{thm:main_manager} for the manager--agent problem. Its formal proof is postponed to \Cref{ss:tech_proof_managers}.

\begin{theorem}\label{thm:main_principal}
Recalling that the principal's problem is defined by \eqref{eq:pb_principal}, the following equality holds
\begin{align}\label{eq:main_principal}
V^{\rm P}_0
= \sup_{ Y_0 \geq \rho} \widebar V^{\rm P} (Y_0) \; \text{ where } \; 
\widebar V^{\rm P} (Y_0) := \sup_{\Zc \in \Vc} \E^{\P^\star (\Zc)} \Big[ \Kc^{\rm P}_{0,T} g^{\rm P} (\zeta, \xi^{\rm M}) \Big],
\end{align}
where 
\begin{enumerate}[label=$(\roman*)$]
    \item the inequality $Y_0 \geq \rho$ has to be understood componentwise, \textit{i.e.}, for all $j \in \{1, \dots, m\}$ and $i \in \{0, \dots, m\}$, $Y_0^{j,i} \geq \rho^{j,i}$, and ensure that the participation constraint of all workers is satisfied;
    \item $\P^\star (\Zc)$ is the unique Nash equilibrium between the workers, given the control $\Zc \in \Vc$ chosen by the principal;
    \item $\xi^{\rm M}$ is the collection of \textit{revealing contracts} for the managers, thoroughly characterised by the choice of $Y_0^{\rm M} \in \R^m$ and $\Zc \in \Vc$.
\end{enumerate}
\end{theorem}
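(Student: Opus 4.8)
The plan is to prove the equality in \eqref{eq:main_principal} through two opposing inequalities, mirroring the structure of the proof of \Cref{thm:main_manager} one level below, with the decisive analytic input being the 2BSDE representation of the managers' value processes developed in \Cref{sec:2BSDEs}.

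First I would establish the easy inequality $\sup_{Y_0 \geq \rho} \widebar V^{\rm P}(Y_0) \leq V^{\rm P}_0$ by checking that every revealing contract is feasible for the principal. Fix $Y_0 \geq \rho$ and $\Zc = (Z,\Gamma) \in \Vc$, and let $\xi^{\rm M} \in \Xi^{\rm M}$ be the revealing contract associated through \Cref{def:contract_manager}. By \Cref{prop:nash_manager}, $\xi^{\rm M} \in \Cc^{\rm M}$ in the sense of \Cref{def:contract_manager_admissible}, the induced Nash equilibrium $\P^\star(\Zc)$ is unique, and each manager's value at equilibrium equals $Y_0^j \geq \rho^{j,0}$, so the participation constraints \eqref{eq:participation_manager} hold; coupled with $Y_0^{j,i} \geq \rho^{j,i}$, which guarantees \eqref{eq:participation_agent} via \Cref{prop:nash_agent}, the pair $(Y_0^{\rm A}, \xi^{\rm M})$ is admissible in \eqref{eq:pb_principal}. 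Evaluating $J^{\rm P}$ along $\P^\star(\Zc)$ reproduces exactly the integrand defining $\widebar V^{\rm P}(Y_0)$, whence $V^{\rm P}_0 \geq \widebar V^{\rm P}(Y_0)$ for every $Y_0 \geq \rho$, and taking suprema gives the claimed bound.

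The substantive direction is $V^{\rm P}_0 \leq \sup_{Y_0 \geq \rho} \widebar V^{\rm P}(Y_0)$, for which I would show that an arbitrary admissible contract $\xi^{\rm M} \in \Cc^{\rm M}$ can itself be recast in revealing form. Given $\xi^{\rm M}$, let $\P^\star(\xi^{\rm M})$ be its unique Nash equilibrium and, for each manager $j$, let $\Yc^j$ denote the dynamic value process with terminal value $\Yc^j_T = g^{j,0}(\zeta^j, \xi^{j,0})$. Following the scheme used to prove \Cref{prop:nash_manager} (deferred to \Cref{ss:tech_proof_managers}), the multidimensional process $\Yc^{\rm M}$ together with control processes $\Zc = (Z,\Gamma)$ solves the coupled system of 2BSDEs whose generator is the fixed-point Hamiltonian $\Hc^{\star}$ of \eqref{eq:hamiltonian_manager_star}; in particular each $\Yc^j$ obeys the representation \eqref{eq:continuation_utility_manager}. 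The contract integrability \eqref{eq:integrability_contract_manager}, propagated through the a priori estimates for the 2BSDE, yields \eqref{eq:integrability_continuation_utility_manager}, so that $\Zc \in \Vc$, while the fact that the 2BSDE is solved on the filtration $\G$ generated by $\zeta$ supplies the required $\G$-predictability. Consequently $\xi^{j,0} = \widebar g^{j,0}(\zeta^j, \Yc^j_T)$ is revealing, i.e. $\xi^{\rm M} \in \Xi^{\rm M}$ for the parameters $(Y_0^{\rm M}, \Zc)$ with $Y_0^j = V^{j,0,\star}(\xi^{\rm M}) \geq \rho^{j,0}$ by \eqref{eq:participation_manager}. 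Since the Nash equilibrium, and hence both $\zeta$ and the principal's criterion, are unaltered by this re-parametrisation, $J^{\rm P}(\xi^{\rm M}) = \E^{\P^\star(\Zc)}[\Kc^{\rm P}_{0,T} g^{\rm P}(\zeta, \xi^{\rm M})] \leq \widebar V^{\rm P}(Y_0)$; taking the supremum over admissible $\xi^{\rm M}$ and over $Y_0^{\rm A}$ closes the estimate, and combining the two inequalities gives \eqref{eq:main_principal}.

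I expect the main obstacle to be precisely this 2BSDE representation: one must show that the managers' value processes, defined only through the \emph{unique} Nash equilibrium of an \emph{arbitrary} admissible contract, coincide with the $\Yc$-component of a well-posed \emph{multidimensional} 2BSDE on $\G$, and that the accompanying $(Z,\Gamma)$ belong to $\Vc$. The inter-manager coupling through the fixed-point structure of \Cref{ass:unicity_fixedpoint_hamiltonien} is what turns the problem into a system rather than a scalar equation, and the Lipschitz-in-$y$ regularity of \Cref{ass:hamiltonian_lipschitz} is what keeps its generator tractable. Verifying the moment estimates that upgrade \eqref{eq:integrability_contract_manager} to \eqref{eq:integrability_continuation_utility_manager}, and confirming that the dynamics of $\zeta$ furnished by \Cref{ass:zeta_dynamic} are compatible with the nondominated family of measures underlying the 2BSDE well-posedness, is where the bulk of the technical effort lies; these are the points relegated to \Cref{ss:tech_proof_managers} and the general theory of \Cref{sec:2BSDEs}.
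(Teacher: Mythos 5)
Your two-inequality architecture is sound and consistent with the paper's logic: the easy direction (revealing contracts are admissible for the principal, with participation constraints encoded by $Y_0 \geq \rho$ via \Cref{prop:nash_agent,prop:nash_manager}) is handled exactly as you describe, and the substance lies in showing that restricting to $\Xi^{\rm M}$ costs the principal nothing. However, your argument for the hard direction has a genuine gap at precisely the point you flag as ``the main obstacle''. Starting from an arbitrary $\xi^{\rm M} \in \Cc^{\rm M}$, the 2BSDE representation of \Cref{prop:genbestreac_manager} and \Cref{th:genbestreac_manager} yields triples $(\Yc^j, Z^j, K^j)$ in which $K^j$ is only a non-decreasing c\`adl\`ag process satisfying the minimality condition; it is \emph{not} absolutely continuous in general. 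To recover the revealing representation \eqref{eq:continuation_utility_manager} one must write $\drm K^j_s$ as $\big(\Hc^{j}(s,\zeta,\Yc^j_s,Z^j_s,\Gamma_s,\cdot) - \tfrac12 {\rm Tr}\big[\Gamma_s \widehat\Sigma_s\big] - F^{j}(s,\zeta,\Yc^j_s,Z^j_s,\cdot,\widehat\Sigma_s)\big)\drm s$ for some $\G$-predictable $\Gamma$, i.e. match the \emph{density} of $K^j$ with the Hamiltonian gap via the surjectivity of the map \eqref{surjective_manager} and a measurable selection — which is only possible when $K^j$ has a density. Your claim that ``each $\Yc^j$ obeys the representation \eqref{eq:continuation_utility_manager}'', and hence that $\xi^{\rm M}$ itself belongs to $\Xi^{\rm M}$, is therefore false in general: \Cref{prop:nash_manager} goes in the opposite direction (from a revealing contract, where the $\Gamma$ term already exists, one constructs an absolutely continuous $K$ explicitly as the Hamiltonian difference), and that construction cannot simply be reversed.

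The paper closes this gap with a mollification, following \citeayn{cvitanic2018dynamic}: it replaces $K^j$ by the absolutely continuous $K^{j,\eps}_t := \frac1\eps \int_{(t-\eps)^+}^t K^j_s \,\drm s$, defines a new process $\Yc^{j,\eps}$ driven by $K^{j,\eps}$, and only then applies the measurable selection to manufacture $\Gamma^{j,\eps}$ from the density $\dot K^{j,\eps}$. The resulting contract $\xi^{j,\eps} := \widebar g^{j,0}(\zeta^j, \Yc^{j,\eps}_T)$ is genuinely revealing, but it does \emph{not} equal $\xi^{j}$ everywhere — it coincides with it only $\P^\star$-a.s., because $K^j$ and $K^{j,\eps}$ vanish under the same measures, so both contracts induce the same unique Nash equilibrium and give the principal the same value. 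This weaker conclusion (agreement at equilibrium, rather than the inclusion $\Cc^{\rm M} \subseteq \Xi^{\rm M}$ you assert) is all that is needed and all that is true; your proposal skips the mollification and selection steps, which are not technical bookkeeping but the core of the proof.
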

Note that the choices of the workers' initial continuation utility, namely $Y_0 := (Y_0^{\rm A},Y_0^{\rm M})$, directly impact the initial value of $\zeta$. Moreover, they also have an impact on the Nash equilibrium between the managers. However, again with a view to lighten the notations, this dependency is not explicitly mentioned.

\subsection{Principal's problem}\label{ss:principal_problem}

Following the previous reasoning, in particular under \Cref{ass:zeta_dynamic}, $\zeta$, which is the only variable observable by the principal, is also clearly the only state variable of her problem together the continuation utilities of the managers, namely $\Yc^{\rm M}$, since $\xi^{\rm M} \in \Xi^{\rm M}$ satisfies $\xi^{\rm M} = \widebar g^{\rm M} (\zeta, \Yc_T^{\rm M})$. 

\subsubsection{Canonical space for the principal}

First, we should write the dynamics of $\zeta$ and $\Yc^{\rm M}$ under managers' optimal efforts. With this in mind, and recalling the definition of the map $u^\star$ in \Cref{ass:unicity_fixedpoint_hamiltonien}, we define two functions $\Lambda_{\rm M}^\star, \Sigma_{\rm M}^\star : [0,T] \times \Cc([0,T],\R^{hm}) \times \R^m \times \M^{hm,m} \times (\M^{hm})^m \longrightarrow \R^{dw}, \M^{hm, dw}$, satisfying:
\begin{align}\label{eq:lamb_sig_star}
    \Lambda_{\rm M}^\star (t, x, y, v) := \Lambda_{\rm M} \big(t, x,  u^{\star} (t, x, y, z, \gamma) \big) \; \text{ and } \; 
    \Sigma_{\rm M}^\star (t, x, y, v) := \Sigma_{\rm M} \big(t, \zeta,  u^{\star} (t, x, y, z, \gamma) \big),
\end{align}
for all $(t, x, y) \in [0,T] \times \Cc([0,T],\R^{hm}) \times \R^m$ and $v := (z,\gamma) \in \M^{hm,m} \times (\M^{hm})^m$. 
Similarly, we define the functions
$c^{\rm M, \star}, k^{\rm M, \star} : [0,T] \times \Cc([0,T],\R^{hm}) \times \R^m \times \M^{hm,m} \times (\M^{hm})^m \longrightarrow \R^m$ such that for all $j \in \{1,\dots,m\}$, the $j$--th components $c^{j,\star}$ and $k^{j,\star}$ respectively satisfy:
\begin{align*}
    c^{j,\star} (t, x, y, v) := c^{j,0} (t, x^j, u^{j,\star} (t, x, y, z, \gamma)) \; \text{ and } \; 
    k^{j,\star} (t, x, y, v) := k^{j,0} (t, x^j, u^{j,\star} (t, x, y, z, \gamma)),
\end{align*}
for $(t, x, y, v) \in [0,T] \times \Cc([0,T],\R^{hm}) \times \R^m \times \V$ and more precisely $x := (x^j)_{j=1}^m$, where $x^j \in \Cc([0,T],\R^h)$.

\medskip

With these notations, $\zeta$ is solution to the following SDE:
\begin{align}\label{eq:dyn_zeta_star}
    \zeta_t = \zeta_0 + \int_0^t \Sigma_{\rm M}^\star (s, \zeta, \Yc^{\rm M}_s, \Zc_s) \big( \Lambda_{\rm M}^\star (s, \zeta, \Yc^{\rm M}_s, \Zc_s) \drm s + \drm W_s \big), \; t \in [0,T],
\end{align}
under $\P^\star (\Zc)$, for some $\Zc := (Z,\Gamma) \in \Vc$. Then, we can compute the value at equilibrium of the multidimensional Hamiltonian $\Hc^{\star}$ defined by \eqref{eq:hamiltonian_manager_star}, and use the dynamic \eqref{eq:dyn_zeta_star} of $\zeta$, to write the SDE satisfied by $\Yc^{\rm M}$. More precisely, starting from \eqref{eq:continuation_utility_manager}, we can state that $\Yc^{\rm M}$ is such that each component $\Yc^j$, for all $j \in \{1,\dots,m\}$, satisfies:
\begin{align}\label{eq:continuation_utility_manager_star}
    \Yc_{t}^{j}
    = Y_0^j + \int_0^t \big( c^{j,\star} \big(s, \zeta, \Yc_s^{\rm M}, \Zc_s \big)  + \Yc_s^j k^{j,\star} \big(s, \zeta, \Yc_s^{\rm M}, \Zc_s \big)  \big) \drm s
    + \int_0^t (Z^j_s)^\top \Sigma_{\rm M}^\star (s, \zeta, \Yc^{\rm M}_s, \Zc_s) \drm W_s, \; t \in [0,T].
\end{align}
Note that the column vector process $\Yc^{\rm M}$ taking values in $\R^m$ satisfies the following multidimensional SDE:
\begin{align}\label{eq:continuation_utility_manager_star_dyn}
    \drm \Yc_{t}^{\rm M}
    = \big( c^{\rm M, \star} (t, \zeta, \Yc^{\rm M}_t, \Zc_t) + \Yc_t^{\rm} \cdot k^{\rm M, \star} (t, \zeta, \Yc^{\rm M}_t, \Zc_t) \big)  \drm t
    + Z_t^\top \Sigma_{\rm M}^\star (t, \zeta, \Yc^{\rm M}_t, \Zc_t) \drm W_t, \; t \in [0,T].
\end{align}

\medskip

We are thus led to consider the following canonical space for the principal,
\begin{align*}
    \Omega^{\rm P} := \Cc \big( [0,T], \R^{hm} \big) \times \Cc \big( [0,T], \R^{m} \big) \times \Cc \big( [0,T], \R^{wd} \big) \times \V^{\rm P},
\end{align*}
where $\V^{\rm P}$ is the collection of all finite and positive Borel measures on $[0,T] \times \V$, whose projection on $[0,T]$ is the Lebesgue measure, and we consider the subset $\V^{\rm P}_0$ of all $q \in \V^{\rm P}$ such that $q(\drm s,\drm u) = \delta_{\phi_s}(\drm u) \drm s$, for some Borel function $\phi$. The canonical process is denoted by $(\zeta, \Yc^{\rm M}, W, \Pi^{\rm P})$, and associated canonical filtration is defined by $\F^{\rm P} := (\Fc^{\rm P}_t)_{t \in [0,T]}$ as usual. Then, for any $(t,\psi) \in [0,T] \times \Cc^2_b(\R^{hm} \times \R^m \times \R^{dw}, \R)$, we set
\begin{align*}
    M^{\rm P}_t (\psi) := \psi (\zeta_t, \Yc^{\rm M}_t, W_t) 
    - \int_0^t \int_{\V} \Big( &\ \widetilde \Lambda_{\rm P} (s, \zeta, \Yc^{\rm M}_s, v) \cdot \nabla \psi (\zeta_s, \Yc^{\rm M}_s, W_s) \\
    &+ \frac12  {\rm Tr} \Big[ \nabla^2 \psi (\zeta_s, \Yc^{\rm M}_s, W_s) \big( \widetilde \Sigma_{\rm P} \widetilde \Sigma_{\rm P}^\top \big) (s, \zeta, \Yc^{\rm M}_s, v) \Big] \Big) \Pi^{\rm P} (\mathrm{d}s, \mathrm{d} v),
\end{align*}
where $\widetilde \Lambda_{\rm P}$ and $\widetilde \Sigma_{\rm P}$ are respectively the drift vector and the diffusion matrix of the $(hm + m + dw)$--dimensional vector process $(\zeta, \Yc^{\rm M}, W)$, defined for all $(s, x, y) \in [0,T] \times \Cc([0,T],\R^{hm}) \times \R^m$ and $v := (z,\gamma) \in \V$, by:
\renewcommand*{\arraystretch}{1.2}
\begin{align*}
    \widetilde \Lambda_{\rm P} (s,x,y,v) := 
    \begin{pmatrix}
        (\Sigma_{\rm M}^\star \Lambda_{\rm M}^\star) (s,x,y,v) \\
        c^{\rm M, \star} (s,x,y,v) + y \cdot k^{\rm M, \star}(s,x,y,v) \\
        \mathbf{0}_{w d}
    \end{pmatrix},
    \; 
    \widetilde \Sigma_{\rm P} (s,x,y,u) :=
    \begin{pmatrix}
        \mathbf{0}_{hm,hm} & \mathbf{0}_{hm,m} & \Sigma^\star_{\rm M} (s,x,v) \\
        \mathbf{0}_{m,hm} & \mathbf{0}_{m,m} & z^\top \Sigma^\star_{\rm M} (s,x,y,v) \\
        \mathbf{0}_{wd,hm} & \mathbf{0}_{wd,m} & \mathrm{I}_{wd} &  \\
    \end{pmatrix},
\end{align*}
recalling that $\Lambda_{\rm M}$ and $\Sigma_{\rm M}$ are defined in \Cref{ass:zeta_dynamic}.

\medskip

Similarly as in \Cref{def:weak_formulation} for the initial control problem of the agents, or in \Cref{sss:can_space_init_managers} for the managers' problem, we fix an initial condition for $\Yc^{\rm M}$, namely $Y_0^{\rm M} \in \R^m$, and we define the subset $\Pc^{\rm P}$ of probability measures $\P$ on $(\Omega^{\rm P},\Fc^{\rm P}_T)$ satisfying the following conditions:
\begin{enumerate}[label=$(\roman*)$]
    \item $M^{\rm P}(\psi)$ is a $(\F^{\rm P}, \P)$--local martingale on $[0,T]$ for all $\psi \in \Cc^2_b(\R^{hm} \times \R^m \times \R^{dw},\R)$;
    \item $\P [(\zeta_0,\Yc^{\rm M}_0, W_0) = (\varrho_0,Y^{\rm M}_0,w_0)]=1$;
    \item $\P\big[\Pi^{\rm P} \in \V_0^{\rm P}]=1$.
\end{enumerate}
As usual, we know that for all $\P \in \Pc^{\rm P}$, $\Pi^{\rm P} (\mathrm{d}s,\mathrm{d} u) = \delta_{\Zc^{\P}_s}(\mathrm{d} u)\mathrm{d}s$ $\P$--a.s. for some $\F^{\rm P}$--predictable control process $\Zc^{\P} \in \Vc$, and the representations \eqref{eq:dyn_zeta_star} and \eqref{eq:continuation_utility_manager_star} holds respectively for $\zeta$ and $\Yc^{\rm M}$, driven by the control $\Zc^\P$.

\subsubsection{On solving the principal's problem}

Recall that the principal's problem is initially defined by \eqref{eq:pb_principal}, and then simplified by \Cref{thm:main_principal} into a standard control problem.
Thanks to the previous section, we can finally rigorously write her problem in weak formulation:
\begin{align}\label{eq:main_principal_weak}
\widebar V^{\rm P} (Y_0) = \sup_{\P \in \Pc^{\rm P}} 
\E^{\P} \Big[ \Kc^{\rm P}_{0,T} g^{\rm P} (\zeta, \xi^{\rm M}) \Big], \text{ and thus } \,
V^{\rm P}_0 = \sup_{ Y_0 \geq \rho} \widebar V^{\rm P} (Y_0).
\end{align}

Let $(t, x, y, y^{\rm P}) \in [0,T] \times \Cc([0,T],\R^{hm}) \times \R^m \times \R$, $\nabla := (\nabla^{\zeta}, \nabla^{\Yc} ) \in \R^{hm} \times \R^m$, and $\Delta \in \M^{hm+m}$ the following symmetric block matrix
\renewcommand*{\arraystretch}{1.2}
\begin{align*}
    \Delta :=
    \begin{pmatrix}
        \Delta^{\zeta} & (\Delta^{\zeta, \Yc})^\top \\
        \Delta^{\zeta, \Yc} & \Delta^{\Yc}
    \end{pmatrix},
    \text{ where } \; \Delta^{\zeta} \in \M^{hm}, \; \Delta^{\Yc} \in \M^{m}, \; \Delta^{\zeta, \Yc} \in \M^{m,hm}.
\end{align*}
Given the dynamics of the state variables $\zeta$ and $\Yc^{\rm M}$, we can define the principal's Hamiltonian as follows:
\begin{align}
    \Hc^{\rm P} (t,x,y,y^{\rm P},\nabla,\Delta) := &\ \sup_{v \in \V} h^{\rm P} (t,x,y,y^{\rm P},\nabla,\Delta, v)
\end{align}
where, in addition for $v := (z,\gamma) \in \V$,
\begin{align*}
    h^{\rm P} (t,x,y,y^{\rm P},\nabla,\Delta,v) := &- y^{\rm P} k^{\rm P}(t,x) 
    + \big( \Sigma_{\rm M}^\star \Lambda_{\rm M}^\star \big) (t, x, y, v) \cdot \nabla^{\zeta}
    + \big( c^{\rm M,\star} + y \cdot k^{\rm M,\star} \big)(t,x,y,v) \cdot \nabla^{\Yc} \nonumber \\
    &+ \dfrac{1}{2} {\rm Tr} \big[ \big( \Sigma^\star_{\rm M} (\Sigma^\star_{\rm M})^\top \big) (t, x, y, v) \Delta^{\zeta} \big]
    + \dfrac{1}{2} {\rm Tr} \big[ z^\top \big( \Sigma^\star_{\rm M} ( \Sigma^\star_{\rm M})^\top \big) (t, x, y, v) z \Delta^{\Yc} \big] \nonumber \\
    &+ {\rm Tr} \big[ \big( \Sigma^\star_{\rm M} (\Sigma^\star_{\rm M})^\top \big) (t, x, y, v) z \Delta^{\zeta, \Yc} \big]. \nonumber
\end{align*}
We are then led to consider the following HJB equation for all $(t,x,y) \in [0,T] \times \Cc([0,T],\R^{hm}) \times \R^m$,
\begin{align}\label{eq:HJB_principal}
    - \partial_t V(t,x,y) - \Hc^{\rm P} \big( t,x,y,V(t,x,y),\nabla V(t,x,y),\nabla^2 V(t,x,y) \big) = 0,
\end{align}
with terminal condition $V(T,x,y) = g^{\rm P} (x, \widebar g^{\rm M} (x, y))$. 

\medskip

Given the previous HJB equation, it is clear that the principal's problem $\widebar V^{\rm P}$ boils down to a more standard control problem. Nevertheless, it should be noticed that the previous HJB equation is path--dependent, since the Hamiltonian at time $t \in [0,T]$ depends on the paths of the variable $\zeta$ up to $t$. Therefore, in this general case, solving the principal's problem $\widebar V^{\rm P} (Y_0)$ is equivalent to solving a path--dependent partial differential equation (path--dependent PDE for short) under appropriate conditions for the solution. We refer to the works of \citeay{ekren2016viscosity} \cite{ekren2016viscosity,ekren2016viscositya} for more details on the resolution of this type of problems through the notion of viscosity solutions. Intuitively, the optimal control $\Zc \in \Vc$ will correspond to the maximiser of the Hamiltonian. The final step is then to find the optimal $Y_0 \geq \rho$ in order to maximise the previously obtain value function. 

\medskip

If we consider a Markovian framework, in the sense that the function $g^{\rm P}$ only depends on the terminal value of $\zeta$ (\textit{i.e.} $\zeta_T$) and that the Hamiltonian at time $t \in [0,T]$ only depends on the current value $\zeta_t$, then solving the principal's problem $\widebar V^{\rm P}$ boils down to solving a more standard PDE. In this case, following the line of \citeayn[Theorem 3.9]{cvitanic2018dynamic} we could write a verification result for the problem $\widebar V^{\rm P}$. In particular, assume that there exists a function $V : [0,T] \times \R^{hm} \times \R^m$, \textit{smooth enough}, solution to \textnormal{HJB} equation \eqref{eq:HJB_principal}, and a function $v^\star : [0,T] \times \R^{hm} \times \R^m \longrightarrow \V$ satisfying, for all $(t,x,y) \in [0,T] \times \R^{hm} \times \R^m$,
\begin{align*}
    \Hc^{\rm P} \big(t,x,y,V(t,x,y), \nabla V(t,x,y), \nabla^2 V(t,x,y) \big) = h^{\rm P} \big( t,x,y,V(t,x,y), \nabla V(t,x,y), \nabla^2 V(t,x,y), v^\star (t,x,y) \big).
\end{align*}
Then, intuitively and under \textit{additional appropriate condition} on this two functions, we should obtain that $\widebar V^{\rm P} (Y_0) = V (0, \zeta_0, Y_0^{\rm M})$, and that the process $\Zc^\star$ defined for all $t \in [0,T]$ by $\Zc_t := v^\star (t, \zeta, \Yc_t^{\rm M})$ is an optimal control for the principal. As mentioned above, the final step is to optimise on the initial value of the workers' continuation utility.

\medskip

Finally, the main aspect to notice concerning the principal's problem is that, thanks to the optimal form of contracts for managers, and in particular by \Cref{thm:main_principal}, the dimension of this problem does not explode. More precisely, if the principal supervises $m$ managers, then her problem has $2m$ state variables, potentially multidimensional but of dimension independent of the number of managers. Indeed, on the one hand, each manager $j$ communicates his results through a variable $\zeta^j$, of fixed dimension $h$, which constitutes a state variable for the principal. On the other hand, thanks to the elegant reasoning of \citeayn{sannikov2008continuous}, later developed by \citeayn{cvitanic2018dynamic}, considering in addition the one--dimensional continuation utility of the said manager is sufficient to solve the principal's problem. Therefore, throughout this paper, we have shown that the method used to solve a contracting problem between one principal and one agent can be extended to a hierarchical structure and preserves the same main features, namely that the principal's problem boils down to a more classical control problem with two state variables per agent under her direct supervision.

\section{Conclusion}\label{sec:conclusion}

\textcursive{Summary.}\vspace{-0.3em} In the first part of this paper, we introduce and solve the continuous--time version of \citeauthor{sung2015pay}'s model developed in \cite{sung2015pay}. This opening example highlights the differences between the one--period model and its continuous--time equivalent, in particular concerning the form of the contracts. More precisely, when studying the continuous--time model, we are allowed to consider an extended class of contracts for the managers, indexed in particular on the quadratic variation of the net benefit $\zeta$ observed by the principal. Therefore, in order to rigorously study a continuous--time hierarchy problem, it is not possible to consider the associated discrete--time model with linear contracts, which justifies and even requires the use of the theory of 2BSDEs to deal with problems of moral hazard within a hierarchy. The second part of this paper focuses on a more general model and provides a systematic method to solve any hierarchy problems of this sort, method that can be extended in a straightforward way to a larger scale hierarchy.

\medskip

\textcursive{Extensions.} As mentioned above, the method we developed throughout the theoretical part of this paper can be applied to any hierarchical structure. Moreover, we have assumed that the agents (at the bottom of the hierarchy) do not interact with each other, in the sense that each of them controls his own output and that these outputs are uncorrelated. We could actually assume that they interact, in the same way as the managers finally do. Furthermore, we could also consider, instead of a finite number agents, a continuum of workers with mean--field interaction. There is no reason why this issue could not be addressed by applying the results of \citeayn{elie2019mean} in our framework.
However, several assumptions are necessary to complete our study, notably on the shape of the dynamics of the state variables. Even if they are satisfied in the most common and interesting examples, one might want to weaken those assumptions. Most of these hypotheses prevent the case where a principal does not observe one of the state variables of her agent's problem. Therefore, to hope for an extension of our model, it would be necessary to solve this issue, which is still scarcely addressed in the literature in continuous time (see the work of \citeayn{huang2017optimal} for a particular example).

\section*{Acknowledgements}
The author gratefully acknowledges the support of the ANR project PACMAN ANR--16--CE05--0027, the FACE Foundation -- Thomas Jefferson Fund, as well as the support of the University Gustave Eiffel through a mobility grant. Finally, the author thanks Dylan Possamaï (IEOR Department, Columbia University) for his relevant ideas and useful advice all along the conception of this paper. Nevertheless, all potential errors and opinions expressed in this paper are sole responsibility of the author.

{\small
\bibliography{bibliographyPAH}}

\newpage

\begin{appendices}

\crefalias{section}{appendix}
\crefalias{subsection}{appendix}

\renewcommand{\appendixpagename}{Appendix.}

\appendixpage

In \Cref{app:sungs_model}, we regroup additional results and proofs that concern the continuous--time version of \citeauthor{sung2015pay}'s model, developed in \Cref{sec:sungmodel}, and its extensions mentioned in \Cref{sec:extensions}. \Cref{app:intuition_extension} provides intuitions for the form of contracts in the general model (\Cref{app:intuition_markovian}), obstacles to the consideration of specific extensions (\Cref{app:extension_dynamic}), as well as simple but interesting examples satisfying the major hypothesis necessary for our study (\Cref{app:assumption_zeta}). Finally, \Cref{sec:2BSDEs} presents the theory of 2BSDE and the associated results relevant to our framework. This section also contains the proofs of the main propositions and theorems established in the paper.

\section{Further comments on Sung's model}\label{app:sungs_model}

Throughout \Cref{sec:sungmodel,sec:extensions}, we compare our results in continuous time with those of the one--period model detailed in \cite{sung2015pay}, but also with the results that would be obtained in a direct contracting framework, \textit{i.e.}, in the case where the principal contracts directly with the agents, without the intermediary of a manager. In the latter case, we are faced with a more traditional principal--agents problem (with a finite number of agents), which can be solved in a straightforward way, by extending the results obtained by \citeayn{holmstrom1987aggregation} to a multitude of agents. The following lemma reports the optimal efforts of the agents as well as the utility of the principal in this case. A similar result is mentioned in \cite{sung2015pay}, but we should refer to the work of \citeayn{koo2008optimal} for a rigorous result, or to the more general model of \citeayn{elie2019contracting}.
\begin{lemma}\label{lem:DC_case}
With direct contracting, the optimal efforts of the workers are given by
\begin{align}\label{eq:effort_DC}
    \alpha^{i, \rm DC} := k^i Z^{i, \rm DC}, \; \text{ where } \; Z^{i, \rm DC} := \dfrac{k^i}{\widetilde R^i} \; \text{ and } \; 
    \widetilde R^i := k^i + R^i |\sigma^i |^2.
\end{align}
Moreover, if $X_0 = 0$, then the value $V^{\rm DC}$ for the principal is equal to:
\begin{align}\label{eq:value_DC}
    V^{\rm DC} := \dfrac{1}{2} \sum_{i=0}^n \dfrac{|k^i|^2}{\widetilde R^i}.
\end{align}
\end{lemma}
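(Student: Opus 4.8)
The plan is to exploit the product structure of the problem. Since the Brownian motions $W^0,\dots,W^n$ in \eqref{eq:dynamic_Xi} are independent, each worker impacts only his own output, the costs \eqref{eq:cost_function} are separable across workers, and the risk--neutral principal maximises the \emph{sum} $\E^{\P}\big[\sum_{i} X_T^i - \sum_i \xi^i_T\big]$, the direct contracting problem decouples into $n+1$ independent single principal--agent problems, one for each worker $i \in \{0,\dots,n\}$. It therefore suffices to solve a generic one of them and add up the resulting values; each is a classical Holmström--Milgrom problem with CARA utility and quadratic cost, for which I would invoke the rigorous treatments of \citeayn{koo2008optimal} or \citeayn{elie2019contracting} cited above.

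For a fixed worker $i$, I would first reduce the admissible contracts to the revealing class \eqref{eq:contract_drift}, parametrised by a single payment rate $Z^i$, arguing as for \Cref{prop:agent_sung} that this restriction is without loss of generality for drift control. With the quadratic cost \eqref{eq:cost_function} the agent's Hamiltonian is $\Hc^i(z) = \sup_{a\in\R}\big\{az - a^2/(2k^i)\big\} = \tfrac12 k^i z^2$, whose maximiser gives the best reaction $a^i(z) = k^i z$. Under the probability associated with this optimal effort, the dynamics of $X^i$ and $\xi^i$ are exactly those in \Cref{prop:agent_sung}, namely $\drm X^i_t = k^i Z^i_t \drm t + \sigma^i \drm W^i_t$ and $\drm \xi^i_t = \tfrac12 \widetilde R^i |Z^i_t|^2\drm t + Z^i_t \sigma^i \drm W^i_t$, with $\widetilde R^i = k^i + R^i|\sigma^i|^2$. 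Saturating the participation constraint against the reservation utility $-1$ then fixes the fixed part $\xi_0^i = 0$, since the worker's value under a revealing contract equals $-\erm^{-R^i \xi_0^i}$.

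It remains to optimise over $Z^i$. Substituting these dynamics into the principal's criterion, the stochastic integrals vanish in expectation, and using $X^i_0 = 0$ together with $\xi_0^i = 0$ the contribution of worker $i$ becomes
\begin{align*}
    \E^{\P}\big[ X_T^i - \xi^i_T \big] = \E^{\P}\bigg[ \int_0^T \Big( k^i Z^i_t - \tfrac12 \widetilde R^i |Z^i_t|^2 \Big) \drm t \bigg].
\end{align*}
The integrand is a strictly concave quadratic in $Z^i_t$, maximised pointwise by the constant rate $Z^{i,\rm DC} = k^i/\widetilde R^i$, which in turn yields the optimal effort $\alpha^{i,\rm DC} = k^i Z^{i,\rm DC}$ announced in \eqref{eq:effort_DC}. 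The corresponding maximal value of the integrand is $\tfrac12 |k^i|^2/\widetilde R^i$; summing the maximised contributions over $i \in \{0,\dots,n\}$ produces the value \eqref{eq:value_DC}.

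The only genuinely non--elementary ingredient is the reduction to revealing contracts in the second step: one must verify that no non--revealing contract lets the principal do strictly better. I expect this to be the main obstacle, but it is precisely the content of the drift--control theory established in \cite{cvitanic2018dynamic} and its predecessors, so within this lemma it may be invoked by reference. The remaining ingredients—the decoupling argument and the pointwise concave optimisation—are routine.
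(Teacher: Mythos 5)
Your proposal is correct in substance, but there is nothing in the paper to compare it against line by line: the paper gives no proof of \Cref{lem:DC_case} at all, stating it as a known consequence of \citeayn{holmstrom1987aggregation} and referring the reader to \cite{koo2008optimal} and \cite{elie2019contracting} for rigorous multi--agent treatments. Your argument therefore supplies the proof that the paper delegates to the literature, and it does so by the natural route: it is exactly the specialisation of the paper's own hierarchical machinery (\Cref{prop:agent_sung,prop:manager_sung,prop:principal_sung}) to the case without a manager. The decoupling into $n+1$ independent Holmström--Milgrom problems, the reduction to revealing contracts \eqref{eq:contract_drift}, the best reaction $a^i(z)=k^i z$, the saturation $\xi_0^i=0$ of the participation constraint, and the pointwise maximisation of $k^i z - \tfrac12 \widetilde R^i z^2$ at $Z^{i,\rm DC} = k^i/\widetilde R^i$ with value $\tfrac12 |k^i|^2/\widetilde R^i$ are all correct and match \eqref{eq:effort_DC}.

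Two points deserve more care. First, the decoupling step is quicker than it should be: a priori the principal may index worker $i$'s contract on the other workers' outputs as well, and ruling this out is not automatic from independence alone; it requires the informativeness argument that indexing a CARA agent's pay on independent noise adds a risk premium with no incentive benefit (this is precisely what the cited multi--agent references handle, so they should be invoked for this step too, not only for the revealing--contract reduction). Second, your computation actually yields $\E^{\P}\big[\int_0^T\big(k^i Z^{i,\rm DC} - \tfrac12\widetilde R^i |Z^{i,\rm DC}|^2\big)\,\drm t\big] = T\cdot\tfrac12 |k^i|^2/\widetilde R^i$ per worker, so you recover \eqref{eq:value_DC} only under the normalisation $T=1$; note that the paper itself is not consistent on this point, since \Cref{prop:principal_sung} keeps the factor $T$ explicitly while \eqref{eq:value_DC} drops it. You should either carry the factor $T$ through or state the normalisation, rather than silently absorbing it in the last line.
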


\subsection{Proofs related to the initial model}\label{app:ss:proof_sung}

This section contains the proofs of the main results related to the continuous--time version of \citeauthor{sung2015pay}'s model, established in \Cref{sec:sungmodel}.

\begin{proof}[\Cref{prop:manager_sung}]
As in the statement of the proposition, let $(Z,\Gamma) \in \Vc^{\rm b}$. According to \Cref{prop:contract_vol}, the optimal form of contract is given by \eqref{eq:contract_vol}.
Recall that the Hamiltonian $\Hc^{\rm b}$ is defined by \eqref{eq:hamiltonian_manager_sung}, and that the dynamic of $\zeta^{\rm b}$ under optimal effort of the agents is given by \eqref{eq:dyn_zeta_b}. Replacing in the value function of the manager, defined by \eqref{eq:manager_pb_sung}, we obtain:
\begin{align*}
    J_0^{0} \big( \xi^{\rm b}, \alpha^{0}, (Z^i)_{i=1}^n \big) = & - \erm^{- R^{0} \xi^{\rm b}_0}
    \exp \bigg(- R^{0} \bigg(
    \int_0^T \Big( h^{\rm b} \big( Z_s, \Gamma_s, (Z_s^i)_{i=1}^n, \alpha^0_s \big) - \Hc^{\rm b} ( Z_s, \Gamma_s) \Big) \drm s \bigg) \\
    & \times  \E^{\P^{0}} \bigg[ \Ec \bigg(- R^{0}
    \int_0^\cdot Z_s \bigg( \sigma^{0} \drm W_s^{0} 
    + \sum_{i=1}^n \sigma^i \big( 1 - Z^i_s \big) \drm W_s^i \bigg) \bigg)_T \bigg],
\end{align*}
where $\Ec$ denotes for the Doléans--Dade exponential and the function $h^{\rm b}$ is defined by:
\begin{align*}
    h^{\rm b} \big( z, \gamma, (z^i)_{i=1}^n, a \big) := 
    \dfrac{1}{2} \gamma | \sigma^0 |^2
    + z a - c^{0} (a)
    + \sum_{i=1}^n \bigg( 
    z \Big( k^i z^{i} - \dfrac{\widetilde R^i}{2} |z^{i}|^2 \Big)  
    + \dfrac{1}{2} \gamma |\sigma^i|^2 |1 - z^i|^2 \bigg).
\end{align*}
Under the appropriate integrability conditions on $Z$, the previously considered Doleans--Dade exponential is a martingale, which implies:
\begin{align*}
    J_0^{0} \big( \xi^{\rm b}, \alpha^{0}, (Z^i)_{i=1}^n \big) = - \erm^{- R^{0} \xi^{\rm b}_0}
    \exp \bigg(- R^{0} \bigg(
    \int_0^T \Big( h^{\rm b} \big( Z_s, \Gamma_s, (Z_s^i)_{i=1}^n, \alpha^0_s \big) - \Hc^{\rm b} ( Z_s, \Gamma_s) \Big) \drm s \bigg)  
    \leq - \erm^{- R^{0} \xi^{\rm b}_0}.
\end{align*}
The inequality in the previous equation stems from the fact that $\Hc^{\rm b} \geq h^{\rm b}$ by definition. In particular, the equality is attained when $(Z^i)_{i=1}^n$ and $\alpha^0$ maximise the function $h^{\rm b}$. More precisely, we have:
\begin{align*}
    k^0 z = \argmax_{a \in \R} \big\{ a z - c^{0} (a) \big\}, \; \textnormal{ and } \; 
    z^{i, \rm b} (z,\gamma) = \argmax_{z^i \in \R} \bigg\{ 
    z \Big(  k^i z^{i} - \frac{\widetilde R^i}{2} |z^{i}|^2 \Big)
    + \dfrac{1}{2} \gamma |\sigma^i|^2 |1 - z^i|^2 \bigg\},
\end{align*}
where the function $z^{i, \rm b}$ is defined in the statement of the proposition by \eqref{eq:zib_star}. 
Therefore, the optimal controls of the manager are given for all $t \in [0,T]$ by $\alpha^{\rm b}_t := k^0 Z_t$ and $Z_t^{i,\rm b} := z^{i,\rm b} (Z_t, \Gamma_t)$.

\medskip

To complete the proof, it remains to compute the dynamics of $\zeta^{\rm b}$ and $\xi^{\rm b}$ under the optimal efforts of the manager. In the one hand, by plugging the optimal effort in the dynamic of $\zeta^{\rm b}$ defined by \eqref{eq:dyn_zeta_b}, it is straightforward to obtain the desired result:
\begin{align*}
    \drm \zeta^{\rm b}_t = \bigg( k^0 Z_t + \sum_{i=1}^n \Big( k^i z^{i, \rm b} (Z_t, \Gamma_t) - \frac{\widetilde R^i}{2} \big| z^{i, \rm b} (Z_t, \Gamma_t) \big|^2 \Big) \bigg) \drm t + \sigma^{0} \drm W_t^{0} + \sum_{i=1}^n \sigma^i \big( 1 - z^{i, \rm b} (Z_t, \Gamma_t) \big) \drm W_t^i.
\end{align*}
Then, using both the previous dynamic for $\zeta^{\rm b}$ and \eqref{eq:contract_vol}, we obtain:
\begin{align*}
    \drm \xi^{\rm b}_t = &\ \Big( h^{\rm b} \big( Z_t, \Gamma_t, (Z^{i, \rm b}_t)_{i=1}^n, \alpha^{\rm b}_t \big) - \Hc^{\rm b} (Z_t, \Gamma_t)  \Big) \drm t
    + c^0 \big(\alpha_t^b \big) \drm t \\
    &+ \dfrac{1}{2} R^{0} Z_t^2 \bigg( |\sigma^{0}|^2 + \sum_{i=1}^n |\sigma^i|^2 \big| 1 - Z^{i, \rm b}_t \big|^2 \bigg) \drm t
    + Z_t \bigg( \sigma^{0} \drm W_t^{0} + \sum_{i=1}^n \sigma^i \big( 1 - Z^{i, \rm b}_t \big) \drm W_t^i \bigg).
\end{align*}
By definition of $h^{\rm b}$ and the optimal controls of the manager, the difference $h^{\rm b}-\Hc^{\rm b}$ is equal to zero. Noticing that the cost $c^0 (\alpha^{\rm b}_t)$ is equal to $k^0 Z_t^2 /2$, we obtain the desired dynamic for $\xi^{\rm b}$, which concludes the proof.
\end{proof}

\begin{proof}[\Cref{prop:principal_sung}]
    Since the principal is risk--neutral, her reward function can be computed as follows:
    \begin{align*}
        J_0^{\rm P} \big(  \xi^{\rm b} \big) = &\ \zeta^{\rm b}_0 - \xi^{\rm b}_0
        + \E^{\P^{\rm b}} \bigg[ \int_0^T \sigma^{0} (1 - Z_t) \drm W_t^{0} \bigg] + \sum_{i=1}^n \E^{\P^{\rm b}} \bigg[ \int_0^T \sigma^i (1 - Z_t) \big( 1 - z^{i, \rm b} (Z_t, \Gamma_t) \big) \drm W_t^i \bigg] \\
        &+ \E^{\P^{\rm b}} \bigg[ \int_0^T \bigg( k^{0} Z_t
        - \dfrac{1}{2} \widetilde R^0 Z_t^2
        + \sum_{i=1}^n h^{i,\rm b} (Z_t,\Gamma_t) \bigg) \drm t \bigg],
    \end{align*}
    for $(Z, \Gamma) \in \Vc^{\rm b}$ and recalling that $\widetilde R^0 := k^0 + R^{0} |\sigma^{0}|^2$.
    First, the expectation of the two stochastic integrals is equal to zero. Then, maximising the reward function is equivalent to maximising inside the last expectation and the integral with respect to time, which leads to the following optimisation problem:
    \begin{align}\label{eq:sup_sung_case}
        \sup_{(z, \gamma) \in \V^{\rm b}} \bigg\{ 
        k^{0} z 
        - \dfrac12 \widetilde R^0 z^2
        + \sum_{i=1}^n h^{i,\rm b} (z,\gamma)
        \bigg\}.
    \end{align}
Since $h^{i,\rm b}$ is strictly concave in $\gamma$, the first--order condition (FOC) is sufficient to obtain the optimal $\gamma$:
\begin{align*}
    0 = 
    \sum_{i=1}^n \Big( k^i \partial_{\gamma} z^{i, \rm b} (z,\gamma)
    - \widetilde R^i z^{i, \rm b} (z,\gamma) \partial_{\gamma} z^{i, \rm b} (z,\gamma)
    + z^2 R^{0} |\sigma^i|^2 \partial_{\gamma} z^{i, \rm b} (z,\gamma) \big( 1 - z^{i, \rm b} (z,\gamma) \big) \Big).
\end{align*}
By computing the derivatives of $z^{i, \rm b} (z,\gamma)$ with respect to $\gamma$ for all $i \in \{1,\dots,n\}$, namely
\begin{align*}
    \partial_{\gamma} z^{i, \rm b} (z,\gamma) =  \dfrac{- R^i |\sigma^i|^4 z }{ \big| \widetilde R^i z - |\sigma^i|^2 \gamma \big|^2},
\end{align*}
one obtain the following FOC
\begin{align*}
    0 = 
    z \big( \gamma + z^3 R^{0} \big)
    \sum_{i=1}^n \dfrac{- |R^i|^2 |\sigma^i|^8}{ \big( \widetilde R^i z - |\sigma^i|^2 \gamma \big)^3},
\end{align*}
and therefore it is optimal to set $\Gamma_t^{\rm b} := -R^0 \big(Z^{\rm b}_t \big)^3$, for all $t \in [0,T]$. Finally, since the optimal control of the principal $(Z^{\rm b}, \Gamma^{\rm b})$ must be in $\Vc^{\rm b}$, the process $Z^{\rm b}$ must be positive. Unfortunately, we cannot obtain an explicit value for $Z^{\rm b}$, but we can estimate it thanks to a simple numerical optimisation, as the maximiser of \eqref{eq:sup_z_sung_case}. Noticing that the optimisation problem \eqref{eq:sup_z_sung_case} does not depend on time nor on the state variable, its maximiser $z^{\rm b}$ is a positive constant. 

\medskip

The point $(ii)$ of the proposition is a simple computation of the contract under the optimal payment rates chosen by the principal:
\begin{align*}
    \xi^{\rm b}_T = &\ 
    \xi^{\rm b}_0
    - \int_0^T \Hc^{\rm b} (z^{\rm b}, - R^0 (z^{\rm b})^3) \drm t
    + z^{\rm b} \int_0^T \drm \zeta^{\rm b}_t
    + \dfrac{1}{2} R^0 |z^{\rm b}|^2 \big(1 - z^{\rm b} \big) \int_0^T \drm \langle \zeta^{\rm b} \rangle_t \\
    = &\ \xi^{\rm b}_0
    - \Hc^{\rm b} (z^{\rm b}, - R^0 (z^{\rm b})^3) T
    + z^{\rm b} (\zeta^{\rm b}_T - \zeta^{\rm b}_0)
    + \dfrac{T}{2} R^0 |z^{\rm b}|^2 (1 - z^{\rm b}) \langle \zeta^{\rm b} \rangle_T.
\end{align*}
The same type of computation leads to the point $(iii)$. Moreover, since the reservation utility of the agents and the managers are equal to $-1$, we obtain that the optimal choice of $\xi^{\rm b}_0$ is zero. To prove the last point of the proposition, it is then sufficient to compute the utility of the principal for the optimal contracts.
\end{proof}

\subsection{Proofs related to the extensions of the initial model}\label{app:ss:proof_extension}

This section contains the proofs of the main results established in \Cref{sec:extensions}, which concern the extensions of the initial model developed in \Cref{sec:sungmodel}.

\begin{proof}[\Cref{prop:cas_pc0_identical}]
    As mentioned in the proposition, we assume that all agents are identical, in the sense that for all $i \in \{1, \dots, n\}$, $k^i = k$, $R^i = R$, $\sigma^i = \sigma$, for some $(k, R, \sigma) \in (\R_+^\star)^3$. In this case, the optimisation problem \eqref{eq:sup_2statevariable} becomes:
    \begin{align*}
        \sup_{(z, \gamma) \in \V^{\rm pc}} \Big\{ 
        k^0 z^{1} - \dfrac12 \widetilde R^0 |z^{1}|^2 + n h^{\cdot, \rm pc} (z, \gamma)
        \Big\},
    \end{align*}
    where $h^{\cdot, \rm pc} := h^{i, \rm pc}$, defined by \eqref{eq:hipc} for all $i \in \{1, \dots, n\}$. By \Cref{lem:DC_case}, to achieve the optimal effort of the direct contracting case for all workers, we should have:
    \begin{align*}
        z^{1,\star} = \dfrac{k^0}{\widetilde R^0} \; \text{ and, for all } \; i \in \{1,\dots,n\}, \; z^{i, \rm pc} (z, \gamma) = z^{\cdot,\rm pc,\star} := \dfrac{k}{\widetilde R}, \text{ where } \; \widetilde R := k + R |\sigma|^2.
    \end{align*}
    Indeed, if $z^1 = z^{1,\star}$ and $z^{\rm \cdot,pc} = z^{\rm \cdot,pc,\star}$, then this imply the optimal effort of the DC case for respectively the manager and the agents.
    Moreover, the value of supremum has to be equal to the value of the principal in the DC case, given by 
    \begin{align*}
        V^{\rm DC} = \dfrac{1}{2} \dfrac{|k^0|^2}{\widetilde R^0} + \dfrac{1}{2} \dfrac{n |k|^2}{\widetilde R}.
    \end{align*}
    Using the expected values of $z^1$ and $z^{\rm pc}$, we obtain the following value for the supremum:
    \begin{align*}
        \dfrac12 \dfrac{|k^0|^2}{\widetilde R^0}
        + \frac{n}{2} \dfrac{|k|^2}{\widetilde R}
        - \dfrac{n}{2} R^{0} |\sigma|^2 
        \inf_{(z, \gamma) \in \V^{\rm pc}} \bigg\{ 
        \bigg| \dfrac{k^0}{\widetilde R^0} + z^{2} \dfrac{k}{\widetilde R} \bigg|^2
        \bigg\}.
    \end{align*}
    The infimum in the previous equation is attained for $z^{2,\star} = - k^0 \widetilde R/ (k \widetilde R^0)$, and we thus obtain the value of the DC case.
    It then remains to solve, for $(\gamma^{12}, \gamma^{22}) \in \R^2$ such that $(z,\gamma) \in \V^{\rm pc}$,
    \begin{align*}
        - \dfrac{k^i z^{1,\star} + |\sigma^i|^2 \gamma^{12} }{ \widetilde R^i z^{2,\star} + |\sigma^i|^2 \gamma^{22}} = \dfrac{k}{\widetilde R},
    \end{align*}
    to ensure that $z^{\cdot, \rm pc}$ has the requested value. This equation is equivalent to:
    \begin{align*}
        \gamma^{12} = \frac{k^0}{\widetilde R^0} R
        - \dfrac{k}{\widetilde R}  \gamma^{22}, \; \text{ for } \; \gamma^{22} < - \frac{k^0 | \widetilde R |^2}{k |\sigma|^2 \widetilde R^0}.
    \end{align*}
    In conclusion, for any $\gamma^{22}$ satisfying the previous inequality, by setting
    \begin{align*}
        z^{1} := \dfrac{k^0}{\widetilde R^0}, \; z^{2} := - \dfrac{k^0 \widetilde R}{k \widetilde R^0} \; \text{ and } \; \gamma^{12} := \dfrac{k^0}{\widetilde R^0} R - \dfrac{k}{\widetilde R} \gamma^{22},
    \end{align*}
    the value function of the principal and the efforts of both the agents and the manager are equal to the those in the DC case.
\end{proof}

\begin{proof}[\Cref{prop:other_reporting}]
$(i)$ To prove the first point of the proposition, we follow the reasoning of the previous subsection by considering contract for the manager of the form \eqref{eq:contract_vol_2D}, indexed on the $2$--dimensional reported variable $\zeta^{\rm b,0}$ by an $\R^2 \times \R^{2 \times 2}$--valued process $(Z,\Gamma)$. We find that the optimal effort of the manager and the optimal payment rate for the $i$--th agents are respectively given for all $t \in [0,T]$ by $\alpha_t^{\rm b, 0} = k^0 Z^{2}_t$ and $Z^{i, \rm b, 0}_t = z^{i, \rm b} (Z_t^{1},\Gamma^{11}_t)$, where $(Z^1, \Gamma^{11}) \in \Vc^{\rm b}$. The principal's problem is then equivalent to:
\begin{align*}
    \sup_{z^2 \in \R} \bigg\{ 
    k^0 z^{2} - \dfrac12 \widetilde R^0 |z^{2}|^2 \bigg\}
    + \sup_{(z^{1}, \gamma^{11}) \in \V^{\rm b}} \sum_{i=1}^n h^{i,\rm b} (z^1, \gamma^{11}),
\end{align*}
recalling that the function $h^{i,\rm b}$ is defined for all $i \in \{1, \dots, n\}$ by \eqref{eq:hib}. 
Therefore, by choosing the constant processes $Z_t^{2} := Z^{0, {\rm DC}}$, defined by \eqref{eq:effort_DC}, and $\Gamma_t^{11} := 0$, for all $t \in [0,T]$, all workers do the optimal effort of the DC case and the supremum becomes:
\begin{align*}
    \dfrac12 \dfrac{|k^0|^2}{\widetilde R^0}
    + \frac{1}{2} \sum_{i=1}^n \dfrac{|k^i|^2}{\widetilde R^i}
    - \frac{1}{2} R^{0} \inf_{z^{1} > 0}
    \sum_{i=1}^n
    |\sigma^i|^2 
    |z^{1}|^2 \bigg| 1 - \dfrac{|k^i|^2}{\widetilde R^i} \bigg|^2.
\end{align*}
The infimum is equal to that in the DC case and can be achieved by considering a sequence $(z^{1,n})_{n \geq 0}$ converging to zero. Therefore, the principal can construct a sequence of contracts, namely $\xi^{{\rm b},0,n}$, with for example $Z^{1,n} := 1/n$, allowing the workers to apply optimal efforts of the DC case and such that her utility converges to $V^{\rm DC}$.

\medskip

$(ii)$ To prove the second point, we consider contracts of the form \eqref{eq:contract_vol_2D}, but indexed on the $3$--dimensional reported variable $\zeta^{\rm pc,0}$, by an $\R^3 \times \R^{3 \times 3}$--valued process $(Z,\Gamma)$. The fact that the principal's problem is also degenerating into the DC case can be shown in the same way as we prove the first point. Moreover, in the case of identical agents, following the same reasoning as in \Cref{sss:2statevariables}, the optimal contract which allow to attain the DC case exists. Indeed, in this case, the principal's problem is equivalent to:
\begin{align*}
    \sup_{(z, \gamma, z^3) \in \V^{\rm pc} \times \R} \bigg\{ 
    k^0 z^{3} 
    - \dfrac12 \widetilde R^0 |z^{3}|^2
    + \sum_{i=1}^n h^{i, \rm pc} (z, \gamma)
    \bigg\},
\end{align*}
recalling that, for all $i \in \{1, \dots, n\}$, $h^{i, \rm pc}$ is defined by \eqref{eq:hipc}.
The optimal payment rate $z^{3}$ is clearly given by the ratio $k^0 / \widetilde R^0$. Moreover, if the agents are identical, in the sense that for all $i \in \{1, \dots, n\}$, $k^i = k$, $R^i = R$, $\sigma^i = \sigma$, for some $(k, R, \sigma) \in (\R_+^\star)^3$,
the previous optimisation problem is equal to
\begin{align*}
    \dfrac12 \dfrac{|k^0|^2}{\widetilde R^0} 
    + n \sup_{(z, \gamma) \in \V^{\rm pc}} \bigg\{ 
    k z^{\cdot,\rm pc}  (z, \gamma)
    - \frac{1}{2} \widetilde R \big| z^{\cdot,\rm pc}  (z, \gamma) \big|^2
    - \dfrac{1}{2} R^{0} |\sigma|^2 
    \big| z^{1} + z^{2} z^{\cdot, pc}(z, \gamma) \big|^2
    \bigg\},
\end{align*}
where $\widetilde R := k + R |\sigma|^2$ and $z^{\cdot,\rm pc}  := z^{i, \rm pc}$ for all $i \in \{1, \dots, n\}$.
By setting
\begin{align*}
    z^{1} = - \dfrac{k}{\widetilde R} z^{2} \; \text{ and } \; \gamma^{12} = - \dfrac{k}{\widetilde R} ( R z^{2} + \gamma^{22}) \; \text{ for all } \; (z^{2}, \gamma^{22}) \in \R^2,
\end{align*}
one obtain $z^{\cdot, \rm pc} (z, \gamma) = k/\widetilde R$ and $z^{1} + z^{2} z^{\cdot,\rm pc} (z, \gamma) = 0$,
implying that the efforts of the workers and the value of the supremum are equal to those in the DC case. 
\end{proof}

\section{Intuition and extensions}\label{app:intuition_extension}

\subsection{Intuition in the Markovian framework}\label{app:intuition_markovian}

One of the cornerstones of the approach to continuous--time moral hazard problems, pioneered by \citeayn{sannikov2008continuous}, and studied in full generality by \citeayn{cvitanic2018dynamic}, is to obtain an appropriate probabilistic representation for incentive--compatible contracts. 
Intuitively, we expect that the continuation utility $Y^{j,i}$ of the $(j,i)$--th agent, given a contract $\xi \in \Cc^{j,i}$, in particular $\G^j$--measurable, and actions of other workers subsumed by the collections of outputs $X^j$ and $\widebar X^{-j}$, may be written as follows:
\begin{align*}
    Y_t^{j,i} = v^{j,i} \big( t, X^j_{\cdot \wedge t}, \widebar X_{\cdot \wedge t}^{-j} \big),
\end{align*}
that is, the process $Y^{j,i}$ at time $t$ depends on time $t$ and on the path history of $X^j$ and $\widebar X^{-j}$. Recall that the effort of other agents, $\nu^{-(j,i)}$, are fixed through the probability $\P^{-(j,i)}$, as well as the efforts $\nu^{\rm M}$ of the managers, fixed through $\P^{\rm M}$. We thus consider $\P \in \Pc^{j,i} (\P^{-(j,i)}, \P^{\rm M})$.

\medskip

To intuit the form of contracts used in \Cref{sss:nash_equilibrium_agent}, the focus here is on the Markovian case: we assume that the continuation utility $Y^{j,i}$ can be written at each time $t \in [0,T]$ as a function of $X^j_t$ and $\widebar X^{-j}_t$, and thus not on their paths up to $t$. In particular, if this value function is smooth enough, we can apply It\=o's formula to the process $(\Kc^{j,i,\P}_{0,t} Y_{t}^{j,i})_{t \in [0,T]}$ under $\P$:
\begin{align*}
    \Kc^{j,i,\P}_{0,t} Y_{t}^{j,i} - \Kc^{j,i,\P}_{0,s} Y_{s}^{j,i} = 
    & - \int_s^t k^{j,i} \big( r, X_r^{j,i}, \nu_r^{j,i} \big) \Kc^{j,i,\P}_{0,r} Y_{r}^{j,i} \drm r
    + \int_s^t \Kc^{j,i,\P}_{0,r} \drm Y_{r}^{j,i} \\
    = & \int_s^t \Kc^{j,i,\P}_{0,r} \Big( \partial_t v^{j,i} - k^{j,i} \big( r, X_r^{j,i}, \nu_r^{j,i} \big) v^{j,i} \Big) \drm r
    + \int_s^t \Kc^{j,i,\P}_{0,r} \nabla_{x} v^{j,i}  \cdot \drm X_t^j 
    + \int_s^t \Kc^{j,i,\P}_{0,r} \nabla_{\widebar x} v^{j,i}  \cdot \drm \widebar X_t^{-j} \\
    &+ \dfrac{1}{2} \int_s^t \Kc^{j,i,\P}_{0,r} \Big( 
    {\rm Tr} \Big[ \nabla^2_x v^{j,i}  \drm \langle X^j \rangle_t \Big]
    + {\rm Tr} \Big[ \nabla^2_{\widebar x} v^{j,i} \drm \langle \widebar X^{-j} \rangle_t \Big]
    + 2 {\rm Tr} \Big[ \nabla^2_{x \widebar x} v^{j,i} \drm \langle X^j, \widebar X^{-j} \rangle \Big] \Big).
\end{align*}
Noticing in particular that the outputs are not correlated, we can rewrite the previous form as follows:
\begin{align*}
    \Kc^{j,i,\P}_{0,t} Y_{t}^{j,i} - \Kc^{j,i,\P}_{0,s} Y_{s}^{j,i}
    = \int_s^t \Kc^{j,i,\P}_{0,r} \bigg(
    & \big( \partial_t v^{j,i} - k^{j,i} \big( r, X_r^{j,i}, \nu_r^{j,i} \big) v^{j,i} \big) \drm r 
    + \sum_{\ell = 0}^{n_j}
    \Big( \partial_{x^{j,\ell}} v^{j,i} \drm X_r^{j,\ell}
    + \dfrac{1}{2} \partial^2_{x^{j,\ell}} v^{j,i} \drm \langle X^{j,\ell} \rangle_r \Big) \\
    &+ \sum_{k = 0, \, k \neq j}^{m} \Big( 
    \partial_{\widebar x^k} v^{j,i} \drm \widebar X_r^{k}
    + \dfrac{1}{2} \partial^2_{\widebar x^k} v^{j,i} \drm \langle \widebar X^{k} \rangle_r \Big) \bigg).
\end{align*}
Recall that, for any $t \in [0,T]$, the output $X_t^{j,i}$ of the $(j,i)$--th worker is given by \eqref{eq:XW},
which implies
\begin{align*}
    \drm \widebar X^{k}_t 
    = \sum_{\ell = 0}^{n_{k}} \Lambda^{k,\ell} \big(t, \nu_t^{k, \ell} \big) \mathrm{d} t
    + \sum_{\ell = 0}^{n_{k}} \sigma^{k,\ell} \big(t, \beta_t^{k,\ell} \big) \cdot \mathrm{d} W^{k,\ell}_t, \; \text{for all} \; k \in \{0, \dots, m\}.
\end{align*}
Moreover, the relevant quadratic variations are given by:
\begin{align*}
    \drm \langle X^{j,\ell} \rangle_t &= \big\| \sigma^{j,\ell} \big(t, \beta_t^{j,\ell} \big) \big\|^2 \drm t, \; \text{for all} \; \ell \in \{0, \dots, n_j \}, \; \text{and} \; 
    \drm \langle \widebar X^{k} \rangle_t = \sum_{\ell = 0}^{n_{k}} \big\| \sigma^{k,\ell} \big(t, \beta_t^{k,\ell} \big) \big\|^2 \drm t, \; \text{for all} \; k \in \{0, \dots, m \},
\end{align*}
which implies the following:
\begin{align*}
    \Kc^{j,i,\P}_{0,t} Y_{t}^{j,i} - \Kc^{j,i,\P}_{0,s} Y_{s}^{j,i}
    = & \int_s^t \Kc^{j,i,\P}_{0,r} \Big( \partial_t v^{j,i} 
    + \widetilde h^{j,i} \big(r, X_r^{j,i}, v^{j,i}, \nabla v^{j,i}, \nabla^2 v^{j,i}, \nu_r^{-(j,i)}, \nu_r^M, \nu^{j,i}_r \big)
    + c^{j,i} \big(r, X_r^{j,i}, \nu_r^{j,i} \big)
    \Big) \drm r \\
    &+ \int_s^t  \Kc^{j,i,\P}_{0,r} \bigg( 
    \sum_{\ell = 0}^{n_j}  \partial_{x^{j,\ell}} v^{j,i} \sigma^{j,\ell} \big(r, \beta_r^{j,\ell} \big) \cdot \mathrm{d} W^{j,\ell}_r
    + \sum_{k = 0, k \neq j}^{m} \partial_{\widebar x^k} v^{j,i}   \sum_{\ell = 0}^{n_k} \sigma^{k,\ell} \big(r, \beta_r^{k,\ell} \big) \cdot \mathrm{d} W^{k,\ell}_r \bigg),
\end{align*}
where for $t \in [0,T]$, $(x,y) \in \R^2$, $(z,\widetilde z) \in \R^{n_j+1} \times \R^{m-1}$, $(\gamma, \widetilde \gamma) \in \R^{n_j+1} \times \R^{m-1}$, $(\nu^{-(j,i)}, \nu^{\rm M}) \in \Uc^{-(j,i)} \times \Uc^{\rm M}$, and $u \in U^{j,i}$:
\begin{align*}
    \widetilde h^{j,i} \big(t, x, y, (z,\widetilde z), (\gamma, \widetilde \gamma), \nu_t^{-(j,i)}, \nu_t^M, u \big) := & 
    - c^{j,i} (t, x, u)
    - k^{j,i} (t, x, u) y
    + z^{i}  \Lambda^{j,i} (t, u)
    + \dfrac{1}{2} \gamma^{i} \big\| \sigma^{j,i} (t, b) \big\|^2 \\
    &+ \sum_{\ell = 0, \ell \neq i}^{n_j} z^{\ell} \Lambda^{j,\ell} \big(t, \nu_t^{j, \ell} \big)
    + \dfrac{1}{2} \sum_{\ell = 0, \ell \neq i}^{n_j} \gamma^{\ell} \big\| \sigma^{j,\ell} \big(t, \beta_t^{j,\ell} \big) \big\|^2 \\
    &+ \widetilde z \cdot \bigg( \sum_{\ell = 0}^{n_k} \Lambda^{k,\ell} \big(t, \nu_t^{k,\ell} \big) \bigg)_{k = 1, \, k \neq j}^{m}
    + \dfrac{1}{2} \widetilde \gamma \cdot \bigg( \sum_{\ell = 0}^{n_{k}} \big\| \sigma^{k,\ell} \big(t, \beta_t^{k,\ell} \big) \big\|^2 \bigg)_{k = 0, k \neq j}^{m}.
\end{align*}
recalling that $\nu^{-(j,i)}$ and $\nu^{\rm M}$ are respectively fixed by $\P^{-(j,i)}$ and $\P^{\rm M}$, and $\Lambda^{j,i} (t,u) := \sigma^{j,i}(t,b) \cdot \lambda^{j,i}(t,a)$.

\medskip

Then, under fairly general conditions, the value of the agent's problem is given by $V^{j,i}_0 = v^{j,i}(0, X^j_0, \widebar X^{-j}_0)$, where the function $v^{j,i} : [0,T] \times \R^{n_j+1} \times \R^{m-1} \longrightarrow \R$ can be characterised as the unique viscosity solution (with appropriate
growth at infinity) of the following Hamilton--Jacobi--Bellman
(HJB) equation:
\begin{align*}
    "- \partial_t v^{j,i} \big(t, x^j, \widebar x^{-j} \big) - \sup_{u^{j,i} \in U^{j,i} } \widetilde h^{j,i} \big(t, X_t^{j,i}, v^{j,i}, \nabla v^{j,i}, \nabla^2 v^{j,i}, \nu_t^{-(j,i)}, \nu_t^M, u^{j,i} \big) = 0".
\end{align*}
This implies, for $s = 0$ and $t= T$, and by taking expectation under $\P$,
\begin{align*}
    V_0^{j,i} \big( \P^{j,i}, \P^{\rm M}, \xi^{j,i} \big)
    \geq &\ \E^{\P} \bigg[ \Kc^{j,i,\P}_{0,T} g^{j,i} \big( X_T^{j,i}, \xi^{j,i} \big)
    - \int_0^T \Kc^{j,i,\P}_{0,r}
    c^{j,i} \big(r, X_r^{j,i}, \nu_r^{j,i} \big)
     \drm r  \bigg].
\end{align*}
In particular, the equality in the previous inequality is attained for the maximiser of the Hamiltonian.

\medskip

The same reasoning allows to obtain, still in the Markovian case, the form of the continuation utility $Y^{j,i}$. In particular, by It\=o's formula on $Y^{j,i}$ and using the HJB, we have:
\begin{align*}
    Y_{t}^{j,i}
    = &\ Y_{0}^{j,i} - \int_0^t \sup_{u^{j,i} \in U^{j,i}} \widetilde h^{j,i} \big(r, X_r^{j,i}, v^{j,i}, \nabla v^{j,i}, \nabla^2 v^{j,i}, \nu_r^{-(j,i)}, \nu_r^M, \nu^{j,i}_r \big) \drm r
    + \int_0^t \nabla_{x^j} v^{j,i} \cdot \drm X_r^{j} \\
    &+ \int_0^t \nabla_{\widebar x^{-j}} v^{j,i} \cdot \drm \widebar X_r^{-j}    
    + \dfrac{1}{2} \sum_{\ell = 0}^{n_j} \int_0^t \partial^2_{x^{j,\ell}} v^{j,i} \drm \langle X^{j,\ell} \rangle_r
    + \dfrac{1}{2} \sum_{k = 0, k \neq j}^{m} \int_0^t  \partial^2_{\widebar x^k} v^{j,i} \drm \langle \widebar X^{k} \rangle_r.
\end{align*}
Replacing the quadratic variations and $\widetilde h^{j,i}$ by their values, several simplifications are possible, especially between the terms related to second order derivatives, and we obtain:
\begin{align*}
    Y_{t}^{j,i}
    = &\ Y_{0}^{j,i} - \int_0^t 
    \Big( \sup_{u^{j,i} \in U^{j,i}} h^{j,i} \big( r, X_r^{j,i}, v^{j,i}, \partial_{x^{j,i}} v^{j,i}, \partial^2_{x^{j,i}} v^{j,i}, u^{j,i} \big)
    + H^{j,i} \big(r, (\partial_{x^{j,\ell}} v^{j,i})_{\ell \neq i}, \partial_{\widebar x^{k}} v^{j,i}, \nu^{-(j,i)}, \nu^{\rm M} \big) 
    \Big) \drm r \\
    &+ \int_0^t \nabla_{x^j} v^{j,i} \cdot \drm X_r^{j}
    + \int_0^t \nabla_{\widebar x^{-j}} v^{j,i} \cdot \drm \widebar X_r^{-j}
    + \dfrac{1}{2} \int_0^t \partial^2_{x^{j,i}} v^{j,i} \drm \langle X^{j,i} \rangle_r,
\end{align*}
where $h^{j,i}$ and $H^{j,i}$ are respectively defined by \eqref{eq:agent_hamiltonian_tomax} and \eqref{eq:agent_hamiltonian_1}.

\medskip

On the one hand, the previous reasoning explains the particular form of the $(j,i)$--th agent's Hamiltonian, denoted by $\Hc^{j,i}$ and defined by \eqref{eq:agent_hamiltonian}. In particular, this Hamiltonian is the supremum on the $(j,i)$--th agent's effort of the sum of the two previous terms $h^{j,i}$ and $H^{j,i}$. On the other hand, we also obtain that the $(j,i)$--th agent's continuation utility $Y^{j,i}$ should be parameterised by a triple $\Zc := (Z, \widetilde Z, \Gamma)$, where 
\begin{align*}
    Z := \nabla_x v^{j,i}, \; \widetilde Z := \nabla_{\widebar x} v^{j,i}, \; \text {and} \; \Gamma := \nabla^2_{x^{j,i}} v^{j,i},
\end{align*}
and satisfies for $t \in [0,T]$:
\begin{align*}
    Y_{t}^{j,i}
    =  Y_{0}^{j,i} - \int_0^t \Hc^{j,i} \big( r, X^{j,i}, Y_r^{j,i}, \Zc_r, \widehat \nu^\star \big) \drm r
    + \int_0^t Z_r \cdot \drm X_r^{j} 
    + \int_0^t \widetilde Z_r \cdot \drm \widebar X_r^{-j}
    + \dfrac{1}{2} \int_0^t \Gamma_r \drm \langle X^{j,i} \rangle_r,
\end{align*}
where $\widehat \nu^\star := (\nu^{-(j,i),\star}, \nu^{\rm M})$ and $\Hc^{j,i}$ is defined by \eqref{eq:agent_hamiltonian}. 

\medskip

Therefore, the process $Y^{j,i}$ is the continuation utility of the $(j,i)$--th agent in the Markovian case, and the associated contract $\xi^{j,i}$ should be such that $Y_T^{j,i} = g^{j,i} (X_T^{j,i}, \xi^{j,i})$. However, this form of contract cannot be used directly in the context of a principal--agent problem with moral hazard. Indeed, this form depends explicitly on the efforts of other agents, namely $\nu^{-(j,i)}$, through the Hamiltonian $\Hc^{j,i}$, and these efforts are not supposed to be observable, nor contractible upon, for the manager. Nevertheless, we can overcome this difficulty by replacing $\nu^{-(j,i)}$ by the optimal efforts process of
other agents, which has to be formally computed as the maximiser in the Hamiltonian denoted by $\nu^{-(j,i), \star}$ and defined
by \eqref{eq:hamiltonian_maximiser}. 
Indeed, at equilibrium, each agent should apply their optimal efforts. 
Moreover, one can notice that the Hamiltonian also depends on the effort of the managers. Indeed, for now, the $j$--th manager offers a contract to his agents, given any efforts made by other managers, and thus communicates these efforts to his agents. We will see later, when solving the Nash equilibrium between managers, that the Hamiltonian will naturally also be computed with the optimal efforts of other managers. 


\subsection{Extending the dynamics}\label{app:extension_dynamic}

In this subsection, we show the limits when considering each output $X^{j,i}$, for $j \in \{1, \dots, m\}$ and $i \in \{0,\dots, n_j\}$, as a solution to the following SDE
\begin{align}\label{eq:XW_SDE}
    \drm X_t^{j,i} =  \sigma^{j,i} \big(t, X^{j,i}, \beta_t^{j,i} \big) \cdot \Big[ \lambda^{j,i} \big(t, X^{j,i}, \alpha_t^{j,i} \big) \mathrm{d} t +  \mathrm{d}W^{j,i}_t \Big],\; t\in[0,T],\; \P \textnormal{--a.s.}
\end{align}
Following the reasoning developed in \Cref{ss:agent_problem}, the Hamiltonian of the $(j,i)$--th agent is defined by
\begin{align}\label{eq:agent_hamiltonian_SDE}
    \Hc^{j,i} \big(t, x, y, z, \widetilde z, \gamma, \widehat \nu \big) := \sup_{u \in U^{j,i}} h^{j,i} \big(t, x^{j,i}, y, z^i, \gamma, u \big) + H^{j,i} \big( t, x, z^{-i}, \widetilde z, \widehat \nu \big),
\end{align}
for any $\big( t, x, y, z, \widetilde z, \gamma \big) \in [0,T] \times \Cc([0,T], \R^w) \times \R \times \R^{n_j+1} \times \R^{m-1} \times \R$ and $\widehat \nu \in \U_0^{-(j,i)} \times \U_0^{\rm M}$, where
\begin{enumerate}[label=$(\roman*)$]
    \item for $(t, x) \in [0,T] \times \Cc([0,T], \R)$, $(y, z, \gamma) \in \R^3$, $u := (a,b) \in U^{j,i}$,
    \begin{align}\label{eq:agent_h_SDE}
        h^{j,i}(t, x, y, z, \gamma, u) &:= - c^{j,i}(t,x, u) - k^{j,i}(t,x, u)y + \Lambda^{j,i} \big(r, x, u \big) z 
        + \dfrac{1}{2} \big\| \sigma^{j,i}(t,x,b) \big\|^2 \gamma;
    \end{align}
    \item for $(t,x) \in [0,T] \times \Cc([0,T], \R^w)$, $(z, \widetilde z) \in \R^{n_j} \times \R^{m-1}$ and $\widehat \nu := (\nu^{-(j,i)},\nu^{\rm M}) \in \Uc^{-(j,i)} \times \Uc^{\rm M}$,
    \begin{align}\label{eq:agent_hamiltonian_2_SDE}
        H^{j,i} \big( t, x, z, \widetilde z, \widehat \nu \big) := 
        z \cdot \Big( \Lambda^{j,\ell} \big(t, x^{j,\ell}, \widehat \nu_t^{j,\ell} \big) \Big)^{n_j}_{\ell = 0, \, \ell \neq i}
        + \widetilde z \cdot \bigg( \sum_{\ell = 0}^{n_k} \Lambda^{k,\ell} \big(t, x^{k,\ell}, \widehat \nu_t^{k,\ell} \big) \bigg)_{k = 1, \, k \neq j}^{m}.
    \end{align}
\end{enumerate}

First, remark that the $(j,i)$--th agent's Hamiltonian depends on every components of $X$ throughout the drift and volatility functions. Limiting the study to the agent, this is not a problem since we have assumed that the agents observes $X$. However, plugging this Hamiltonian in the contract is not possible, since the $j$--th manager only observes $X^j$ and $\widebar X^{-j}$. Therefore, the manager cannot compute the Hamiltonian part $H^{j,i}$ of his $(j,i)$--th agent. This would lead to an additional assumption, similar to \Cref{ass:zeta_dynamic}, on the shape of the dynamic of $\widebar X^{-j}$ for all $j \in \{1,\dots,m\}$. 

\medskip

Nevertheless, even with this type of assumption, we are faced with a much more serious problem. The part of the Hamiltonian optimised by the agent, \textit{i.e.}, the part given by \eqref{eq:agent_h_SDE}, depends on the output of the agent. Thus, his optimal effort will a priori be a functional of his output. Considering only the relation between the manager and his agent, it is not an issue since both observes the output. However, moving to the problem of another team, since the contract of an agent is written with his Hamiltonian on the optimal efforts of others, it will in fact depend on the output of the agents of another team through their optimal efforts. Since these outputs are not observed by the manager, we are not allowed to write the contract in this way either.

\subsection{On the reporting of the managers}\label{app:assumption_zeta}

The goal of this appendix is to find interesting cases where \Cref{ass:zeta_dynamic} holds. With this in mind, we fix $j \in \{1, \dots, m\}$ and we recall that $\zeta^j_t := f^j \big(t, X^{j}, \xi^{j \setminus 0} \big)$, for some function $f^j$. Assuming that the function $f^j$ is smooth enough, we can apply It\=o's formula to write explicitly the dynamics of $\zeta^j$ with respect to $X^j$ and $\xi^{j \setminus 0}$:
\begin{align*}
    \drm \zeta^j_t = \partial_t f^j \drm t
    + \nabla_x f^j \cdot \drm X^j_t
    + \nabla_y f^j \cdot \drm \xi^{j \setminus 0}_t
    + \dfrac12 \mathrm{Tr} \big[ \nabla_x^2 f^j \drm \langle X^j \rangle_t \big]
    + \dfrac12 \mathrm{Tr} \big[ \nabla_y^2 f^j \drm \langle \xi^{j \setminus 0} \rangle_t \big]
    + \mathrm{Tr} \big[ \nabla_{xy}^2 f^j \drm \langle X^j, \xi^{j \setminus 0} \rangle_t \big].
\end{align*}
Nevertheless, to obtain a dynamics of the form \eqref{eq:dyn_zeta_simple} for $\zeta$, it is necessary to develop the previous equation using the dynamic of $X^j$ and $\xi^{j \setminus 0}$ under the optimal effort of the agents.

\medskip

Given a probability $\P^{\rm M}$ and a collection $\xi^{\rm A}$ of contracts for the agents, both chosen by the managers, the Nash equilibrium between the agents is represented by the probability $\P^\star (\P^{\rm M}, \xi^{\rm A})$, which will be denoted by $\P^\star$ for simplicity. Under this probability and for all $j \in \{1, \dots, m\}$ and $i \in \{0, \dots, n_j\}$, we have the following dynamics for $X^{j,i}$, for all $t \in [0,T]$, $\P^\star$--a.s.
\begin{align*}
    \drm X_t^{j,0}= \sigma^{j,0} \big(t, \beta_t^{j,0} \big) \cdot \Big[ \lambda^{j,0} \big(t, \alpha_t^{j,0} \big) \mathrm{d}t +  \mathrm{d}W^{j,0}_t \Big], \textnormal{ and } 
    \drm X_t^{j,i}= \sigma^{j,i} \big(t, \beta_t^{j,i, \star} \big) \cdot \Big[ \lambda^{j,i} \big(t, \alpha_t^{j,i, \star} \big) \mathrm{d}t +  \mathrm{d}W^{j,i}_t \Big],
\end{align*}
for $i \in \{1, \dots, n_j\}$, where
\begin{align*}
    \big( \alpha_t^{j,i,\star}, \beta_t^{j,i, \star} \big) = u^{j,i, \star} \big(t, Y^{j,i}_t, \big( Z_t^{j,i} \big)^i, \Gamma^{j,i}_t \big), \; \drm t \otimes \P^\star\text{--a.s. for all } t \in [0,T],
\end{align*}
while the effort $\nu^{j,0}$ for all $j$ are fixed through $\P^{\rm M}$. 

\medskip

On the other hand, for all $i \in \{1, \dots, n_j\}$, we have $\xi_t^{j,i} = \widebar g^{j,i} (X_{\cdot \wedge t}^{j,i}, Y_t^{j,i})$, for $t \in [0,T]$,
where $Y^{j,i}$ is defined by \eqref{eq:continuation_utility_agent}.  Under the optimal effort of the agents, some parts of the Hamiltonian simplifies with the drift parts of the stochastic integrals, and we obtain in particular that the dynamic of $Y^{j,i}$ is given by:
\begin{align}\label{eq:dyn_Y_simple}
    \drm Y_{t}^{j,i}
    = &\ \big( c^{j,i} + Y^{j,i}_t  k^{j,i} \big) \big(t, X^{j,i}_t, \nu_t^{j,i,\star} \big) \drm t
    + \big(Z^{j,i}_t\big)^0 \sigma^{j,0} \big(t, \beta_t^{j,0} \big) \cdot \mathrm{d}W^{j,0}_t 
    + \sum_{\ell = 1}^{n_j} \big(Z^{j,i}_t\big)^\ell \sigma^{j,\ell} \big(t, \beta_t^{j,\ell, \star} \big) \cdot \mathrm{d} W^{j,\ell}_t \nonumber \\
    &+ \widetilde Z^{j,i}_t \cdot \bigg( \sigma^{k,0} \big(t, \beta_t^{k,0} \big) \cdot \mathrm{d}W^{k,0}_t
    + \sum_{\ell = 1}^{n_k} \sigma^{k,\ell} \big(t, \beta_t^{k,\ell,\star} \big) \cdot \mathrm{d}W^{k, \ell}_t \bigg)_{k=1, \, k \neq j}^m.
\end{align}

More precisely for the reporting, let us keep in mind that the most consistent forms are those considered in the examples detailed in \Cref{sec:sungmodel,sec:extensions}. We can assume that the $j$--th manager reports in continuous time to the principal the sum of all the outcomes of his working team including him, and the sum of compensations paid to the agents under his supervision, \textit{i.e.}, the two--dimensional variable
\begin{align}\label{eq:zeta_profit_cost}
    \zeta_t^j := \bigg( \sum_{i=0}^{n_j} X_t^{j,i}, \sum_{i=1}^{n_j} \xi_t^{j,i} \bigg), \; \text{ for } \; t \in [0,T],
\end{align}
as considered in the example in the \Cref{sss:2statevariables}. Based on this form of reporting, it will be relatively simple to also consider the case the case where the $j$--th manager only reports in continuous time the net benefits of his working team, as in \citeauthor{sung2015pay}'s model developed in \Cref{sec:sungmodel}, \textit{i.e.}, the one--dimensional variable
\begin{align}\label{eq:zeta_net_profit}
    \zeta_t^j := \sum_{i = 0}^{n_j} X_t^{j,i} - \sum_{i=1}^{n_j} \xi_t^{j,i}, \; \text{ for } \; t \in [0,T].
\end{align}
These two potential choices of reporting are based on the results of \Cref{sss:reporting_denegerate}, which shows that a more accurate reporting of agents' results leads to a degeneracy of the principal's problem into the direct contracting case. 

\medskip

Looking at the two reporting choices mentioned above, it is clear that if $\zeta$ has an independent dynamic in the second case, then the same is true in the first case. We thus focus on a reporting $\zeta$ given by \eqref{eq:zeta_net_profit}. In the following, we detail two interesting cases: 
\begin{enumerate}[label=$(\roman*)$]
    \item the linear case, in the sense that $g^{j,i} (x,y) := g_{\rm x}^{j,i} x + y$, for some $g_{\rm x}^{j,i} \in \R$;
    \item the exponential case, which corresponds to $g^{j,i} (x,y) := - \erm^{- R^{j,i} ( x + y)}$ for some $R^{j,i} > 0$, to cover the examples provided in the first two sections.
\end{enumerate}

\subsubsection{Linear case}

We assume in this section that $g^{j,i} (x,y) := g_{\rm x}^{j,i} x + y$. In this case, remark that $\xi^{j,i}_t = Y^{j,i}_t - g_{\rm x}^{j,i} X^{j,i}_t$. Therefore, the reporting $\zeta^j$ given by \eqref{eq:zeta_net_profit} admits the following dynamics:
\begin{align*}
    \drm \zeta_t^j = \drm X_t^{j,0} + \sum_{i = 1}^{n_j} (1+g_{\rm x}^{j,i}) \drm X_t^{j,i} - \sum_{i=1}^{n_j} \drm Y_t^{j,i}, \; \text{ for } \; t \in [0,T].
\end{align*}
There are two problems in attempting to obtain an independent dynamic: 
\begin{enumerate}[label=$(\roman*)$]
    \item the optimal effort of each agent is a function of his continuation utility, and there is no reason that, finally, the dynamics will only make the sum of $Y^{j,i}$ appear;
    \item the drift parts of the continuation utilities are particular functions of the outputs and the continuation utilities themselves, and, in the same way as for the first point, there is no apparent reason to get the sum at the end.
\end{enumerate}
The easiest way to tackle the first problem is to assume that the discount rate $k^{j,i}$ is not controlled, \textit{i.e.}, $k^{j,i} (t, x, u) = k_{\rm x}^{j,i} (t,x)$, recalling that the function $k^{j,i}_x$ is defined by \Cref{ass:separability}. Under this specification, and given the form \eqref{eq:agent_hamiltonian_tomax} of the Hamiltonian part to be maximised by the agent, we can state that the optimal efforts of the agents do not depend on their continuation utilities anymore. It remains to deal with the second problem. Since the drift part of $Y^{j,i}$ can only depends on $X^{j,i}$ and $Y^{j,i}$, by summing for $i=1$ to $n_j$, there is no way we can obtain something that depends on $X^{j,0}$. It is therefore impossible to obtain a function of $\zeta^j$ in the drift. We are thus led to assume that $c^{j,i} (t,x,u)$ is independent of $x$ and that $k^{j,i}$ is in fact equal to $0$. Under these strong assumptions, it is now clear that the dynamics of $\zeta$ is of the desired form, since it does not depend on the outputs and continuation utilities anymore. One can note that if we had considered the reporting form \eqref{eq:zeta_profit_cost} with $g^{j,i}_x = 0$, we could have let $k^{j,i}$ depends on time $t$. Indeed, in this case, by summing the dynamics of the continuation utilities, we obtain a drift depending only on the sum, which corresponds to the second component of $\zeta^j$.

\begin{lemma}\label{lem:linear_case}
Consider the linear case, \textit{i.e.}, when $g^{j,i}(x,y) := g_{\rm x}^{j,i} x + y$ for some $g_{\rm x}^{j,i} \in \R$. If the reporting $\zeta$ is defined by \eqref{eq:zeta_profit_cost}, then \textnormal{\Cref{ass:zeta_dynamic}} is satisfied if, for all $j \in \{1, \dots, m\}$, $i \in \{1, \dots, n_j\}$, $c^{j,i} (t,x,u) = c_{u}^{j,i} (t,u)$ and $k^{j,i}(t,x,u) = k(t)$ for $(t,x,u) \in [0,T] \times \R \times U^{j,i}$. Moreover, if $\zeta$ is defined by \eqref{eq:zeta_net_profit}, then we have to assume in addition that $k(t) = 0$ for all $t \in [0,T]$.
\end{lemma}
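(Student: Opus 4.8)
The plan is to verify \Cref{ass:zeta_dynamic} directly, by computing the dynamics of $\zeta$ in each reporting regime and checking that the resulting drift and diffusion coefficients depend on the path only through the current value of $\zeta$ and the managers' controls $\chi$, so that an SDE of the form \eqref{eq:dyn_zeta_simple} holds. First I would exploit the linear structure: inverting $g^{j,i}$ gives $\xi^{j,i}_t = Y^{j,i}_t - g_{\rm x}^{j,i}X^{j,i}_t$, so in both \eqref{eq:zeta_profit_cost} and \eqref{eq:zeta_net_profit} the reported $\zeta^j$ is an affine combination of the outputs $X^{j,i}$ and the continuation utilities $Y^{j,i}$. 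Differentiating this combination and plugging in the output dynamics \eqref{eq:XW} and the continuation-utility dynamics \eqref{eq:dyn_Y_simple}, both under the agents' equilibrium $\P^\star$, reduces the lemma to showing that every term appearing can be rewritten as a function of $(t,\zeta,\chi)$.

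The first substantive step is to strip the state-dependence out of the agents' optimal efforts. A priori $\nu^{j,i,\star}_t = u^{j,i,\star}(t, Y^{j,i}_t, (Z^{j,i}_t)^i, \Gamma^{j,i}_t)$ depends on the unobserved $Y^{j,i}_t$. I would argue that under $k^{j,i}(t,x,u)=k(t)$ the term $-k^{j,i}(t,x,u)\,y$ in the maximand \eqref{eq:agent_hamiltonian_tomax} is constant in $u$ and hence does not enter the maximiser, killing the $y$-dependence; and that under $c^{j,i}(t,x,u)=c^{j,i}_{\rm u}(t,u)$, together with the fact that in the present It\=o process formulation $\lambda^{j,i}$ and $\sigma^{j,i}$ do not depend on $X$, the maximand is $x$-free as well. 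Thus $\nu^{j,i,\star}_t$ becomes a function of $t$ and $\chi$ only, so the diffusions $\sigma^{j,i}(t,\beta^{j,i,\star}_t)$, the output drifts $\Lambda^{j,i}(t,\nu^{j,i,\star}_t)$ and the costs $c^{j,i}_{\rm u}(t,\nu^{j,i,\star}_t)$ all depend only on $(t,\chi)$. This already disposes of the entire diffusion of $\zeta$ and of the output contribution to its drift.

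The delicate part, and the one I expect to be the main obstacle, concerns the drift contributed by the continuation utilities. By \eqref{eq:dyn_Y_simple} the drift of $Y^{j,i}$ is $c^{j,i}_{\rm u}(t,\nu^{j,i,\star}_t) + k(t)\,Y^{j,i}_t$, so after summation the only term in the drift of $\zeta^j$ that is not yet a function of $(t,\chi)$ is $k(t)\sum_i Y^{j,i}_t$. For the profit--cost reporting \eqref{eq:zeta_profit_cost}, with $g_{\rm x}^{j,i}=0$ this sum equals the second component of $\zeta^j$, so the offending term is $k(t)$ times a component of $\zeta^j$ and the drift closes; this is exactly why $k$ may be left time-dependent in that case. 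For the net--profit reporting \eqref{eq:zeta_net_profit}, instead $\sum_i Y^{j,i}_t = \sum_{i=0}^{n_j}X^{j,i}_t - \zeta^j_t$, which brings the individual outputs back into the drift through $k(t)\sum_{i=0}^{n_j}X^{j,i}_t$; since these outputs are not recoverable from $\zeta$, closure fails unless one imposes the extra condition $k(t)=0$, which cancels the term and leaves a drift depending only on $(t,\chi)$.

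Once this closure is secured, the remaining work is routine: I would read off the coefficient functions $\Lambda_{\rm M}$ and $\Sigma_{\rm M}$ from the computed drift and diffusion, and check the qualitative requirements of \Cref{ass:zeta_dynamic}, namely boundedness and $\G$-optionality, which follow from the assumed boundedness of $\lambda^{j,i}, \sigma^{j,i}, c^{j,i}_{\rm u}, k$ and the $\G$-predictability of $\chi$. The crux of the argument is therefore the algebraic identity that lets the affine-in-$Y$ drift term be re-expressed through $\zeta$ in the profit--cost case, or be annihilated via $k=0$ in the net--profit case.
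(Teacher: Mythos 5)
Your proposal is correct and follows essentially the same route as the paper's own argument: invert the linear utility to get $\xi^{j,i} = Y^{j,i} - g_{\rm x}^{j,i}X^{j,i}$, note that an uncontrolled discount rate removes the $y$-dependence (and, via separability, the $x$-dependence) of the agents' optimal efforts, and then check closure of the drift, which succeeds through the second component of $\zeta^j$ in the profit--cost case and forces $k \equiv 0$ in the net--profit case where the individual outputs re-enter. Your explicit caveat that the profit--cost closure requires $g_{\rm x}^{j,i}=0$ is the very same restriction the paper itself invokes in its discussion, so this is not a deviation but a faithful (indeed slightly more precise) rendering of the paper's proof.
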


\subsubsection{Exponential case}

We assume in this section that $g^{j,i} (x,y) := - \erm^{- R^{j,i} ( x + y)}$, where $R^{j,i}$ is a positive constant representing the risk--aversion of the $(j,i)$--th agent, and $c^{j,i} \equiv 0$, in order to recover the classical exponential utility case. In this case, we remark that
\begin{align*}
    \xi^{j,i}_t = - \dfrac{1}{R^{j,i}} \ln \big( - Y^{j,i}_t \big) -  X^{j,i}_t.
\end{align*}
Therefore, by applying It\=o's formula and using the dynamics of $Y^{j,i}$ given by \eqref{eq:dyn_Y_simple}, we obtain the following dynamics for $\xi^{j,i}$, for all $t \in [0,T]$:
\begin{align*}
    \drm \xi^{j,i}_t 
    = & - \dfrac{1}{R^{j,i}} k^{j,i} (t, X^{j,i}_t, \nu_t^{j,i,\star}) \drm t 
    + \dfrac{1}{2} R^{j,i} \big| (\widehat Z_t^{j,i})^0 \big|^2 \big\| \sigma^{j,0} (t, \beta_t^{j,0}) \big\|^2 \drm t
    + \dfrac{1}{2} R^{j,i} \sum_{\ell = 1}^{n_j} \big| ( \widehat Z_t^{j,i})^\ell \big|^2 \big\| \sigma^{j,\ell} (t, \beta_t^{j,\ell, \star}) \big\|^2 \drm t  \\
    &+ \dfrac{1}{2} R^{j,i} \sum_{k=1, \, k \neq j}^m \big| ( \widecheck Z_t^{j,i})^k \big|^2 \bigg( \big\| \sigma^{k,0} (t, \beta_t^{k,0}) \big\|^2
    + \sum_{\ell = 1}^{n_k} \big\| \sigma^{k,\ell} (t, \beta_t^{k,\ell,\star} ) \big\|^2 \bigg) \drm t 
    - g_{\rm x}^{j,i} \drm X^{j,i}_t \\
    &+ ( \widehat Z_t^{j,i})^0 \sigma^{j,0} (t, \beta_t^{j,0}) \cdot \mathrm{d}W^{j,0}_t
    + \sum_{\ell = 1}^{n_j} ( \widehat Z_t^{j,i})^\ell \sigma^{j,\ell} (t, \beta_t^{j,\ell, \star}) \cdot \mathrm{d} W^{j,\ell}_t \\
    &+ \widecheck Z_t^{j,i} \cdot \bigg( \sigma^{k,0} (t, \beta_t^{k,0}) \cdot \mathrm{d}W^{k,0}_t
    + \sum_{\ell = 1}^{n_k} \sigma^{k,\ell} (t, \beta_t^{k,\ell,\star}) \cdot \mathrm{d}W^{k, \ell}_t \bigg)_{k=1, \, k \neq j}^m,
\end{align*}
where for all $t \in [0,T]$,
\begin{align*}
    \widehat Z_t^{j,i} := - \dfrac{Z_t^{j,i}}{R^{j,i} Y_t^{j,i}}, \;
    \widecheck Z_t^{j,i} := - \dfrac{\widetilde Z_t^{j,i}}{R^{j,i} Y_t^{j,i}}, \text{ and } \widehat \Gamma_t^{j,i} := - \dfrac{\Gamma_t^{j,i}}{R^{j,i} Y_t^{j,i}}.
\end{align*}
The previous change of variable is classical when considering exponential utilities, and implies that the optimal control is in fact independent of the continuation utility. Indeed, the Hamiltonian's part to maximise, defined by \eqref{eq:agent_hamiltonian_tomax}, is now given by:
\begin{align*}
    h^{j,i} \big(t, X^{j,i}, Y_t^{j,i}, \big( Z_t^{j,i} \big)^i, \Gamma_t^{j,i}, u \big) 
    &= -  Y_t^{j,i} \bigg( k^{j,i}(t,X^{j,i},u) + R^{j,i} \Lambda^{j,i} (t,u) \big( \widehat  Z_t^{j,i} \big)^i + \dfrac{1}{2} \big\| \sigma^{j,i}(t,b) \big\|^2 R^{j,i}  \widehat \Gamma_t^{j,i} \bigg),
\end{align*}
and its maximiser $\nu_t^{j,i,\star}$ is thus independent of $y$.
Therefore, the first issue in the linear case does not arise in the exponential case. However, in order to obtain an independent dynamic for $\zeta$, we are still led to assume that $k^{j,i}$ is in fact independent of $X^{j,i}$. 

\begin{lemma}\label{lem:exponential_case}
Consider the exponential case, \textit{i.e.}, when $g^{j,i} (x,y) := - \erm^{- R^{j,i} ( x + y)}$ for some $R^{j,i} > 0$. If the reporting $\zeta$ is defined by \eqref{eq:zeta_profit_cost} or \eqref{eq:zeta_net_profit}, then \textnormal{\Cref{ass:zeta_dynamic}} is satisfied if, for all $j \in \{1, \dots, m\}$, $i \in \{1, \dots, n_j\}$, $c^{j,i} (t,x,u) = 0$ and $k^{j,i}(t,x,u) = k_{\rm u}^{j,i}(t,u)$ for $(t,x,u) \in [0,T] \times \R \times U^{j,i}$.
\end{lemma}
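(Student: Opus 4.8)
The plan is to verify directly that, under $c^{j,i}\equiv 0$ and $k^{j,i}(t,x,u)=k^{j,i}_{\rm u}(t,u)$, the drift and diffusion coefficients governing each $\zeta^j$ reduce to functions of time and of the managers' controls alone, so that $\zeta:=(\zeta^j)_{j=1}^m$ solves an SDE of the form \eqref{eq:dyn_zeta_simple} and \Cref{ass:zeta_dynamic} holds. Following the remark preceding \Cref{lem:linear_case}, I would organise the computation so as to establish the autonomy of $\drm X^{j,i}$ and $\drm\xi^{j,i}$ \emph{individually}; this is stronger than needed for the net--benefit reporting \eqref{eq:zeta_net_profit} and simultaneously covers the profit--cost reporting \eqref{eq:zeta_profit_cost}, whose two components are exactly $\sum_{i=0}^{n_j}X^{j,i}$ and $\sum_{i=1}^{n_j}\xi^{j,i}$. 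Fix $j\in\{1,\dots,m\}$ and start from the identity $\xi^{j,i}_t=-\tfrac{1}{R^{j,i}}\ln(-Y^{j,i}_t)-X^{j,i}_t$, valid in the exponential case, so that $\drm\zeta^j_t=\sum_{i=0}^{n_j}\drm X^{j,i}_t-\sum_{i=1}^{n_j}\drm\xi^{j,i}_t$ and the task is to control the dynamics of $\ln(-Y^{j,i})$.

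The crux is the classical CARA change of variable already performed in the derivation of $\drm\xi^{j,i}$ above, namely $\widehat Z^{j,i}:=-Z^{j,i}/(R^{j,i}Y^{j,i})$, $\widecheck Z^{j,i}:=-\widetilde Z^{j,i}/(R^{j,i}Y^{j,i})$ and $\widehat\Gamma^{j,i}:=-\Gamma^{j,i}/(R^{j,i}Y^{j,i})$. Since $Y^{j,i}=-\erm^{-R^{j,i}(X^{j,i}+\xi^{j,i})}<0$, this is, along each trajectory, a bijective reparametrisation of the manager's payment rates, so I may equivalently take $(\widehat Z^{j,i},\widecheck Z^{j,i},\widehat\Gamma^{j,i})$ as his control. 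As recalled after \eqref{eq:dyn_Y_simple}, under this substitution the Hamiltonian part maximised by the agent factorises as $-Y^{j,i}$ times an expression free of $Y^{j,i}$, whence the optimal effort $\nu^{j,i,\star}_t=u^{j,i,\star}(t,Y^{j,i}_t,(Z^{j,i}_t)^i,\Gamma^{j,i}_t)$ depends only on $t$, $(\widehat Z^{j,i}_t)^i$ and $\widehat\Gamma^{j,i}_t$. Feeding this into \eqref{eq:dyn_Y_simple} and using $c^{j,i}\equiv 0$, $k^{j,i}=k^{j,i}_{\rm u}(t,u)$ yields $\drm Y^{j,i}_t=Y^{j,i}_t\,\drm N^{j,i}_t$, where the semimartingale $N^{j,i}$ has drift and diffusion coefficients depending only on $t$, the reparametrised rates, and the volatility functions $\sigma^{k,\ell}(\cdot,\beta^{k,\ell,\star})$ evaluated at the (control--dependent) optimal efforts. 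Consequently $\drm\ln(-Y^{j,i}_t)=\drm N^{j,i}_t-\tfrac12\drm\langle N^{j,i}\rangle_t$ has all its coefficients expressible as functions of $(t,\chi)$ alone.

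It would then remain to assemble the pieces. Since $\drm\xi^{j,i}_t=-\tfrac{1}{R^{j,i}}\drm\ln(-Y^{j,i}_t)-\drm X^{j,i}_t$ and the dynamics of each $X^{j,i}$ from \eqref{eq:XW} have coefficients $\lambda^{j,i}(t,\alpha^{j,i,\star})$, $\sigma^{j,i}(t,\beta^{j,i,\star})$ governed by the optimal efforts, hence by $(t,\chi)$, the combination $\drm\zeta^j_t=\sum_{i=0}^{n_j}\drm X^{j,i}_t-\sum_{i=1}^{n_j}\drm\xi^{j,i}_t$ has drift and diffusion coefficients that are functions of $(t,\chi)$ only. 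This is exactly the autonomy required by \eqref{eq:dyn_zeta_simple} (a fortiori compatible with the dependence on $\zeta$ permitted there), and it holds simultaneously for every $j$; stacking the $\zeta^j$ and reading off $\Lambda_{\rm M},\Sigma_{\rm M}$ from the resulting drift and diffusion establishes \Cref{ass:zeta_dynamic}, for both \eqref{eq:zeta_net_profit} and \eqref{eq:zeta_profit_cost}.

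The main obstacle is conceptual rather than computational: the contract $\xi^{j,i}$ depends on the individual continuation utility $Y^{j,i}$, which is neither observed by the principal nor recoverable from $\zeta$, since only the aggregate $\sum_i (R^{j,i})^{-1}\ln(-Y^{j,i})$ enters $\zeta^j$. The argument closes precisely because of the multiplicative structure $\drm Y^{j,i}=Y^{j,i}\drm N^{j,i}$ special to CARA utilities, which lets $Y^{j,i}$ factor out of the drift and diffusion of $\ln(-Y^{j,i})$; the two hypotheses $c^{j,i}\equiv0$ and $k^{j,i}$ independent of $x$ are exactly what prevents $X^{j,i}$ — and, through the Hamiltonian, the optimal effort — from reintroducing a non--autonomous, unobservable state dependence. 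Checking carefully that no residual $X^{j,i}$-- or $Y^{j,i}$--dependence survives in the combined coefficients, in contrast with the linear case where this factorisation fails and the stronger condition $k\equiv0$ is needed (cf.\ \Cref{lem:linear_case}), is the delicate point of the proof.
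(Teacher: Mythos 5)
Your proposal is correct and follows essentially the same route as the paper: the identity $\xi^{j,i}_t=-\tfrac{1}{R^{j,i}}\ln(-Y^{j,i}_t)-X^{j,i}_t$, the CARA renormalisation $\widehat Z^{j,i}:=-Z^{j,i}/(R^{j,i}Y^{j,i})$ (and its companions), the observation that this makes the Hamiltonian maximiser independent of $Y^{j,i}$, and then It\=o's formula to check that, under $c^{j,i}\equiv 0$ and $k^{j,i}$ independent of $x$, the coefficients of $\drm X^{j,i}$ and $\drm\xi^{j,i}$ depend on $(t,\chi)$ only, which covers both reportings at once. Your packaging of the computation through the multiplicative structure $\drm Y^{j,i}=Y^{j,i}\drm N^{j,i}$ is just a rephrasing of the paper's explicit expansion of $\drm\xi^{j,i}$, so no genuinely different idea is involved.
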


\begin{remark}
One may note that the assumption on $k^{j,i}$, mainly that it is bounded \textnormal{(}see {\rm \Cref{ass:separability})}, is made to ensure that the agent's Hamiltonian, defined in \eqref{eq:agent_hamiltonian_tomax}, is Lipschitz in $y$ \textnormal{(}the continuation utility\textnormal{)}, and is not necessary if we only consider \textnormal{CARA} utility functions as in this section.
\end{remark}

\subsubsection{Other kinds of reporting}

Finally, we could imagine alternative types of reporting than those mentioned above. For example, we can assume that the $j$--th manager reports to the principal the sum of the outputs and, separately, the sum of the discounted continuation utilities. In this case, if we arbitrary assume that the agent's optimal efforts are independent of their continuation utility, the dynamic of $\zeta^j$ is also independent of $Y^{j\setminus 0}$. Indeed, using \eqref{eq:dyn_Y_simple}, we have:
\begin{align*}
    \drm \big( \Kc^{j,i,\P^\star}_{0,t} Y_{t}^{j,i} \big)
    = \Kc^{j,i,\P^\star}_{0,t} \Bigg( &\ c^{j,i} (t, X^{j,i}_t, \nu_t^{j,i,\star}) \drm t
    + (Z^{j,i}_t)^0 \sigma^{j,0} (t, \beta_t^{j,0}) \cdot \mathrm{d}W^{j,0}_t 
    + \sum_{\ell = 1}^{n_j} (Z^{j,i}_t)^\ell \sigma^{j,\ell} (t, \beta_t^{j,\ell, \star}) \cdot \mathrm{d} W^{j,\ell}_t \nonumber \\
    &+ \widetilde Z^{j,i}_t \cdot \bigg( \sigma^{k,0} (t, \beta_t^{k,0}) \cdot \mathrm{d}W^{k,0}_t
    + \sum_{\ell = 1}^{n_k} \sigma^{k,\ell} (t, \beta_t^{k,\ell,\star}) \cdot \mathrm{d}W^{k, \ell}_t \bigg)_{k=1, \, k \neq j}^m \Bigg),
\end{align*}
for all $i \in \{1,\dots,n_j\}$. Nevertheless, we still have an issue with the dependency in the output $X$.

\medskip

To prevent this issue, we can also imagine that the $j$--th manager reports the following:
\begin{align*}
    \zeta_t^j := \bigg( \sum_{i=0}^{n_j} X_t^{j,i}, \sum_{i=1}^{n_j} \Kc^{j,i,\P^\star}_{0,t} Y_{t}^{j,i} - \int_0^t c^{j,i} (t, X^{j,i}_t, \nu^{j,i,\star}_t ) \drm t \bigg), \; \text{ for } \; t \in [0,T].
\end{align*}
In this case, under the same assumption as below, namely that the agents' optimal efforts are independent of their continuation utility, then the dynamic of $\zeta^j$ is both independent of $X^j$ and $Y^{j\setminus0}$.

\medskip

In short, there appear to be many cases in which \Cref{ass:zeta_dynamic} is satisfied. Unfortunately, it seems complicated to define a general framework with weak assumptions on the form of the reporting $\zeta$ and on the characteristic functions of the agents ensuring that this hypothesis is satisfied, though the assumption itself can easily be checked on a case--by--case basis.

\section{The underlying theory of 2BSDEs}\label{sec:2BSDEs}

The theoretical framework developed throughout this paper strongly relies on the recent theory of 2BSDEs, which is thus presented in this appendix. More precisely, \Cref{ss:additional_notations_2BSDEs} defines some additional notations. 
Then, \Cref{ss:link_2BSDEs} (resp. \Cref{ss:link_2BSDEs_manager}) clarifies the link between the Nash for the agents (resp. managers) and the theory of 2BSDEs. Finally, \Cref{ss:tech_proof_agents,ss:tech_proof_managers} regroup the proofs of the propositions and theorems established in the paper, respectively for the managers--agents and the principal--managers problems.

\subsection{Additional notations}\label{ss:additional_notations_2BSDEs}

Throughout this section, let $\X := (\Xc_t)_{t \in [0,T]}$ be an arbitrary filtration on $(\Omega, \Fc_T)$, and $\Pk$ be any set of probability measures on $(\Omega,\Fc_T)$. 

\subsubsection{Filtrations}\label{sss:notation_filtration}

We will denote by $\X_+ :=(\Xc^+_t)_{t\in [0,T]}$ the right limit of $\X$, \textit{i.e.}, $\Xc_t^+ := \bigcap_{s > t} \Xc_s$ for all $t \in [0,T)$ and $\Xc^+_T := \Xc_T$. 
For any $\P\in \Pk$, we denote by $\X^\P:=(\Xc_t^\P)_{t\in [0,T]}$ the completed filtration, where for all $t \in [0,T]$, $\Xc_t^\P$ is the completed $\sigma-$field of $\Xc_t$ under $\P$. Denote also by $\X_+^\P$ the right limit of $\X^\P$, so that $\X_+^\P$ satisfies the usual conditions. 
In addition, the filtrations $\X^\Pk := (\Xc_t^\Pk)_{t \in [0,T]}$ and $\X^{\Pk +} := (\Xc_t^{\Pk +})_{t \in [0,T]}$ are defined as follows:
\begin{align*}
    \Xc_t^\Pk := \bigcap_{\P \in \Pk} \Xc_t^\P, \text{ for } t \in [0,T], \; \Xc_t^{\Pk +} := \Xc_{t+}^{\Pk}, \text{ for } t \in [0,T), \text{ and } \Xc_T^{\Pk +} := \Xc_T^{\Pk}.
\end{align*}
Finally, we will use the following notation:
\begin{align}\label{def:Pc_bar}
\Pc (t, \P, \X) :=
 \big\{ \P^\prime \in \Pk \text{ s.t. } \P[E] = \P^\prime[E]~\mbox{for all}~E \in \Xc^+_t \big\}, \; \text{for any} \; (\P,t)\in \Pk \times[0,T].
\end{align}

\subsubsection{Canonical spaces and norms}

Let $\Sigma$ be an $\S^\ell$--valued process, and $p>1$. To properly define the solution of a 2BSDE in our framework, we will have to consider the following spaces with their associated norms:
\begin{enumerate}[label=$(\roman*)$]
    \item $\H_{\ell}^{p}(\X, \Pk, \Sigma)$ the space of $\X^{\Pk}$--progressively measurable $\R^{\ell}$--valued processes $Z$, satisfying:
\begin{align*}
\|Z\|^p_{\H_{\ell}^{p}(\X, \Pk, \Sigma)}
 :=
 \sup_{\P \in \Pk} \E^{\P} \bigg[ \bigg( \int_0^T Z_t^\top \Sigma_t Z_t \drm t\bigg)^{p/2} \bigg] < + \infty;
\end{align*}
    \item $\D^{p}(\X, \Pk)$ the space of $\X^{\Pk +}$--optional $\R$--valued càdlàg processes $Y$, satisfying:
\begin{align*}
 \|Y\|_{\D^{p}(\X, \Pk)}^p :=
 \sup_{\P\in \Pk} \E^\P\bigg[\sup_{0\leq t\leq T}|Y_t|^p\bigg] < + \infty;
\end{align*}
\item $\I^p (\X, \Pk)$ the space of all $\X^{\Pk +}$--optional càdlàg and non--decreasing processes $K$, satisfying $K_0 = 0$, and
\begin{align*}
 \|K \|_{\I^{p}(\X, \Pk)}^p :=
 \sup_{\P\in \Pk} \E^\P \big[ K_T^p \big] < + \infty;
\end{align*}
\end{enumerate}

\subsection{2BSDE representation for an agent}\label{ss:link_2BSDEs}

This section provides a slight adaptation of the 2BSDE theory needed to study and solve the agents' problem.

\subsubsection{Another representation for the set of measures}\label{app:sss:appendix_anotherrepres}

Recall the set of probability measures $\Pc$, specified by \Cref{def:weak_formulation}. The general approach to moral hazard problems by \citeayn{cvitanic2018dynamic} requires to distinguish between the efforts of the agents which give rise to absolutely continuous probability measures in $\Pc$, namely the ones for which only the drift changes, or for which the volatility control changes, while keeping fixed the quadratic variation of $X$. The goal of this subsection is to provide the appropriate formulation in our setting. 

\medskip

For simplicity, we denote by $\Sigma^2(t, b)$ the following diagonal matrix:
\begin{align*}
    \Sigma^2(t, b) := \Sigma(t,b)^\top \Sigma(t,b) = \textnormal{diag} \Big[ \big( \big\| \sigma^{j,i} (t, b^{j,i} ) \big\|^2 \big)_{j,i} \Big], \; \textnormal{ for } t \in [0,T] \textnormal{ and } b \in B,
\end{align*}
recalling that $\Sigma (t,b) \in \M^{dw,w}$ is defined by \eqref{eq:def_vol_X}.

\begin{definition}\label{def:widebar_Pc}
We define by $\widebar \Pc$ the set of probability measures $\widebar \P$ on $(\Omega, \Fc_T)$ such that
\begin{enumerate}[label=$(\roman*)$]
    \item the canonical vector process $(X,W)^\top$ is an $(\F, \widebar \P)$--local martingale for which there exists an $\F$--predictable and $B$--valued process $\beta^{\widebar \P}$ such that the $\widebar \P$--quadratic variation of $(X,W)^\top$ is $\widebar \P$--a.s. equal to
\begin{align*}
    \begin{pmatrix}
        \Sigma^2 \big(t, \beta_t^{\widebar \P} \big)  & \Sigma \big(t, \beta_t^{\widebar \P} \big)^\top  \\
        \Sigma \big(t, \beta_t^{\widebar \P} \big) & \mathrm{I}_{w d} \\
    \end{pmatrix},\; t \in[0,T];
\end{align*}
    \item $\widebar\P\big[\Pi \in \U_0]=1$.
\end{enumerate}
\end{definition}

Similarly to \Cref{lem:no_enlarge}, we know that for all $\widebar \P \in \widebar \Pc$, we have the following representation for $X$:
\begin{align*}
    X_t = x_0 + \int_0^t \Sigma \big(s,\beta^{\widebar \P}_s \big)^\top \drm W_s, ~t\in[0,T], \; \widebar\P-\textnormal{a.s.}.
\end{align*}
More precisely, for any $j \in \{1, \dots, m\}$ and $i \in \{0, \dots, n_j\}$,
\begin{align*}
    X^{j,i}_t &= x^{j,i}_0 + \int_0^t \sigma^{j,i} \big(s, \beta^{\widebar \P,j,i}_s) \cdot \drm W^{j,i}_s, ~t\in[0,T], \; \widebar\P-\textnormal{a.s.}.
\end{align*}
Recall that using classical results of \citeayn{bichteler1981stochastic} or \citeayn[Proposition 6.6]{neufeld2014measurability}, we can define a pathwise version of the $\F$--predictable quadratic variation $\langle X \rangle$,
allowing us to define the $w \times w$ non--negative symmetric matrix $\widehat \sigma_t$ for all $t \in [0,T]$ such that
\begin{align*}
\widehat \sigma_t^2 := \underset{n \rightarrow +\infty}{\mathrm{limsup}}\; n \big(\langle X \rangle_t-\langle X \rangle_{t-1/n}\big).
\end{align*}
Since $\widehat \sigma_t^2$ takes values in $\S^{w}$, we can naturally define its square root $\widehat \sigma_t$. In particular, we will denote for all $j \in \{1, \dots, m\}$ and $i \in \{0, \dots, n_j \}$, the process $S^{j,i}$, taking values in $\R$, defined as follows:
\begin{align}\label{eq:Stji}
    S_t^{j,i} := \underset{n \rightarrow +\infty}{\mathrm{limsup}}\; n \big(\langle X^{j,i} \rangle_t-\langle X^{j,i} \rangle_{t-1/n}\big), \; \text{for all} \; t\in[0,T].
\end{align}

\begin{definition}\label{def:P_nu_girsanov}
Let $\widebar \P \in \widebar \Pc$ and consider the process $\beta^{\widebar \P}$ associated to $\widebar \P$ in the sense of \textnormal{\Cref{def:widebar_Pc} $(i)$}. For any $A \times B$--valued and $\F$--predictable processes $\nu := (\alpha,\beta)$\footnote{Strictly speaking, the process $\beta$ should be indexed by the measure $\widebar \P$, but we chose to not do so in order to alleviate notations.} such that, for all $t \in [0,T]$, $\Sigma^2 (t, \beta) = \Sigma^2 \big(t, \beta^{\widebar\P} \big)$ $\widebar\P$--a.s., 
we define the equivalent measures $\widebar \P^{\nu}$ by their Radon--Nikodym density on $\Fc_T$,
\begin{align*}
    \frac{\drm \widebar \P^\nu}{\drm \widebar \P} :
    = &\ \exp \bigg( \int_0^T \lambda (s, \alpha_s) \cdot \drm W_s
    - \frac12 \int_0^T \big\| \lambda (s, \alpha_s) \big\|^2 \drm s \bigg),
\end{align*}
where $\lambda$ is defined as the column vector composed of all the functions $\lambda^{j,i} : [0,T] \times A \longmapsto \R^{d}$, implying that $\lambda$ takes values in $\R^{dw}$.
\end{definition}
\noindent Notice that such a measure is well--defined since, for all $j \in \{1,\dots,m\}$ and $i \in \{0,\dots,n_j\}$, $\lambda^{j,i}$ is bounded. It is then immediate to check that the set $\Pc$ coincides exactly with the set of all probability measures of the form $\widebar \P^{\nu}$, which satisfy in addition that there exists $w_0 \in \R^{dw}$ such that $\widebar \P^{\nu} \circ (X_0, W_0)^{-1} = \delta_{(x_0,w_0)}$. For any $\widebar \P\in\widebar\Pc$, we denote by $\widebar \Uc (\widebar\P)$ the set of controls $\nu \in \Uc$ such that $\widebar \P^{\nu}\in\Pc$.

\medskip

Following the reasoning developed in \Cref{ss:hierarchy_theoretical}, it is necessary to characterise the space and the actions of other workers to properly define the admissible response of a considered agent. In particular, this leads to the definitions of $\Pc^{-(j,i)}$ and $\Pc^{\rm M}$ in \Cref{sss:nash_theoretical}, in addition to the definition of $\Pc$ on the whole canonical space in \Cref{ss:theoretical_formulation}. We are therefore led to consider the sets $\widebar \Pc^{-(j,i)}$ and $\widebar \Pc^{\rm M}$ corresponding respectively to $\Pc^{-(j,i)}$ and $\Pc^{\rm M}$, in the same way that we just constructed $\widebar \Pc$ corresponding to $\Pc$ in \Cref{def:P_nu_girsanov}. Similarly to \Cref{def:admissible_agent}, we can then define the set $\widebar \Pc^{j,i} (\widebar \P^{-(j,i)}, \widebar \P^{\rm M})$ of admissible responses $\widebar \P \in \widebar \Pc$ of the $(j,i)$--th agent to some probabilities $\widebar \P^{-(j,i)} \in \widebar \Pc^{-(j,i)}$ and $\widebar \P^{\rm M} \in \widebar \Pc^{\rm M}$ respectively chosen by the other agents and the managers.

\subsubsection{Semilinear Hamiltonian}\label{app:sss:semilinear_hamiltonian}

In the following, in order to focus on the $(j,i)$--th agent, let us consider $j \in \{1, \dots, m\}$ and $i \in \{1, \dots, n_j\}$. We also fix the probabilities $\widebar \P^{-(j,i)} \in \widebar \Pc^{-(j,i)}$ and $\widebar \P^{\rm M} \in \widebar \Pc^{\rm M}$, and consider the associated efforts $\widehat \nu := (\nu^{-(j,i)}, \nu^{\rm M}) \in \Uc^{-(j,i)} \times \Uc^{\rm M}$ of the other workers. In order to lighten the notations, we will consider $\Pk := \widebar \Pc^{j,i} (\widebar \P^{-(j,i)}, \widebar \P^{\rm M})$.

\medskip

For any $t \in [0,T]$, we denote by $\Sc_t^{j,i}$ the image of $B^{j,i}$ by the map $b\in B^{j,i} \longmapsto \| \sigma^{j,i} (t,b) \|^2 \in \R_+$, \textit{i.e.}, $\Sc_t^{j,i} := \{ \| \sigma^{j,i} (t,b) \|^2, \text{ for } b \in B^{j,i} \}$. Conversely, for any $S \in \Sc_t^{j,i}$, we define $\widetilde U_t^{j,i}(S) := \{ (a,b) \in A^{j,i} \times B^{j,i}, \; \text{s.t.} \; \| \sigma^{j,i} (t, b) \|^2 = S \}$. Thanks to these notations, we can isolate the partial maximisation with respect to the squared diffusion in the Hamiltonian of the $(j,i)$--th agent. Indeed, we can define a map $F^{j,i} : [0,T] \times \Cc([0,T], \R) \times \R \times \R^{n_j+1} \times \R^{m-1} \times \Uc^{-(j,i)} \times \Uc^{\rm M} \times \R_+ \longrightarrow \R$ as follows:
\begin{align}\label{eq:generator_2bsde}
    F^{j,i} \big( t, x, y, z, \widetilde z, \widehat \nu, S \big) :=
    \sup_{u \in \widetilde U^{j,i}_t (S) } \widetilde h^{j,i} (t, x, y, z^i, u)
    + H^{j,i} \big( t, z^{-i}, \widetilde z, \widehat \nu \big),
\end{align}
for all $(t, x, y, z, \widetilde z, \widehat \nu, S) \in [0,T] \times \Cc([0,T], \R) \times \R \times \R^{n_j+1} \times \R^{m-1} \times \Uc^{-(j,i)} \times \Uc^{\rm M} \times \R_+$, where in addition for $z^i \in \R$ and $u \in U^{j,i}$, $\widetilde h^{j,i} (t, x, y, z^i, u) := - c^{j,i}(t,x,u) - k^{j,i}(t,x,u)y + \sigma^{j,i}(t,b) \cdot \lambda^{j,i}(t,a) z^i$.
We thus obtain that the Hamiltonian $\Hc^{j,i}$ of the $(j,i)$--th agent, defined by \eqref{eq:agent_hamiltonian}, satisfies:
\begin{align*}
    \Hc^{j,i} \big(t, x, y, z, \widetilde z, \gamma, \widehat \nu \big) = 
    \sup_{S \in \Sc_t^{j,i}} \bigg\{ F^{j,i} \big( t, x, y, z, \widetilde z, \widehat \nu, S \big) + \dfrac{1}{2} S \gamma \bigg\},
\end{align*}
for all $( t, x, y, z, \widetilde z, \gamma, \widehat \nu) \in [0,T] \times \Cc([0,T], \R) \times \R \times \R^{n_j+1} \times \R^{m-1} \times \R \times \Uc^{-(j,i)} \times \Uc^{\rm M}$.

\subsubsection{Best--reaction functions of an agent}\label{sss:agent_best_reac}

In this subsection, for a given admissible contract $\xi^{j,i} \in \Cc^{j,i}$, in the sense of \Cref{def:contract_agent_admissible}, and a pair of probability measures $(\widebar \P^{-(j,i)}, \widebar \P^{\rm M})$ chosen by the other workers, we wish to relate the best--reaction function $V_0^{j,i}(\widebar \P^{-(j,i)}, \widebar \P^{\rm M}, \xi)$ of the $(j,i)$--th agent to an appropriate 2BSDE. 

\medskip

We define the process $\widetilde \Sigma^{j}$, taking values in the set of diagonal positive $(n_j + m-1)$--dimensional matrices, by:
\begin{align*}
\widetilde \Sigma^{j}_t := \mathrm{diag} \Big[ \big( S_t^{j,\ell} \big)_{\ell \in \{0, \dots, n_j\}} \big] \oplus \mathrm{diag} \bigg[ \bigg( \sum_{\ell = 1}^{n_k} S^{k,\ell}_t \bigg)_{k \in \{1, \dots, m\} \setminus \{j\} } \bigg],\; t\in[0,T],
\end{align*}
recalling that $S^{k,\ell}$ is defined for all $k \in \{1, \dots, m\}$ and all $\ell \in \{0, \dots, n_k\}$ by \eqref{eq:Stji}. Note that the process $\widetilde \Sigma^{j}$ represents the (pathwise) quadratic variation of the vector $(X^j, \widebar X^{-j})$, which corresponds to the state variables of any agents with manager $j$. 

\medskip

Given an admissible contract $\xi^{j,i} \in \Cc^{j,i}$, we are led to consider the following 2BSDE, indexed by $(j,i)$:
\begin{align}\label{eq:2bsde}
    Y_t = g^{j,i} \big( X^{j,i}, \xi^{j,i} \big) + \int_t^T F^{j,i} \big(s, X^{j,i}, Y_s, Z_s, \widetilde Z_s, \widehat \nu_s, S^{j,i}_s \big)  \drm s 
    - \int_t^T Z_s \cdot \drm X^{j}_s
    - \int_t^T \widetilde Z_s \cdot \drm \widebar X^{-j}_s 
    + \int_t^T \drm K_s,
    \tag{2BSDE $(j,i)$}
\end{align}
where $F^{j,i}$ is defined by \eqref{eq:generator_2bsde}. The following definition adapts the classic notion of 2BSDE to our framework, using the notations defined in \Cref{sss:notation_filtration}. In particular, recall that we consider here $\Pk := \widebar \Pc^{j,i} (\widebar \P^{-(j,i)}, \widebar \P^{\rm M})$, and that $\G^j$ is the natural filtration generated by $X^j$ and $\widebar X^{-j}$.

\begin{definition} \label{def:2BSDE}
We say that $(Y,(Z,\widetilde Z), K)$ is a solution to \textnormal{\ref{eq:2bsde}} if \textnormal{\ref{eq:2bsde}} holds $\Pk$--\textnormal{q.s.}, and if for some $k>1$, 
$Y \in \D^{k}(\G^j, \Pk)$, $(Z, \widetilde Z) \in \H_{n_j+m-1}^{k}( \G^j, \Pk, \widetilde \Sigma^j)$, $K \in \I^k (\G^j, \Pk)$, where $K$ satisfies in addition the following minimality condition
\begin{align*}
    0 = \essinf_{ \widebar{\mathbb{P}}^\prime \in \widebar{\mathcal{P}} (t, \widebar{\mathbb{P}},\G^{j} ) }^{\widebar{\mathbb P}}
    \E^{\P'} \Big[ K_T - K_t \Big|(\Gc^{j}_t)^{\widebar\P+}\Big],
 ~0\leq t\leq T,
 \ \widebar{\P}\textnormal{--a.s. for all } 
 \widebar\P \in \Pk,
 \end{align*}
recalling that $\widebar \Pc (t, \widebar{\mathbb{P}},\G^{j})$ is defined by \eqref{def:Pc_bar}, and $(\G^{j})^{\widebar\P+}$ is the right limit of the completion of $\G^j$ under $\widebar \P$.
\end{definition}

The following result relates the solution to the above 2BSDE to the best--reaction function of the $(j,i)$--th agent. 
\begin{proposition}\label{prop:genbestreac}
Fix $(\widebar \P^{-(j,i)}, \widebar \P^{\rm M}) \in \widebar \Pc^{-(j,i)} \times \widebar \Pc^{\rm M}$, as well as $\xi^{j,i} \in \Cc^{j,i}$. Let $(Y, (Z, \widetilde Z), K)$ be a solution to \textnormal{\ref{eq:2bsde}}.
We have
\begin{align*}
    V_0^{j,i} \big( \widebar \P^{-(j,i)}, \widebar \P^{\rm M}, \xi^{j,i} \big) = \sup_{\widebar \P \in \widebar \Pc^{j,i} (\widebar \P^{-(j,i)}, \widebar \P^{\rm M})} \E^{\widebar \P}[Y_0].
\end{align*}
Conversely, the \textnormal{(}dynamic\textnormal{)} value function $V_t^{j,i} (\widebar \P^{-(j,i)}, \widebar \P^{\rm M}, \xi^{j,i})$ always provides the first component $"Y"$ of a solution to \textnormal{\ref{eq:2bsde}}. Moreover, any optimal effort $\nu^{j,i}$, and the optimal measure $\widebar \P$ must be such that 
\begin{align}\label{eq:characterisation_star_ji}
 K=0,\; \widebar{\P}^{\nu} \textnormal{--a.s.}, \; \text{ and } \; \nu^{j,i} \in 
 \argmax_{u \in \widetilde U^{j,i}_t (S^{j,i}_t) } \widetilde h^{j,i} (t, X^{j,i}, Y_t, Z_t^i, u),\; \widebar{\P}^{\nu} \textnormal{--a.s.},
\end{align}
where $\widebar \P^{\nu}$ is defined from $\widebar \P$ and $\nu$ by {\rm \Cref{def:P_nu_girsanov}}, where $\nu$ results from the collection of all workers' efforts, and is thus composed by the optimal effort $\nu^{j,i}$ of the $(j,i)$--th agent, and by the efforts of other workers $(\nu^{-(j,i)},\nu^{\rm M})$ fixed through the pair of probability measures $(\widebar \P^{-(j,i)}, \widebar \P^{\rm M})$.
\end{proposition}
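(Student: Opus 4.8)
The plan is to establish the stated equivalence between the best--reaction function of the $(j,i)$--th agent and the solution of \ref{eq:2bsde} by adapting the now--classical verification argument of \citeayn{cvitanic2018dynamic} (specifically their Theorem 3.5 and its proof), carefully tracking the additional structure coming from the presence of other workers, whose efforts are frozen through the pair $(\widebar \P^{-(j,i)}, \widebar \P^{\rm M})$. The crux is that, once these probabilities are fixed, the agent's control problem reduces to a standard drift--and--volatility control problem on the canonical space, for which the semilinear Hamiltonian decomposition obtained in \Cref{app:sss:semilinear_hamiltonian} is precisely what is needed to invoke the 2BSDE machinery.

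First I would recall the representation of $\widebar \Pc^{j,i}(\widebar \P^{-(j,i)}, \widebar \P^{\rm M})$ in terms of Girsanov changes of measure from $\widebar \Pc$, as in \Cref{def:P_nu_girsanov}: for a fixed volatility behaviour $\widebar \P$, varying the drift control $\alpha^{j,i}$ corresponds to the equivalent measures $\widebar \P^{\nu}$, while varying the admissible squared volatility $S^{j,i} \in \Sc^{j,i}$ corresponds to moving among the mutually singular base measures $\widebar \P$. The objective $J^{j,i}(\widebar\P^\nu, \xi^{j,i})$ from \eqref{eq:objectiv_agent} can then be rewritten, via the discount factor $\Kc^{j,i,\P}$, in a form amenable to the dynamic programming comparison. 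I would then fix a solution $(Y, (Z,\widetilde Z), K)$ of \ref{eq:2bsde} and, for each admissible $\widebar\P^\nu$, apply It\=o's formula to $\Kc^{j,i,\widebar\P^\nu}_{0,t} Y_t$ between $0$ and $T$. Using that the generator $F^{j,i}$ dominates $\widetilde h^{j,i}$ pointwise (by its very definition as a supremum over $\widetilde U^{j,i}_t(S)$ in \eqref{eq:generator_2bsde}), together with the non--decrease of $K$, one obtains that $Y_0 \geq \E^{\widebar\P^\nu}[\,\cdot\,]$ gives an upper bound for the criterion; taking the supremum over $\nu$ and then over $\widebar\P$ yields one inequality, namely $\sup_{\widebar\P} \E^{\widebar\P}[Y_0] \geq V_0^{j,i}$. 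For the reverse inequality, the minimality condition on $K$ (\Cref{def:2BSDE}) is invoked to produce, $\widebar\P$--a.s., a sequence of controls along which the drift term in $F^{j,i}$ is attained and the contribution of $K$ vanishes in the essential--infimum sense, so that the bound is asymptotically saturated. The characterisation \eqref{eq:characterisation_star_ji} of optimisers follows by identifying when both inequalities become equalities: $K=0$ under the optimal measure and $\nu^{j,i}$ maximising $\widetilde h^{j,i}$ over the relevant slice $\widetilde U^{j,i}_t(S^{j,i}_t)$.

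For the converse statement—that the dynamic value function always furnishes the first component of a solution—I would follow the dynamic programming principle approach: one shows that $V^{j,i}_t(\widebar\P^{-(j,i)}, \widebar\P^{\rm M}, \xi^{j,i})$ is a $\widebar\P$--supermartingale of the appropriate class under each $\widebar\P$ and a $\widebar\P^{\nu}$--submartingale along optimising controls, then appeals to the Doob--Meyer decomposition under the family $\widebar\Pc^{j,i}$ and the aggregation results underpinning the 2BSDE theory (as developed in the references the paper relies on) to construct $(Z,\widetilde Z)$ and the non--decreasing $K$ satisfying the minimality condition. The integrability of $Y$, $(Z,\widetilde Z)$ and $K$ in the spaces $\D^k$, $\H^k_{n_j+m-1}$, $\I^k$ is inherited from the integrability imposed on the contract via \eqref{eq:integrability_contract_agent} and on the cost via \eqref{eq:integ_cond_cost}, combined with the boundedness of $\lambda^{j,i}$, $\sigma^{j,i}$ and $k^{j,i}$.

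The main obstacle I expect is the handling of the frozen efforts of the other workers inside the generator. The term $H^{j,i}(t, z^{-i}, \widetilde z, \widehat\nu)$ in \eqref{eq:agent_hamiltonian_1} depends on $\widehat\nu = (\nu^{-(j,i)}, \nu^{\rm M})$, which are only specified $\widebar\P^{-(j,i)}$-- and $\widebar\P^{\rm M}$--a.s.; one must check that this dependence is consistent across the admissible responses $\widebar\Pc^{j,i}(\widebar\P^{-(j,i)}, \widebar\P^{\rm M})$, i.e. that restricting a response $\widebar\P$ to the subspaces $\Omega^{-(j,i)}$ and $\Omega^{\rm M}$ indeed recovers the prescribed marginals (the admissibility conditions of \Cref{def:admissible_agent}), so that $H^{j,i}$ is a well--defined, $\G^j$--measurable input to the 2BSDE independent of the particular response chosen. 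Crucially, this term is affine in $(z^{-i}, \widetilde z)$ and does not involve the agent's own control, so it contributes only a linear, uncontrolled perturbation to the generator; verifying that this perturbation does not disrupt the minimality condition nor the well--posedness of \ref{eq:2bsde}—essentially that it can be absorbed into the martingale and drift parts without affecting the structure of $F^{j,i}$ as a supremum over the agent's own volatility—is the delicate point, and it is precisely where \Cref{ass:separability} and the independence structure of the outputs (no cross terms in the quadratic variation, as exploited in \Cref{app:intuition_markovian}) are used.
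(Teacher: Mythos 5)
Your proposal is correct and follows essentially the same route as the paper: the paper's own proof consists precisely of checking the hypotheses needed to invoke \cite[Propositions 4.5 and 4.6]{cvitanic2018dynamic} and the 2BSDE results of \cite{possamai2018stochastic} --- the Lipschitz continuity of $F^{j,i}$ in $\big(y, (\widetilde\Sigma^j)^{1/2}(z,\widetilde z)\big)$ (your affine--with--bounded--coefficients observation for $H^{j,i}$), the integrability of the terminal condition and generator via \Cref{eq:integrability_contract_agent} and \eqref{eq:integ_cond_cost}, and the stability and saturation properties of the constrained measure set $\widebar\Pc^{j,i}\big(\widebar\P^{-(j,i)},\widebar\P^{\rm M}\big)$ --- and then cites those references for exactly the verification (It\=o plus minimality of $K$) and dynamic--programming arguments you sketch. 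One minor correction: \Cref{ass:separability} plays no role in this proposition (the paper never invokes it here; it is needed later so that the maximiser of the Hamiltonian does not depend on $X^{j,i}$ and is hence computable by other teams' managers), so the frozen term $H^{j,i}$ is harmless simply because it is affine in $(z^{-i},\widetilde z)$ with bounded coefficients, i.e.\ Lipschitz in the $\widetilde\Sigma^j$--weighted norm.
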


\begin{proof}
The proof is classical and follows the lines of \citeayn[Proof of Propositions 4.5 and 4.6]{cvitanic2018dynamic}. We thus only mention here why the assumptions required to apply the results of \citeayn{possamai2018stochastic} are satisfied in our framework.

\medskip

First of all, recall that $k^{j,i}$, $\sigma^{j,i}$, and $\lambda^{j,i}$ are bounded for all $j \in \{1, \dots, m\}$ and $i \in \{1,\dots, n_j\}$.
As in \cite[Proof of Proposition 4.5]{cvitanic2018dynamic}, it follows from the definition of admissible controls that $F^{j,i}$ satisfies the Lipschitz continuity assumptions required in \cite[Assumption 2.1 $(i)$]{possamai2018stochastic}. Indeed, in the one hand we have, for all $(t, x, y, z) \in [0,T] \times \Cc([0,T], \R) \times \R \times \R$, $S^{j,i} \in \Sc_t^{j,i}$, and $(y', z') \in \R^2$,
\begin{align*}
    \bigg| \sup_{u \in \widetilde U^{j,i}_t (S^{j,i}) } \widetilde h^{j,i} (t, x, y, z, u) - \sup_{u \in \widetilde U^{j,i}_t (S^{j,i}) } \widetilde h^{j,i} (t, x, y', z', u) \bigg| 
    \leq \big| k^{j,i} \big|_{\infty} |y-y'| + \big| \lambda^{j,i} \big|_{\infty} \big| S^{j,i} \big|^{1/2} |z-z'|.
\end{align*}
On the other hand, for all $(t, z, \widetilde z, \widehat \nu) \in [0,T] \times \R^{n_j+1} \times \R^{m} \times \Uc^{-(j,i)} \times \Uc^{\rm M}$, and $(z', \widetilde z') \in \R^{n_j+1} \times \R^{m}$, we have
\begin{align*}
    \big| H^{j,i} \big( t, z, \widetilde z, \widehat \nu \big) - H^{j,i} \big( t, z', \widetilde z', \widehat \nu \big) \big|
    \leq \sum_{\ell = 0, \; \ell \neq i}^{n_j} 
    \big| \lambda^{j,\ell} \big|_{\infty}
    \big| S^{j,\ell} \big|^{1/2}
    \big| z^\ell - (z')^\ell \big|
    + \sum_{k = 1, k \neq j}^{m} 
    \bigg| \sum_{\ell = 0}^{n_k} \lambda^{k,\ell} \bigg|_{\infty}
    \bigg| \sum_{\ell = 0}^{n_k} S^{k,\ell} \bigg|^{1/2}
    \big| \widetilde z^k - (\widetilde z')^k \big|.
\end{align*}
Combining the two inequalities, we obtain that $F^{j,i}$ is Lipschitz in $y$ and in $(\widetilde \Sigma^j)^{1/2} (z, \widetilde z)$, as requested in \cite[Assumption 2.1 $(i)$]{possamai2018stochastic}.

\medskip

Moreover, by \Cref{def:contract_agent_admissible} of the set of admissible contracts $\Cc^{j,i}$, the terminal condition $g^{j,i}(X^{j,i}, \xi)$ satisfies \eqref{eq:integrability_contract_agent}. Using in addition the integrability condition \eqref{eq:integ_cond_cost} for $c^{j,i}$, it then follows that the terminal condition $g^{j,i}(X^{j,i}, \xi)$ and $F^{j,i}$ satisfy the integrability properties in \cite[Assumption 1.1 $(ii)$]{possamai2018stochastic}.
In addition, \cite[Assumption 3.1]{possamai2018stochastic} is also satisfied thanks to the integrability condition \eqref{eq:integ_cond_cost} for $c^{j,i}$, as explained in \cite[Proof of Proposition 4.5]{cvitanic2018dynamic}. Next, \cite[Assumption 1.1 $(iii)$--$(v)$]{possamai2018stochastic} are also satisfied by the set of measures $\Pk$, see for instance \cite{nutz2013constructing}. Finally, the set $\Pk$ is saturated in the sense of \cite[Definition 5.1]{possamai2018stochastic}, see 
\cite[Remark 5.1]{possamai2018stochastic}.
\end{proof}

\subsubsection{Characterisation of the Nash equilibrium between agents}\label{sss:nash_agents}

With \Cref{prop:genbestreac} in hand, we can now characterise a Nash equilibria between the agents, thanks to a collection of decoupled 2BSDEs, reminiscent of the multidimensional BSDE obtained in the setting of \citeayn{elie2019contracting} where only the drift of the canonical process was controlled.

\begin{theorem}\label{th:genbestreac}
Let $\widebar \P^{\rm M} \in \widebar \Pc^{\rm M}$, as well as a collection $\xi^{\rm A} \in \Cc^{\rm A}$ of contracts for the agents. A probability measure $\P^\star$ belongs to $\Pc^{\rm A,\star}(\widebar \P^{\rm M},\xi^{\rm A})$, in the sense of \textnormal{\Cref{def:nash_agent}}, if and only if $\P^\star=\widebar \P^{\nu^\star}$ where $\nu^{\star}:=(\alpha^{\P^\star},\beta^{\P^\star})$ is such that for any $j \in \{1, \dots, m\}$ and any $i \in \{1,\dots, n_j\}$, 
 \[
 K^{j,i}=0,\; \widebar{\P}^{\nu^\star} \textnormal{--a.s.}, \; \text{ and } \; \nu^{j,i,\star} =
 \argmax_{u \in \widetilde U^{j,i}_t (S^{j,i}_t) } \widetilde h^{j,i} \big(t, X^{j,i}, Y^{j,i}_t, (Z^{j,i}_t)^i, u \big),\; \widebar{\P}^{\nu^{\star}}\textnormal{--a.s.},
 \]
where $(Y^{j,i},(Z^{j,i},\widetilde Z^{j,i}), K^{j,i})$ is a solution to \textnormal{\ref{eq:2bsde}}, in the sense of \textnormal{\Cref{def:2BSDE}}.
\end{theorem}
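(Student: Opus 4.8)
The plan is to prove Theorem \ref{th:genbestreac} by reducing the characterisation of a Nash equilibrium between the agents to the individual best--reaction characterisations provided by \Cref{prop:genbestreac}, applied simultaneously to every agent. The key observation is that, by \Cref{def:nash_agent}, a probability measure $\P^\star \in \Pc$ is a Nash equilibrium given $\widebar\P^{\rm M}$ and $\xi^{\rm A}$ if and only if, for each $j \in \{1,\dots,m\}$ and each $i \in \{1,\dots,n_j\}$, the measure $\P^\star$ is an optimal response of the $(j,i)$--th agent to the restriction $\P^{-(j,i),\star}$ of $\P^\star$ on $\Omega^{-(j,i)}$ and to $\widebar\P^{\rm M}$, \textit{i.e.}, $\P^\star \in \Pc^{j,i,\star}(\P^{-(j,i),\star},\widebar\P^{\rm M},\xi^{j,i})$. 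Thus the whole statement follows once we translate each of these $w-m$ individual optimality conditions into the corresponding 2BSDE characterisation.

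First I would fix $j$ and $i$ and invoke \Cref{prop:genbestreac} with $\widebar\P^{-(j,i)} := \P^{-(j,i),\star}$ the restriction of the candidate $\P^\star$ and with $\widebar\P^{\rm M}$ as given. Since any measure in $\Pc$ can be written as $\widebar\P^{\nu}$ for some admissible control $\nu$ (by the construction in \Cref{def:P_nu_girsanov} and the identification of $\Pc$ with the set of such measures), we may write $\P^\star = \widebar\P^{\nu^\star}$ with $\nu^\star := (\alpha^{\P^\star},\beta^{\P^\star})$. \Cref{prop:genbestreac} then asserts that $\P^\star$ is an optimal response for the $(j,i)$--th agent if and only if the minimality increment vanishes, $K^{j,i}=0$ $\widebar\P^{\nu^\star}$--a.s., and the effort $\nu^{j,i,\star}$ lies in the argmax of the semilinear part $\widetilde h^{j,i}$ evaluated along the solution components $(Y^{j,i},(Z^{j,i})^i)$ of \ref{eq:2bsde}. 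The crucial point making this reduction legitimate is that, by \Cref{ass:unicity_maximiser_agents} and the discussion in \Cref{sss:nash_equilibrium_agent}, the maximiser $u^{j,i,\star}$ of the $(j,i)$--th agent's Hamiltonian depends only on $(t,Y^{j,i}_t,(Z^{j,i}_t)^i,\Gamma^{j,i}_t)$ and \emph{not} on the efforts of the other agents; consequently each agent's optimality condition decouples, and the collection of 2BSDEs indexed by $(j,i)$ is genuinely a \emph{decoupled} system rather than a coupled multidimensional 2BSDE.

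Next I would assemble the $w-m$ equivalences into a single statement. In the forward direction, if $\P^\star$ is a Nash equilibrium, then for each $(j,i)$ it is an optimal response to the others' restricted choices, so \Cref{prop:genbestreac} yields the two displayed conditions for $(Y^{j,i},(Z^{j,i},\widetilde Z^{j,i}),K^{j,i})$ solving \ref{eq:2bsde}; the $\widehat\nu_s$ appearing in the generator $F^{j,i}$ is precisely $(\nu^{-(j,i),\star},\nu^{\rm M})$, which is exactly the data fixed by $\P^{-(j,i),\star}$ and $\widebar\P^{\rm M}$. In the converse direction, given a control $\nu^\star$ and solutions $(Y^{j,i},(Z^{j,i},\widetilde Z^{j,i}),K^{j,i})$ to \ref{eq:2bsde} satisfying $K^{j,i}=0$ and the argmax condition $\widebar\P^{\nu^\star}$--a.s. for every $(j,i)$, I would apply the second half of \Cref{prop:genbestreac} to conclude that $\widebar\P^{\nu^\star}$ is an optimal response for each agent to the others, hence a Nash equilibrium by \Cref{def:nash_agent}, with $V_0^{j,i}(\P^{-(j,i),\star},\widebar\P^{\rm M},\xi^{j,i}) = \E^{\widebar\P^{\nu^\star}}[Y^{j,i}_0] = Y_0^{j,i}$, which also recovers the admissibility statement $\xi^{\rm A}\in\Cc^{\rm A}$ of \Cref{def:contract_agent_admissible}.

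The main obstacle I anticipate is ensuring the internal consistency of the fixed--point nature of the construction: each 2BSDE \ref{eq:2bsde} has a generator $F^{j,i}$ whose term $H^{j,i}$ depends on $\widehat\nu = (\nu^{-(j,i),\star},\nu^{\rm M})$, so one must verify that the efforts $\nu^{-(j,i),\star}$ entering agent $(j,i)$'s equation coincide with the argmax efforts actually realised in the equations indexed by the \emph{other} agents, all under the single common measure $\widebar\P^{\nu^\star}$. Here the decoupling granted by \Cref{ass:unicity_maximiser_agents} is what resolves the difficulty: because each maximiser is independent of the others' efforts, one can solve the 2BSDEs one at a time (the generator's dependence on $\widehat\nu$ only enters the linear drift--indexation term $H^{j,i}$, which does not affect the optimisation in $u$), and the resulting profile $(\nu^{j,i,\star})_{j,i}$ is automatically mutually consistent and unique. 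The remaining technical points — well--posedness of each \ref{eq:2bsde} in the spaces $\D^k,\H^k,\I^k$, and the validity of the minimality condition — are already guaranteed by the assumptions checked in the proof of \Cref{prop:genbestreac} via \cite{possamai2018stochastic}, so I would simply cite them rather than reprove them.
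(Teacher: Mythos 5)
Your proposal is correct and follows essentially the same route as the paper: it reduces the Nash equilibrium, via \Cref{def:nash_agent}, to the requirement that $\P^\star$ be an optimal response of each agent $(j,i)$ to the restrictions of $\P^\star$ and $\widebar\P^{\rm M}$, and then invokes \Cref{prop:genbestreac} for each $(j,i)$ to obtain the conditions $K^{j,i}=0$ and the argmax characterisation. Your additional discussion of the decoupling ensured by \Cref{ass:unicity_maximiser_agents} is sound but not needed here (the paper defers it to the proof of \Cref{prop:nash_agent}), and the remark about ``recovering'' $\xi^{\rm A}\in\Cc^{\rm A}$ is superfluous since admissibility is a hypothesis of the theorem.
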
 

\begin{proof}
As in the statement of the theorem, we fix $\widebar \P^{\rm M} \in \widebar \Pc^{\rm M}$, as well as a collection $\xi^{\rm A} \in \Cc^{\rm A}$ of contracts for the agents. Recall that by definition of the set $\Cc^{\rm A}$, the collection of contracts $\xi^{\rm A}$ leads to a unique Nash equilibrium between the agents (see \Cref{def:contract_manager_admissible}).
By \Cref{prop:genbestreac}, we have a characterisation of the best--reaction function of the $(j,i)$--th agent to an arbitrary pair of probability measures $(\widebar \P^{-(j,i)}, \widebar \P^{\rm M})$ chosen by the managers and the other agents. A Nash equilibrium $\P^\star$ then necessitates only that for each $j \in \{1, \dots,m\}$ and $i \in \{1, \dots, n_j \}$, $\P^\star$ is the best--reaction function of the $(j,i)$--th agent to $(\widebar \P^{-(j,i)}, \widebar \P^{\rm M})$, that are respectively defined as the restrictions of $\P^\star$ to $\Omega^{-(j,i)}$ and $\Omega^{\rm M}$. In other words, the probability $\P^\star$ and the associated effort $\nu^\star = (\nu^{j,i,\star})_{j,i}$ have to satisfy \eqref{eq:characterisation_star_ji} for all $j \in \{1, \dots, m\}$ and $i \in \{1,\dots, n_j\}$, which is exactly what is written in the statement of the theorem. 
\end{proof}

\subsection{Technical proofs for the agents' problem}\label{ss:tech_proof_agents}

Thanks to the results established in the previous subsection, we now have everything in hand to prove \Cref{prop:nash_agent} and \Cref{thm:main_manager}.

\begin{proof}[\Cref{prop:nash_agent}]
As in the statement of the proposition, we fix $\P^{\rm M}$ a probability chosen by the managers. For all $j \in \{1, \dots, m\}$ and $i \in \{0, \dots, n_j\}$, let $Y^{j,i}_0 \in \R$ and $\Zc^{j,i} \in \Vc^{j,i}$, and consider the continuation utility $Y^{j,i}$ as well as the associated contract $\xi^{j,i}$ defined through \Cref{def:contractji}. Note that each $\xi^{j,i}$ naturally satisfies the properties in order to be admissible in the sense of \Cref{def:contract_agent_admissible}, and that it suffices to prove uniqueness of the Nash equilibrium to ensure that $\xi^{\rm A} \in \Cc^{\rm A}$. We will first show that the equilibrium suggested in the proposition is indeed a Nash equilibrium, and then that it is unique.

\medskip

\textcursive{Existence of a Nash equilibrium.}\vspace{-0.4em} We first fix $j \in \{1, \dots, m\}$ and $i \in \{0, \dots, n_j\}$ to focus on the $(j,i)$--th agent. We assume for now that other agents are playing according to $\nu^{-(j,i),\star}$, \textit{i.e.}, $\P^{-(j,i)} = \P^{-(j,i),\star}$, and resume by $\widehat \nu^\star := (\nu^{-(j,i),\star}, \nu^{\rm M}) \in  \Uc^{-(j,i)} \times \Uc^{M}$ the effort of other workers. We look for the best admissible response of the $(j,i)$--th agent to this probability $\P^{-(j,i),\star}$, and to the probability $\P^{\rm M}$ chosen by the managers, \textit{i.e.}, a probability $\P \in \Pc^{j,i} (\P^{-(j,i), \star}, \P^{\rm M})$ that maximises his utility. First, by assumption, the contract $\xi^{j,i}$ belongs to the set $\Xi^{j,i}$, thus the continuation utility $Y^{j,i}$ satisfies the formula in \Cref{def:contractji}, for $Y_0^{j,i} \in \R$ and $\Zc^{j,i} = (Z^{j,i}, \widetilde Z^{j,i}, \Gamma^{j,i}) \in \Vc^{j,i}$, \textit{i.e.}, for all $t \in [0,T]$,
\begin{align*}
    Y_{t}^{j,i}
    = &\ Y_{0}^{j,i} - \int_0^t \Hc^{j,i} \big( r, X^{j,i}, Y_r^{j,i}, \Zc^{j,i}_r, \widehat \nu_r \big) \drm r
    + \int_0^t Z^{j,i}_r \cdot \drm X_r^{j} 
    + \int_0^t \widetilde Z^{j,i}_r \cdot \drm \widebar X_r^{-j}
    + \dfrac{1}{2} \int_0^t \Gamma^{j,i}_r \drm \langle X^{j,i} \rangle_r.
\end{align*}
Define then, for $t \in [0,T]$,
\begin{align*}
    K^{j,i}_t := \int_0^t \bigg( \Hc^{j,i} \big( r, X^{j,i}, Y_r^{j,i}, \Zc^{j,i}_r,  \widehat \nu_r \big)
    - \frac12 \Gamma^{j,i}_r S^{j,i}_r 
    - F^{j,i} \big(r, X^{j,i}, Y^{j,i}_r, Z^{j,i}_r, \widetilde Z^{j,i}_r,  \widehat \nu_r, S^{j,i}_r \big) \bigg) \mathrm{d}r,
\end{align*}
where $F^{j,i}$ is given by \eqref{eq:generator_2bsde}. Replacing in the previous form of the continuation utility, we thus obtain:
\begin{align*}
    Y_{t}^{j,i}
    = &\ Y_{0}^{j,i} 
    - \int_0^t F^{j,i} \big(r, X^{j,i}, Y^{j,i}_r, Z^{j,i}_r, \widetilde Z^{j,i}_r,  \widehat \nu_r, S^{j,i}_r \big) \mathrm{d}r
    + \int_0^t Z^{j,i}_r \cdot \drm X_r^{j} 
    + \int_0^t \widetilde Z^{j,i}_r \cdot \drm \widebar X_r^{-j}
    - \int_0^t \drm K^{j,i}_r.
\end{align*}
Finally, recalling the contract satisfies $\xi^{j,i} = \widebar g^{j,i} (X^{j,i}, Y^{j,i}_T)$, we have $Y_T^{j,i} = g^{j,i} (X^{j,i}, \xi^{j,i})$ and we can rewrite the previous equation in a backward form as follows:
\begin{align*}
    Y_{t}^{j,i}  
    = &\ Y_{T}^{j,i}
    + \int_t^T F^{j,i} \big(r, X^{j,i}, Y^{j,i}_r, Z^{j,i}_r, \widetilde Z^{j,i}_r,  \widehat \nu_r, S^{j,i}_r \big) \mathrm{d}r
    - \int_t^T Z^{j,i}_r \cdot \drm X_r^{j} 
    - \int_t^T \widetilde Z^{j,i}_r \cdot \drm \widebar X_r^{-j}
    + \int_t^T \drm K^{j,i}_r,
\end{align*}
which exactly corresponds to \ref{eq:2bsde} under $\Pk := \widebar \Pc^{j,i} (\widebar \P^{-(j,i), \star}, \widebar \P^{\rm M})$. By definition of $F^{j,i}$, we can directly check that $K^{j,i}$ is always a non--decreasing process, which vanishes on the support of any probability measure corresponding to the efforts $\nu^{j,i,\star}$ defined in the statement of the proposition. 
To ensure that $(Y^{j,i}, (Z^{j,i}, \widetilde Z^{j,i}), K^{j,i})$ solves \ref{eq:2bsde}, it therefore remains to check that all the integrability requirements in \Cref{def:2BSDE} are satisfied. The one for $Y^{j,i}$ is immediate by definition of the set $\Vc^{j,i}$. The required integrability on $(Z^{j,i}, \widetilde Z^{j,i}, K^{j,i})$ then follows from \citeayn[Theorem 2.1 and Proposition 2.1]{bouchard2018unified}.

\medskip

\textcursive{Uniqueness.}\vspace{-0.1em} We have therefore obtained that the candidate provided in the statement of the proposition was indeed an equilibrium. Let us now prove uniqueness. Let $\nu^{-(j,i)}$ be the arbitrary efforts of other agents, and the associated probability measure $\P^{-(j,i)}$. In this case, the continuation utility of the $(j,i)$--th agent, given a contract $\xi^{j,i} \in \Xi^{j,i}$, does not satisfies \ref{eq:2bsde}, since other agents' efforts are not necessarily optimal anymore. Nevertheless, $\Xi^{j,i} \subset \Cc^{j,i}$ and by \Cref{prop:genbestreac}, the optimal effort $\nu^{j,i,\star}$ is the maximiser of the map $F^{j,i}$, which coincides with the maximiser \eqref{eq:hamiltonian_maximiser} of his Hamiltonian $\Hc^{j,i}$ given by \eqref{eq:agent_hamiltonian}. By \Cref{ass:unicity_maximiser_agents}, this optimal effort is unique, and in particular does not depend on $\nu^{-(j,i)}$ (nor on $\nu^{M}$). To sum up, given a contract in $\Xi^{j,i}$ and for arbitrary efforts $\nu^{-(j,i)}$ of others, the agent $(j,i)$ has a unique optimal effort $\nu^{j,i, \star}$, independent of others actions. We can therefore conclude that the optimal effort of each agent is given by the maximiser of his Hamiltonian. This induces a unique equilibrium in terms of efforts, given by $\nu^\star = (\alpha^\star, \beta^\star)$, and inducing the law $\P^\star$. It is therefore the unique equilibrium.
\end{proof}


\begin{proof}[\Cref{thm:main_manager}]
Before explaining how to prove the aforementioned result, notice that if we can prove that the restriction to revealing contracts in $\Xi^{j,i}$ is without loss of generality, then the equality \eqref{eq:main_manager} is trivial. Indeed, as mentioned before the theorem and by \Cref{def:contractji}, given a constant $Y_0^{j,i} \in \R$, choosing a contract $\xi^{j,i} \in \Xi^{j,i}$ is strictly equivalent of choosing a triple of payment rates $\Zc^{j,i} := (Z^{j,i}, \widetilde Z^{j,i}, \Gamma^{j,i}) \in \Vc^{j,i}$.

\medskip

The fact that the restriction to revealing contracts in the sense of \Cref{def:contractji} is without loss of generality relies on arguments similar to the ones developed in the aforementioned works \cite{cvitanic2018dynamic,elie2018tale, aid2019optimal, elie2019contracting}. We thus consider an arbitrary collection $\xi^{\rm A} \in \Cc^{\rm A}$ of contracts, such that each agent $(j,i)$ receives the contract $\xi^{j,i} \in \Cc^{j,i}$. Starting from the admissible collection of contracts $\xi^{\rm A}$, the goal is to show that for all agents, we can define an approximation $\xi^\eps$ of his contract $\xi^{j,i}$, leading to the same Nash equilibrium, such that the associated continuation utility $Y^\eps$ has the required dynamics \eqref{eq:continuation_utility_agent}, and moreover, $\xi^\eps = \xi^{j,i}$ at the Nash equilibrium. This will thus ensure that the manager receives the same value when considering revealing contracts in the sense of \Cref{def:contractji} instead of arbitrary admissible contracts.

\medskip

First, using \Cref{prop:genbestreac} and \Cref{th:genbestreac}, we know that for a collection of contracts $\xi^{\rm A} \in \Cc^{\rm A}$ and a probability $\P^{\rm M}$ chosen by the managers, there exists a unique Nash equilibrium $\P^\star \in \Pc^{\rm A,\star} (\P^{\rm M}, \xi^{\rm A})$, associated to an optimal effort $\nu^{\star} \in \Uc$, satisfying for any $j \in \{1, \dots, m\}$ and $i \in \{1,\dots, n_j\}$, 
 \[
 K^{j,i}=0,\; \P^{\star} \textnormal{--a.s.}, \; \text{ and } \; \nu^{j,i,\star} \in 
 \argmax_{u \in \widetilde U^{j,i}_t (S^{j,i}_t) } \widetilde h^{j,i} (t, X^{j,i}, Y^{j,i}_t, \big(Z^{j,i}_t \big)^i, u \big),\; \P^{\star} \textnormal{--a.s.},
 \]
where $(Y^{j,i}, (Z^{j,i}, \widetilde Z^{j,i}), K^{j,i})$ is a solution to \ref{eq:2bsde}, in the sense of \Cref{def:2BSDE}. 

\medskip

Given an arbitrary but admissible collection $\xi^{\rm A} \in \Cc^{\rm A}$ of contracts, the idea is to use the aforementioned solution $(Y^{j,i}, (Z^{j,i}, \widetilde Z^{j,i}), K^{j,i})$ to \ref{eq:2bsde} to construct an approximation of the contract $\xi^{j,i}$. Since the reasoning is similar for all agents, we fix $j \in \{1, \dots, m\}$, $i \in \{1, \dots, n_j\}$, and focus on the approximation of the $(j,i)$--th agent's contract $\xi^{j,i}$. 
The main difference between contracts in $\Cc^{j,i}$ and $\Xi^{j,i}$ comes from whether the process $K^{j,i}$ above is absolutely continuous with respect to Lebesgue measure or not. Since it is not in general, we will approximate it by a sequence of absolutely continuous ones. With this in mind, fix some $\eps>0$, and define the absolutely continuous approximation of $K^{j,i}$:
\[
    K^{\eps}_t :=
    \frac1\eps\int_{(t-\eps)^+}^t K^{j,i}_s \mathrm{d}s,\; t\in[0,T].
\]
Recalling the notation $\Pk := \widebar \Pc^{j,i} (\widebar \P^{-(j,i)}, \widebar \P^{\rm M})$, we have that $ K^{\eps}$ is $(\G^j)^{\Pk}$--predictable, non--decreasing $\Pk$--q.s. and
\begin{align}\label{eq:kk}
    K^{\eps} = 0,
    \; \P^\star \mbox{--a.s. for all} \;
    \P^\star \in \Pc^{\rm A,\star} (\widebar \P^{\rm M}, \xi^{\rm A}).
\end{align}
We next define for any $t\in[0,T]$ the process
\begin{align}\label{Yeps}
    Y^\eps_t :=
    Y^{j,i}_0
    - \int_0^t F^{j,i} \big(s, X^{j,i}, Y^\eps_s, Z^{j,i}_s, \widetilde Z^{j,i}_s, \widehat{\nu}_s^\star, S^{j,i}_s \big)  \drm s 
    + \int_0^t Z^{j,i}_s \cdot \drm X^{j}_s
    + \int_0^t \widetilde Z^{j,i}_s \cdot \drm \widebar X^{-j}_s 
    - \int_0^t \drm K^\eps_s,
\end{align}
where $\widehat \nu^\star := (\nu^{-(j,i),\star}, \nu^{\rm M}) \in  \Uc^{-(j,i)} \times \Uc^{M}$ denotes for the effort of others under $\P^\star \in \Pc^{\rm A,\star} (\widebar \P^{\rm M}, \xi^{\rm A})$, as defined in \Cref{prop:nash_agent}.
We first verify that $(Y^\eps, (Z^{j,i}, \widetilde Z^{j,i}), K^{\eps})$ solves \ref{eq:2bsde}, with terminal condition $Y^\eps_T$ and generator $F^{j,i}$. First, by \eqref{eq:kk}, $K^{\eps}$ clearly satisfies the required minimality condition. Then, noticing that $0 \leq K^\eps \leq K^{j,i}$, $K^\eps$ inherits the integrability of $K^{j,i}$. By the integrability of $Y^{j,i}$, $(Z^{j,i}, \widetilde Z^{j,i})$ and $K^{j,i}$, the stability of solutions to SDEs with Lipschitz generator implies that $\sup_{\widebar\P\in\widebar\Pc}\E^{\widebar\P}\big[| Y_T^\eps |^{p}\big]<\infty$. 
Therefore, $(Y^\eps, (Z^{j,i}, \widetilde Z^{j,i}), K^{\eps})$ is solution to \ref{eq:2bsde} in the sense of \Cref{def:2BSDE}, which implies by \cite[Theorem 4.4]{possamai2018stochastic} the following estimates:
\begin{align}\label{estimates-eps}
    \|Y^\eps\|_{\S^{\bar p} (\G^j, \Pk)} + \big\| \big(Z^{j,i}, \widetilde Z^{j,i} \big) \big\|_{\H^{\bar p}_{n_j + m -1} (\G^j, \Pk, \widetilde \Sigma^j) } < \infty,\;
    \mbox{for} \; \bar p \in(1,p).
\end{align}
We finally observe that a probability measure $\P$ satisfies $K^{j,i}=0$, $\P$--a.s. if and only if it satisfies $K^\eps=0$, $\P$--a.s. An approximation $\xi^\eps$ of the admissible contract $\xi^{j,i}$ can thus be defined as a particular function of the terminal value of $Y^\eps$, more precisely by $\xi^{\varepsilon} := \widebar g^{j,i} (X^{j,i}, Y^\eps_T)$, recalling that $\widebar g^{j,i}$ corresponds to the inverse of $g^{j,i}$ with respect to the second variable. In other words, the approximation $\xi^\eps$ satisfies $Y^\eps_T = g^{j,i} (X^{j,i}, \xi^{\varepsilon})$.

\medskip

To prove that the previously defined contract $\xi^\eps$ is a revealing contract, meaning that it belongs to the set $\Xi^{j,i}$, we should in particular make the parameter $\Gamma$ appear. With this in mind, notice that for any $(t,\omega, x,y,z, \widetilde z) \in [0,T] \times \Omega \times \Cc([0,T],\R) \times \R \times \R^{n_j+1} \times \R^{m-1}$, the map
\begin{align}\label{surjective}
    \gamma \longmapsto  \Hc^{j,i} (t, x, y, z, \widetilde z, \gamma, \widehat \nu^\star)
    - \frac12 \gamma S(\omega) - F^{j,i} (t, x, y, z, \widetilde z, \widehat \nu^\star, S(\omega)),
\end{align}
is surjective on $(0, + \infty)$. Indeed, it is non--negative, by definition of $\Hc^{j,i}$ and $F^{j,i}$, convex, continuous on the interior of its domain, and is coercive by the boundedness of the functions $\lambda^{j,i}$, $\sigma^{j,i}$, $k^{j,i}$ and $c^{j,i}$.
Let $\dot{K}^\eps$ denote the density of the absolutely continuous process $K^\eps$ with respect to the Lebesgue measure. Applying a classical measurable selection argument (the maps appearing here are continuous, and we can use the results from \cite{benes1970existence, benes1971existence}), we may deduce the existence of a $\G^j$--predictable process $\Gamma^\eps$ such that
\[
    \dot{K}_s^{\eps}
    = \Hc^{j,i} \big( r, X^{j,i}, Y_s^{\eps}, Z^{j,i}_s, \widetilde Z^{j,i}_s, \Gamma^\eps_s, \widehat \nu_s^\star \big)
    - \frac12 \Gamma^\eps_s S^{j,i}_s 
    - F^{j,i} \big(r, X^{j,i}, Y^{\eps}_s, Z^{j,i}_s, \widetilde Z^{j,i}_s, \widehat \nu_s^\star, S^{j,i}_s \big), \; s \in [0,T].
\]
Indeed, if $\dot{K}_s^{\eps}>0$, the existence of $\Gamma^\eps_s$ is clear from \eqref{surjective}, and if $\dot{K}_s^{\eps}=0$, $\Gamma^\eps_s$ can be chosen arbitrarily. Substituting in \eqref{Yeps}, it follows that the following representation for $Y^\eps$ holds
\begin{align*}
    Y^\eps_t :=
    &\ Y^{j,i}_0
    - \int_0^t \Hc^{j,i} \big( r, X^{j,i}, Y_s^{\eps}, Z^{j,i}_s, \widetilde Z^{j,i}_s, \Gamma^\eps_s, \widehat \nu_s^\star \big) \drm s
    + \int_0^t Z^{j,i}_s \cdot \drm X^{j}_s
    + \int_0^t \widetilde Z^{j,i}_s \cdot \drm \widebar X^{-j}_s
    + \frac12 \int_0^t  \Gamma^\eps_s \drm \langle X^{j,i} \rangle_s.
\end{align*}

This shows that the continuation utility $Y^\eps$ has the required dynamics \eqref{eq:continuation_utility_agent}. The fact that the contract $\xi^\eps$ induced by $Y^\eps$ belongs to $\Xi^{j,i}$ then stems from \eqref{estimates-eps}. Moreover, notice that the admissible contract $\xi^{j,i}$ and its approximation $\xi^\eps$ coïncides at the equilibrium, in the sense that $\xi^\varepsilon=\xi^{j,i},\; \mathbb P^\star$--a.s. This reasoning is true for all $j \in \{1, \dots, m\}$ and $i \in \{1, \dots , n_j \}$, and we have therefore constructed a well--suited approximation of the collection $\xi^{\rm A}$ of contract belonging to $\Xi^{\rm A}$. Using \Cref{prop:nash_agent,prop:genbestreac}, we can then conclude as in the proof of \cite[Theorem 3.6]{cvitanic2018dynamic}, since both collection of contracts lead to the same unique Nash equilibrium.
\end{proof}

\subsection{2BSDE representation for a manager}\label{ss:link_2BSDEs_manager}

This section provides the slight adaptation of the 2BSDE theory needed to study and solve the managers problem.

\subsubsection{Another representation for the set of measures}\label{app:sss:another_rep_manager}
Following the reasoning developed in \Cref{app:sss:appendix_anotherrepres} for the agents' problem, we need to distinguish between the efforts of the managers which give rise to absolutely continuous probability measures, namely the ones for which only the drift changes, or for which the volatility control changes, while keeping fixed the quadratic variation of $\zeta$. 

\begin{definition}\label{def:widebar_Pc_manager}
We define by $\widebar \Pc^{\rm M}$ the set of probability measures $\widebar \P$ on $(\Omega^{\rm M}, \Fc^{\rm M}_T)$ such that
\begin{enumerate}[label=$(\roman*)$]
    \item the canonical vector process $(\zeta,W)^\top$ is an $(\F^{\rm M}, \widebar \P)$--local martingale for which there exists an $\F^{\rm M}$--predictable and $\Xk$--valued process $\chi^{\widebar \P}$ such that the $\widebar \P$--quadratic variation of $(\zeta,W)^\top$ is $\widebar \P$--a.s. equal to
\begin{align*}
    \begin{pmatrix}
        \Sigma_{\rm M} \Sigma_{\rm M}^\top \big(t,\zeta, \chi_t^{\widebar \P} \big)  & \Sigma_{\rm M} \big(t,\zeta, \chi_t^{\widebar \P} \big)  \\
        \Sigma_{\rm M}^\top \big(t,\zeta, \chi_t^{\widebar \P} \big) & \mathrm{I}_{w d} \\
    \end{pmatrix},\; t \in[0,T],
\end{align*}
recalling that $\Sigma_{\rm M}$ is defined in \textnormal{\Cref{ass:zeta_dynamic}};
    \item $\widebar\P\big[\Pi \in \X_0]=1$.
\end{enumerate}
\end{definition}

We thus know that for all $\widebar \P \in \widebar \Pc^{\rm M}$, we have the following representation for $\zeta$:
\begin{align*}
    \zeta_t = \zeta_0 + \int_0^t \Sigma_{\rm M} \big(s,\zeta, \chi^{\widebar \P}_s \big) \drm W_s, \; t\in[0,T], \; \widebar\P-\textnormal{a.s.}
\end{align*}
We can also define a pathwise version of the $\F^{\rm M}$--predictable quadratic variation $\langle \zeta \rangle$, allowing us to define the $hm \times hm$ non--negative symmetric matrix $\widehat \Sigma_t$ for all $t \in [0,T]$ such that
\begin{align}\label{eq:pathwise_qv_zeta}
\widehat \Sigma_t := \underset{n \rightarrow +\infty}{\mathrm{limsup}}\; n \big(\langle \zeta \rangle_t-\langle \zeta \rangle_{t-1/n}\big).
\end{align}
Since $\widehat \Sigma$ takes values in $\S^{hm}$, we can naturally define its square root $\widehat \Sigma_t^{1/2}$.

\begin{definition}\label{def:P_nu_girsanov_manager}
Let $\widebar \P \in \widebar \Pc^{\rm M}$ and consider the process $\chi^{\widebar \P}$ associated to $\widebar \P$ in the sense of \textnormal{\Cref{def:widebar_Pc_manager} $(i)$}. For any $\Xk$--valued and $\F^{\rm M}$--predictable processes $\chi$ such that
\begin{align*}
    \Sigma_{\rm M} \Sigma_{\rm M}^\top \big(t,\zeta, \chi_t \big) = \Sigma_{\rm M} \Sigma_{\rm M}^\top \big(t,\zeta, \chi_t^{\widebar \P} \big), \; t\in[0,T], \; \widebar\P-\textnormal{a.s.},
\end{align*}
we define the equivalent measures $\widebar \P^{\chi}$ by their Radon--Nikodym density on $\Fc^{\rm M}_T$,
\begin{align*}
    \frac{\drm \widebar \P^\chi}{\drm \widebar \P} := 
    \exp \bigg( \int_0^T \Lambda_{\rm M} (s, \zeta, \chi_s) \cdot \drm W_s
    - \frac12 \int_0^T \big\| \Lambda_{\rm M} (s, \zeta, \chi_s) \big\|^2 \drm s \bigg).
\end{align*}
\end{definition}

\noindent Notice that such a measure is well--defined since $\Lambda_{\rm M}$ is bounded. It is then immediate to check that the set $\Pc^{\rm M}$ coincides exactly with the set of all probability measures of the form $\widebar \P^{\chi}$, which satisfy in addition that there exists $\iota \in \R^{wd}$ such that $\widebar \P^{\chi} \circ (\zeta_0, W_0)^{-1} = \delta_{(\varrho,\iota)}$. For any $\widebar \P\in\widebar\Pc^{\rm M}$, we denote by $\widebar{\mathscr{X}} (\widebar\P)$ the set of controls $\chi \in \mathscr X$ such that $\widebar \P^{\chi} \in \Pc^{\rm M}$.

\medskip

Following the reasoning developed in \Cref{ss:manager_problem}, it is necessary to characterise the space and the actions of a considered manager in response of other managers' choices. In particular, this leads to the definition of $\Pc^{j} (\chi^{-j})$ in \Cref{sss:weak_manager}, in addition to the definition of $\Pc$ on the whole canonical space in \Cref{sss:can_space_init_managers}, when actions of other managers are fixed through $\chi^{-j} \in \mathscr X^{-j}$. We are therefore led to consider the set $\widebar \Pc^{j} (\chi^{-j})$ corresponding to $\Pc^{j} (\chi^{-j})$, in the same way that we just constructed $\widebar \Pc^{\rm M}$ corresponding to $\Pc^{\rm M}$ in \Cref{def:P_nu_girsanov_manager}.

\subsubsection{Best--reaction function of a manager}\label{app:sss:manager_best_reac}

In this subsection, for a given admissible contract $\xi^{j} \in \Cc^{j,0}$, in the sense of \Cref{def:contract_manager_admissible}, and the choices of other managers, namely $\chi^{-j}$, we wish to relate the best--reaction function $V_0^{j,0}(\xi^{j,0}, \chi^{-j})$ of the $j$--th manager, defined by \eqref{eq:manager_weak_formulation}, to an appropriate 2BSDE. 
With this in mind, we fix throughout the following $j \in \{1, \dots, m\}$ in order to focus on the $j$--th manager, as well as the effort of other managers summarised by $\chi^{-j} \in \mathscr X^{-j}$. For simplicity, we will denote $\Pk := \widebar \Pc^j (\chi^{-j})$.

\medskip

First, we should adapt the notations defined in \Cref{app:sss:semilinear_hamiltonian} for the agents' problem to the managers' problem, by defining for any $(t,x) \in [0,T] \times \Cc([0,T], \R^{hm})$, 
\begin{align*}
    \Sc_t^{j} (x, \chi^{-j}) &:= \Big\{ \Sigma_{\rm M} \Sigma_{\rm M}^\top (t,x, u \otimes_j \chi_t^{-j}) \in \S^{hm}_+, \text{ for } u \in \Xk^{j} \big\}, \\
    \text{and } \; \widetilde \Xk_t^{j}(x,\chi^{-j}, S) &:= \big\{ u \in \Xk^{j}, \text{ s.t. } \Sigma_{\rm M} \Sigma_{\rm M}^\top (t,x, u \otimes_j \chi_t^{-j}) = S \big\}, \; \text{ for } \; S \in \Sc_t^{j}(x, \chi^{-j}).
\end{align*}

Thanks to these notations, we can isolate the partial maximisation with respect to the squared diffusion in the Hamiltonian of the $j$--th manager. Indeed, we can define a map $F^{j} : [0,T] \times \Cc([0,T], \R^{hm}) \times \R \times \R^{hm} \times \mathscr X^{-j} \times \S^{hm}_+ \longrightarrow \R$ as follows:
\begin{align}\label{eq:generator_2bsde_manager}
    F^{j} \big( t, x, y, z, \chi^{-j}, S \big) &:=
    \sup_{u \in \widetilde \Xk^j_t (x, \chi^{-j}, S)} 
    \widetilde h^{j} ( t, x, y, z, \chi^{-j}, u),
\end{align}
for all $(t, x, y, z, \chi^{-j}, S) \in [0,T] \times \Cc([0,T], \R^{hm}) \times \R \times \R^{hm} \times \mathscr X^{-j} \times \S^{hm}_+$ and where, in addition,
\begin{align*}
    \widetilde h^{j} ( t, x, y, z, \chi^{-j}, u) := &- \big( c^{j,0} + y k^{j,0} \big) (t,x^j,u) + \big( \Sigma_{\rm M} \Lambda_{\rm M} \big) \big( t,x, u \otimes_j \chi^{-j} \big) \cdot z, \;\text{ for } \; u \in \widetilde \Xk_t^{j}(x,\chi^{-j}, S).
\end{align*}
We thus obtain that the Hamiltonian $\Hc^{j}$ of the $j$--th manager, defined by \eqref{eq:hamiltonian_manager}, satisfies:
\begin{align*}
    \Hc^{j} \big(t, x, y, z, \gamma, \chi^{-j} \big) = 
    \sup_{S \in \Sc_t^{j}(x, \chi^{-j}) } \bigg\{ F^{j} \big( t, x, y, z, \chi^{-j}, S \big) + \dfrac{1}{2} {\rm Tr} \big[ S \gamma \big] \bigg\},
\end{align*}
for all $(t, x, y, z, \gamma, \chi^{-j}) \in [0,T] \times \Cc([0,T], \R^{hm}) \times \R \times \R^{hm} \times \M^{hm} \times \mathscr X^{-j}$.

\medskip

Given an admissible contract $\xi^{j} \in \Cc^{j,0}$, we are led to consider the following 2BSDE, indexed by $j$:
\begin{align}\label{eq:2bsde_manager}
    \Yc_t = g^{j,0} \big( \zeta^j, \xi^{j} \big) 
    + \int_t^T F^{j} \big(s, \zeta, \Yc_s, Z_s, \chi^{-j}, \widehat \Sigma_s \big)  \drm s 
    - \int_t^T Z_s \cdot \drm \zeta_s
    + \int_t^T \drm K_s,
    \tag{2BSDE $j$}
\end{align}
where $F^{j}$ is defined by \eqref{eq:generator_2bsde_manager}. The following definition, which echoes \Cref{def:2BSDE}, adapts the classic notion of 2BSDE to our framework, using the notations defined in \Cref{sss:notation_filtration}. In particular, recall that we consider here $\Pk := \widebar \Pc^{j} (\chi^{-j})$, and that $\G$ is the natural filtration generated by $\zeta$.

\begin{definition}\label{def:2BSDE_manager}
We say that $(\Yc,Z,K)$ is a solution to \textnormal{\ref{eq:2bsde_manager}} if \textnormal{\ref{eq:2bsde_manager}} holds $\Pk$--\textnormal{q.s.}, and if for some $k>1$, 
$\Yc \in \D^{k}(\G, \Pk)$, $Z \in \H_{hm}^{k}( \G, \Pk, \widehat \Sigma)$, $K \in \I^k (\G, \Pk)$, where $K$ satisfies in addition the following minimality condition
\begin{align*}
    0 = \essinf_{ \widebar{\mathbb{P}}^\prime \in \widebar{\mathcal{P}} (t, \widebar{\mathbb{P}},\G) }^{\widebar{\mathbb P}}
    \E^{\P'} \Big[ K_T - K_t \Big| \Gc_t^{\widebar\P+}\Big],
 ~0\leq t\leq T,
 \ \widebar{\P}-a.s.\mbox{ for all }
 \widebar\P \in \Pk,
 \end{align*}
recalling that $\widebar \Pc (t, \widebar{\mathbb{P}},\G)$ is defined by \eqref{def:Pc_bar}, and $\G^{\widebar\P+}$ is the right limit of the completion of $\G$ under $\widebar \P$.
\end{definition}

The following result relates the solution to the above 2BSDE to the best--reaction function of the $j$--th manager. 
\begin{proposition}\label{prop:genbestreac_manager}
Fix $\chi^{-j} \in \mathscr X^{-j}$, as well as $\xi^{j} \in \Cc^{j,0}$. Let $(\Yc, Z, K)$ be a solution to \textnormal{\ref{eq:2bsde_manager}}.
We have
\begin{align*}
    V_0^{j,0} \big( \xi^{j}, \chi^{-j} \big) = \sup_{\widebar \P \in \widebar \Pc^{j} (\chi^{-j})} \E^{\widebar \P}[\Yc_0].
\end{align*}
Conversely, the \textnormal{(}dynamic\textnormal{)} value function $V_t^{j,0} (\xi^{j}, \chi^{-j})$ always provides the first component $"\Yc"$ of a solution to \textnormal{\ref{eq:2bsde_manager}}. Moreover, any optimal effort $\widetilde \chi^{j,\star}$, and the optimal measure $\widebar \P$ must be such that 
\begin{align}\label{eq:characterisation_star_ji_manager}
    K=0,\; \widebar{\P}^{\widetilde \chi} \textnormal{--a.s.}, \; \text{ and } \;
    \widetilde \chi_t^{j, \star} \in
    \argmax_{u \in \widetilde \Xk^j_t (\zeta, \chi^{-j}, \widehat \Sigma_t) }
    \widetilde h^{j} ( t, \zeta, \Yc_t, Z_t, \chi_t^{-j}, u), \; t \in [0,T], \; \widebar{\P}^{\widetilde \chi} \textnormal{--a.s.},
\end{align}
where $\widebar \P^{\widetilde \chi}$ is defined from $\widebar \P$ and $\widetilde \chi$ by {\rm \Cref{def:P_nu_girsanov_manager}}, where $\widetilde \chi$ results from the collection of all managers' efforts, and is thus composed by the optimal effort $\widetilde \chi^{j,\star}$ of the $j$--th manager, and by the arbitrary efforts of other workers $\chi^{-j} \in \mathscr X^{-j}$. More precisely, $\chi := \widetilde \chi^{j,\star} \otimes_j \chi^{-j}$.
\end{proposition}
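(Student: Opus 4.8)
The proof of Proposition~\ref{prop:genbestreac_manager} follows exactly the template of Proposition~\ref{prop:genbestreac} for the agents' problem, since the managers' problem has been deliberately set up (via the weak formulation of Section~\ref{sss:weak_manager} and the semilinear decomposition of Section~\ref{app:sss:manager_best_reac}) to be a one-player stochastic control problem with both drift and volatility control, for which the reference \cite{cvitanic2018dynamic,possamai2018stochastic} apply verbatim. Concretely, the plan is to cite \cite[Proof of Propositions 4.5 and 4.6]{cvitanic2018dynamic}, and to verify that the structural and integrability hypotheses required by the 2BSDE wellposedness theory of \cite{possamai2018stochastic} hold in our setting for the data $(g^{j,0}, F^{j})$ associated to \ref{eq:2bsde_manager}. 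This reduces the proof to a checklist entirely analogous to the one carried out in the proof of Proposition~\ref{prop:genbestreac}.

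The main steps are as follows. First I would establish that the generator $F^{j}$ defined by \eqref{eq:generator_2bsde_manager} is uniformly Lipschitz in $y$ and in $\widehat\Sigma^{1/2} z$, as required by \cite[Assumption 2.1 $(i)$]{possamai2018stochastic}. This follows from the boundedness of $c^{j,0}$, $k^{j,0}$, $\Lambda_{\rm M}$ and $\Sigma_{\rm M}$ (the latter three bounded by \Cref{ass:zeta_dynamic}, and $k^{j,0}$ bounded by assumption), together with the standard estimate that the supremum over $u \in \widetilde\Xk^j_t(x,\chi^{-j},S)$ of the affine-in-$(y,z)$ map $\widetilde h^j$ inherits the Lipschitz constants of its coefficients, using that $\|(\Sigma_{\rm M}\Lambda_{\rm M})\|$ controls the $z$-dependence through $S^{1/2}$. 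Second, I would invoke \Cref{def:contract_manager_admissible} of $\Cc^{j,0}$, which guarantees through \eqref{eq:integrability_contract_manager} that the terminal condition $g^{j,0}(\zeta^j,\xi^j)$ satisfies the $L^p$ integrability in \cite[Assumption 1.1 $(ii)$]{possamai2018stochastic}, and the integrability condition \eqref{eq:integ_cond_cost_manager} on $c^{j,0}$ to cover \cite[Assumption 3.1]{possamai2018stochastic} and the generator integrability. Third, \cite[Assumption 1.1 $(iii)$--$(v)$]{possamai2018stochastic} on the set of measures $\Pk := \widebar\Pc^j(\chi^{-j})$ follows from the construction of these measures via \Cref{def:P_nu_girsanov_manager} and the dynamic programming / measurable selection results of \cite{nutz2013constructing}, exactly as in the agents' case; and saturation of $\Pk$ in the sense of \cite[Definition 5.1]{possamai2018stochastic} holds by \cite[Remark 5.1]{possamai2018stochastic}.

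Once these hypotheses are verified, the wellposedness theorem of \cite{possamai2018stochastic} yields existence and uniqueness of a solution $(\Yc,Z,K)$ to \ref{eq:2bsde_manager} in the sense of \Cref{def:2BSDE_manager}, and the stochastic-control representation of \cite[Proof of Proposition 4.6]{cvitanic2018dynamic} gives the identity $V_0^{j,0}(\xi^j,\chi^{-j}) = \sup_{\widebar\P \in \widebar\Pc^j(\chi^{-j})} \E^{\widebar\P}[\Yc_0]$, the fact that the dynamic value function provides the first component of a solution, and the characterisation \eqref{eq:characterisation_star_ji_manager} of optimal controls through the minimality condition $K=0$ and the pointwise maximisation of $\widetilde h^j$. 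The key conceptual point making this transfer legitimate is \Cref{ass:zeta_dynamic}: it ensures that $\zeta$ is the \emph{only} state variable of the $j$-th manager's problem and that its dynamics \eqref{eq:dyn_zeta_simple} are driven by $\chi$ in the prescribed diffusive form, so that the isolation of the volatility maximisation in $F^j$ via the sets $\Sc^j_t$ and $\widetilde\Xk^j_t$ is well defined and $\Pk$-measurable.

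The main obstacle I anticipate is not in the abstract machinery but in the bookkeeping around the \emph{partial} fixing of the other managers' controls $\chi^{-j}$: one must be careful that, with $\chi^{-j}$ frozen, the generator $F^j$, the diffusion set $\Sc^j_t(x,\chi^{-j})$, and the Girsanov density in \Cref{def:P_nu_girsanov_manager} all retain the required $\G$-optional measurability and that the Radon--Nikodym densities defining $\widebar\P^\chi$ are genuine martingales (guaranteed by boundedness of $\Lambda_{\rm M}$). In particular, the operator $u \mapsto u \otimes_j \chi^{-j}_t$ must be checked to preserve the predictability and boundedness structure so that the measurable selection argument applies to the frozen problem just as to the unfrozen one; this is routine given the boundedness assumptions but is the place where the multi-player nature of the setup could, if one were careless, introduce a non-observed state variable of the type warned against in \Cref{rk:measurability_manager}. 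Since \Cref{ass:zeta_dynamic} precisely rules this out, the argument goes through, and the proof is complete.
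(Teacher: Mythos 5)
Your proposal is correct and follows essentially the same route as the paper's own proof: reduce everything to checking the hypotheses of \cite{possamai2018stochastic} exactly as in \Cref{prop:genbestreac}, namely the Lipschitz continuity of $F^j$ in $y$ and $S^{1/2}z$ (with constants $|k^{j,0}|_\infty$ and $|\Lambda_{\rm M}|_\infty$), the $L^p$ integrability of $g^{j,0}(\zeta^j,\xi^j)$ and of the generator via \Cref{eq:integrability_contract_manager} and \eqref{eq:integ_cond_cost_manager}, the conditions on $\Pk := \widebar\Pc^j(\chi^{-j})$ via \cite{nutz2013constructing}, and saturation via \cite[Remark 5.1]{possamai2018stochastic}, then conclude by \cite[Propositions 4.5 and 4.6]{cvitanic2018dynamic}. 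The only slip is listing $c^{j,0}$ among the bounded functions: it is only required to satisfy the integrability condition \eqref{eq:integ_cond_cost_manager}, which is harmless here since $c^{j,0}$ cancels in the Lipschitz differences and enters only through the integrability checks.
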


\begin{proof}
As in the proof of \Cref{prop:genbestreac}, it suffices to mention why the assumptions required to apply the results of \cite{possamai2018stochastic} are satisfied within this framework. First of all, recall that by definition of $k^{j,0}$ in \Cref{sss:def_manager_pb}, and $\Sigma_{\rm M}$, $\Lambda_{\rm M}$ in \Cref{ass:zeta_dynamic}, these functions are assumed to be bounded.
As in \cite[Proof of Proposition 4.5]{cvitanic2018dynamic}, it follows that $F^{j}$ satisfies the Lipschitz continuity assumptions required in \cite[Assumption 2.1 $(i)$]{possamai2018stochastic}. 
Indeed, for all $(t, x, y, z) \in [0,T] \times \Cc([0,T], \R^{hm}) \times \R \times \R^{hm}$, $S \in \Sc_t^j (x, \chi^{-j})$, and $(y', z') \in \R \times \R^{hm}$, we have
\begin{align*}
    \big| F^{j} \big( t, x, y, z, \chi^{-j}, S \big) - F^{j} \big( t, x, y', z', \chi^{-j}, S \big) \big|
    \leq |k^{j,0}|_{\infty} |y-y'| + | \Lambda_{\rm M} |_{\infty} \big| S^{1/2} (z-z') \big|,
\end{align*}
ensuring that $F^{j}$ is Lipschitz continuous in $y$ and in $S^{1/2} z$, as requested in \cite[Assumption 2.1 $(i)$]{possamai2018stochastic}.
Moreover, by \Cref{def:contract_manager_admissible} of the set of admissible contracts $\Cc^{j,0}$, the terminal condition $g^{j,0}(\zeta^j, \xi^j)$ satisfies \eqref{eq:integrability_contract_manager}. Using in addition the integrability condition \eqref{eq:integ_cond_cost_manager} for $c^{j,0}$, it then follows that the terminal condition $g^{j,0}(\zeta^j, \xi^j)$ and $F^{j}$ satisfy the integrability properties in \cite[Assumption 1.1 $(ii)$]{possamai2018stochastic}. Indeed, we have, for some $p>1$,
\begin{align*}
    &\ \sup_{\P \in \Pk} \E^{\P} \bigg[ \big| g^{j,0} (\zeta^j, \xi^j) \big|^p\bigg] < + \infty, \; \text{ by \Cref{eq:integrability_contract_manager}} \\
    \text{and } \;
    &\ \sup_{\P \in \Pk} \E^{\P} \bigg[ \int_t^T \big| F^j (s, \zeta, 0, 0, \chi^{-j}, \widehat \Sigma_s) \big|^p \drm s \bigg]
    \leq \sup_{\P \in \Pc} \E^{\P} \bigg[ \int_0^T \big| c^{j,0} (t, \zeta^j, \chi^{j,\P}) \big|^p \drm s \bigg] 
    < + \infty, \; \text{ by \eqref{eq:integ_cond_cost_manager}}, \; \forall t \in [0,T].
\end{align*}
In addition, \cite[Assumption 3.1]{possamai2018stochastic} is also satisfied thanks to the integrability condition \eqref{eq:integ_cond_cost_manager} for $c^{j,0}$, as explained in \cite[Proof of Proposition 4.5]{cvitanic2018dynamic}. Next, \cite[Assumption 1.1 $(iii)$--$(v)$]{possamai2018stochastic} are also satisfied by the set of measures $\Pk := \widebar \Pc^j (\chi^{-j})$, see for instance \cite{nutz2013constructing}. Finally, the set $\Pk$ is saturated in the sense of \cite[Definition 5.1]{possamai2018stochastic}, see 
\cite[Remark 5.1]{possamai2018stochastic}.
\end{proof}

\subsubsection{Characterisation of the Nash equilibrium between managers}\label{sss:nash_manager}

With \Cref{prop:genbestreac_manager} in hand, we can now characterise a Nash equilibria between the managers, thanks to a collection of coupled 2BSDEs, reminiscent of the multidimensional BSDE obtained in the setting of \citeayn{elie2019contracting} where only the drift of the canonical process was controlled.

\begin{theorem}\label{th:genbestreac_manager}
Let $\xi^{\rm M} \in \Cc^{\rm M}$ be the collection of contracts for the managers, meaning that the $j$--th manager receives a contract $\xi^{j} \in \Cc^{j,0}$. The unique Nash equilibrium $\P \in \Pc^{\rm M,\star}(\xi^{\rm M})$, in the sense of \textnormal{\Cref{def:nash_manager_weak}}, is characterised by $\P =\widebar \P^{\widetilde \chi^\star}$ where $\widetilde \chi^\star$ is such that for any $j \in \{1, \dots, m\}$, 
\[
K_t^{j}=0\; 
\text{ and } \; \widetilde \chi^{j,\star} =
\argmax_{u \in \widetilde \Xk^j_t (\zeta, \widetilde \chi^{-j, \star}, \widehat \Sigma_t) } \widetilde h^{j} ( t, \zeta, \Yc^j_t, Z^j_t, \widetilde \chi_t^{-j, \star}, u), \; \text{ for all } \; t \in [0,T], \; \widebar{\P}^{\widetilde \chi^\star} \textnormal{--a.s.},
\]
where $(\Yc^{j},Z^{j}, K^{j})$ is a solution to \textnormal{\ref{eq:2bsde_manager}}, in the sense of \textnormal{\Cref{def:2BSDE_manager}}, on $\widebar \Pc^j(\widetilde \chi^{-j,\star})$. 
\end{theorem}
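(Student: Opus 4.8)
The plan is to prove \Cref{th:genbestreac_manager} by combining the single--manager characterisation of \Cref{prop:genbestreac_manager} with the uniqueness of the Nash equilibrium guaranteed by \Cref{ass:unicity_fixedpoint_hamiltonien} and the admissibility structure of $\Cc^{\rm M}$ (\Cref{def:contract_manager_admissible}). The overall strategy mirrors the proof of \Cref{th:genbestreac} for the agents, with the crucial difference that the managers' Hamiltonians are coupled, so the collection of \ref{eq:2bsde_manager} for $j\in\{1,\dots,m\}$ is a \emph{coupled} system rather than a decoupled one, and the equilibrium is pinned down by the fixed point $u^{\star}$ of \Cref{ass:unicity_fixedpoint_hamiltonien} rather than by individual maximisers.

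First I would unwind \Cref{def:nash_manager_weak}: a control $\chi\in\mathscr X$ associated to $\P^\star\in\Pc^{\rm M}$ is a Nash equilibrium if and only if, for every $j$, $\P^\star\in\Pc^j(\chi^{-j})$ and $\P^\star$ attains the supremum in the weak problem \eqref{eq:manager_weak_formulation}. Fixing $j$ and the others' controls $\chi^{-j}=\widetilde\chi^{-j,\star}$, I apply \Cref{prop:genbestreac_manager} with $\Pk:=\widebar\Pc^j(\widetilde\chi^{-j,\star})$: the $j$--th manager's best reaction to $\xi^{j}$ and $\widetilde\chi^{-j,\star}$ is characterised by the solution $(\Yc^j,Z^j,K^j)$ of \ref{eq:2bsde_manager}, together with the optimality conditions \eqref{eq:characterisation_star_ji_manager}, namely $K^j=0$ and $\widetilde\chi^{j,\star}$ maximising $\widetilde h^j(t,\zeta,\Yc^j_t,Z^j_t,\widetilde\chi^{-j,\star}_t,\cdot)$ over $\widetilde\Xk^j_t(\zeta,\widetilde\chi^{-j,\star},\widehat\Sigma_t)$, $\widebar\P^{\widetilde\chi^\star}$--a.s. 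Since a Nash equilibrium requires this simultaneously for all $j$, a control $\widetilde\chi^\star$ is an equilibrium precisely when these conditions hold jointly, which is exactly the statement of the theorem. This establishes the characterisation in both directions: any equilibrium must satisfy the displayed conditions (necessity, from the best--reaction part of \Cref{prop:genbestreac_manager}), and conversely any $\widetilde\chi^\star$ satisfying them is a mutual best reaction, hence an equilibrium (sufficiency).

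The remaining work is to justify \emph{existence} and \emph{uniqueness} of such a joint control. For existence, I would invoke \Cref{ass:existence_maximiser_manager} to produce, for each $j$ and each fixed $\chi^{-j}$, a measurable maximiser, and then \Cref{ass:unicity_fixedpoint_hamiltonien} to assemble these into the simultaneous fixed point $u^{\star}$, so that setting $\widetilde\chi^{j,\star}_t:=u^{j,\star}(t,\zeta,\Yc^{\rm M}_t,Z_t,\Gamma_t)$ decouples the pointwise optimisation across managers. The coupled system of \ref{eq:2bsde_manager} is then well posed because, under \Cref{ass:hamiltonian_lipschitz}, each generator $F^j$ is Lipschitz in $y$ and in $\widehat\Sigma_t^{1/2}z$ (as verified in the proof of \Cref{prop:genbestreac_manager}), placing us within the well--posedness theory of \cite{possamai2018stochastic}; uniqueness of the control follows from the uniqueness clause of \Cref{ass:unicity_fixedpoint_hamiltonien}, since two equilibria would yield two fixed points of the multidimensional Hamiltonian, a contradiction. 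Finally, the admissibility $\xi^{\rm M}\in\Cc^{\rm M}$ (\Cref{def:contract_manager_admissible}) is what guarantees the induced equilibrium is unique, so $\Pc^{\rm M,\star}(\xi^{\rm M})=\{\widebar\P^{\widetilde\chi^\star}\}$.

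The main obstacle I anticipate is the coupling: unlike the agents' case in \Cref{th:genbestreac}, where each optimal effort $\nu^{j,i,\star}$ is independent of the others (so the 2BSDEs decouple and uniqueness is immediate from \Cref{ass:unicity_maximiser_agents}), here the maximiser in \eqref{eq:characterisation_star_ji_manager} depends on $\widetilde\chi^{-j,\star}$ through both the drift coefficient $\Sigma_{\rm M}\Lambda_{\rm M}$ and the admissible set $\widetilde\Xk^j_t(\zeta,\widetilde\chi^{-j,\star},\widehat\Sigma_t)$. Consequently one cannot solve each \ref{eq:2bsde_manager} in isolation and must instead argue that the fixed--point control $u^{\star}$ renders the system simultaneously consistent; the technical heart is to check that the substitution $\chi^{-j}=\widetilde\chi^{-j,\star}$ is legitimate, i.e.\ that the others' equilibrium controls are indeed $\F^{\rm M}$--predictable and compatible with $\widebar\Pc^j(\widetilde\chi^{-j,\star})$, so that \Cref{prop:genbestreac_manager} applies with these particular (rather than arbitrary) $\chi^{-j}$. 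This predictability/measurability consistency, together with the joint minimality condition $K^j=0$ holding under the \emph{same} measure $\widebar\P^{\widetilde\chi^\star}$ for all $j$ at once, is where care is needed; it is precisely handled by the fixed--point assumption, which is why \Cref{ass:unicity_fixedpoint_hamiltonien} is the linchpin of the argument.
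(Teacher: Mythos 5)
Your proposal is correct and takes essentially the same route as the paper: the paper's proof likewise combines the best--reaction characterisation of \Cref{prop:genbestreac_manager}, applied with $\chi^{-j}=\widetilde\chi^{-j,\star}$, with the fact that uniqueness of the Nash equilibrium is built directly into the admissibility of $\xi^{\rm M}\in\Cc^{\rm M}$ (\Cref{def:contract_manager_admissible}), so the theorem is purely a characterisation. Your third paragraph on existence and uniqueness via \Cref{ass:existence_maximiser_manager,ass:unicity_fixedpoint_hamiltonien} is superfluous here (the paper defers that analysis to \Cref{prop:nash_manager} for revealing contracts), and note in passing that \Cref{ass:unicity_fixedpoint_hamiltonien} concerns the full Hamiltonians $\Hc^{j}$ rather than the constrained maximisation of $\widetilde h^{j}$ over $\widetilde \Xk^j_t(\zeta,\widetilde\chi^{-j,\star},\widehat\Sigma_t)$, so it does not directly deliver the maximisers appearing in the statement.
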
 

\begin{proof}
As in the statement of the theorem, we fix a collection $\xi^{\rm M} \in \Cc^{\rm M}$ of contracts for the managers. Recall that by definition of the set $\Cc^{\rm M}$, the collection of contracts $\xi^{\rm M}$ leads to a unique Nash equilibrium between the managers (see \Cref{def:contract_manager_admissible}).
By \Cref{prop:genbestreac_manager}, we have a characterisation of the best--reaction function of the $j$--th manager to an arbitrary tuple of controls $\chi^{-j}$ chosen by the other managers. A Nash equilibrium $\P^\star$ associated to an optimal effort $\widetilde \chi^{\star} := (\widetilde \chi^{j,\star})_{j=1}^m$ then necessitates only that for all $j \in \{1, \dots,m\}$, $\P^\star$ is the best--reaction function of the $j$--th managers to $\widetilde \chi^{-j,\star}$. In other words, the probability $\P^\star$ and the associated effort $\widetilde \chi^\star$ have to satisfy \eqref{eq:characterisation_star_ji_manager} for all $j \in \{1, \dots, m\}$, which is exactly what is written in the statement of the theorem.
\end{proof}

The result of the previous theorem leads us to consider, if it exists, a map $\widetilde u$ taking values in $\R^m$, satisfying for all $j \in \{1, \dots,m\}$
\begin{align}\label{eq:u_tilde_j}
\widetilde u^{j} (t, x, y, z, S) =
\argmax_{u \in \widetilde \Xk^j_t (x, \widetilde u^{-j} (t, x, y, z, S), S)} \widetilde h^{j} ( t, x, y^j, z^j, \widetilde u^{j} (t, x, y, z, S), u),
\end{align}
for $(t,x) \in [0,T] \times \Cc([0,T],\R^{hm})$, $y = (y^j)_{j=1}^m \in \R^{m}$, $z = (z^j)_{j=1}^m \in (\R^{hm})^m$ and $S \in \S^{hm}_+$.
Let then define $F^\star : [0,T] \times \Cc([0,T],\R^{hm}) \times \R^{m} \times (\R^{hm})^m \times \S^{hm}_+ \longrightarrow \R^m$ such that each component satisfies
\begin{align*}
    F^{j,\star} (t, x, y, z, S) := F^j (t, x, y^j, z^j, \widetilde u^{j} (t, x, y, z, S)), \; j \in \{1, \dots, m\}.
\end{align*}
We can then consider a triple $(\Yc, Z, K)$, solution of a multidimensional 2BSDE, in the sense that each component $(\Yc^j, Z^j,K^j)$ is solution to the following \textnormal{2BSDE}
\begin{align}\label{eq:2bsde_manager_star}
    \Yc^j_t = g^{j,0} \big( \zeta^j, \xi^{j} \big) 
    + \int_t^T F^{j,\star} \big(s, \zeta, \Yc_s, Z_s, \widehat \Sigma_s \big)  \drm s 
    - \int_t^T Z^j_s \cdot \drm \zeta_s
    + \int_t^T \drm K^j_s, \; \Pc^j(\widetilde \chi^{-j,\star})\textnormal{--q.s.},
\end{align}
where $\widetilde \chi^{\star}$ is defined component by component by $\widetilde \chi_t^{j,\star} := \widetilde u^{j} (t, \zeta, \Yc_t, Z_t, \widehat \Sigma_t)$, for all $t \in [0,T]$ and $j \in \{1, \dots, m\}$.

\medskip

This multidimensional 2BSDE is an extension of the pair of 2BSDEs considered by \citeayn[Equations (3.22--3.23)]{possamai2018zero} in their framework of a zero--sum game with two interacting players. One can also relate this multidimensional 2BSDE to the Mean--Field and Mc--Kean Vlasov 2BSDEs obtained respectively by \citeayn{elie2019mean} and \citeayn{barrasso2020controlled} in a framework with a continuum of agents with Mean--Field interactions.

\subsection{Technical proofs for the managers' problem} \label{ss:tech_proof_managers}

Thanks to the results established in the previous subsection, we now have everything in hand to prove \Cref{prop:nash_manager} and \Cref{thm:main_principal}.

\begin{proof}[\Cref{prop:nash_manager}]

As in the statement of the proposition, let $Y_0^{\rm M} := (Y_0^{j})_{j=1}^m$ and $\Zc := (Z,\Gamma) \in \Vc$. By \Cref{def:contract_manager}, consider the $m$--dimensional process $\Yc^{\rm M} := (\Yc^j)_{j=1}^m$ as well as the associated collection of contracts $\xi^{\rm M} := (\xi^j)_{j=1}^m \in \Xi^{\rm M}$. Note that each $\xi^j$ naturally satisfies the properties in order to be admissible in the sense of \Cref{def:contract_manager_admissible}, and that it suffices to prove uniqueness of the Nash equilibrium to ensure that $\xi^{\rm M} \in \Cc^{\rm M}$.

\medskip

\textcursive{Existence of a Nash equilibrium.}\vspace{-0.2em} We first fix $j \in \{1, \dots, m\}$, and we assume that other managers apart form the $j$--th are playing according to $\chi^{-j,\star}$, defined by the first point of the proposition, \textit{i.e.},
\begin{align}\label{eq:chi_ell_star}
    \chi^{-j,\star} = (\chi^{\ell,\star})_{\ell=1, \, \ell \neq j}^m, \; \text{ where } \; \chi_t^{\ell,\star} = u^{\ell,\star} (t, \zeta, \Yc, Z, \Gamma), \; t \in [0,T], \; \ell \in \{1,\dots, m\} \setminus \{j\}.
\end{align} 
We look for the best admissible response of the $j$--th manager with respect to the effort of others, \textit{i.e.}, a probability $\P \in \Pc^{j} (\chi^{-j, \star})$ that maximises his utility. More precisely, we want to prove that $\chi^{j,\star}$ is also given by \eqref{eq:chi_ell_star}. By assumption on the contract, namely that $\xi^j \in \Xi^j$, we have in particular that the continuation utility $\Yc^j$ of the $j$--th manager satisfies the formula \eqref{eq:continuation_utility_manager}. Recalling the definition of $\Hc^{j,\star}$ in \eqref{eq:hamiltonian_manager_star}, \textit{i.e.}, for $(t, x, y, z, \gamma) \in [0,T] \times \Cc( [0,T], \R^{hm}) \times \R^m \times (\R^{hm})^m \times (\M^{hm})^m$,
\begin{align*}
    \Hc^{j,\star} (t, x, y, z, \gamma) 
    := \Hc^{j} \big( t, x, y^j, z^j, \gamma^j, u^{-j,\star} (t, x, y, z, \gamma ) \big),
\end{align*}
it is easy to see that $\Yc^j$ satisfies the following:
\begin{align*}
    \Yc_{t}^{j}
    =  Y_0^j - \int_0^t \Hc^{j} \big( r, \zeta, \Yc_r^j, Z_r^j, \Gamma_r^j, \chi_r^{-j,\star} \big) \drm r
    + \int_0^t Z^j_r \cdot \drm \zeta_r
    + \dfrac{1}{2} \int_0^t {\rm Tr} \big[ \Gamma^j_r \drm \langle \zeta \rangle_r \big], \; t \in [0,T].
\end{align*}
Recalling that we have denoted by $\widehat \Sigma$ the pathwise version of the quadratic variation $\langle \zeta \rangle$ (see \eqref{eq:pathwise_qv_zeta}), we define
\begin{align*}
    K^{j}_t := \int_0^t \Big( \Hc^{j} \big( r, \zeta, \Yc_r^j, Z_r^j, \Gamma_r^j, \chi_r^{-j,\star} \big)
    - \frac12 {\rm Tr} \big[\Gamma^{j}_r \widehat \Sigma_r \big]
    - F^j \big(r, \zeta, \Yc^j_r, Z^j_r, \chi_r^{-j,\star}, \widehat \Sigma_r \big)
    \Big) \mathrm{d}r,
\end{align*}
for $t \in [0,T]$, where $F^{j}$ is defined by \eqref{eq:generator_2bsde_manager}. Replacing in the previous form of the continuation utility, we obtain:
\begin{align*}
    \Yc_{t}^{j}
    = &\ Y_0^j 
    - \int_0^t F^{j} \big(r, \zeta, \Yc^j_r, Z^j_r, \chi_r^{-j,\star}, \widehat \Sigma_r \big) \mathrm{d}r
    + \int_0^t Z^{j}_r \cdot \drm \zeta_r
    - \int_0^t \drm K^{j}_r.
\end{align*}
Finally, recalling the contract satisfies $\xi^{j} = \widebar g^{j} (\zeta^j, \Yc^{j}_T)$, we have $\Yc_T^{j} = g^{j} (\zeta^j, \xi^{j})$ and we can rewrite the previous equation in a backward form as follows:
\begin{align*}
    \Yc_{t}^{j}  
    = &\ g^{j} (\zeta^j, \xi^{j})
    + \int_t^T F^{j} \big(r, \zeta, \Yc^j_r, Z^j_r, \chi_r^{-j,\star}, \widehat \Sigma_r \big) \mathrm{d}r
    - \int_t^T Z^{j}_r \cdot \drm \zeta_r
    + \int_t^T \drm K^{j}_r,
\end{align*}
which exactly corresponds to \ref{eq:2bsde_manager} under $\Pk := \widebar \Pc^{j} (\chi^{-j, \star})$. By definition of $F^{j}$, we can directly check that $K^{j}$ is always a non--decreasing process, which vanishes on the support of any probability measure corresponding to the efforts $\chi^{j,\star}$ defined in the statement of the proposition. 
To ensure that $(\Yc^{j}, Z^{j}, K^{j})$ solves \ref{eq:2bsde_manager}, it therefore remains to check that all the integrability requirements in \Cref{def:2BSDE_manager} are satisfied. The one for $\Yc^{j}$ is immediate by definition of the set $\Vc$. The required integrability on $(Z^{j}, K^{j})$ then follows from \citeayn[Theorem 2.1 and Proposition 2.1]{bouchard2018unified}.

\medskip

\textcursive{Uniqueness.}\vspace{-0.1em} We have therefore obtained that the candidate provided in the statement of the proposition was indeed an equilibrium. Let us now prove uniqueness. Let $\chi^{-j}$ be the arbitrary efforts of other managers. In this case, the continuation utility of the $j$--th manager, given a contract $\xi^{j} \in \Xi^{j}$, does not satisfies \ref{eq:2bsde_manager}, since other agents' efforts are not necessarily optimal anymore. Nevertheless, $\Xi^{j} \subset \Cc^{j,0}$ and by \Cref{prop:genbestreac_manager}, an optimal effort $\chi^{j,\star}$ is a maximiser of the map $F^{j}$, which coincides with a maximiser of his Hamiltonian $\Hc^{j}$ given by \eqref{eq:hamiltonian_manager}. Recall that the existence of such a maximiser is ensured by \Cref{ass:existence_maximiser_manager}, but uniqueness is not assumed. Nevertheless, this reasoning is valid for all managers, implying that $\chi^{\star}$ should satisfies for all $j \in \{1, \dots, m\}$, $\chi^{j,\star}_t = u^{j} \big(t, \zeta, \Yc^j, Z^j, \Gamma^j, \chi^{-j,\star} \big)$.
This condition is equivalent to the definition of the map $u^\star$ in \Cref{ass:unicity_fixedpoint_hamiltonien}. Moreover, since by \Cref{ass:unicity_fixedpoint_hamiltonien} there is a unique map $u^\star$ guaranteeing the maximization of each manager's Hamiltonian simultaneously, this induces a unique equilibrium in terms of efforts, given by $\chi^\star$.
\end{proof}

\begin{proof}[\Cref{thm:main_principal}]

The main point is to prove that the restriction to revealing contracts in the sense of \Cref{def:contract_manager} is without loss of generality. This proof relies on arguments similar to the ones developed in the proof of \Cref{thm:main_manager} and initially in \cite[Proof of Theorem 3.6]{cvitanic2018dynamic}. We thus consider an arbitrary collection $\xi^{\rm M} \in \Cc^{\rm M}$ of contracts, in the sense that the $j$--th manager receives the contract $\xi^{j} \in \Cc^{j,0}$. Starting from this admissible collection of contracts, the goal is to prove that, for all managers, we can define an approximation $\xi^{j,\eps}$ of his contract $\xi^{j}$, such that the new collection of contracts $\xi^\eps$ is admissible and gives the same Nash equilibrium. Moreover, we should verify that for all $j \in \{1, \dots, m\}$, $\xi^{j,\eps} = \xi^{j}$ at the Nash equilibrium, and that the associated continuation utility $\Yc^{{\rm M}, \eps}$ satisfies the representation \eqref{eq:continuation_utility_manager}, in addition to required integrability conditions, to ensure that $\xi^{\eps}$ is a collection of revealing contracts, in the sense of \Cref{def:contract_manager}.

\medskip

First, using \Cref{prop:genbestreac_manager} and \Cref{th:genbestreac_manager}, we know that for a collection of contracts $\xi^{\rm M} \in \Cc^{\rm M}$, there exists a unique equilibrium $\P^\star \in \Pc^{\rm M,\star} (\xi^{\rm M})$, associated to an optimal effort $\widetilde \chi^{\star} \in \mathscr X$, satisfying for any $j \in \{1, \dots, m\}$, 
\begin{align}\label{eq:charac_chistar}
K^{j}=0,\; \P^{\star} \textnormal{--a.s.}, \; \text{ and } \; \widetilde \chi_t^{j,\star} = 
\argmax_{u \in \widetilde \Xk_t^j (\zeta, \widetilde \chi^{-j,\star}, \widehat \Sigma_t) } \widetilde h^{j} (t, \zeta, \Yc^{j}_t, Z^{j}_t, \widetilde \chi^{-j,\star}_t, u \big),\; \P^{\star} \textnormal{--a.s.},
\end{align}
where $(\Yc^j, Z^j, K^j)$ is a solution to \ref{eq:2bsde_manager}, in the sense of \Cref{def:2BSDE_manager}. 

\medskip

Given this arbitrary but admissible collection $\xi^{\rm M}$ of contracts, the idea is to use the aforementioned solution $(\Yc^j, Z^j, K^j)$ to \ref{eq:2bsde_manager} to construct an approximation $\xi^{j,\eps}$ of the contract $\xi^{j}$. Similar to the agents' problem, let us fix some $\eps>0$, and define, for all $j \in \{1, \dots, m\}$, the absolutely continuous approximation of $K^{j}$:
\[
    K^{j,\eps}_t :=
    \frac1\eps\int_{(t-\eps)^+}^t K^{j}_s \mathrm{d}s,\; t\in[0,T].
\]
The process $K^{j,\eps}$ naturally inherits some properties of $K^{j}$.
More precisely, given the effort of other managers $\chi^{-j}$, and recalling the notation $\Pk := \widebar \Pc^{j} (\chi^{-j})$, we have that $K^{j,\eps}$ is $\G^{\Pk}$--predictable, non--decreasing $\Pk$--q.s. and
\begin{align}\label{eq:kk_manager}
    K^{j,\eps} = 0,
    \; \P^\star \mbox{--a.s. for all} \;
    \P^\star \in \Pc^{\rm M,\star} (\xi^{\rm M}).
\end{align}
We next define the $m$--dimensional process $\Yc^\eps$ such that each component $\Yc^{j,\eps}$ satisfies, for any $t\in[0,T]$, 
\begin{align}\label{Yeps_manager}
    \Yc^{j,\eps}_t :=
    Y_0^j
    - \int_0^t F^{j} \big(s, \zeta, \Yc^{j,\eps}_s, Z^j_s, \widetilde \chi_s^{-j,\star}, \widehat \Sigma_s \big) \drm s 
    + \int_0^t Z^{j}_s \cdot \drm \zeta_s
    - \int_0^t \drm K^{j,\eps}_s, \; \Pc^{j} (\widetilde \chi^{-j,\star})\text{--q.s.},
\end{align}
recalling that the optimal efforts $\widetilde \chi^{-j, \star}$ of other managers are defined defined omega per omega through \eqref{eq:charac_chistar}.

\medskip

We first verify that for all $j \in \{1, \dots, m\}$, $(\Yc^{j,\eps}, Z^{j}, K^{j,\eps})$ solves \ref{eq:2bsde_manager} under $\Pk := \widebar \Pc^{j} (\chi^{-j, \star})$, with terminal condition $\Yc^{j,\eps}_T$ and generator $F^{j}$. With this in mind, let us fix $j \in \{1,\dots,m\}$ as well the other components, namely $(\Yc^{-j, \eps}, Z^{-j}, K^{-j,\eps})$. First, by \eqref{eq:kk_manager}, $K^{j,\eps}$ clearly satisfies the required minimality condition. Then, noticing that $K^{j,\eps} \leq K^{j}$, $K^{j,\eps}$ inherits the integrability of $K^j$, and moreover we can verify that $\sup_{\widebar\P\in\widebar\Pc^{\rm M}} \E^{\widebar\P} [| \Yc^{j,\eps}_T |^{p}] <\infty$, similarly to the equivalent proof for the manager--agents problem.
Therefore, by \cite[Theorem 4.4]{possamai2018stochastic}, we have the following estimates
\begin{align}\label{estimates-eps_manager}
    \|\Yc^{j,\eps}\|_{\S^{\bar p} (\G, \Pk)} + \big\| Z^j \big\|_{\H^{\bar p}_{hm} (\G, \Pk, \widehat \Sigma) } < \infty,\;
    \mbox{for} \; \bar p \in(1,p).
\end{align}
We finally observe that a probability measure $\P$ satisfies $K^{j}=0$, $\P$--a.s. if and only if it satisfies $K^\eps=0$, $\P$--a.s. An approximation $\xi^{j,\eps}$ of the admissible contract $\xi^j$ can thus be defined omega per omega from the terminal value of $\Yc^{j,\eps}$ by $\xi^{j,\eps} := \widebar g^{j,0} (\zeta^j, \Yc^{j, \eps}_T)$, recalling that $\widebar g^{j,0}$ corresponds to the inverse of $g^{j,0}$ with respect to the second variable. In other words, the approximation $\xi^{j,\eps}$ satisfies $\Yc^{j, \eps}_T := g^{j,0} (\zeta^j, \xi^{j,\eps})$.

\medskip

To prove that the previously defined contract $\xi^{j,\eps}$ is a revealing contract, meaning that it belongs to the set $\Xi^{j,0}$, we should in particular make the parameter $\Gamma$ appears in the representation \eqref{Yeps_manager}. With this in mind, notice that for any $(t,\omega, x, y, z) \in [0,T] \times \Omega^{\rm M} \times \Cc([0,T],\R^{hm}) \times \R^{hm} \times \R^{n_j+1}$, the map
\begin{align}\label{surjective_manager}
    \gamma \longmapsto  \Hc^{j} (t, x, y, z, \gamma, \widetilde \chi^{-j,\star} )
    - \frac12 {\rm Tr} \big[ \gamma S(\omega) \big] - F^{j} (t, x, y, z, \widetilde \chi^{-j,\star}, S(\omega)),
\end{align}
is surjective on $(0, + \infty)$. Indeed, it is non--negative, by definition of $\Hc^{j}$ and $F^{j}$, convex, continuous on the interior of its domain, and is coercive by the boundedness of the functions $\Lambda_{\rm M}$, $\Sigma_{\rm M}$, $k^{j,0}$ and $c^{j,0}$.
Let $\dot{K}^{j,\eps}$ denote the density of the absolutely continuous process $K^{j,\eps}$ with respect to the Lebesgue measure. Applying a classical measurable selection argument (the maps appearing here are continuous, and we can use the results from \cite{benes1970existence, benes1971existence}), we can deduce the existence of a $\G$--predictable process $\Gamma^{j,\eps}$ such that
\[
    \dot{K}_s^{j,\eps}
    = \Hc^{j} \big( r, \zeta, \Yc_s^{j,\eps}, Z^{j}_s, \Gamma^{j,\eps}_s, \widetilde \chi^{-j,\star}_s \big)
    - \frac12 {\rm Tr} \big[\Gamma^{j,\eps}_s \widehat \Sigma_s\big] 
    - F^{j} \big(r, \zeta, \Yc^{j,\eps}_s, Z^{j}_s, \widetilde \chi^{-j,\star}_s, \widehat \Sigma_s \big), \; s \in [0,T].
\]
Indeed, if $\dot{K}_s^{j,\eps}>0$, the existence of $\Gamma^{j,\eps}_s$ is clear from \eqref{surjective_manager}, and if $\dot{K}_s^{j,\eps}=0$, $\Gamma^{j,\eps}_s$ can be chosen arbitrarily. Substituting in \eqref{Yeps_manager}, it follows that the following representation for $\Yc^{j,\eps}$ holds
\begin{align*}
    \Yc^{j,\eps}_t :=
    &\ Y_0^j
    - \int_0^t \Hc^{j} \big( r, \zeta, \Yc_s^{j,\eps}, Z^{j}_s, \Gamma^{j,\eps}_s, \widetilde \chi^{-j,\star}_s \big) \drm s
    + \int_0^t Z^{j}_s \cdot \drm \zeta_s
    + \frac12 \int_0^t {\rm Tr} \big[ \Gamma^{j,\eps}_s \drm \langle \zeta \rangle_s \big].
\end{align*}
This shows that the continuation utility $\Yc^\eps$ has the required dynamics \eqref{eq:continuation_utility_manager}, since, at equilibrium, the effort $\widetilde \chi^{\star} = \chi^{\star}$ and is unique. The fact that the contract $\xi^{j,\eps}$ induced by $Y^{j,\eps}$ belongs to $\Xi^{j}$ then stems from \eqref{estimates-eps_manager}. Moreover, notice that the admissible contract $\xi^{j}$ and its approximation $\xi^{j,\eps}$ coincides at the equilibrium, in the sense that $\xi^{j,\eps}=\xi^{j},\; \mathbb P^\star$--a.s. This reasoning is true for all $j \in \{1, \dots, m\}$, and we have therefore constructed a well--suited approximation of the collection $\xi^{\rm M}$ of contracts, belonging to $\Xi^{\rm M}$. Using \Cref{prop:genbestreac_manager,prop:nash_manager}, we can then conclude as in the proof of \cite[Theorem 3.6]{cvitanic2018dynamic} since both collection of contracts lead to the same unique Nash equilibrium.

\medskip

Finally, the equality \eqref{eq:main_principal} is now trivial. Indeed, by \Cref{def:contract_manager}, choosing a collection $\xi^{\rm M} \in \Xi^{\rm M}$ of contracts is strictly equivalent of choosing both a pair of payment rates $\Zc := (Z, \Gamma) \in \Vc$ to index the contract of each manager respectively on $\drm \zeta$ and $\drm \langle \zeta \rangle$, and a constant $Y_0^{\rm M} := (Y^j_0)_{j=1}^m \in \R^m$. Nevertheless, for all $j \in \{1, \dots, m\}$, the constant $Y_0^{j} \in \R$ has to be chosen so that the participation constraint for the $j$--th manager is satisfied. Moreover, the principal also chooses the initial value $Y^{\rm A}_0 \in \R$ of the agents' continuation utility, such that their participation constraints, \textit{i.e.}, \Cref{eq:participation_agent}, are satisfied. Using \Cref{prop:nash_agent,prop:nash_manager} respectively for the agents and the managers, these conditions are satisfied if and only if:
\begin{align*}
    Y_0^{j,i} = V_0^{j,i,\star} (\chi^\star)  \geq \rho^{j,i} \; \text{ and } \;
    Y_0^{j} = V_0^{j,0, \star} (\xi^{\rm M}) \geq \rho^{j,0}, \; \text{ for all } j \in \{1,\dots,m\}, \; i \in \{0, \dots, n_j\},
\end{align*}
recalling that $V^{j,i,\star}$ and $V_0^{j,0, \star}$ are respectively defined by \eqref{eq:V_ji_star} and \eqref{eq:V_ji_star}. This justifies the equality \eqref{eq:main_principal} and ends the proof.
\end{proof}

\end{appendices}

\end{document}